\numberwithin{equation}{section}
\newcommand{\cFl}{\mathcal{F}^{\lambda}}
\newcommand{\cpar}{c_{\text{par}}}
\newcommand{\gal}{\gamma^{\lambda}}
\newcommand{\phl}{\phi^{\lambda}}
\newcommand{\cF}{\mathcal{F}}
\newcommand{\cI}{\mathcal I}
\newtheorem{theorem}{Theorem}[section]
\newtheorem{lemma}[theorem]{Lemma}
\newtheorem{proposition}[theorem]{Proposition}
\newtheorem{remark}[theorem]{Remark}
\newtheorem*{remark*}{Remark}
\newtheorem{definition}[theorem]{Definition}
\newtheorem{corollary}[theorem]{Corollary}
\newtheorem{conjecture}[theorem]{Conjecture}
\newtheorem{List}{List}
\newcommand{\cE}{\mathcal{E}}
\newcommand{\W}{\mathbb{W}}
\newcommand{\barredsum}{%
  \DOTSB\mathop{\mathpalette\@barredsum\relax}\slimits@
}
\newcommand{\@barredsum}[2]{%
  \begingroup
  \sbox\z@{$#1\sum$}%
  \setlength{\unitlength}{\dimexpr2pt+\ht\z@+\dp\z@\relax}%
  \@barredsumthickness{#1}%
  \vphantom{\@barredsumbar}%
  \ooalign{$\m@th#1\sum$\cr\hidewidth$#1\@barredsumbar$\hidewidth\cr}%
  \endgroup
}
\newcommand{\@barredsumbar}{%
  \vcenter{\hbox{\begin{picture}(0,1)\roundcap\Line(0,0)(0,1)\end{picture}}}%
}
\newcommand{\@barredsumthickness}[1]{% see https://tex.stackexchange.com/a/477200/
  \linethickness{%
    1.25\fontdimen8
      \ifx#1\displaystyle\textfont\else
      \ifx#1\textstyle\textfont\else
      \ifx#1\scriptstyle\scriptfont\else
      \scriptscriptfont\fi\fi\fi 3
  }%
}
\newcommand{\al}{\alpha}
\newcommand{\be}{\beta}
\newcommand{\ga}{\gamma}
\newcommand{\Ga}{\Gamma}
\newcommand{\de}{\delta}
\newcommand{\De}{\Delta}
\newcommand{\e}{\varepsilon}
\newcommand{\ka}{\kappa}
\newcommand{\la}{\lambda}
\newcommand{\si}{\sigma}
\newcommand{\Si}{\Sigma}
\newcommand{\om}{\omega}
\newcommand{\Om}{\Omega}
\newcommand{\cq}{\mathcal Q}
\newcommand{\cG}{\mathcal {G}}
\newcommand{\cb}{\mathcal B}
\newcommand{\ce}{\mathcal E}
\newcommand{\cf}{\mathcal F}
\newcommand{\f}{\frac}
\newcommand{\wt}{\widetilde}
\newcommand{\wh}{\widehat}
\newcommand{\nf}{\infty}
\newcommand{\ZR}{\mathbb{R}}
\newcommand{\ZT}{\mathbb{T}}
\newcommand{\ZH}{\mathbb{H}}
\newcommand{\ZZ}{\mathbb{Z}}
\newcommand{\ZS}{\mathbb{S}}
\newcommand{\Id}{{\bf{1}}}
\newcommand{\bA}{{\bf A}}
\newcommand{\bB}{{\bf B}}
\newcommand{\bT}{{\bf T}}
\newcommand{\Tau}{\mathcal{T}}
\newcommand{\Kc}{K_\circ}
\newcommand{\cB}{{\mathcal B}}
\newcommand{\cT}{{\mathcal T}}
\newcommand{\cQ}{{\mathcal Q}}
\newcommand{\cg}{\mathcal G}
\newcommand{\A}{\mathbb A}
\renewcommand{\S}{\mathbb{S}}
\newcommand{\bbr}{\textup{br}}
\newcommand{\R}{\mathbb{R}}
\newcommand{\T}{\mathbb{T}  }
\newcommand{\B}{\mathbb{B}}
\newcommand{\N}{\mathbb{N}}
\newcommand{\bfone}{\mathbf{1}}
\newcommand{\rap}{{\rm RapDec}}
\newcommand{\dist}{{\rm dist}}
\newcommand{\BL}{\textup{BL}}
\newcommand{\ang}{\measuredangle}
\newcommand{\supp}{{\rm supp}}
\newcommand{\proj}{{\rm{Proj}}}
\newcommand{\cW}{\mathcal{W}}
\newcommand{\hw}{e^{it\sqrt{-\Delta}}}
\newcommand{\smax}{s_{\text{max}}}
\begin{document}

\title[Local smoothing for wave equations]{On local smoothing estimates for wave equations}

\date{}

\author{Shengwen Gan} \address{ Shengwen Gan\\  Department of Mathematics\\ University of Wisconsin-Madison, USA} \email{sgan7@wisc.edu}

\author{Danqing He}
\address{Danqing He, School of Mathematical Sciences,
Fudan University, People's Republic of China}
\email{hedanqing@fudan.edu.cn}

 \author{Xiaochun Li}
 \address{Xiaochun Li, Department of Mathematics, University of Illinois at Urbana-Champaign, Urbana, IL, 61801, USA}
 \email{xcli@illinois.edu}

\author{Shukun Wu} \address{ Shukun Wu\\  Department of Mathematics\\ Indiana University Bloomington, USA} \email{shukwu@iu.edu}

\begin{abstract}

We prove sharp local smoothing estimates for wave equations on compact Riemannian manifolds in $n+1$ dimensions for odd $n$ and obtain improved estimates in even dimensions.
This is achieved by deriving local smoothing estimates for certain Fourier integral operators.
We also obtain improved local smoothing estimates for wave equations in Euclidean spaces.

\end{abstract}
\maketitle

%\tableofcontents

\section{Introduction}

\subsection{Overview}

The study of local smoothing originated in the late 1980s with independent work of Sjölin \cite{Sjo} and Vega \cite{vega}, who observed that solutions to the Schr\"odinger equation gain fractional regularity when localized in space and averaged in time. 
Sogge \cite{sogge1991propagation} subsequently identified analogous phenomena for the wave equation, formulated the local smoothing problem within the framework of Fourier integral operators, and emphasized its geometric character. 
In the 1990s, Wolff \cite{wolff2000local} revealed that sharp local smoothing estimates are closely linked to Kakeya-type phenomena, showing that geometric obstructions arise from configurations of thin tubes. 
This connection placed local smoothing at the intersection of dispersive PDE, harmonic analysis, and geometric measure theory.

\medskip

This paper establishes local smoothing estimates for wave equations on both Euclidean spaces and general compact Riemannian manifolds.
Let $(M, g)$ denote either $\mathbb{R}^n$ or a smooth, compact $n$-dimensional Riemannian manifold equipped with the corresponding Laplace--Beltrami operator $\Delta_g$. Consider $u$ as the solution to the Cauchy problem:
%This paper proves local smoothing estimates for wave equations on Euclidean spaces and on general compact Riemannian manifold.
%Let $(M,g)$ be $\ZR^n$ or a smooth, compact $n$-dimensional Riemannian manifold with the associated Laplace–Beltrami operator $\De_g$.
%Let $u$ be the solution of the Cauchy problem
\begin{equation}
\label{cauchy-problem}
    \begin{cases}
        (\partial^2_{t}-\Delta_g)u(x,t)=0, \hspace{.5cm}(x,t)\in M\times \R,\\[.5ex]
        u(x,0)=f_0(x), \hspace{1cm}
        \partial_t u(x,0)=f_1(x).
    \end{cases}
\end{equation}

\smallskip

When the underlying manifold is $M=\mathbb{R}^n$ with the flat metric, Peral \cite{peral1980lp} showed that for each fixed time $t$,  
$$
\|u(\cdot, t)\|_{L^p(M)} \lesssim_{M,g} \|f_0\|_{L^p_{s_p}(M)} + \|f_1\|_{L^p_{s_p-1}(M)},
$$  
for $p \in (1,\infty)$, where $s_p = (n-1)\big|\tfrac{1}{2}-\tfrac{1}{p}\big|$. The corresponding estimate for a general Riemannian manifold $(M,g)$ was later proved by Seeger-Sogge-Stein \cite{Seeger-Sogge-Stein}.   In subsequent work, Sogge \cite{sogge1991propagation} observed that averaging the solution over time intervals $t \sim 1$ yields an $L^p$ regularity gain for $2 < p < \infty$. In the same paper, he formulated the following {\it local smoothing} conjecture on $\mathbb R^n$:

%When $M=\ZR^n$ with flat metric, Peral \cite{peral1980lp} showed that for fixed time $t$,
  %  \[\|u(\cdot, t)\|_{L^p(M)}\lesssim_{M,g} \| f_0 \|_{L^p_{s_p}(M)}+\| f_1 \|_{L^p_{s_p-1}(M)},  \]
%for $p\in(1,\infty)$ and $s_p=(n-1)|\frac12-\frac1p|$ (for general $(M,g)$, this was proved by Seeger-Sogge-Stein in  \cite{Seeger-Sogge-Stein}). 
%Sogge \cite{sogge1991propagation} observed that with an averaging over $t\sim 1$, there is a gain of regularity in $L^p$ for $2<p<\infty$. 
%In the same paper, he made the following local smoothing conjecture:

\begin{conjecture}
\label{smoothing-conj-1}
Let $M=\ZR^n$ equipped with the flat metric, let $s_p=\frac{(n-1)}{2}-\frac{(n-1)}{p}$, and let $u$ solve the Cauchy problem \eqref{cauchy-problem}.
Then for all $p\geq 2+\frac{2}{n-1}$ and $\si<\frac1p$,
\begin{equation}
\label{smoothing-esti-1}
    \| u\|_{L^p(\R^n\times [1,2])}\le C_{p,\si}\big(\|f_0\|_{L^p_{s_p-\si}(\R^n)}+\|f_1\|_{L^p_{s_p-1-\si}(\R^n)}\big).
\end{equation}
\end{conjecture}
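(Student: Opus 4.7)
The plan is to reduce Conjecture~\ref{smoothing-conj-1} to a frequency-localized estimate for the half-wave propagator. A Littlewood-Paley decomposition combined with the splitting $u = e^{it\sqrt{-\De_g}}f_+ + e^{-it\sqrt{-\De_g}}f_-$ reduces matters to showing, for each dyadic $\la \geq 1$ and each $f$ with Fourier support in $\{|\xi| \sim \la\}$,
\begin{equation*}
\|e^{it\sqrt{-\De_g}} f\|_{L^p(M \times [1,2])} \lesssim_\si \la^{s_p - \si} \|f\|_{L^p(M)}.
\end{equation*}
After a parabolic rescaling $(x,t) \mapsto \la(x,t)$, this is essentially an $L^p$ estimate on a ball of radius $\la$ in $\R^{n+1}$ for the Fourier extension operator attached to a truncated cone, modulo tail errors absorbed by the subcritical gain.

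The first natural attempt would be to apply Bourgain-Demeter $\ell^p$ decoupling for the cone,
\begin{equation*}
\Big\|\sum_\theta F_\theta\Big\|_{L^p(B_\la)} \lesssim_\e \la^\e \Big(\sum_\theta \|F_\theta\|_{L^p}^2\Big)^{1/2}, \qquad p \geq 2 + \tfrac{4}{n-1},
\end{equation*}
summed over $\la^{-1/2}$-caps $\theta$, together with Plancherel on each cap. This yields \eqref{smoothing-esti-1} in the Wolff range $p \geq 2 + \tfrac{4}{n-1}$ but falls short of the sharp threshold $p \geq 2 + \tfrac{2}{n-1}$ demanded by the conjecture. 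To reach the endpoint, one instead needs a \emph{square function} estimate for the cone — the Mockenhaupt-Sogge conjecture — which has been established by Guth-Wang-Zhang in $n=2$ and yields the full local smoothing conjecture in that dimension.

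For higher $n$, and especially the $n = 3$ case highlighted in the abstract, my plan is a broad/narrow induction on scales. In the broad regime, the contributing caps are transverse and one exploits multilinear cone restriction of Bennett-Carbery-Tao and Guth, combined with polynomial partitioning, to control wave-packet interactions through combinatorial incidence. In the narrow regime, the caps cluster into a lower-dimensional sub-variety of the cone, and the estimate recurses at a smaller scale where the induction hypothesis applies. The main obstacle is closing the induction at the critical exponent $p = 2 + \tfrac{2}{n-1}$ with only subcritical loss $\si < 1/p$: the natural iteration accumulates $\la^\e$ factors, and to recover the sharp $s_p - \si$ regularity one must carefully track polynomial gains and prove a refined Kakeya-type estimate for plates tangent to the cone at the sharp exponent, which in dimension $n \geq 3$ is precisely the open frontier.
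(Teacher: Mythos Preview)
The statement you are asked to prove is a \emph{conjecture}, open in the full range $p\ge 2+\tfrac{2}{n-1}$ for every $n\ge 3$; the paper does not prove it either. What the paper establishes is Theorem~\ref{mainthm}: Conjecture~\ref{smoothing-conj-1} holds for $p\ge p(n)$, where $p(3)=\tfrac{10}{3}$ and $p(n)=2+\tfrac{8}{3n-4}$ for $n\ge 4$. Your proposal correctly recognizes that the full conjecture is out of reach and that the last step (``a refined Kakeya-type estimate for plates tangent to the cone at the sharp exponent'') is exactly the open problem, so as a proof of the conjecture it is, by your own admission, incomplete.

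More substantively, even for the partial range the paper does achieve, your proposed toolkit---multilinear cone restriction of Bennett--Carbery--Tao and polynomial partitioning---is not what the paper uses and would not straightforwardly close the induction here. The central obstacle, which you do not address, is that the natural rescaling for the cone (Lorentz rescaling) is non-isotropic on the initial data, so the parabolic-restriction style induction on scales you sketch does not transplant to the wave setting. The paper's key new idea is to replace $\|f\|_p$ by a mixed norm $\|f\|_2^{2/p}\,\cW(f,B_R^{n+1})^{1-2/p}$, where the \emph{wave packet density} $\cW$ is a Lorentz-rescaling-invariant $L^2$-average over conic planks at all intermediate scales; this is what makes induction on scales possible. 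The argument then combines a refined decoupling inequality for the cone, a hairbrush incidence bound exploiting the special geometry of light rays, and (in $n=3$) a refined $L^2$ estimate on $\rho$-regular slab covers, all orchestrated through a broad-and-two-ends algorithm. None of these ingredients appear in your plan, and the multilinear/partitioning route you suggest has not, to date, gone beyond the decoupling threshold $2+\tfrac{4}{n-1}$ for this problem.
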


\smallskip

The local smoothing phenomenon also occurs on a general compact manifold $(M,g)$.
However, the range of admissible exponents is quite different from the flat case in Conjecture \ref{smoothing-conj-1}. 
Miniccozi and Sogge \cite{Minicozzi-Sogge} constructed examples to show that \eqref{smoothing-esti-1} fails for all $\si<\frac{1}{p}$ when $p<p_n^+$, where
\begin{equation}
\nonumber
    p_n^+:=
    \begin{cases}
        2+\frac{8}{3n-3}, \hspace{.3cm}\text{if $n$ is odd},\\[.5ex]
        2+\frac{8}{3n-2}, \hspace{.3cm}\text{if $n$ is even}.
    \end{cases}
\end{equation}
This observation naturally leads to the following conjecture:

\begin{conjecture}
\label{smoothing-conj-2}
For $n\geq2$, let $(M,g)$ be an $n$-dimensional compact Riemannian manifold, and let $u$ solve the Cauchy problem \eqref{cauchy-problem}.
Then for all $p\geq p_n^+$ and $\si<\frac1p$,
\begin{equation}
\label{smoothing-esti-2}
    \| u\|_{L^p(M\times [1,2])}\le C_{p,\si}\big(\|f_0\|_{L^p_{s_p-\si}(M)}+\|f_1\|_{L^p_{s_p-1-\si}(M)}\big).
\end{equation}
\end{conjecture}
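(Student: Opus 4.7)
The plan is to reduce Conjecture \ref{smoothing-conj-2} to a frequency-localized $L^p$ bound for the half-wave propagator, realized as a Fourier integral operator whose canonical relation is governed by the geodesic flow on $T^*M$, and then apply variable coefficient decoupling combined with a fine analysis of caustics.

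For the reduction, I would use the Hadamard/Lax parametrix to write $e^{it\sqrt{-\De_g}}$ as a zero-order FIO modulo a smoothing remainder. A Littlewood--Paley decomposition in space, together with a standard square function argument, reduces \eqref{smoothing-esti-2} to the statement that, for $f$ Fourier supported at frequency scale $\la \ge 1$,
\begin{equation*}
\| e^{it\sqrt{-\De_g}} f \|_{L^p(M\times[1,2])} \lesssim_\epsilon \la^{s_p - \f1p + \epsilon}\|f\|_{L^p(M)}.
\end{equation*}
A parabolic rescaling at scale $\la^{-1/2}$ converts this into an oscillatory integral estimate whose phase is a small perturbation of the flat cone phase $\langle x,\xi\rangle + t|\xi|$, with the perturbation encoding the metric $g$.

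The core of the argument is variable coefficient cone decoupling. One decomposes the characteristic surface $\Si_\la$, the image of the cosphere bundle under the geodesic spray at time $t\in[1,2]$, into $\la^{-1/2}$-plates and applies the Bourgain--Demeter cone decoupling together with its variable-coefficient extensions (Beltr\'an--Hickman--Sogge and related work). Decoupling alone yields the sharp estimate at $p\ge \tfrac{2(n+1)}{n-1}$, recovering Conjecture \ref{smoothing-conj-1} in the flat case in this range. To descend to $p_n^+$, one needs to combine the decoupling with a square function estimate on $\Si_\la$ and with a bilinear or multilinear restriction input, implemented through the broad/narrow dichotomy of Guth, so as to exploit the transversality of most of the plates.

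The principal obstacle is the behaviour at caustics. The Minicozzi--Sogge examples forcing $p\ge p_n^+$ are wave packets concentrating at a conjugate point with maximal order of degeneration of the geodesic flow, corresponding to a Whitney cusp of the exponential map. The plan is to stratify the FIO according to the order of vanishing of the Jacobian of the exponential map, prove uniformly sharp decoupling and square function bounds away from the maximal caustic stratum, and then handle the maximal caustic stratum via an adapted wave packet decomposition that separates out the focusing direction and compensates for it by a lower-dimensional Sobolev embedding. The exponent $p_n^+$ is expected to emerge precisely from balancing these two contributions, and this last step is where the analysis is most delicate; indeed it is the reason the authors' main theorems achieve the conjectured exponent sharply only when $n=3$, where the caustic geometry is still manageable, and give only partial gains for $n\ge 4$.
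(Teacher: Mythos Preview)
The statement you are addressing is a \emph{conjecture}, and the paper does not prove it in full. The paper establishes Conjecture \ref{smoothing-conj-2} only for $p\ge p(n)$ (Theorem \ref{mainthm}), which matches the conjectured endpoint $p_n^+$ exclusively when $n=3$; for $n\ge 4$ the conjecture remains open. Your own final paragraph acknowledges this, so what you have written is not a proof but a research plan with an explicitly identified missing step.

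Your proposed route and the paper's route diverge sharply after the standard reduction to FIO estimates. The paper does not stratify by caustic order or single out the Minicozzi--Sogge configurations for special treatment. Instead, its two new ingredients are: (i) a \emph{wave packet density} $\cW(f,B_R^{n+1})$, a Lorentz-rescaling-invariant $L^2$-average that replaces $\|f\|_\infty$ and makes induction on scales possible for the cone (the failure of isotropic parabolic rescaling being precisely what blocks the Schr\"odinger-type argument), and (ii) for $n=3$, a refined $L^2$ estimate exploiting a ``4-broad'' structure on each $R^{1/2}$-ball, combined with a hairbrush incidence bound. Neither of these appears in your outline. Conversely, your ``caustic stratification'' idea, while plausible-sounding, is not something the paper uses or even alludes to.

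There is also a genuine gap in the logic of your plan. The Minicozzi--Sogge examples show that $p_n^+$ is \emph{necessary}; they do not show that the maximally degenerate caustic is the unique obstruction. Proving sharp bounds ``away from the maximal caustic stratum'' and then handling that stratum separately would require, at minimum, a quantitative statement that generic configurations satisfy a \emph{better} estimate than $p_n^+$, together with a mechanism for summing the strata without loss. You have not indicated what inequality would hold off the bad set, nor how the ``lower-dimensional Sobolev embedding'' on the focusing direction would interact with the decoupling loss to produce exactly $p_n^+$. As it stands, this step is a hope rather than an argument, and the paper's quite different machinery (wave packet density, two-ends reduction, refined decoupling plus hairbrush) is what actually closes the induction for $n=3$.
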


Our main result is the following theorem:

\begin{theorem}\label{mainthm}
Conjectures \ref{smoothing-conj-1} and \ref{smoothing-conj-2} are true when $n\geq3$ and $p\geq p(n)$, where 
\begin{equation}
\label{p(n)}
    p(n):=
    \left\{\begin{array}{ll} 
    p_n^+&\textrm{$n$ is odd}\\[1ex]
    2+\frac 8{3n-2-\om(n)}&\textrm{$n$ is even.}
\end{array}
\right.
\end{equation}
Here, $\omega(n)=\frac{4(7n-2)}{3n^2+5n+14}=O(n^{-1})$.

\end{theorem}

Theorem \ref{mainthm} confirms Conjecture \ref{smoothing-conj-2} for all odd dimensions.
Note that $p(n) =p_n^+ +O(n^{-3})$ for all $n$ even.
The even-dimensional case appears to require new ideas and techniques.

\smallskip

\begin{remark}
    {\rm
    
     One may form a stronger conjecture by including the endpoint $\si= \frac1p$.
     For example, it was proved by Heo, Nazarov, and Seeger  \cite{heo2011radial} that Conjecture \ref{smoothing-conj-1} is true when $p\geq 2+\frac{4}{n-3}$ and $\si=\frac{1}{p}$ for $n\ge 4$. 
     It is open whether there are local smoothing estimates at the endpoint $\si=\frac1p$ for dimensions $n=2,3$.
    }
\end{remark}

\medskip

To study Conjecture \ref{smoothing-conj-1}, Wolff \cite{wolff2000local} initiated a program based on decoupling inequalities.
The investigation of decoupling inequalities culminated in Bourgain-Demeter's resolution on the $\ell^2$-decoupling theorem \cite{Bourgain-Demeter}.
As a direct corollary, they showed that Conjecture \ref{smoothing-conj-1} is true when $p>2+\frac{4}{n-1}$, where $2+\frac{4}{n-1}$ is the decoupling exponent for the light cone.
The approach was adapted to general compact Riemannian manifolds by Beltran-Hickman-Sogge \cite{beltran2020variable}, where they proved the same range of $p$ for Conjecture \ref{smoothing-conj-2}.
We refer to \cite{beltran2021sharp} for more historical information.

Conjecture \ref{smoothing-conj-1} was verified when $n=2$ by Guth-Wang-Zhang \cite{guth2020sharp}, however, via a different approach.
The authors there proved a sharp reverse square function estimate, which implies Conjecture \ref{smoothing-conj-1}.
This approach was modified by Gao-Liu-Miao-Xi \cite{gao2023square} to prove Conjecture \ref{smoothing-conj-2} in the same dimensions.

\smallskip

In this paper, we follow Wolff's footsteps and use refined decoupling inequalities to study the local smoothing problem.
Prior to this work, the decoupling range $p>2+\frac{4}{n-1}$ is the best-known result for both Conjecture \ref{smoothing-conj-1} and \ref{smoothing-conj-2} when $n\geq3$. 
Further discussion of our methods will be provided after an introduction to the Fourier integral operator, a standard tool for the local smoothing problem on manifolds.

\bigskip

\subsection{Fourier integral operator (FIO)}

Through a standard parametrix construction of the half wave operator $e^{it\sqrt{-\De_g}}$ (see \cite{beltran2021sharp} for details), the study of Conjecture \ref{smoothing-conj-2} boils down to the analysis of certain Fourier integral operators:
\begin{equation}\label{FIO}
    \cF f(x,t):=\int_{\R^n}e^{i\phi(x,t;\xi)}a(x,t;\xi)\wh f(\xi)\mathrm{d}\xi.
\end{equation}
Here, the symbol $a\in S^{0}(\R^{n+1}\times \R^n)$ is supported in $\{(x,t;\xi):|x|^2+t^2\le 1\}$. 
The phase function $\phi(x,t;\xi)$ is homogeneous of degree $1$ in $\xi$ and smooth away from $\xi=0$. Also, $\phi$ satisfies

\medskip

\noindent
$\bullet$ \textbf{(H1) Non-degeneracy condition.}

\textup{rank} $\partial^2_{x\xi}\phi(x,t;\xi)=n$ for all $(x,t;\xi)\in\supp\ a$.

\medskip
\noindent
$\bullet$ \textbf{(H2) Positive definiteness condition.}

Consider the Gauss map $G: \supp\ a\rightarrow S^n$ by $G(z;\xi):=\frac{G_0(z;\xi)}{|G_0(z;\xi)|}$, where
    \[G_0(z;\xi):=\bigwedge_{j=1}^n\partial_{\xi_j}\partial_{z}\phi(z;\xi). \]
Then for all $(z,\xi_0)\in\supp\ a$,
\begin{equation}
\nonumber
    \partial^2_{\xi\xi}\langle \partial_z \phi(z,\xi),G(z,\xi_0) \rangle|_{\xi=\xi_0} 
\end{equation} 
has rank $n-1$ with $n-1$ positive eigenvalues.

\smallskip

A prototypical example for the phase function is given by $\phi(x,t;\xi)=\langle x,\xi\rangle+t|\xi|$. 
The Fourier integral operator defined using this phase function corresponds to the classical half-wave propagator $\hw f$.
One may also keep in mind another example  $\phi(x,t;\xi)=\langle x,\xi\rangle + t|\xi'|^2/\xi_n$. Here $\xi'=(\xi_1,\dots,\xi_{n-1})$. 

\smallskip

As shown in \cite{beltran2021sharp}, to prove Conjecture \ref{smoothing-conj-2}, it suffices to prove the following conjecture for FIOs satisfying the two conditions \textbf{(H1)} and \textbf{(H2)}.

\begin{conjecture}\label{FIO-conj}
Let $\cF$ be defined in \eqref{FIO}.
Suppose $\cF$ satisfies conditions \textbf{\textup{(H1)}} and \textbf{\textup{(H2)}}.
Then for $p\geq p_n^+$ and $\si>(n-1)(\frac12-\frac1p)-\frac1p$, 
\begin{equation}
\label{FIO-esti}
    \|\cF f\|_{L^p(\R^n\times [1,2])}\lesssim \|f\|_{L^p_\si(\R^n)}.
\end{equation}
\end{conjecture}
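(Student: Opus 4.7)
The plan is to combine a Bourgain-Guth broad/narrow analysis with a $k$-broad $L^p$ estimate for FIOs proved via polynomial partitioning, pushing beyond the plain decoupling range $p > 2 + \frac{4}{n-1}$ that follows from Bourgain-Demeter together with the variable-coefficient adaptation of Beltran-Hickman-Sogge. After a standard Littlewood-Paley reduction and parabolic rescaling, it suffices to prove
\[
    \|\cFl f\|_{L^p(B_\la)} \lesssim \la^{(n-1)(\frac12-\frac1p) - \frac1p + \e}\|f\|_{L^p(\R^n)}
\]
for the frequency-localized operator $\cFl$, with $f$ Fourier-supported near $|\xi|\sim\la$. I would then perform a wave-packet decomposition $f=\sum_T f_T$ adapted to curved tubes of dimensions $\la^{1/2}\times\cdots\times\la^{1/2}\times\la$ whose cores are bicharacteristic trajectories of $\phi$.

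Next I would run a broad/narrow dichotomy at angular scale $K^{-1}$: on each ball of radius roughly $K^2$, partition the frequencies into caps $\tau$ of angular size $K^{-1}$ and declare a point \emph{narrow} when one cap dominates the sum and \emph{broad} when $k$ caps with transverse Gauss images contribute simultaneously. The narrow part is handled by variable-coefficient cone decoupling; the broad part is reduced to establishing a $k$-broad estimate of the form
\[
    \|\cFl f\|_{\BL^p_{k,A}(B_\la)} \lesssim \la^{\alpha_k(p) + \e}\Big(\sum_\tau \|f_\tau\|_{L^p}^p\Big)^{1/p}.
\]

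I would establish this $k$-broad bound by an induction on scales coupled with polynomial partitioning in the spirit of Guth and Guth-Hickman-Iliopoulou. At each stage one either partitions physical space by a polynomial of controlled degree and recurses inside the cells (gaining from the cell count), or restricts to a thin neighborhood of the zero set and applies a lower-dimensional $k'$-broad or decoupling estimate, using tangency/transversality properties derived from H1 and H2. Optimizing $k$ for the cone geometry with its $n-1$ positive Hessian eigenvalues, interpolating the resulting inequality with the trivial $L^2$ and $L^\infty$ bounds, and balancing against the narrow contribution via Bourgain-Guth is what produces the target exponent $p(n) = 2 + \frac{8}{3n-4}$, matching $p_n^+ = 10/3$ when $n=3$.

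The main obstacle is the variable-coefficient nature of the wave packets. Unlike in the extension-operator setting the tubes are curved bicharacteristic segments, so algebraic surfaces arising from polynomial partitioning interact with them in a subtler way, and the tangent/transverse dichotomy must be set up carefully from H1 and H2 rather than taken for granted. Tracking the geometry of curved tubes relative to polynomial hypersurfaces, controlling rapidly decaying tails, and propagating transversality axioms under the induction on scales are the technical pinch points; the case $n=3$ is especially delicate since the target exponent $p(3)=p_3^+$ is sharp and the argument admits essentially no slack.
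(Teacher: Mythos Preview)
Your proposal diverges sharply from the paper and has a genuine gap. First, for cone-type phases the wave packets are not $\lambda^{1/2}\times\cdots\times\lambda^{1/2}\times\lambda$ tubes but anisotropic \emph{planks} of size roughly $1\times R^{1/2}\times\cdots\times R^{1/2}\times R$, with one genuinely short direction normal to the cone; this is precisely what separates the wave case from Schr\"odinger and is why polynomial-partitioning arguments designed for fat tubes do not transfer cleanly. More seriously, you merely assert that broad/narrow plus a $k$-broad polynomial-partitioning estimate ``produces'' $p(n)=2+\tfrac{8}{3n-4}$, with no supporting numerology. The known $k$-broad results for variable-coefficient cone-type FIOs do not reach this exponent, and the paper states explicitly that the decoupling range $p>2+\tfrac{4}{n-1}$ was the previous best for both conjectures; if your outline actually closed at $p(n)$, that would already be a theorem in the literature.

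The paper uses no polynomial partitioning. Its two new ideas are: (i) a \emph{wave packet density} $\mathcal{W}(f,B_R^{n+1})$, a Lorentz-rescaling-invariant supremum of $L^2$-averages over fat planks at all intermediate scales $s\in[R^{-1/2},1]$, which substitutes for $\|f\|_\infty$ in a mixed norm $\|f\|_2^{2/p}\,\mathcal{W}(f,B_R^{n+1})^{1-2/p}$ and finally makes induction on scales work despite the non-isotropic rescaling of the cone; and (ii) a \emph{hairbrush} incidence bound exploiting that light rays through a fixed tube are essentially disjoint, which together with refined decoupling at $q_n=\tfrac{2(n+1)}{n-1}$ and a crude $L^2$ estimate yields $p(n)$ for $n\ge4$. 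For $n=3$ the sharp exponent $10/3$ requires a further \emph{refined $L^2$ estimate}: inside each $R^{1/2}$-ball one covers the set $X$ by $1\times R^{1/2}\times R^{1/2}\times R^{1/2}$-slabs and uses $4$-broadness to pick, slab by slab, a cap transverse to that slab so that the $L^2$ mass on it is controlled by a single-cap contribution. These mechanisms, together with a delicate broad-and-two-ends algorithm to make the incidence hypotheses hold simultaneously, are what actually drive the exponent; none of them appears in your outline.
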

\begin{remark}
\rm
Via a similar argument, to prove Conjecture \ref{smoothing-conj-1}, it suffices to prove \eqref{FIO-esti} with $\cf=e^{it\sqrt{-\De}}$ for the given range of $p$.
\end{remark}

Similarly, to establish Theorem \ref{mainthm}, we only need to prove

\begin{theorem}
\label{FIOthm}
Conjecture \ref{FIO-conj} is true when $p\geq p(n)$, where $p(n)$ is given in \eqref{p(n)}.
\end{theorem}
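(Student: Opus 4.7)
The plan is to follow Wolff's framework: a Littlewood-Paley reduction to single-scale estimates, a wave packet decomposition, and a Bourgain-Guth broad-narrow analysis combined with induction on scales. First, a standard Littlewood-Paley and scaling argument reduces Theorem \ref{FIOthm} to proving, for $f$ with Fourier support in $\{|\xi|\sim R\}$, the single-scale estimate
\begin{equation*}
\|\cF f\|_{L^p(\ZR^n\times[1,2])}\lesssim_\epsilon R^{(n-1)(1/2-1/p)-1/p+\epsilon}\|f\|_{L^p(\ZR^n)}.
\end{equation*}
One then decomposes $\cF f=\sum_T \cF f_T$ into curved wave packets $T$ of thickness $R^{-1/2}$ and length $1$, whose directions $\theta_T$ lie in $R^{-1/2}$-caps of the sphere, the curvature being governed by the Gauss map of $\phi$.

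Fix a large $K=K(\epsilon)$ and partition the $\xi$-annulus into $K^{-1}$-caps $\tau$, writing $\cF f=\sum_\tau \cF f_\tau$. At each point $(x,t)$ apply a Bourgain-Guth broad-narrow dichotomy: either a bounded number of caps dominate the sum (narrow case), or caps from $k$ transverse clusters contribute comparably (broad case). The narrow contribution is handled by parabolic rescaling of $\phi$ to reduce to scale $R/K$, followed by the inductive hypothesis together with Bourgain-Demeter $\ell^2$-decoupling to recombine the caps; this contribution is acceptable precisely for $p\geq p(n)$.

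The new input is a $k$-broad $L^p$-inequality for $\cF$ at exponent $p(n)$, roughly of the form
\begin{equation*}
\|\cF f\|_{\BL^p_{k,A}(\ZR^n\times[1,2])}\lesssim_\epsilon R^{(n-1)(1/2-1/p)-1/p+\epsilon}\Big(\sum_\tau \|f_\tau\|_{L^p}^p\Big)^{1/p}.
\end{equation*}
I would prove this by combining Guth's polynomial partitioning, adapted to variable-coefficient FIOs as in \cite{beltran2021sharp, beltran2020variable}, with a high-low frequency decomposition in the spirit of Guth-Wang-Zhang and Gao-Liu-Miao-Xi. The positive definiteness condition \textbf{(H2)} ensures that distinct caps correspond to transverse families of curved tubes, so that one may exploit a reverse square function inequality on the low-frequency component (an $L^2$-gain via orthogonality) and a Kakeya-type incidence bound on the high-frequency component (a gain from transversality of tubes). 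Optimizing $K$ and iterating closes the induction and yields the stated range of $p$.

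The principal obstacle is establishing the $k$-broad estimate in the variable-coefficient setting. For the Euclidean light cone, the argument benefits from translation invariance and the explicit geometry of straight light rays; here the wave packets are curved tubes whose transversality and the orthogonality required by the high-low decomposition must be verified uniformly in the base point using only \textbf{(H1)} and \textbf{(H2)}. A secondary difficulty is managing the $R^\epsilon$ losses accumulated through the rescaling and induction, ensuring they remain sub-polynomial so that they can be absorbed into the final $\epsilon$ loss.
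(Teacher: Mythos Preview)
Your proposal has a genuine gap at the heart of the argument: the broad estimate. You assert a $k$-broad $L^p$ inequality at exponent $p(n)$ and say you would prove it by ``polynomial partitioning \ldots\ with a high-low frequency decomposition in the spirit of Guth--Wang--Zhang,'' but neither ingredient is known to yield $p(n)$ for $n\geq 3$. The polynomial partitioning machinery of \cite{beltran2020variable,beltran2021sharp} recovers only the decoupling range $p>2+\frac{4}{n-1}$ for local smoothing, and the high-low/reverse square function method of \cite{guth2020sharp,gao2023square} is a two-dimensional argument whose extension to $n\geq 3$ is exactly what is at stake here. Writing down a broad-narrow skeleton and citing these two lines of work does not explain how to cross below the decoupling exponent; that is the entire content of the theorem.

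The paper's proof is quite different and rests on three ideas you have not touched. First, because Lorentz rescaling is non-isotropic, naive induction on scales fails for wave equations; the paper introduces a \emph{wave packet density} $\cW(f,B_R^{n+1})$ and replaces $\|f\|_p$ by the mixed norm $\|f\|_2^{2/p}\cW(f,B_R^{n+1})^{1-2/p}$, which is invariant under rescaling and behaves correctly when passing to smaller balls (Lemmas \ref{Linfty}, \ref{leminduct}). Second, the broad case is not handled by polynomial partitioning but by a \emph{hairbrush} incidence argument specific to the light cone (Lemma \ref{lemhairbrush}): planks transverse to a fixed stem are essentially disjoint away from the stem, which combined with a two-ends reduction yields the multiplicity bound $\mu\lesssim(\#\ZT)^{1/2}l^{-1/2}$. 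This is interpolated against a refined decoupling inequality and an $L^2$ estimate to reach $p=2+\frac{8}{3n-4}$. Third, to hit the sharp endpoint $p(3)=\frac{10}{3}$ in $3+1$ dimensions, the paper proves a \emph{refined $L^2$ estimate} using $4$-broadness and a $\rho$-regular covering of each $R^{1/2}$-ball by thin slabs (Lemma \ref{regular-lem}), improving both the $L^2$ bound and the hairbrush incidence. A nontrivial iterative algorithm (Section \ref{section-algorithm}) is also needed to make the broad and two-ends conditions hold simultaneously. None of these mechanisms appears in your outline.
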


\bigskip

\subsection{Main ideas}

To explain our ideas, we take $\cf=e^{it\sqrt{-\De}}$ in Conjecture \ref{FIO-conj}.
The same ideas apply to general FIOs.

Since the kernel of the operator $e^{it\sqrt{-\De}}$ is essentially the Fourier inversion of the surface measure of the cone $\{(\xi,|\xi|): \xi\in\R^n\}$, it is natural to connect the local smoothing problem with Fourier restriction theory.
This connection was shown to be successful by Rogers \cite{rogers2008local} in the study of the Schr\"odinger equations (with the half-wave operator $e^{it\sqrt{-\De}}$ being replaced by the Schr\"odinger operator $e^{it\De}$).
In fact, Rogers showed that the local smoothing conjecture for Schr\"odinger equations is equivalent to the Fourier restriction conjecture for paraboloid.
Moreover, since the tools developed for the parabolic restriction conjecture are versatile and apply equally well to hypersurfaces with a positive definite second fundamental form, it is reasonable to believe that the same ideas will extend to the local smoothing problem of the same type.
Successful examples can be found in \cite{gan2022note}, where fractional Schr\"odinger equations with the operators $e^{it(-\De)^{\al/2}},\, \al>1,$ are investigated.

\smallskip

However, there is a significant difference when one attempts to relate the local smoothing problem for wave equations to the conic restriction problem.
Indeed, the local smoothing problem is generally believed to be strictly harder than the conic and parabolic restriction problem (see \cite{Tao-B-R-restriction}).
For example, the exponent $2+\frac{4}{n-1}$, also known as the Stein-Tomas exponent, was established for the restriction problem back in the 1970s. 
In contrast, for the local smoothing problem, this exponent was only obtained much later by Bourgain and Demeter in 2014.

One key difference between paraboloid and light cone is that the former possesses a better symmetry: the natural rescaling for paraboloid, the parabolic rescaling (see \cite{guth2016restriction}), is an isotropic rescaling when restricting to the initial data, whereas the natural rescaling for the light cone, the Lorentz rescaling (see Section \ref{Lorentz-rescaling-section}), is non-isotropic.
As a consequence, although induction is well-established in the study of the parabolic restriction problem and hence the related local smoothing problem for Schr\"odinger equations, there seems to be no satisfactory induction on scales scheme for wave equations.

\medskip

In this paper, we propose a possible candidate for induction on scales for the wave equations.
To better explain our idea, we will compare the half-wave operator $e^{it\sqrt{-\De}}$ with the Schr\"odinger operator $e^{it\De}$.
After standard reductions, the desired local smoothing estimates for the two operators and the parabolic restriction problem are reduced to the following local forms: 

\medskip

\noindent
\textit{Local smoothing estimate for wave equations}: 
For $\supp\wh f\subset B^n_1\setminus B^n_{1/2}$,
\begin{equation}\label{localwave}
    \|\hw f\|_{L^p(B^{n+1}_R)}\lessapprox R^{(n-1)(\frac12-\frac1p)}\|f\|_{p}.
\end{equation}  
\smallskip

\noindent
\textit{Local smoothing estimate for Schr\"odinger equations}:
For $\supp\wh f\subset B^{n}_1$,
\begin{equation}\label{localschr}
    \|e^{it\Delta} f\|_{L^p(B^{n+1}_R)}\lessapprox R^{n(\frac12-\frac1p)}\|f\|_{p}.
\end{equation} 
\smallskip

\noindent
\textit{Fourier restriction estimate for paraboloid}:
For $\supp\wh f\subset B^{n}_1$,
\begin{equation}\label{restriction}
    \|e^{it\Delta} f\|_{L^p(B^{n+1}_R)}\lessapprox \|\wh f\|_{p}.
\end{equation} 

\medskip

To study the restriction estimate \eqref{restriction} via induction on scales, Guth \cite{guth2016restriction} introduced the mixed norm 
\begin{equation}\label{mimic}
    \|\wh f\|_{2}^{\frac2p} \sup_{\theta}\|\wh f_\theta\|_{L^2_{avg}(\theta)}^{1-\frac2p}
\end{equation}
in place of the $L^p$-norm $\|\wh f\|_{L^p(B^{n}_1)}$, where $\|\wh f_\theta\|_{L^2_{avg}(\theta)}^2=|\theta|^{-1}\|\wh f_\theta\|_{L^2(\theta)}^2$, and $\theta$ ranges over $R^{-1/2}$-balls in $B^n_1$.
The mixed norm \eqref{mimic} can be viewed as the $L^p$-norm by  interpolation, in which the quantity $\sup_{\theta}\|\wh f_\theta\|_{L^2_{avg}(\theta)}$ plays the roll of $\|\wh f\|_{L^\infty}$. 
It is crucial that $\sup_{\theta}\|\wh f_\theta\|_{L^2_{avg}(\theta)}$ is an $L^2$-average.
If this is simply replaced by $\|\wh f\|_{L^\infty}$, inductions will generally fail.

\smallskip

Given the connection between the parabolic restriction conjecture and the local smoothing conjecture for Schr\"odinger equations, it is thus natural to consider a suitable mixed norm similar to \eqref{mimic} to investigate \eqref{localschr}.
One subtlety is that the right-hand side of \eqref{localschr} lies in the physical space, while the right-hand side of \eqref{restriction} lies in the frequency space.
This leads to a different mixed norm, which may be chosen as follows:
    \[R^{n(\frac12-\frac1p)}\|f\|_{2}^{\frac2p}\sup_{B}\|f\|_{L^2_{avg}(B)}^{1-\frac2p}, \]
where $B$ ranges over $R^{1/2}$-balls in $B^n_R$.
Using this mixed norm, it is possible that the recent framework developed by Wang and the second author in \cite{wang2024restriction} for the restriction conjecture can lead to parallel results for local smoothing estimate for Schr\"odinger equations \eqref{localschr}.

Based on the above discussion, it seems natural to study the local smoothing estimate for wave equations \eqref{localwave} by considering a similar mixed norm:
\begin{equation}\label{modify}
    R^{(n-1)(\frac12-\frac1p)}\|f\|_{2}^{\frac2p}\sup_{B}\|f\|_{L^2_{avg}(B)}^{1-\frac2p},
\end{equation} 
where $B$ ranges over $R^{1/2}$-balls in $B^n_R$.
However, this naive choice of the mixed norm does not work, since the form in \eqref{modify} is not invariant under Lorentz rescaling, a non-isotropic one.

\medskip

Inspired by the work \cite{guth2020sharp}, we introduce a mixed norm 
\begin{equation}
\label{mixed-norm-wpd}
    R^{(n-1)(\frac12-\frac1p)}\|f\|_{2}^{\frac2p}\cW( f,B_R^{n+1})^{1-\frac2p},
\end{equation}
where the \textit{wave packet density} $\cW(f, B_R^{n+1})$ is a rescaling-invariant form of an $L^2$ average that serves as an analogue of $\|f\|_\infty$.
The precise definition of $\cW(f,B_R^{n+1})$ is given in Definition \ref{wpd-def}.
Roughly speaking, it is the supreme of all the $L^2$-average over conic wave packets and their rescaled counterparts under Lorentz rescaling.

\smallskip

The introduction of the wave packet density is one of the main ideas of this work.
With the mixed norm \eqref{mixed-norm-wpd} in hand, induction on scales becomes possible, and we would like to adapt the framework developed in \cite{wang2024restriction} to study \eqref{localwave}.
After the standard wave packet decomposition and several steps of dyadic pigeonholing, our goal is to establish the following: 
Suppose $f=\sum_{T\in\ZT}f_T$ is a wave packet decomposition of $f$, where $\ZT$ is a family of $1\times R^{1/2}\times\cdots\times R^{1/2}\times R$-planks.
Given a union of unit balls $X\subset B^{n+1}_R$ such that each unit ball $B\subset X$ intersects with $\lesssim \mu$ planks in $\ZT$, prove that when $p=p(n)$,
\begin{equation}\label{wave-mixed-norm}
    \|\hw f\|_{L^p(X)}\lessapprox R^{(n-1)(\frac12-\frac1p)}\|f\|_{2}^{\frac2p}\cW( f,B_R^{n+1})^{1-\frac2p}.
\end{equation}  
As in \cite{wang2024restriction}, we aim to
\begin{enumerate}
    \item Prove a refined decoupling inequality for the cone to bound $\|\hw f\|_{L^{q_n}(X)}$ with $q_n=\frac{2(n+1)}{n-1}$ being the decoupling exponent.
    \item Explore orthogonality in the $L^2$ space.
    That is, estimate $\|\hw f\|_{L^2(X)}$.
\end{enumerate}
It is fairly standard to establish item (1), and we prove the refined decoupling inequality for general FIOs in the appendix.
However, it is not clear what the optimal $L^2$-orthogonality for $\|\hw f\|_{L^2(X)}$ is, as $X$ is a union of unit balls. 
One simple approach is to enlarge $X$ to its $R^{1/2}$-neighborhood and then use $L^2$ orthogonality on the $R^{1/2}$-balls.
As we will see below, this already allows us to obtain some useful $L^2$ estimates.

\smallskip

We introduce a parameter $l$ that measures how many $R^{1/2}$-balls interact with a plank $T$.
Specifically, for each plank $T\in\ZT$, define a shading $Y(T)=X\cap T$, and assume that $Y(T)$ intersects with $\sim l$ many $R^{1/2}$-balls in $\ZR^{n+1}$.
Thus, by using $L^2$-orthogonality inside each $R^{1/2}$-ball in $N_{R^{1/2}}(X)$, we have the following $L^2$-estimate
\begin{equation}
\label{L2-1}
    \|\hw f\|_{L^2(X)}^2\lesssim \|\hw f\|_{L^2(N_{R^{1/2}}(X))}^2\lesssim(lR^{1/2})\|f\|_2^2.
\end{equation}

Next, at the decoupling endpoint $p=\frac{2(n+1)}{n-1}$, we use the refined decoupling inequality for the cone to have
\begin{equation}
\nonumber
    \|\hw f\|_{L^p(X)}^p\lessapprox \mu^{\frac{2}{n-1}}\sum_{T\in\ZT}\|\hw f_{T}\|_p^p,
\end{equation}
After normalization, we assume that $|\hw f_{T}|$ is essentially the characteristic function of the $1\times R^{1/2}\times\cdots\times R^{1/2}\times R$-plank $T$.
Thus, when $p=\frac{2(n+1)}{n-1}$,
\begin{equation}
\label{after-dec-1}
    \|\hw f\|_{L^p(X)}^p\lessapprox \mu^{\frac{2}{n-1}}(R^{\frac{n+1}{2}}\#\ZT).
\end{equation}
It remains to find an upper bound for $\mu^{\frac{2}{n-1}}\#\ZT$, which is an incidence problem between the unit balls in $X$ and the $1\times R^{1/2}\times\cdots\times R^{1/2}\times R$-planks in $\ZT$.

Now we are going to use the power of induction on scales.
Broadly speaking, induction on scales allows us to impose a two-ends condition (which is a non-concentration condition) on the shading $Y(T)$. 
Also, the wave packet density $\cW(f,B_R^{n})$ imposes a non-concentration condition on the collection of planks $\ZT$ (see Remark \ref{wpd-cwa-rmk-1}), which, in particular, implies $\cW(f,B_R^{n+1})^2\gtrsim R^{-\frac{n+1}{4}}(\#\ZT)^{1/2}$.
Therefore, by using the hairbrush structure (see Figure \ref{hairbrush1}) and a ``2-broad" assumption on each unit ball in $X$, we have
\begin{equation}
\label{mu-intro}
    \mu\lesssim l^{-1/2}(\#\ZT)^{1/2}\lesssim l^{-1/2} R^\frac{n+1}{4}\cW(f,B_R^{n+1})^2.
\end{equation}
We remark that the hairbrush structure for the cone has the following special property: all the light rays intersecting a fixed line are essentially disjoint.

\smallskip

Finally, we plug the incidence bound back to  \eqref{after-dec-1} to get when $p=\frac{2(n+1)}{n-1}$,
\begin{equation}
\label{L-decoupling}
    \|\hw f\|_{L^p(X)}^p\lessapprox  R^2(R^{-\frac{n-3}{2(n-1)}}l^{-\frac{1}{n-1}}) \|f\|_2^2\,\cW( f,B_R^{n+1})^{p-2}.
\end{equation}
Interpolate this with \eqref{L2-1} and use the fact that $l\lesssim R^{1/2}$ to conclude \eqref{localwave} for $p=2+\frac{8}{3n-4}$, which already is a strong bound.

\begin{remark}
\rm
The wave packet density $\cW(f,B_R^{n})$ is quite versatile.
Modifications to its definition will allow us to impose stronger non-concentration conditions on the set of planks $\ZT$.
We refer the readers to Remark \ref{wpd-cwa-rmk-2} for further discussions.
A similar object is also considered recently in \cite{GWX}.
\end{remark}

\bigskip 

\subsection{Sharp exponents in odd dimensions}

To further extend the admissible range of $p$, we refine the aforementioned incidence and $L^2$ estimates.
Together with some $k$-broad estimates (which is also used in a recent work \cite{FHL}), we achieve a complete solution to the problem in odd dimensions.
Let us briefly outline the method here. 
We refer to Section \ref{section-sketch} for a sketch of the proof.

By the standard broad-narrow argument, the inequality \eqref{wave-mixed-norm} at $p = p(n)$ can be reduced to a $k_n$-broad norm, where $k_n$ is given by
\begin{equation}
\label{defofkn}
    k_n=
    \left\{
    \begin{array}{ll} 
    \tfrac{n+5}2; &\textrm{$n\ge 3$ is odd},\\[1ex]
    \f{n+4}{2};& \textrm{$n\ge 2$ is even}.%\\
%2&n=4
    \end{array}
    \right.
\end{equation}
By pigeonholing, we may assume that $X$ can be covered by a collection of $R^{1/2}$-balls $Q$, each of which 
satisfies
\begin{equation}
    \big|X\cap Q\big| \sim \sigma, \text{ and }\, \#\mathbb T(Q) \sim m \,,
\end{equation}
where $\mathbb T(Q):= \{T\in\mathbb T: X\cap Q\cap T\neq \emptyset\}$, and $m,\sigma$ are some dyadic numbers.

To obtain a refined incidence estimate, on the one hand, using a hairbrush argument, we have (note that $m\geq\mu$)
\begin{equation}
\label{incidence}
    |X| \gtrsim l m \lambda.
\end{equation}
On the other hand, a double-counting argument yields
\begin{equation}
\label{dbcon}
    \mu |X| \sim \lambda (\#\mathbb{T}).
\end{equation}
Combining\eqref{incidence} and \eqref{dbcon}, we obtain at a refinement of \eqref{mu-intro} given by 
\begin{equation}
\label{mu-bdd}
    \mu \lesssim l^{-1} m^{-1} (\#\mathbb{T}).
\end{equation}

Clearly, the larger $m$ is, the stronger \eqref{mu-bdd} becomes, while the refinement of the $L^2$ estimates depends inversely on $m$: the smaller $m$ is, the better $L^2$ estimate we can obtain.

\smallskip 

Note that inside each $R^{1/2}$ ball $Q$, $T\cap Q$ is a slab of dimensions $1\times R^{1/2}\times\cdots\times R^{1/2}$ for each $T\in\ZT(Q)$.
Let $\ZT(B)=\{T\in\ZT(Q):B\cap T\not=\varnothing\}$ for each unit ball $B\subset X\cap Q$.
From the broad-narrow reduction, we know that the set of planks $\ZT(B)$ is $k_n$-broad; that is, the directions of $\ZT$ are $k_n$-broad.
One might expect this to imply that the set of slabs $\{T\cap Q: T\in\ZT(B)\}$ is also $k_n$-broad, meaning that the normal directions of $\ZT(B)$ are $k_n$-broad.
If this expectation holds, a favorable lower bound for 
$m$ can be obtained using a $k_n$-linear estimate for the slabs $\{T\cap Q: T\in\ZT(Q)\}$.
This is indeed the case when the FIO $\cf$ is the half-wave operator ${\hw}$; however, it fails for general FIOs.
Our corresponding treatment is dimension-sensitive, and we outline the corresponding arguments separately.
The cases $n=3$ and $n=5$ turn out to be the most interesting.

\medskip

\subsubsection{When $n=3$}

We use the approach developed in \cite{Li-Wu} to prove a refined $L^2$ estimate in place of \eqref{L2-1}.
Via the covering lemma, Lemma \ref{regular-lem}, there is a set of $1\times R^{1/2}\times R^{1/2}\times R^{1/2}$-slabs $\ZS(Q)$ such that the following is true: 
\begin{enumerate}
    \item $\ZS(Q)$ essentially forms a cover of $X\cap Q$.
    \item $\#\ZS(Q)\sim \si/\rho$.
    \item For any arbitrary $1\times R^{1/2}\times R^{1/2}\times R^{1/2}$-slab $S\subset Q$, $|S\cap X|\lesssim\rho$.
\end{enumerate}
Let $f_Q=\sum_{T\in\ZT_Q}f_T$.
The key observation is that if each $B\subset X\cap Q$ is ``4-broad" (note that $k_3=4)$, then we have the $L^2$ estimate 
\begin{equation}
\nonumber
    \|\hw f\|_{L^2(X\cap Q)}^2\lesssim\sum_{S\in\ZS(Q)}\|\hw f\|_{L^2(X\cap S)}^2\lesssim\#\ZS(Q)\|f_Q\|_2^2\sim(\si/\rho)\|f_Q\|_2^2.
\end{equation}
Sum up all $R^{1/2}$-balls to obtain the refined $L^2$ estimate
\begin{equation}
\label{refined-L2-1}
    \|\hw f\|_{L^2(X)}^2\lesssim l\si/\rho\|f\|_2^2.
\end{equation}
Note that \eqref{refined-L2-1} is stronger than \eqref{L2-1} when $\si/\rho$ is smaller than $R^{1/2}$.

\smallskip

Next, observe that $\{T\cap Q\}_{T\in\ZT(Q)}$ are distinct $1\times R^{1/2}\times R^{1/2}\times R^{1/2}$-slabs. 
Since $|S\cap X|\lesssim\rho$ for any arbitrary $1\times R^{1/2}\times R^{1/2}\times R^{1/2}$-slab $S\subset Q$, by double counting the unit balls in $X\cap Q$ and planks in $\ZT(Q)$, we have $\#\ZT(Q)\sim m\gtrsim \mu(\si/\rho)$.
Put this back to \eqref{mu-bdd} so that
\begin{equation}
\nonumber
    \mu\lesssim \rho^{1/2}(l\si)^{-1/2}(\#\ZT)^{1/2}\lesssim\rho^{1/2}(l\si)^{-1/2}R\cdot\cW(f,B_R^{4})^2.
\end{equation}
Plugging this back to \eqref{after-dec-1} and interpolating with \eqref{refined-L2-1}, we prove Theorem \ref{FIOthm} when $n=3$.
A sketch of our argument will be given in Section \ref{section-sketch-n=3}.

\medskip

\subsubsection{When $n=5$}

Although the slabs $\{T \cap Q : T \in \ZT(Q)\}$ do not exhibit strong $5$-broadness (recall that $k_5 = 5$), they do satisfy a weaker $3$-broadness property.
Thus, we can use a trilinear estimate for these slabs and obtain (with $n=5, k=3$)
\begin{equation}
\nonumber
    \#\mathbb{T}(Q) \sim m \gtrsim \mu \left( \frac{\sigma}{R^{\frac{n+1-k}{2}}} \right)^{\frac{1}{k}},
\end{equation}
which, together with \eqref{mu-bdd}, gives
\begin{equation}
\label{mu-bdd-2}
    \mu \lesssim l^{-1/2}  (\#\mathbb{T})^{1/2}\Big( \frac{R^{3/2}}{\sigma} \Big)^{1/6} .
\end{equation}

By locally using the $k_n$-broad estimate established in \cite{Schippa} inside each $R^{1/2}$ ball $Q$, we can refine \eqref{L2-1} as (with $n=5$)
\begin{equation}
\label{l2-intro-2}
     \|\sum_{T\in\ZT}\hw f_T\|_{L^2(X)}\lesssim l^{1/2}\min (R^{1/4},\si^{\frac{1}{n+k_n+1}})\|f\|_{L^2}.
\end{equation}
Put \eqref{mu-bdd-2} back to \eqref{after-dec-1} to obtain an improved $L^{\frac{2(n+1)}{n-1}}$ estimate.
The desired estimates at $p(n)$ are then obtained by interpolating between this $L^{\frac{2(n+1)}{n-1}}$ estimate, the $L^2$ estimate \eqref{l2-intro-2}, and $k_n$-broad estimate by \cite{Schippa} in the $L^{\frac{2(n+k_n+1)}{n+k_n-1}}$ space.
See Section \ref{section-sketch-ngeq5} for a sketch of its proof.

\medskip

\subsubsection{When $n\geq7$}

In this case, no refinement on incidence or $L^2$ estimates is needed.
The desired estimates at $p(n)$ are then obtained by interpolating between the $L^{\frac{2(n+1)}{n-1}}$ estimate \eqref{L-decoupling}, the $L^2$ estimate \eqref{L2-1}, and $k_n$-broad estimate  in the $L^{\frac{2(n+k_n+1)}{n+k_n-1}}$ space.

\bigskip

\subsection{A broad \& two-ends algorithm}

We conclude the introduction with a comment on a technical difficulty encountered during the proof.
Let $\hw f=\sum_{T\in\ZT}\hw f_T$ be the wave packet decomposition.
Via a standard broad-narrow reduction, it suffices to consider the ``broad" norm $\|\sum_{T\in\ZT}\hw f_T\|_{\BL^p(X)}$.
That is, for each unit ball $B\subset X$, $|\sum_{T\in\ZT}\hw f_T|\Id_B$ is ``broad", which essentially means that its contribution comes from groups of wave packets with quantitatively linearly independent directions.
To utilize the ``broad" information and the two-ends condition on the shading $Y(T)=X\cap T$, we need the following two conditions to hold simultaneously:
\begin{enumerate}
    \item For each unit ball $B\subset X$, $|\sum_{T\in\ZT}\hw f_T|\Id_B$ is broad.
    \item For each plank $T\in\ZT$, $Y(T)=X\cap T$ satisfies a two-ends condition.
\end{enumerate}

It turns out that achieving the two conditions requires considerable effort, mostly because the sum in $|\sum_{T\in\ZT}\hw f_T|\Id_B$ is a sum of oscillatory functions.
To compare, if we were to consider $\sum_{T\in\ZT}|\hw f_T|\Id_B$, where oscillation is removed, then several steps of dyadic pigeonholing would suffice to ensure the two required conditions hold simultaneously.
When there is oscillation, as is the case here, we essentially prove that if the two conditions do not hold simultaneously, then the total incidence $\sum_{B\subset X}\#\{T\in\ZT:T\cap B\not=\varnothing\}$ must decrease by a factor of $R^\ka$ for some absolute $\ka>0$.
As a result, we develop an algorithm to show that the two conditions must hold simultaneously after finite steps of refinement.
See Section \ref{section-algorithm}, Remark \ref{algorithm-remark}, and Section \ref{section-iteration} for details.

\bigskip

\subsection{Structure of the paper}
In Section 2, we give a sketch of the proof of our main result.
Section 3 discusses preliminaries, followed by the introduction of wave packet density in Section 4.
Section 5 contains some geometric results.
In Section 6, we introduce an algorithm that, after being iterated in Section 7, helps us realize the broad \& two-ends reduction on our operator.
Finally, we prove the main theorem in Sections 8 and 9.  
The appendix includes a proof of the refined decoupling theorem for the cone in the variable coefficient setting.

\bigskip

\noindent {\bf Notation:} 
Throughout the paper, we use $\# E$ to denote the cardinality of a finite set.
If $\ce$ is a family of sets in $\ZR^n$, we use $\cup_\ce$ to denote $\cup_{E\in\ce}E$.
For $A,B\geq 0$, we use $A\lesssim B$ to mean $A\leq CB$ for a (big) constant $C$, and use $A\sim B$ to mean $A\lesssim B$ and $B\lesssim A$.
For a given $R>1$, we use $ A \lessapprox B$ to denote $A\leq c_\eta R^{\eta} B$ for all $\eta>0$.
For two finite sets $E,F$, we say $E$ is a {\bf refinement} of $F$, if $E\subset F$ and  $\#E\gtrapprox \#F$.
We use $\mathrm{RapDec}(R)$ to denote the quantity such that for all $R \geq 1$ and all $N\ge 1$,
    \[\mathrm{RapDec}(R) \leq C_{N}R^{-N}. \]
We use $B^{m}_r(z)$ to denote the ball of radius $r$ centered at $z$ in $\R^m$. When the center is not specified, $B^m_r$ typically refers to the ball of radius $r$ centered at the origin, unless stated otherwise.
We use $N_r(X)$ to denote the $r$-neighborhood of $X$.

\bigskip

\noindent {\bf Choice of parameters:} $0<\e<1$, $R\in[1,\la^{1-\e}]$, $K=R^{\e^{50}}, \Kc=R^{\e^{100}}, \ka=R^{\e^{500}}, \de=\e^{1000}$. 
Under such a choice, we have
\[ R^\de\ll \kappa\ll \Kc\ll K\ll R\ll\la. \]

\bigskip

\noindent
{\bf Acknowledgment.} Danqing He is supported by National Key R$\&$D Program of China (No.2021YFA1002500), NNSF of China (No.12322105), Natural Science Foundation of Shanghai (No.23QA1400300), and the New Cornerstone Science Foundation.
Xiaochun Li is partially supported by NSF2350101.
Shukun Wu is partially supported by NSF2453583 and would like to express his gratitude to Chuanwei Gao for bringing a useful reference to his attention.

\bigskip

\section{A sketch of the proof for Theorem \ref{FIOthm} for odd \texorpdfstring{$n$}{}}
\label{section-sketch}

In this section, we give a sketch of our proof of Theorem \ref{FIOthm}, mostly focusing on the numerology.
For simplicity, we take $\cf=e^{it\sqrt{-\De}}$ in \eqref{FIO-esti}.
After standard global-to-local reductions, it suffices to prove that for $p=p(n)$ and a function $f$ defined on $\ZR^n$ with $\supp\wh f\subset B^n_1\setminus B^n_{1/2}$,
\begin{equation}
\nonumber
    \|\hw f\|_{L^p(B^{n+1}_R)}\lessapprox R^{(n-1)(\frac12-\frac1p)}\|f\|_{p}.
\end{equation}
Let $f=\sum_{T\in\ZT}f_T$ be the wave packet decomposition, where $\ZT$ is a family of $1\times R^{1/2}\times\cdots\times R^{1/2}\times R$-planks.
Let $q_n=\frac{2(n+1)}{n-1}$ be the decoupling exponent.
By dyadic pigeonholing and rescaling, we can assume each wave packet $\hw f_T$ has amplitude $\sim 1$, and hence $\|\hw f_T\|_{L^{q_n}(B_R)}^{q_n}\sim R^{\frac{n+1}{2}}$ for all $T\in\ZT$.
For the purpose of induction, we replace the $L^p$-norm $\|f\|_{p}$ by a mixed norm $\|f\|_{2}^{\frac2p}\cW( f,B_R^{n+1})^{1-\frac2p}$. 
Roughly speaking, the wave packet density $\cW( f,B_R^{n+1})$ is defined by 
    \[ \cW( f,B_R^{n+1}):=\sup_{U}\bigg(\frac{1}{|U|}\int_{\R^{n}} \sum_{T\in\T[U]}|\hw f_T|^2\bigg)^{1/2}. \]
Here, $U$ ranges over all $Rs^2\times Rs\times \dots\times Rs\times R$-planks where $s$ ranges over dyadic numbers in $[R^{-1/2},1]$. 
We also require the longest direction of $U$ points to a light ray direction, and the shortest direction of $U$ points to the normal direction of the cone at that light ray. 
$\T[U]$ is the set of planks contained in $U$ that share the same direction as $U$.
We remark that the $U$ defined here is a higher-dimensional generalization of the one in \cite{guth2020sharp}.

\smallskip

It suffices to show that when $p=p(n)$,
\begin{equation}\label{localwave-2}
   \big \|\sum_{T\in\ZT}\hw f_T\big\|_{L^p(B^{n+1}_R)}^p\lessapprox R^{(n-1)(\frac p2-1)}\|f\|_{2}^{2}\,\cW( f,B_R^{n+1})^{p-2}.
\end{equation}
By testing with $U=T$ and $U=B_R^{n+1}$, we have the following lower bound for $\cW(f,B_R^{n+1})$: 
\begin{equation}
\label{wpd-1}
    \cW(f,B_R^{n+1})^2\gtrsim \max\{ 1, R^{-\frac{n+1}{2}}(\#\T)\}\gtrsim R^{-\frac{n+1}{4}}(\#\ZT)^{1/2}.
\end{equation}

We also record the refined decoupling inequality for cones, whose variable coefficient version is proved in Theorem \ref{refdecthm}.

\begin{theorem}
\label{refdecthm0}      
Suppose we have the wave packet decomposition for $\hw f$ in $B_R^{n+1}$:
    \[\hw f=\sum_{T\in \W} \hw f_T. \]
Assume that $\|f_T\|_{L^p}$ are about the same for all $T\in\W$. Let $Y$ be a disjoint union of unit balls in $B_R^{n+1}$, each of which intersects $\le M$ many $T\in \W$. Then for $2\le p\le \frac{2(n+1)}{n-1}$ and any $\e>0$,
\begin{equation}
   \big \|\hw f\|_{L^p(Y)}\lesssim R^\e M^{\frac12-\frac1p}\Big(\sum_{T\in\W} \|\hw f_T\big\|_{L^p(w_{B_R^{n+1} })}^p\Big)^{1/p}. 
\end{equation}
\end{theorem}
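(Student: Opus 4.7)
The plan is to establish the estimate at the decoupling endpoint $p=q_n:=\frac{2(n+1)}{n-1}$ and interpolate with the trivial $L^2$ bound to cover the full range $2\le p\le q_n$. At $p=2$ the factor $M^{1/2-1/p}$ is trivial and the estimate reduces to approximate $L^2$-orthogonality of the wave packets: Plancherel together with the approximate disjointness of their frequency caps gives
\[\|\hw f\|_{L^2(Y)}^2\le \|\hw f\|_{L^2(w_{B_R^{n+1}})}^2 \lesssim \sum_{T\in\W} \|\hw f_T\|_{L^2(w_{B_R^{n+1}})}^2.\]
Standard interpolation between this and the $p=q_n$ endpoint then delivers the remaining range.

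At $p=q_n$, the strategy has four steps. First, cover $Y$ by a finitely overlapping family of $R^{1/2}$-balls $B$ and apply the variable coefficient $\ell^2$-decoupling for the cone on each $B$; this is the FIO analogue of Bourgain--Demeter decoupling established by Beltran--Hickman--Sogge, and yields
\[\|\hw f\|_{L^{q_n}(w_B)}\lesssim R^\e\Big(\sum_\theta \|\hw f_\theta\|_{L^{q_n}(w_B)}^2\Big)^{1/2},\]
where $\theta$ ranges over $R^{-1/2}$-caps on the cone and $\hw f_\theta=\sum_{T\in\theta}\hw f_T$. Second, within each cap $\theta$ the wave packets $\hw f_T$ for $T\in\theta$ are spatial translates with essentially disjoint supports, so $\|\hw f_\theta\|_{L^{q_n}(w_B)}^{q_n}\sim\sum_{T\in\theta,\,T\cap B\ne\varnothing}\|\hw f_T\|_{L^{q_n}(w_B)}^{q_n}$. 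Third, pass to the unit-ball scale using the locally constant property of each $\hw f_T$ on scale $1$: because the half-wave planks have width exactly $1$ in the cone-conormal direction, at most one plank per cap can meet a given unit ball $B_1\subset Y$, so the number of caps contributing to $B_1$ coincides with the number of planks through $B_1$, which is $\le M$ by hypothesis. Fourth, apply H\"older passing from $\ell^2$ to $\ell^{q_n}$ with this count to extract the factor $M^{1/2-1/q_n}$, sum over the unit balls in $Y$, and assemble the plank contributions over all $B$ to arrive at
\[\|\hw f\|_{L^{q_n}(Y)}^{q_n}\lesssim R^{\e q_n}\,M^{q_n/2-1}\sum_{T\in\W}\|\hw f_T\|_{L^{q_n}(w_{B_R^{n+1}})}^{q_n}.\]

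The main obstacle will be reconciling the two scales at play: decoupling is sharp at the $R^{1/2}$-ball scale, while the hypothesis lives at the unit-ball scale. This is precisely the feature that distinguishes the cone refined decoupling from Guth's paraboloid version (in which the two scales coincide at $R^{1/2}$), and it is overcome by exploiting the narrow width $1$ of cone wave packets in the conormal direction. A secondary difficulty is the variable coefficient setting itself: each step---wave packet localization, frequency almost-disjointness, and decoupling---must be carried out with FIO-compatible error terms, leveraging \textbf{(H1)}, \textbf{(H2)} and the parametrix framework already in place.
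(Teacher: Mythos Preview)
Your argument has a genuine gap at the decoupling step. Bourgain--Demeter $\ell^2$-decoupling into $R^{-1/2}$-caps $\theta$ is only available on a ball of radius $R$; on an $R^{1/2}$-ball $B$ one can only decouple into $R^{-1/4}$-caps. So the displayed inequality
\[
\|\hw f\|_{L^{q_n}(w_B)}\lesssim R^\e\Big(\sum_{\theta\in\Theta_{R^{-1/2}}} \|\hw f_\theta\|_{L^{q_n}(w_B)}^2\Big)^{1/2}
\]
is false as stated (take all the $\hw f_\theta$ to be single wave packets through a common point with aligned phases). Even if one ignores this, the subsequent H\"older step also fails: the quantities $\|\hw f_\theta\|_{L^{q_n}(w_B)}$ live on the full $R^{1/2}$-ball, where the number of contributing caps can vastly exceed $M$; the hypothesis bounds only the number of planks through a \emph{unit} ball, and there is no legitimate way to pass the right-hand side from $w_B$ down to $Y\cap B$ without destroying the $\ell^2$ structure you just obtained. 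Your observation that at most one plank per cap meets a given unit ball is correct, but it cannot be combined with a single application of decoupling.

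The paper's proof (in the appendix, following the scheme of the constant-coefficient case in \cite{gan2022restricted} and the paraboloid case in \cite{guth2020falconer}) circumvents exactly this obstruction by induction on scales. One introduces an intermediate scale $K=R^{\nu}$, covers $B_R$ by fat planks $\Box$ of dimensions $RK^{-2}\times RK^{-1}\times\cdots\times RK^{-1}\times R$, and after pigeonholing records two multiplicities: $M'$, the number of $R$-planks through a rescaled $K_1^2$-cube inside a fixed $\Box$, and $M''$, the number of $\Box$'s through a given $K^2$-ball in $Y$. The crucial arithmetic is $M'M''\lesssim M$. One then applies Bourgain--Demeter at the small scale $K^2$ (paying $(M'')^{1/2-1/p}$) and the inductive hypothesis at scale $R/K^2$ inside each rescaled $\Box$ (paying $(M')^{1/2-1/p}$), and the product gives $M^{1/2-1/p}$. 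This multiscale splitting of $M$ is the missing idea; the same mechanism is needed even in the paraboloid refined decoupling, so your remark that the paraboloid case avoids this because ``the two scales coincide'' is not accurate.
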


\begin{figure}[ht]
\centering
\includegraphics[width=10cm]{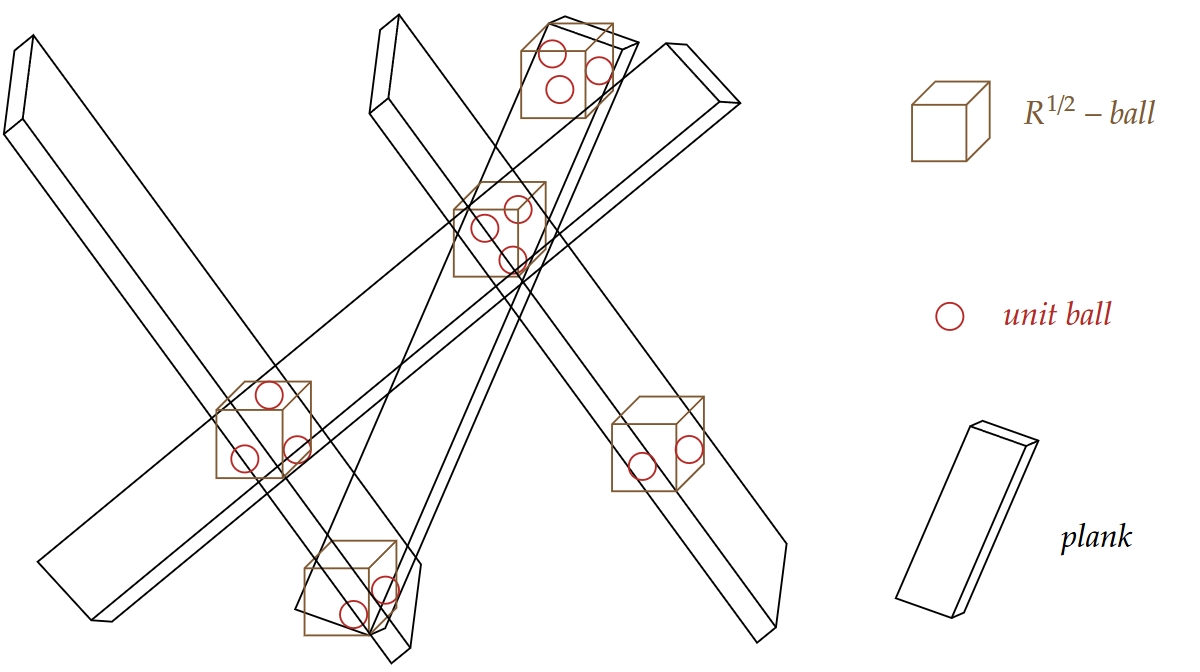}
\caption{}
\label{shading1}
\end{figure}

\smallskip

The refined decoupling result stated in Theorem 2.1 serves as an important endpoint in our interpolation. 
When \(n = 3\), the desired estimates follow from interpolation between the refined decoupling exponent $q_n=\frac{2(n+1)}{n-1}$ and the refined $L^2$ estimates, which are obtained via improved incidence bounds based on both two-end and bilinear structures. 
In higher dimensions, besides the $L^2$ and $L^{q_n}$ spaces, we need to use an additional endpoint $L^{p_{n,k}}$, where the exponent $p_{n,k}$ is associated with the $k_n$-broad estimates. 
It is noteworthy that the cases $n \ge 7$ are the simplest, as no improved incidence estimates are required.
For $n = 5$, an improved incidence estimate arising from 3-broad analysis need to be employed.

\medskip

Let us start with our outline of our proof for the two most difficult cases $n=3$ and $5$.  
By several steps of dyadic pigeonholing, we can obtain 
\begin{enumerate}
    \item[(a)] a union of unit balls $X\subset B^{n+1}_R$,
    \item[(b)] a number $\mu\in[1, R^{(n-1)/2}]$, and a number $m\in[1, R^{n/2}]$,
    \item[(c)] a number $l\in[1,R^{1/2}]$,
    \item[(d)] two numbers $\la,\si\in[1, R^{(n+1)/2}]$,
\end{enumerate}
such that the following is true (see Figure \ref{shading1}):
\begin{enumerate}
    \item $\|\sum_{T\in\ZT}\hw f_T\|_{L^p(B)}$ are about the same for all $B\subset X$, and
    \begin{equation}
    \label{after-pigeonholing-1}
        \|\sum_{T\in\ZT}\hw f_T\|_{L^p(B^{n+1}_R)}^p\lessapprox\|\sum_{T\in\ZT}\hw f_T\|_{L^p(X)}^p.
    \end{equation}
    \item Each $B\subset X$ intersects with $\sim\mu$ many planks in $\ZT$.
    \item For each $R^{1/2}$-ball $Q$ intersecting $X$, $\#\ZT(Q)\sim m$, where $\ZT(Q)\subset\ZT$ is the set of planks intersects $X\cap Q$.
    \item For each plank $T\in\ZT$, the shading $Y(T):=X\cap T$ intersects with $\sim l$ many $R^{1/2}$-balls. 
    \item For each $T\in\ZT$, $|Y(T)|\sim\la$.
    \item For each $R^{1/2}$-ball $Q$ intersecting $X$, $|X\cap Q|\sim \si$.
\end{enumerate}
With the help of the broad \& two-ends algorithm discussed in Sections \ref{section-algorithm} and \ref{section-iteration}, we can further assume that (recall \eqref{defofkn} for $k_n$)
\begin{enumerate}
    \item[(7)] The shading $Y(T)$ satisfies a two-ends condition for all $T\in\ZT$. 
    \item[(8)] For each unit ball $B\subset X$, $|\sum_{T\in\ZT}\hw f_T|\Id_B$ is ``$k_n$-broad", which implies the following: 
    There exist $k_n$ refinements $\ZT_1(B),\cdots,\ZT_{k_n}(B)$ of $\ZT(B)$ such that $T_1,\dots, T_{k_n}$ are quantitatively transverse for all $T_i\in\ZT_i(B)$, $1\le i\le k_n$. %$, T_2\in\ZT_2(B)$.
    Here $\ZT(B):=\{T\in\ZT: Y(T)\cap B\not=\varnothing\}$.
\end{enumerate}
For the case $n=3$, by several more steps of dyadic pigeonholing and a covering lemma, Lemma \ref{regular-lem}, we can additionally obtain 
\begin{enumerate}
    \item[(e)] a number $\rho\in[1,R^{n/2}]$,
\end{enumerate}
such that
\begin{enumerate}
    \item[(9)] For any $Q\in\cQ$, $X\cap Q$ is \textbf{$\rho$-regular} with respect to $1\times R^{1/2}\times R^{1/2}\times R^{1/2}$-slabs, in the following sense: For any $1\times R^{1/2}\times R^{1/2}\times R^{1/2}$-slab $S\subset Q$, 
    \[|X\cap S|\le  \rho,\] 
    and there are 
    \[\sim |X\cap Q|/\rho\sim 
    \si/\rho\]
    many $1\times R^{1/2}\times R^{1/2}\times R^{1/2}$-slabs that cover $X\cap Q$. 
    Denote by $\ZS(Q)=\{S\}$ these slabs that cover $X\cap Q$, so $\#\ZS(Q)\sim\si/\rho$.
\end{enumerate}

Apply the refined decoupling theorem, Theorem \ref{refdecthm0}, to the right-hand side of \eqref{after-pigeonholing-1} so that (recall $q_n=\frac{2(n+1)}{n-1}$),
\begin{equation}
\nonumber
    \|\sum_{T\in\ZT}\hw f_T\|_{L^{q_n}(X)}^{q_n}\lessapprox\mu^{\frac{2}{n-1}}\sum_{T\in\ZT}\|\hw f_T\|_{L^{q_n}(B_R^{n+1})}^{q_n}.
\end{equation}
Since $R^\frac{n+1}{2}
\sim \|\hw f_T\|_{L^{q_n}(B_R^{n+1})}^{q_n}\lesssim R^{-\frac{n+1}{n-1}}\|\hw f_T\|_{L^2(B_R^{n+1})}^{q_n}\lesssim \|f_T\|_2^{q_n}$, by \eqref{wpd-1}, we have
\begin{align}
\nonumber
    \|\sum_{T\in\ZT}\hw f_T\|_{L^{q_n}(X)}^{q_n}&\lessapprox\mu^{\frac{2}{n-1}}(R^{\frac{n+1}{2}}\#\ZT)\\ \label{after-dec-2}
    &\lesssim R^2(R^{-\frac{n-3}{2(n-1)}}\mu^{\frac{2}{n-1}}(\#\ZT)^{-\frac{1}{n-1}})\|f\|_2^2\,\cW(f,B_R^{n+1})^{q_n-2}.
\end{align}

To obtain an upper on $\mu$, we will use the hairbrush structure (see Figure \ref{hairbrush}).
On the one hand, by double-counting, we have
\begin{equation}
\label{l1-1}
    |X|\sim \mu^{-1}\la(\#\ZT).
\end{equation}
On the other hand, pick a plank $T_0\in\ZT$ and define a hairbrush
\begin{equation}
\nonumber
    \ZH(T_0)=\{T\in\ZT: Y(T)\cap Y(T_0)\not=\varnothing, \text{ $T$ and $T_0$ are quantitatively transverse} \}.
\end{equation}
One sees that the planks in $\ZH(T_0)$ are morally disjoint.
Therefore,
\begin{equation}
\label{hairbrush-esti-1}
    |X|\ge |\bigcup_{T\in\ZH(T_0)}Y(T)| \gtrsim\sum_{T\in\ZH(T_0)}|Y(T)| \gtrsim lm\la,
\end{equation}
where we used $|Y(T)|\sim \la$ and $\#\ZH(T_0)\sim lm$.

Combining \eqref{l1-1} and \eqref{hairbrush-esti-1}, we get 
\begin{equation}
\label{mu-1}
    \mu\lesssim l^{-1}m^{-1}(\#\ZT).
\end{equation}
Since $m\geq\mu$, this also implies
\begin{equation}
\label{mu-final-0}
    \mu\lesssim l^{-1/2}(\#\ZT)^{1/2}.
\end{equation}

\medskip

\subsection{When \texorpdfstring{$n=3$}{}}
\label{section-sketch-n=3}

Let $Q\in\cq$ be an $R^{1/2}$-ball.
Recall that $\#\ZT(Q)\sim m$, and $|T\cap Q|\lesssim \rho$ for all $T\in\ZT(Q)$.
By double-counting the incidence between the unit balls in $X\cap Q$ and the $1\times R^{1/2}\times R^{1/2}\times R^{1/2}$-slabs $\{T\cap Q: T\in\ZT(Q)\}$, we obtain
\begin{equation}
\label{L1-esti}
    \mu \si\lesssim \rho m.
\end{equation}

Note that for all $B\subset X$, $|\sum_{T\in\ZT}\hw f_T|\Id_B$ is ``4-broad".
Hence, there exist 4 quantitatively linearly independent directional caps $\{\tau_j\}_{j=1,\ldots, 4}$ such that for all $j=1,\ldots,4$,
\begin{equation}
\nonumber
    \big|\sum_{T\in\ZT}\hw f_T\big|\Id_B\lesssim |\sum_{T\in\ZT_{\tau_j}}\hw f_T|\Id_B.
\end{equation}
Here $\ZT_{\tau_j}:=\{T\in\ZT:\text{ the direction of $T$ is contained in $\tau_j$} \}$.

For each $R^{1/2}$-ball $Q\subset\cq$, recall that $X\cap Q$ has a cover $\ZS(Q)$ by $1\times R^{1/2}\times R^{1/2}\times R^{1/2}$-slabs and $\#\ZS(Q)\sim\si/\rho$.
We claim that for all $S\in\ZS(Q)$
\begin{equation}
\label{l2-broad}
    \bigg\|\sum_{T\in\ZT}\hw f_T\bigg\|_{L^2(S\cap X)}^2\lesssim \sum_{T\in\ZT(Q)}\|f\|_2^2.
\end{equation}
\eqref{l2-broad} is analogous to equation (2.8) in \cite{Li-Wu}.
The proof of \eqref{l2-broad} involves several steps of pigeonholing.
Intuitively, it is based on the following observation:
Since $|\sum_{T\in\ZT}\hw f_T|\Id_B$ is ``4-broad" for each unit ball $B\subset S\cap X$, we can choose a directional cap $\tau_j$ such that $\tau_j$ is quantitatively transverse to the slab $S$.
Let us assume that the cap $\tau_j$ is uniform for all $B\subset S\cap X$.
Thus, as the planks in $\ZT_{\tau_j}$ are all quantitatively transverse to $S$ (see Figure \ref{slab}), by $L^2$-orthogonality, we have
\begin{align}
\nonumber
    &\|\sum_{T\in\ZT}\hw f_T\|_{L^2(S\cap X)}^2\lesssim\|\sum_{T\in\ZT(Q)}\hw f_T\|_{L^2(S\cap X)}^2\\  \nonumber
    \lesssim&\,\|\sum_{T\in\ZT_{\tau_j}\cap\ZT(Q)}\hw f_T\|_{L^2(S\cap X)}^2 \lesssim \|\sum_{T\in\ZT_{\tau_j}\cap\ZT(Q)}\hw f_T\|_{L^2(S)}^2\\ \nonumber
    \lesssim&\,\sum_{T\in\ZT_{\tau_j}\cap\ZT(Q)}\|f_T\|_2^2\lesssim\sum_{T\in\ZT(Q)}\|f_T\|_2^2.
\end{align}
This proves \eqref{l2-broad}.

\begin{figure}[ht]
\centering
\includegraphics[width=5cm]{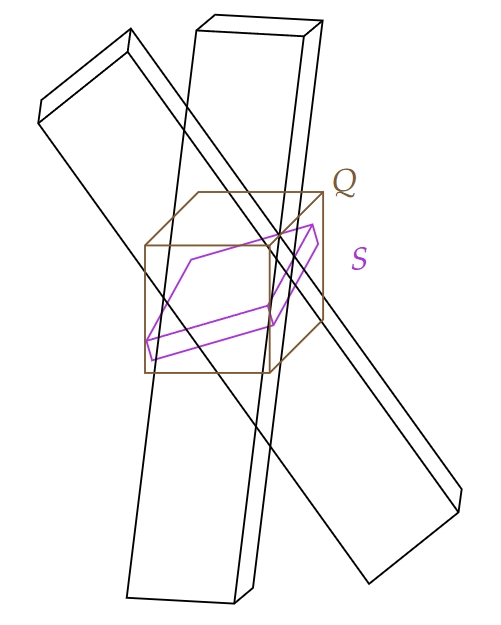}
\caption{}
\label{slab}
\end{figure}

\smallskip

Apply \eqref{l2-broad} to all $S\in\ZS(Q)$ and all $Q\in\cq$.
Since $\#\ZS(Q)\sim\si/\rho$ and since each $T\in\ZT$ intersects $\lesssim l$ many $R^{1/2}$-balls in $\cq$, we have
\begin{equation}
\nonumber
    \|e^{it\sqrt{-\De}}f\|_{L^2(X)}^2\lesssim \sum_{Q\in\cq}\sum_{S\in\ZS(Q)}\|\hw f\|_{L^2(S\cap X)}^2\lesssim l(\si/\rho)\sum_{T\in\ZT}\|f_T\|_2^2,
\end{equation}
which implies the $L^2$ estimate
\begin{equation}
\label{l2-3}
    \|e^{it\sqrt{-\De}}f\|_{L^2(X)}^2\lesssim l(\si/\rho)\|f\|_2^2.
\end{equation}
Interpolate \eqref{after-dec-2} and \eqref{l2-3} by using \eqref{mu-1} and \eqref{L1-esti} to obtain \eqref{localwave-2} for $p=\frac{10}{3}$.

\medskip

\subsection{When \texorpdfstring{$n\geq5$}{}}
\label{section-sketch-ngeq5}
We will obtain \eqref{localwave-2} by interpolating between the cases $p=p_{n,k}=\tfrac{2(n+k_n+1)}{n+k_n-1}$ (recall \eqref{defofkn}), $p=2$,  and $p=q_n=\f{2(n+1)}{n-1}$.

First, we consider the case $p_{n,k}$.
We apply the broad-norm estimates for cones  (Theorem~\ref{k-brd}) obtained in  \cite{Schippa} to essentially establish (see \eqref{k-brd1})
\begin{equation}\label{k-br-in}
    \|\sum_{T\in\ZT}\hw f_T\|_{L^{p_{n,k}}(X)}\lesssim  R^{n(\f12-\f1{p_{n,k}})} \|f\|_{L^2(\mathbb R^n)}^{\f2{p_{n,k}}}\cW(f,B^{n+1}_R)^{1-\f2{p_{n,k}}}.
\end{equation}

Next, we consider the case $p=2$.
For each $R^{1/2}$-ball $Q$, on the one hand we have a simple $L^2$ estimate (see Lemma \ref{local-L2-lem-2})
\begin{equation}
\nonumber
    \|\sum_{T\in\ZT}\hw f_T\|_{L^2(X\cap Q)}^2\lesssim\|\sum_{T\in\ZT}\hw f_T\|_{L^2(Q)}^2\lesssim R^{1/2}\|f\|_2^2
\end{equation}
On the other hand, using $|X\cap Q|\sim \si$, by H\"older's inequality and by applying Theorem \ref{k-brd} locally in $Q$ with $k=k_n$, we have
\begin{equation}
\nonumber
     \|\sum_{T\in\ZT}\hw f_T\|_{L^2(X)}\lesssim \si^{\f1{n+k_n+1}}\|f\|_2.
\end{equation}
Since $Y(T)$ intersects with $\lesssim l$ many $R^{1/2}$-balls for each plank $T\in\ZT$, the above two estimates yield 
\begin{equation}\label{l2-2}
     \|\sum_{T\in\ZT}\hw f_T\|_{L^2(X)}\lesssim l^{1/2}\min (R^{1/4},\si^{\f 1{n+k_n+1}})\|f\|_{L^2}.
\end{equation}

Finally, we discuss the case $q_n=2\f{n+1}{n-1}$.
We will use two types of incidence estimates. 
\eqref{mu-1} is our first upper bound for $\mu$.

As for the second upper bound, we focus on each $R^{1/2}$-ball $Q$.
Recall that $\#\ZT(Q)\sim m$ and $|X\cap Q|\sim\si$ whenever $X\cap Q\not=\varnothing$.
Since each $B\subset X\cap Q$ is $3$-broad regarding the slabs $\{T\cap Q: T\in\ZT(Q)\}$, we have the trilinear  estimate
\begin{equation}
\label{mu-2}
    \mu^{3} \si \lesssim     \sum_{T_1,T_2,T_3\text{ transverse}} \big| T_1\cap T_2\cap T_3\cap Q\big|\lesssim  R^{\frac{n+1-3}{2}}m^3.
\end{equation}
Now \eqref{mu-1} and \eqref{mu-2} give our second estimate on $\mu$:
\begin{equation}
\label{mu-final}
    \mu\lesssim l^{-1/2}(\#\ZT)^{1/2} \bigg(  \frac{R^{\frac{n+1-3}{2}}}{\sigma}\bigg)^{\frac{1}{6}} .
\end{equation}
 
Plug the two estimates \eqref{mu-final-0} and \eqref{mu-final} back to \eqref{after-dec-2} so that
\begin{align}
\nonumber
    & \|\sum_{T\in\ZT}\hw f_T\|_{L^{q_n}(X)}
    \\ \label{after-hairbrush-2}
    \lesssim&\min\Big\{1, 
    \bigg(  \frac{R^{\frac{n+1-3}{2}}}{\si}\!\!\bigg)^{\!\frac{1}{6(n+1)}}\! \Big\}\ell^{-\frac{1}{2(n+1)}} R^{\frac1{q_n}+\f14} \|f\|_{L^2}^{\f2{q_n}}\,\cW(f,B_R^{n+1})^{1-\f2{q_n}}.
\end{align}
Interpolate \eqref{l2-2}, \eqref{after-hairbrush-2}, and \eqref{k-br-in} to obtain \eqref{localwave-2} for $p=2+\frac{8}{3n-3}$ for odd $n$.

\bigskip

\section{Preliminaries}\label{section-preliminary}

In this section, we set up some notations and introduce basic properties for Fourier integral operators.
We denote the variables as $z=(x,t)=(x',x_n,t)\in \R^{n-1}\times \R\times \R$ and $\xi=(\xi',\xi_n)\in\R^{n-1}\times \R$ in the rest of the paper.

\subsection{Reduction to a local estimate}

Recall the Fourier integral operator defined in \eqref{FIO} and the associated assumptions (\textbf{H1}) and (\textbf{H2}).
By considering the new phase function $\wt \phi(z;\xi)=\phi(z;\xi)-\phi(0;\xi)$, we can assume that for all $\al$,
\begin{equation}\label{phi0}
    \partial^\al_\xi\phi(0;\xi)\equiv 0.
\end{equation}

Let $\la\ge 1$.
Given a Fourier integral operator $\cf$ defined in \eqref{FIO}, let 
\begin{equation}
\label{FIOlambda}
    \cFl f(x,t):=\int_{\R^n} e^{i\phi^\la(x,t;\xi)}a^\la(x,t;\xi)\wh f(\xi)\mathrm{d}\xi
\end{equation}
where $\phi^\la$ and $a^\la$ are defined as
    \[\phi^\la(x,t;\xi):=\la\phi(x/\la,t/\la;\xi),\ \  a^{\la}(x,t;\xi):= a(x/\la,t/\la;\xi). \]

By a standard Littlewood-Paley decomposition (see, for example, \cite[Section 1.3]{gao2023square}), and by a smooth partition on the unit sphere $S^{n-1}$, we can assume the amplitude function $a$ obeys that $\supp_\xi\ a\subset\A^n(1)^\circ$, the interior of $\A^n(1)$, where 
    \[\A^n(1):=\{ (\xi',\xi_n)\in\R^n: 1/2\le \xi_n\le 2, |\xi'|\le \xi_n \}. \]
Therefore, Theorem \ref{FIOthm} reduces to the following result:

\begin{proposition}\label{propLpest}
Let $\cf$ be a Fourier integral operator given by \eqref{FIO}.
Suppose the phase function $\phi$ satisfies \eqref{phi0}, and the amplitude function $a$ obeys $\supp_\xi\ a\subset\A^n(1)^\circ$.
Let $\cf^\la$ be given by \eqref{FIOlambda}. 
Then for all $\e>0$ and all $p>p(n)$, \begin{equation}\label{replace}
    \|\cFl f\|_{L^{p}(B_\la^{n+1})}\le C_{\e,p} \la^{(n-1)(\frac12-\frac1p)+\e}\|f\|_{L^p(\R^n)}.
\end{equation}
\end{proposition}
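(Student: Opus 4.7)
The plan is to adapt the strategy laid out in Section \ref{section-sketch} to scale $\lambda$. After a wave packet decomposition $\cFl f = \sum_{T \in \ZT} \cFl f_T$ into planks $T$ of dimensions $1 \times \lambda^{1/2} \times \cdots \times \lambda^{1/2} \times \lambda$ and normalization so that each $|\cFl f_T|$ is essentially the indicator of $T$, the problem reduces to the mixed-norm endpoint bound
\[
\|\cFl f\|_{L^p(B_\lambda^{n+1})}^p \lessapprox \lambda^{(n-1)(\frac{p}{2}-1)}\,\|f\|_2^2\,\cW(f,B_\lambda^{n+1})^{p-2}
\]
at $p = p(n)$. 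I would then perform several layers of dyadic pigeonholing to extract a union $X$ of unit balls in $B_\lambda^{n+1}$ on which $|\cFl f|$ is essentially constant, together with parameters $\mu$ (number of planks meeting each unit ball), $l$ (number of $\lambda^{1/2}$-balls meeting each shading $Y(T) = X \cap T$), and a common shading volume $|Y(T)| \sim V$. The broad/two-ends algorithm of Section \ref{section-algorithm} then refines these data so that $|\cFl f|\Id_B$ is $K$-broad on every unit ball $B \subset X$ and each shading $Y(T)$ satisfies a two-ends non-concentration condition. The complementary narrow branch is handled by lower-dimensional decoupling combined with induction on $\lambda$ in the standard manner.

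In the broad branch, the refined decoupling theorem for variable-coefficient cones (Theorem \ref{refdecthm0}) at the decoupling endpoint $q_n = \frac{2(n+1)}{n-1}$ yields
\[
\|\cFl f\|_{L^{q_n}(X)}^{q_n} \lessapprox \mu^{2/(n-1)}\,\lambda^{(n+1)/2}\,\#\ZT.
\]
To bound $\mu$ I would run a hairbrush argument: fix $T_0 \in \ZT$; the broad condition extracts a transverse sub-collection $\ZH(T_0)$ of size $\gtrsim l\mu$; the light-cone geometry (planks transverse to a fixed light ray through the shading are essentially disjoint) combined with the two-ends property on $Y(T)$ gives $|X| \gtrsim l\mu V$, while double counting yields $|X| \sim \mu^{-1}V\,\#\ZT$, so $\mu \lesssim l^{-1/2}(\#\ZT)^{1/2}$. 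Feeding this into the decoupling bound and applying the wave packet density lower bound $\cW(f,B_\lambda^{n+1})^2 \gtrsim \lambda^{-(n+1)/4}(\#\ZT)^{1/2}$ converts the estimate into the required mixed-norm form.

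The last step is to interpolate with an $L^2$ estimate. For $n \geq 4$ the naive bound $\|\cFl f\|_{L^2(X)}^2 \lesssim l\lambda^{1/2}\|f\|_2^2$, obtained by $L^2$ orthogonality on $\lambda^{1/2}$-balls inside $N_{\lambda^{1/2}}(X)$, is enough: interpolation with the $L^{q_n}$ bound and the constraint $l \lesssim \lambda^{1/2}$ yields $p(n) = 2 + 8/(3n-4)$. For $n = 3$ the sharp exponent $p(3) = 10/3$ requires a refined $L^2$ estimate: the covering Lemma \ref{regular-lem} partitions each $\lambda^{1/2}$-ball $Q \in \cq$ covering $X$ into $\sim \sigma/\rho$ many $1 \times \lambda^{1/2} \times \lambda^{1/2} \times \lambda^{1/2}$-slabs $S$ with $|S \cap X| \lesssim \rho$, and, assuming a 4-broad condition on unit balls $B \subset X$, one selects on each slab a directional cap quantitatively transverse to $S$ to obtain the slab-wise bound $\|\cFl f\|_{L^2(S \cap X)}^2 \lesssim \sum_{T \in \ZT(Q)} \|f_T\|_2^2$. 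Summation across $\cq$ produces the refined estimate $\|\cFl f\|_{L^2(X)}^2 \lesssim l(\sigma/\rho)\|f\|_2^2$; together with an improved hairbrush incidence bound $\mu \lesssim (\#\ZT)^{1/2}\rho^{1/2}(l\sigma)^{-1/2}$ (derived by combining the internal double counting $\mu\sigma \lesssim \rho m$ with $|X| \gtrsim Vlm$), interpolation delivers $p = 10/3$. Failure of the 4-broad hypothesis is absorbed by an induction on $\lambda$ via lower-dimensional decoupling.

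The main obstacle I expect is the simultaneous enforcement of the broad and two-ends conditions: because $|\cFl f|\Id_B = |\sum_T \cFl f_T|\Id_B$ is an oscillatory sum rather than a sum of moduli, standard pigeonholing destroys one condition while trying to establish the other. The resolution in Section \ref{section-algorithm} is an iterative refinement showing that any failure of compatibility forces the total incidence $\sum_{B \subset X} \#\{T \in \ZT : T \cap B \neq \emptyset\}$ to drop by a definite power $\lambda^\kappa$, so the algorithm terminates after finitely many steps and produces data on which both conditions hold.
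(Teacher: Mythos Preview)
Your outline reproduces the paper's argument in every essential respect: reduction to a mixed-norm inequality governed by the wave packet density $\cW$, the broad/narrow split, refined decoupling at $q_n$, the hairbrush incidence bound, the slab-based refined $L^2$ estimate for $n=3$, and the iterative algorithm of Section~\ref{section-algorithm} that forces the broad and two-ends conditions to coexist.

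The one structural point you elide is that the paper does \emph{not} run this machinery at scale $\lambda$. The mixed-norm estimate (Theorem~\ref{mixed-norm-thm}) is proved on balls $B_R^{n+1}$ with $R\le\lambda^{1-\e}$ via a \emph{two-parameter} induction on $(\lambda,R)$; one then tiles $B_\lambda^{n+1}$ by such $R$-balls, uses Lemma~\ref{Linfty} to replace $\cW$ by $\|f\|_\infty$, and interpolates the resulting restricted-type bound at $p=p(n)$ against the trivial $L^\infty$ estimate to obtain $p>p(n)$. The separation $R\ll\lambda$ is not cosmetic in the variable-coefficient setting: it is what keeps the curved planks close enough to straight ones for Lemmas~\ref{geometric}, \ref{local-L2-lem-1}, \ref{lemhairbrush} to go through, and it is what makes the induction close---the one-end branch drops to scale $(\lambda,R/\Kc)$ with $\lambda$ fixed, while the narrow branch (Lorentz rescaling) drops to $(\lambda/K^2,R/K^2)$ with a \emph{new} phase, so the estimate must be proved uniformly over type-$\mathbf{1}$ data rather than for a single $\cF$. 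Your single-parameter ``induction on $\lambda$'' would not close as written.
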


\bigskip

\subsection{Quantitative conditions}\label{subsecquant}

We need to strengthen Proposition \ref{propLpest} to obtain uniform estimates for a class of Fourier integral operators (FIOs). 
This is necessary because we aim to use induction on scales, but an individual operator $\cF$ is not invariant under rescaling. Fortunately, rescaling preserves certain properties of the FIOs, enabling us to perform induction for a broader class of operators simultaneously. 
We will adopt the framework established in \cite[Section 2.3]{beltran2020variable}.

Let $\cpar>0$ be a small fixed constant. 
For $\bA=(A_1,A_2,A_3)\in[1,\infty)^3$, consider the following conditions on the phase function:

\begin{enumerate}
    \item[$(\text{H1}_{\bA})$]  $|\partial^2_{\xi x}\phi(z;\xi)-I_n|\le \cpar A_1$ for all $(z;\xi)\in B^{n+1}_1\times \A^n(1)$.
    \item[$(\text{H2}_{\bA})$] $|\partial^2_{\xi'\xi'}\partial_t\phi(z;\xi)-\frac{1}{\xi_n}I_{n-1}|\le \cpar A_2$ for all $(z;\xi)\in B^{n+1}_1\times \A^n(1)$.
    \item[$(\textup{D1}_\bA)$]$\|\partial_\xi^\beta\partial_{x_k}\phi\|_{L^\infty(B_1^{n+1}\times \A^n(1))}\le \cpar A_1$ for all $1\le k\le n$ and $\beta\in\N_0^n$ with $2\le|\beta|\le 3$ satisfying $|\beta'|\ge 2$;
    
    \noindent $\|\partial_{\xi'}^{\beta'}\partial_{x_k}\phi\|_{L^\infty(B_1^{n+1}\times \A^n(1))}\le \frac{\cpar}{2n} A_1$ for all $\beta'\in\N_0^{n-1}$ with $|\beta'|=3$. 
    \item[$(\textup{D2}_\bA)$] For some large integer $N=N_{\e,M,p}\in\N$ depending only on the dimension $n$ and the fixed choice of $\e,M$ and $p$, one has
        \[\|\partial_\xi^\beta\partial_z^\al\phi\|_{L^\infty(B_1^{n+1}\times \A^n(1))}\le \frac{\cpar}{2n}A_3 \]
    for all $(\al,\beta)\in\N_0^{n+1}\times \N_0^n$ with $2\le |\al|\le 4N$ and $1\le |\beta|\le 4N+2$ satisfying $1\le |\beta|\le 4N$ or $|\beta'|\ge 2$. 
\end{enumerate}

\noindent
Finally, it is useful to assume a margin condition on the spatial support of the amplitude $a$:

\begin{enumerate}
    \item[$(\textup{M}_\bA)$] $\dist(\supp_z a, \R^{n+1}\setminus B_1^{n+1})\ge \frac{1}{4A_3}$.
 
\end{enumerate}

\medskip

Datum $(\phi,a)$ satisfying $(\textup{H1}_\bA),(\textup{H2}_\bA),(\textup{D1}_\bA),(\textup{D2}_\bA)$ and $(\textup{M}_\bA)$ (in addition to (\textbf{H1}) and (\textbf{H2})) is said to be of \textit{type} $\bA$. 
By cutting the amplitude function into pieces and various rescaling arguments, it is possible to reduce to the case where $\bA=\mathbf{1}:=(1,1,1)$. 
We refer to \cite[Section 2.5]{beltran2020variable} for more details.

Throughout the paper, we will assume the FIOs are of type \textbf{1}. 
Proposition \ref{propLpest} can be strengthened as

\begin{proposition}\label{1propLpest}

Let $\cf$ be a Fourier integral operator of type $\bfone$ given by \eqref{FIO}.
Suppose the phase function $\phi$ satisfies \eqref{phi0}, and the amplitude function $a$ obeys $\supp_\xi\ a\subset\A^n(1)^\circ$.
Let $\cf^\la$ be given by \eqref{FIOlambda}. 
Then for all $\e>0$ and all $p>p(n)$, there exists a constant $C_{\e,p}$ independent to $\cf$, such that
\begin{equation}
\label{lp-esti-1}
    \|\cFl f\|_{L^{p}(B_\la^{n+1})}\le C_{\e,p} \la^{(n-1)(\frac12-\frac1p)+\e}\|f\|_{L^p(\R^n)}.
\end{equation}
Here, $C_{\e,p}>0$ is uniform for all $\cF$ of type $\bfone$.
\end{proposition}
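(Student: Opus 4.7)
The plan is to induct on the scale $R\in[1,\lambda^{1-\varepsilon}]$, using the fact that the type $\mathbf{1}$ hypothesis is preserved (up to harmless constants) under passage to subregions and under the Lorentz-type rescaling associated with the cone; this is precisely why \eqref{lp-esti-1} is formulated uniformly over the whole class of operators of type $\mathbf{1}$, so that the inductive hypothesis may legitimately be applied to rescaled subproblems. I would first reduce \eqref{lp-esti-1} to the localized mixed-norm inequality \eqref{localwave-2}: after a standard wave packet decomposition $\cFl f=\sum_{T\in\ZT}\cFl f_T$ into $1\times R^{1/2}\times\cdots\times R^{1/2}\times R$-planks and several rounds of dyadic pigeonholing, it suffices to prove
\[
\Big\|\sum_{T\in\ZT}\cFl f_T\Big\|_{L^p(X)}^p\lessapprox R^{(n-1)(p/2-1)}\|f\|_2^{2}\,\cW(f,B_R^{n+1})^{p-2}
\]
on a refinement $X\subset B_R^{n+1}$ with parameters $\mu$ (multiplicity of planks through each $B\subset X$), $l$ (the number of $R^{1/2}$-balls met by each shading $Y(T)=X\cap T$) and $\lambda$ (the size of $Y(T)$). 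Returning from the mixed norm back to $\|f\|_p$ is handled by H\"older combined with the structural lower bounds on $\cW$ recorded in \eqref{wpd-1}.

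For the core argument when $n\geq 4$, I would work at the decoupling endpoint $q_n=2(n+1)/(n-1)$ and apply the variable-coefficient refined decoupling theorem (Theorem \ref{refdecthm0}) to dominate the $L^{q_n}$-mass on $X$ by $\mu^{2/(n-1)}R^{(n+1)/2}\#\ZT$. The multiplicity $\mu$ is then bounded via a cone hairbrush estimate: fixing $T_0\in\ZT$, the planks in $\ZH(T_0)$ that are quantitatively transverse to $T_0$ have essentially disjoint shadings of total volume $\gtrsim l\mu\lambda$, while double counting gives $|X|\sim\mu^{-1}\lambda\,\#\ZT$. Combining these yields $\mu\lesssim l^{-1/2}(\#\ZT)^{1/2}$, and the testing lower bound $\cW^2\gtrsim R^{-(n+1)/4}(\#\ZT)^{1/2}$ converts cardinality into wave packet density. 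Interpolating against the crude $L^2$ bound $\|\cFl f\|_{L^2(X)}^2\lesssim lR^{1/2}\|f\|_2^2$, obtained by enlarging $X$ to its $R^{1/2}$-neighborhood and invoking $L^2$-orthogonality on $R^{1/2}$-balls, closes the induction at $p=2+8/(3n-4)$. For $n=3$ the same skeleton only yields the decoupling exponent, so the extra gain must come from replacing the crude $L^2$ bound with a refined one: using Lemma \ref{regular-lem} I partition $X\cap Q$ inside each $R^{1/2}$-ball $Q$ into $1\times R^{1/2}\times R^{1/2}\times R^{1/2}$-slabs of density $\rho$ and, under a $4$-broad hypothesis on each unit ball, exploit transverse $L^2$-orthogonality to replace the factor $R^{1/2}$ by $\sigma/\rho$. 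Double-counting inside $Q$ likewise sharpens the hairbrush to $\mu\lesssim (\#\ZT)^{1/2}\rho^{1/2}(l\sigma)^{-1/2}$, and interpolating the two improved inputs produces the endpoint $p=10/3$.

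The main obstacle, I expect, is arranging that the broad hypothesis on each unit ball $B\subset X$ and the two-ends condition on each shading $Y(T)$ hold \emph{simultaneously}, because $|\sum_T\cFl f_T|\mathbf{1}_B$ is a sum of oscillatory wave packets rather than nonnegative bumps, so restricting to the planks that enforce one condition can easily destroy the other. My plan is to run the broad \& two-ends algorithm of Section \ref{section-algorithm}, which alternately refines the plank family and the shading; the key structural point is that each time either condition fails, the total incidence $\sum_{B\subset X}\#\{T\in\ZT:T\cap B\neq\varnothing\}$ must drop by a factor $R^{\kappa}$ with $\kappa=R^{\varepsilon^{500}}$, so, since the incidence count is polynomially bounded in $R$, the procedure terminates in $O(\varepsilon^{-1})$ steps with both properties simultaneously in place. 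Management of this iteration, together with the assembly of the narrow contribution (treated by a standard narrow-broad reduction using decoupling and the inductive hypothesis at a smaller scale) with the broad contribution analyzed above, will occupy Sections 7 and 8, while the wave packet density, supporting geometry, and the variable-coefficient refined cone decoupling are handled in Sections 4, 5, and the appendix respectively.
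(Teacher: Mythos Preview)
Your outline of the main engine --- the induction on scales via the mixed-norm inequality involving $\cW(f,B_R^{n+1})$, the refined decoupling at $q_n$, the hairbrush incidence bound, the two $L^2$ estimates (crude for $n\ge 4$, slab-refined for $n=3$), and the broad/two-ends algorithm --- matches the paper's argument essentially step for step. That part is fine.

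There is, however, a genuine gap in the step you describe as ``returning from the mixed norm back to $\|f\|_p$''. You write that this is handled ``by H\"older combined with the structural lower bounds on $\cW$ recorded in \eqref{wpd-1}''. This cannot work: the lower bounds \eqref{wpd-1} on $\cW$ go in the wrong direction, and H\"older gives $\|f\|_p \le \|f\|_2^{2/p}\|f\|_\infty^{1-2/p}$, so the mixed norm $\|f\|_2^{2/p}\cW^{1-2/p}$ is generically \emph{larger} than $\|f\|_p$, not smaller. What the paper actually does (see the short argument following the statement of Theorem~\ref{mixed-norm-thm}) is use the \emph{upper} bound $\cW(f,B_R^{n+1})\lesssim\|f\|_\infty$ of Lemma~\ref{Linfty}, which yields
\[
\|\cFl f\|_{L^{p(n)}(B_\lambda^{n+1})}\lesssim \lambda^{(n-1)(\frac12-\frac1{p(n)})+\varepsilon}\|f\|_2^{2/p(n)}\|f\|_\infty^{1-2/p(n)}.
\]
This is only a \emph{restricted-type} estimate at the endpoint $p=p(n)$ (it gives the $L^p$ bound when $f$ is a characteristic function). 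The full range $p>p(n)$ then comes from real interpolation between this restricted-type bound and the trivial endpoint $\|\cFl f\|_\infty\lesssim \lambda^{(n-1)/2}\|f\|_\infty$. You should replace your one-line deduction with this two-step argument; the rest of your plan is sound.
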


\bigskip

\subsection{Wave packet decomposition}

Fix $0<\e<1/100$.
We will perform the wave packet decomposition for a function $f$ with $\supp\wh f\subset\A^n(1)$ at a scale $R\in[1,\la^{1-\e}]$.

By condition (\textbf{H1}), we see that $\partial_\xi\phi(\cdot, t; \xi)$ is a local diffeomorphism.
Indeed, for fixed $t,\xi$, the map $x\mapsto \partial_\xi \phi(x,t;\xi)$ has non-vanishing Jacobian.
By rescaling, we may assume $\partial_\xi\phi(\cdot,t;\xi)$ is a diffeomorphism on the domain $B_1^n$ for any $(t,\xi)\in [-1,1]\times \A^n(1)$. Therefore, it has an inverse function which we defined as follows.

\begin{definition}
    For $(t,\xi)\in [-1,1]\times \A^n(1)$, let $\Phi(u,t;\xi)$ be the unique solution to
    \begin{equation}\label{defPhi}
        \partial_\xi\phi(\Phi(u,t;\xi),t;\xi)=u.
    \end{equation}
Here, we may assume the equation holds for $u$ in a small neighborhood of the origin.
\end{definition}

Since $\phi$ is $1$-homogeneous in $\xi$, we have $\partial_\xi \phi(z;\xi)=\partial_\xi \phi(z;s\xi)$. Hence, 
\begin{equation}
\label{1-homo}
    \Phi(u,t;\xi)=\Phi(u,t;s\xi)
\end{equation}

By the inverse function theorem and conditions $(\textup{D1}_\bA),(\textup{D2}_\bA)$, we obtain favorable upper bounds on the $L^\infty$ norm of $\Phi$ and its derivatives.
Taking the derivative in $u$ in \eqref{defPhi}, we get $\partial^2_{\xi x} \phi\cdot \partial_u\Phi=I_n$.
Thus,
    \[\det\partial_u\Phi(u,t;\xi)\gtrsim 1.\] 
In other words, $\partial_u\Phi(u,t;\xi)$ is a quantitatively non-singular matrix. 

We define 
\begin{equation}
\nonumber
    \ga^\la(u,t;\xi):=\la \Phi(u/\la,t/\la;\xi). 
\end{equation}
Then $\ga^\la$ satisfies
\begin{equation}
\label{transformation-id}
    \partial_\xi\phi^\la(\ga^\la(u,t;\xi),t;\xi)=u.
\end{equation}
For fixed $(u;\xi)$, $t\mapsto (\ga^\la(u,t;\xi),t)$ is a curve, with $t$ ranging over $[-R,R]$.
Our wave packets will be certain non-isotropic neighborhoods of such curves.

\medskip

Now we present the wave packet decomposition for $\cFl f$ in $B_R^{n+1}$. 
In fact, we will perform the wave packet decomposition for $f$: $f=\sum_{T}f_T$ first. 
Then, by applying the linear operator $\cFl$, we get $\cFl f=\sum_T \cFl f_T$.

We begin by decomposing in frequency space.
Fix a maximally $R^{-1/2}$-separated subset of $B^{n-1}_1\times \{1\}$.
For each $\xi_\theta$ belonging to this subset, define
\begin{equation}
\label{partiiton-of-annulus}
    \theta=\{ (\xi',\xi_n)\in \A^n(1): |\xi'/\xi_n-\xi_\theta|\le R^{-1/2} \}.
\end{equation}
Then $\theta$ is roughly a $1\times R^{-1/2}\times\dots\times R^{-1/2}$-box, and $\xi_\theta$ is the center of $\theta$. 
We call such $\theta$ an \textbf{$R^{-1/2}$-cap} and denote the collection of these $\theta$ by $\Theta_{R^{-1/2}}$.
Note that $\Theta_{R^{-1/2}}$ form a finitely overlapping covering of $\A^{n}(1)$.
Choose $\{\psi_\theta(\xi)\}_{\theta\in\Theta_{R^{-1/2}}}$ to be a smooth partition of unity adapted to $\Theta_{R^{-1/2}}$, so that $\sum_{\theta\in\Theta_{R^{-1/2}}}\psi_\theta(\xi)=1$ for $\xi\in\A^n(1)$. 
Since $\supp\wh f\subset \A^n(1)$, we have the frequency decomposition
    \[\wh f =\sum_{\theta\in\Theta_{R^{-1/2}}}\psi_\theta \wh f. \]
We denote $f_\theta:=(\psi_\theta\wh f)^\vee$, and hence
\[ f=\sum_{\theta\in\Theta_{R^{-1/2}}}f_\theta. \]
\smallskip

Before performing decomposition in the physical space, we introduce a notation.
\begin{definition}
\label{dual-box}
Let $U\subset \R^n$ be a box of dimensions $a_1\times \dots\times a_n$.
Define $U^*$ to be the {\bf dual box} of $U$, which is a box centered at the origin of dimensions $a_1^{-1}\times \dots\times a_n^{-1}$, and with edges parallel to the corresponding edges of $U$.
\end{definition}

Now for a fixed $\theta\in\Theta_{R^{-1/2}}$, we cover $B_R^n$ by $1\times R^{1/2}\times\dots\times R^{1/2}$-slabs that are parallel to $\theta^*$.
Denote this collection of slabs by  
\begin{equation}
\label{T-flat}
    \T_\theta^\flat=\{T^\flat\}.
\end{equation}
Choose a set of non-negative smooth functions $\{\eta_{T^\flat}\}$ adapted to $\T^\flat_\theta$, so that $\eta_{T^\flat}$ decays rapidly outside $T^\flat$, $\sum \eta_{T^\flat}=1$ in $B_R^n$ and $\supp\wh \eta_{T^\flat}\subset \theta$.
Thus, inside $B_R^n$, we have
    \[\wh f=\sum_{\theta\in\Theta_{R^{-1/2}}}\sum_{T^\flat\in\T^\flat_\theta}\wh\eta_{T^\flat}*(\psi_\theta \wh f)+\rap(R)\|f\|_2. \]

\begin{figure}[ht]
\centering
\includegraphics[width=10cm]{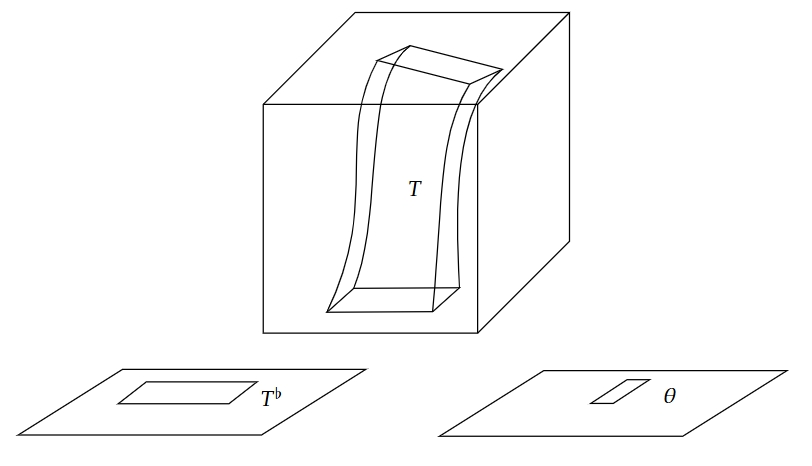}
\caption{Wave packet}
\label{wavepacket}
\end{figure}

Let $\de=\e^{1000}$, which is a small number to handle rapidly decaying tails. 
For each $T^\flat$, define
\begin{equation}
\label{single-wpt}
    T:=\{ (\ga^\la(u,t;\xi_\theta),t): u\in  R^\de T^\flat, |t|\le R \}.
\end{equation}
Here, $R^\de T^\flat$ is a non-isotropic dilate of $T^\flat$ of dimensions $R^{2\de}\times R^{1/2+\de}\times \dots\times R^{1/2+\de}$. Each $T$ is a ``curved plank" (see Figure \ref{wavepacket}).   
We denote $\T_\theta:=\{T: T^\flat\in\T^\flat_\theta\}$ and
    \[f_T:=\eta_{T^\flat}(\psi_\theta^\vee *f). \]
This gives the wave packet decomposition in $B_R^n$
    \[f=\sum_{\theta\in\Theta_{R^{-1/2}}}\sum_{T\in\T_\theta}f_T+\rap(R)\|f\|_2. \]
Therefore, in $B_R^{n+1}$, we obtain
    \[\cFl f=\sum_{\theta\in\Theta_{R^{-1/2}}}\sum_{T\in\T_\theta}\cFl f_T+\rap(R)\|f\|_2. \]

\begin{definition}[Curved plank]
\label{defgaV}
Given $\xi_0\in\A^n(1)$ and a box $V\subset B_R^n$, define
\begin{equation}\label{defgaVeq}
    \Ga_V(\xi_0;R):=\{ (\ga^\la(u,t;\xi_0),t):u\in V,|t|\le R \}.
\end{equation}
$\Ga_V(\xi_0;R)$ is called a curved plank with base $V$ and direction $\xi_0$.
\end{definition}

We primarily focus on the case where $V$ is a slab with dimensions $Rs^2 \times Rs \times \dots \times Rs$ for $s \in [R^{-1/2}, 1]$.
Next, we define the non-isotropic dilation for such slabs.

\begin{definition}
For a slab $U\subset \R^n$ of dimensions $Rs^2\times Rs\times \dots\times Rs$ with $s\in[R^{-1/2},1]$ and a constant $C>0$, define $CU$, the non-isotropic $C$-dilation of $U$, to be a slab of dimensions $C^2Rs^2\times CRs\times \dots\times CRs$ with the same center as $U$.

For two slabs $U_1, U_2\subset \R^n$, we say they are \textbf{comparable} if 
    \[\frac{1}{C}U_1\subset U_2\subset C U_1.\] 
Here, $C=C_\phi$ is a constant that may vary from line to line, but eventually only depends on $\phi$. 
%If $U_1, U_2$ are not comparable, we say they are \textbf{essentially distinct}. We may identify sets that are comparable. Therefore, we can also talk about the dual box for any convex set. 
\end{definition}

\begin{remark}
    {\rm We will also need the isotropic dilation. Namely, for a $a_1\times \dots \times a_n$-box $U$, the isotropic $C$-dilation of $U$ will be the box of dimensions $Ca_1\times \dots \times Ca_n$ with the same center as $U$. We will always specify when doing isotropic dilation; otherwise, the notation $CU$ always refer to the non-isotropic dilation.}
\end{remark}

We also define the dilation and comparability for curved planks. 
Since a curved plank is not convex, we will define these notions in terms of its base.

\begin{definition}
\label{distinct}
For a curved plank $\Ga_V(\xi_0;R)$ and a constant $C>0$, we define the (non-isotropic) $C$-dilation of $\Ga_V(\xi_0;R)$ to be
    \[C\Ga_V(\xi_0;R):=\{ (\ga^\la(u,t;\xi_0),t):u\in CV,|t|\le R  \}. \]
For two curved planks $\Ga_{V_1}(\xi_1;R), \Ga_{V_2}(\xi_2;R)$, we say they are \textbf{comparable} if 
\[\frac{1}{C}\Ga_{V_1}(\xi_1;R)\subset \Ga_{V_2}(\xi_2;R)\subset C \Ga_{V_1}(\xi_1;R)\] 
for some constant $C$ only depending on $\phi$. 
If $\Ga_{V_1}(\xi_1;R), \Ga_{V_2}(\xi_2;R)$ are not comparable, we say they are \textbf{distinct}. 
We may identify curved planks that are comparable.
\end{definition}

The next lemma concerns the geometry of curved planks.

\begin{lemma}\label{geometric}
Let $s\in[R^{-1/2},1]$. 
Let $V\subset B_R^n$ be a slab of dimensions $Rs^2\times Rs \times \dots\times Rs $. 
Then for $|t_0|\leq R$, each $t_0$-slice $\Ga_V(\xi_0;R)\cap \{t=t_0\} $ is comparable to an $Rs^2\times Rs\times \dots\times Rs$-slab.
\end{lemma}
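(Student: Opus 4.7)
The plan is to analyze the map $\Psi \colon u \mapsto \gal(u,t_0;\xi_0)$ on the base $V$, showing it is well-approximated by an affine map whose derivative is close to the identity; the image $\Psi(V)$ will then be comparable to the parallelepiped obtained by linearizing $\Psi$ at the center of $V$.

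First I would differentiate the defining identity \eqref{transformation-id} in $u$ to get $D\Psi(u) = \bigl(\partial^2_{\xi x}\phl(\Psi(u),t_0;\xi_0)\bigr)^{-1}$, and observe via the scaling $\partial^2_{\xi x}\phl(x,t;\xi) = \partial^2_{\xi x}\phi(x/\la,t/\la;\xi)$ and condition $(\textup{H1}_{\bfone})$ that $D\Psi(u) = I_n + O(\cpar)$ uniformly on $V$. Differentiating a second time and invoking the bounds on $\partial^3_{xx\xi}\phi$ from $(\textup{D1}_{\bfone})$, together with the chain-rule scaling $\gal = \la\Phi(\cdot/\la)$, I would derive the crucial second-derivative bound $|D^2\Psi(u)| \lesssim \la^{-1}$.

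Next, fix the center $u_0$ of $V$ and Taylor expand
\[ \Psi(u) = \Psi(u_0) + D\Psi(u_0)(u-u_0) + E(u), \qquad |E(u)| \lesssim \la^{-1}|u-u_0|^2. \]
For $u \in V$, $|u - u_0| \lesssim Rs$, so $|E(u)| \lesssim R^2 s^2/\la \le Rs^2$, using the standing hypothesis $R \le \la^{1-\e}$. Set $\tilde V := \Psi(u_0) + D\Psi(u_0)(V - u_0)$. Since $D\Psi(u_0)$ is within $O(\cpar)$ of $I_n$, $\tilde V$ is a parallelepiped with edge lengths $\sim Rs^2,Rs,\dots,Rs$, hence comparable to some $Rs^2 \times Rs \times \cdots \times Rs$-slab $S$. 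The Taylor bound yields $\Psi(V) \subset \tilde V + B(0, CRs^2) \subset C'S$, the last inclusion holding because $Rs^2$ is precisely the thin dimension of $S$. For the reverse inclusion, a contraction-mapping argument based on the same Taylor expansion solves $\Psi(u) = v$ with $u \in V$ for every $v$ in a constant shrinkage of $\tilde V$, giving $(1/C'')S \subset \Psi(V)$.

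The main obstacle is ensuring that the quadratic Taylor remainder $\sim R^2 s^2/\la$ does not exceed the thinnest dimension $Rs^2$ of the target slab, so that the linearization controls the image's shape all the way down to its smallest scale; this is exactly where the standing regime $R \le \la^{1-\e}$ is essential. Once this scale matching is secured, the two inclusions follow from standard perturbative arguments.
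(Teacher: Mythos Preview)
Your proposal is correct and follows essentially the same approach as the paper: Taylor expand $\gal(\cdot,t_0;\xi_0)$ about the center $u_0$ of $V$, use the scaling $\gal=\la\Phi(\cdot/\la)$ to get the second-derivative bound $O(\la^{-1})$, observe that the remainder $\la^{-1}(Rs)^2\le Rs^2$ matches the thin dimension, and conclude via the quantitative nonsingularity of $\partial_u\gal$. The paper is slightly terser---it simply notes the two sets are within distance $Rs^2$ of each other and that the linear image is comparable to an $Rs^2\times Rs\times\cdots\times Rs$-slab---whereas you spell out the reverse inclusion via a contraction argument; one minor point is that the third-derivative bound you cite lives in $(\textup{D2}_{\bfone})$ rather than $(\textup{D1}_{\bfone})$, though the paper bypasses this by working directly with $\Phi$.
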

   
\begin{proof}
Suppose $V$ is centered at $u_0$. 
We express each point in $V$ as $u_0+u$, where $u$ ranges over $V-u_0$, an $Rs^2\times Rs\times \dots\times Rs$-box centered at the origin. 

For fixed $(t_0,\xi_0)$, by applying Taylor's expansion in the $u$ variable, we have
\begin{equation}\label{taylorexpansion}
    \ga^\la(u_0+u,t_0;\xi_0)=\ga^\la(u_0,t_0;\xi_0)+\partial_u\ga^\la(u_0,t_0;\xi_0)\cdot u+O(\|\partial^2_{uu}\ga^\la(\cdot,t_0;\xi_0)\|_\infty)|u|^2. 
\end{equation} 
Since $\partial^2_{uu}\ga^\la(u_0,t_0;\xi_0)=\la^{-1}\partial^2_{uu}\Phi(u_0/\la,t_0/\la;\xi_0)$, whose $L^\infty$ norm is  $O(\la^{-1})$, 
    \[O(\|\partial^2_{uu}\ga^\la(\cdot,t_0;\xi_0)\|_\infty)|u|^2\lesssim \la^{-1}(Rs)^2\le Rs^2. \]  
Therefore, the sets $\Ga_V(\xi_0;R)\cap\{t=t_0\}$ and
    \[\ga^\la(u_0,t_0;\xi_0)+\{\partial_u\ga^\la(u_0,t_0;\xi_0)\cdot x:x\in V-u_0\} \]
are within a distance $\le Rs^2$ from each other.
Since $\partial_u\ga^\la(u_0,t_0;\xi_0)$ is quantitative non-singular, the latter is comparable to $V-u_0$ which is a $Rs^2 \times Rs\times \dots\times Rs$-slab.
Therefore, $\Ga_V(\xi_0;R)\cap \{t=t_0\}$ is also comparable to such a slab.
\qedhere

\end{proof}

\begin{remark}
    {\rm  From the above lemma, we see that in Figure \ref{wavepacket}, each horizontal slice of $T$ is a slab of dimensions $R^{2\de}\times R^{1/2+\de}\times \dots\times R^{1/2+\de}$. Though, they may not be parallel.}
\end{remark}

The next lemma concerns the perturbation of a curved plank resulting from choosing different directions.
It was essentially proved in \cite{gao2023square}.  

\begin{lemma}\label{lemuncertainty}
Let $s\in[R^{-1/2}, 1]$, and let $\tau\in\Theta_{s}$. 
Define $V_{\tau,R}:=(Rs^2)\tau^*$ to be an isotropic dilate of $\tau^*$, which is an $Rs^2\times Rs\times \dots\times Rs$-box.
Let $V\subset B_R^n$ be a translated copy of $V_{\tau,R}$. 
Then for any $\xi_1,\xi_2\in\tau$, the two curved planks $\Ga_V(\xi_1;R)$ and $\Ga_V(\xi_2;R)$ are comparable. 
In other words, $\Ga_V(\xi_1;R)\subset \Ga_{CV}(\xi_2;R)$ for some constant $C>0$ and vice versa.
\end{lemma}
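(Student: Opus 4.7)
My plan is to establish $\Ga_V(\xi_1;R) \subset \Ga_{CV}(\xi_2;R)$ for some $C = C_\phi$; the reverse inclusion then follows by swapping the roles of $\xi_1$ and $\xi_2$. Given $y = \gal(u,t;\xi_1)$ with $u \in V$ and $|t| \leq R$, the defining relation \eqref{transformation-id} forces $u' := \partial_\xi \phl(y,t;\xi_2)$ to be the unique value satisfying $\gal(u',t;\xi_2) = y$; likewise $u = \partial_\xi \phl(y,t;\xi_1)$, so the problem reduces to showing that
\[
    u' - u = \partial_\xi \phl(y,t;\xi_2) - \partial_\xi \phl(y,t;\xi_1)
\]
lies in a box of dimensions $Rs^2 \times Rs \times \cdots \times Rs$ aligned with $V$, with the short ($Rs^2$) direction along $\hat\xi_\tau^* := (\xi_\tau,1)/|(\xi_\tau,1)|$.

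Since $\partial_\xi \phi$ is $0$-homogeneous in $\xi$ (a consequence of \eqref{1-homo}), I first normalize $\xi_1,\xi_2$ to lie on $S^{n-1}$, after which $|\xi_1 - \xi_2| \lesssim s$ because $\tau \cap S^{n-1}$ is contained in an $O(s)$-cap around $\hat\xi_\tau^*$. Writing
\[
    u' - u = \int_0^1 \partial^2_{\xi\xi}\phl(y,t;\xi(\sigma)) \cdot \xi'(\sigma)\, d\sigma
\]
for a smooth curve $\xi(\sigma) \subset \tau \cap S^{n-1}$ joining $\xi_1$ and $\xi_2$ with $|\xi'(\sigma)| \lesssim s$ and $\xi'(\sigma) \perp \xi(\sigma)$, I invoke \eqref{phi0}, which forces $\partial^2_{\xi\xi}\phi(0;\xi) \equiv 0$. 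A one-step Taylor expansion in $z$, combined with the derivative bounds in the type $\mathbf{1}$ conditions, then gives $|\partial^2_{\xi\xi}\phi(z;\xi)| \lesssim |z|$ on $B_1^{n+1}$, so by rescaling $|\partial^2_{\xi\xi}\phl(y,t;\xi)| \lesssim R$ for $(y,t) \in B_R^{n+1}$. This already yields the isotropic bound $|u'-u| \lesssim Rs$, which covers the long directions of $V$.

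The main obstacle, and the crux of the argument, is proving the improved $O(Rs^2)$ bound on the component of $u'-u$ along $\hat\xi_\tau^*$. For this I exploit Euler's identity for $1$-homogeneous functions, which yields $\xi \cdot \partial^2_{\xi\xi}\phi(y,t;\xi) = 0$; since $\partial^2_{\xi\xi}\phl$ is symmetric, its image lies in $\xi(\sigma)^\perp$, so the integrand $\partial^2_{\xi\xi}\phl(y,t;\xi(\sigma)) \cdot \xi'(\sigma)$ is perpendicular to $\xi(\sigma)$. As $|\xi(\sigma) - \hat\xi_\tau^*| \lesssim s$, every $v \perp \xi(\sigma)$ satisfies $|v \cdot \hat\xi_\tau^*| \lesssim s|v|$; hence the $\hat\xi_\tau^*$-component of the integrand is $\lesssim Rs \cdot s = Rs^2$. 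Integrating in $\sigma$ shows $u' - u$ has total magnitude $O(Rs)$ but only $O(Rs^2)$ along $\hat\xi_\tau^*$, placing it inside a box of dimensions $Rs^2 \times Rs \times \cdots \times Rs$ aligned with $V$. This yields $u' \in CV$ for some $C=C_\phi$, completing the desired inclusion. Without the Euler cancellation one would be stuck with the isotropic estimate $|u'-u| \lesssim Rs$, which is too weak to place $u'$ in any bounded non-isotropic dilate of $V$.
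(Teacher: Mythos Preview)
Your reduction is identical to the paper's: both arguments reduce the inclusion $\Ga_V(\xi_1;R)\subset\Ga_{CV}(\xi_2;R)$ to the displacement estimate
\[
    \partial_\xi\phl(z;\xi_1)-\partial_\xi\phl(z;\xi_2)\in C\,V_{\tau,R}
    \qquad\text{for all }z\in B_R^{n+1},\ \xi_1,\xi_2\in\tau,
\]
which is precisely \eqref{uncertainty}. The difference is that the paper stops there and cites Lemma~4.3 of \cite{gao2023square}, whereas you prove this bound directly. Your argument is correct and self-contained: the isotropic $O(Rs)$ bound comes from $\partial_\xi^2\phi(0;\xi)=0$ (via \eqref{phi0}) together with the type~$\mathbf{1}$ derivative control, and the crucial $O(Rs^2)$ bound in the radial direction follows cleanly from the Euler relation $(\partial^2_{\xi\xi}\phi)\,\xi=0$ for the $1$-homogeneous phase, which forces the Hessian's image into $\xi(\sigma)^\perp$ and hence costs an extra factor of $s$ when projected onto $\xi_\tau/|\xi_\tau|$. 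This is exactly the mechanism behind the cited lemma, so you have essentially reproduced its proof.

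One cosmetic point: your notation $\hat\xi_\tau^*:=(\xi_\tau,1)/|(\xi_\tau,1)|$ is garbled, since in the paper's conventions $\xi_\tau$ already lies in $B_1^{n-1}\times\{1\}\subset\ZR^n$; you simply mean the unit radial vector $\xi_\tau/|\xi_\tau|$, which is indeed the short axis of $V_{\tau,R}$.
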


\begin{proof}
It suffices to show that there exists a large constant $C>0$ such that
\begin{equation}\label{toshowinclusion}
     \{\ga^\la(u,t;\xi_2):u\in V\}\subset \{\ga^\la(u,t;\xi_1):u\in CV\}. 
\end{equation}

Let $u^\la(z;\xi):=\partial_\xi\phl(z;\xi)$, so $u^\la(\gal (u,t;\xi),t;\xi)=u$ (recall \eqref{transformation-id}). 
To obtain \eqref{toshowinclusion}, we only need to show that for any $u\in V$,
    \[u^\la(\gal(u,t;\xi_2),t;\xi_1)\subset CV. \]
Since $u^\la(\gal(u,t;\xi_2),t;\xi_2)=u\in V$, it suffices to show
\begin{equation}\label{uncertainty}
    u^\la(z;\xi_1)-u^\la(z;\xi_2)\subset 4V_{\tau,R}.
\end{equation} 
for all $z\in B_R^{n+1}$ and all $\xi_1,\xi_2\in \tau$. 
This is true by Lemma 4.3 in \cite{gao2023square}.
\qedhere

\end{proof}

Using the notation of curved plank, each wave packet $T\in \T_\theta$ can be written as
    \[T= \Ga_{R^\de T^\flat}(\xi_\theta;R), \]
a curved plank with base $R^\de T^\flat$ and direction $\xi_\theta$. We call each such $T$  an \textbf{$R$-plank}.
Note that each $t$-slice $T\cap \{t=t_0\}$ of $T$ is morally an $R^{2\de} \times R^{1/2+\de}\times \dots\times R^{1/2+\de}$-slab. 
Moreover, $\Ga_{R^\de T^\flat}(\xi_\theta;R)$ and $\Ga_{R^{\de}T^\flat}(\xi;R)$ are comparable for all $\xi\in\theta$.

\medskip

\begin{lemma}\label{lemsuppT}
    $(\cFl f_T)\Id_{B_R^{n+1}}$ is essentially supported in $ T$. In other words, \[(\cFl f_T)\Id_{B_R^{n+1}\setminus  T}=\rap(R)\|f\|_2.\]
\end{lemma}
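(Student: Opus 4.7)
The plan is to establish the pointwise bound $|\cFl f_T(x,t)|\le \rap(R)\|f\|_2$ for every $(x,t)\in B_R^{n+1}\setminus T$, by viewing $\cFl f_T$ as an integral against $f_T$ of an oscillatory kernel and proving rapid decay of that kernel away from $T$. Unfolding definitions and swapping integrations,
\[
\cFl f_T(x,t)=\int f_T(y)\,K_\theta(x,t;y)\,dy+\rap(R)\|f\|_2,\qquad K_\theta(x,t;y):=\int e^{i\Psi(\xi)}a^\la(x,t;\xi)\widetilde\chi_\theta(\xi)\,d\xi,
\]
where $\Psi(\xi)=\phl(x,t;\xi)-y\cdot\xi$ and $\widetilde\chi_\theta$ is a smooth bump equal to $1$ on $2\theta$ and supported in $3\theta$. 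Inserting $\widetilde\chi_\theta$ costs only $\rap(R)\|f\|_2$ because $\widehat{f_T}=\widehat{\eta_{T^\flat}}*(\psi_\theta\widehat{f})$ is essentially supported in $2\theta$ (the spread $\widehat{\eta_{T^\flat}}$ lives on the dual scale of $T^\flat$, which is $\theta$-size). Combined with the rapid decay of $\eta_{T^\flat}$ off $T^\flat$ and the elementary bound $\|\psi_\theta^\vee*f\|_\infty\lesssim R^{-(n-1)/4}\|f\|_2$, the $y$-integration may be further restricted to $y\in CT^\flat$ modulo $\rap(R)\|f\|_2$. It therefore suffices to show $|K_\theta(x,t;y)|=\rap(R)$ for $(x,t)\in B_R^{n+1}\setminus T$ and $y\in CT^\flat$.

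Set $u^\la:=\partial_\xi\phl(x,t;\xi_\theta)$ and $\omega_\theta:=\xi_\theta/|\xi_\theta|$. Using $\partial_\xi^\alpha\phi(0;\xi)\equiv 0$ and Taylor expansion, one gets $|\partial^2_{\xi\xi}\phl|\lesssim R$ on $B_R^{n+1}\times\A^n(1)$, and the $0$-homogeneity of $\partial_\xi\phl$ in $\xi$ (so that $\xi$ is a null direction of this Hessian) yields, for every $\xi\in 3\theta$,
\[
\partial_\xi\phl(x,t;\xi)=u^\la+O(R^{1/2})\text{ in directions perpendicular to }\xi_\theta,\qquad \omega_\theta\cdot\partial_\xi\phl(x,t;\xi)=\omega_\theta\cdot u^\la+O(1).
\]
This is essentially Lemma \ref{lemuncertainty} together with radial invariance. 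The hypothesis $(x,t)\notin T$ translates to $u^\la\notin R^\de T^\flat$; since $T^\flat$ has short side $1$ along $\xi_\theta$ and long sides $R^{1/2}$ in $\xi_\theta^\perp$, either (A) some perpendicular coordinate of $u^\la-y_T$ has modulus $\ge R^{1/2+\de}$, or (B) the $\omega_\theta$-component of $u^\la-y_T$ has modulus $\ge R^{2\de}$, where $y_T$ denotes the center of $T^\flat$.

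In Case (A), for $y\in CT^\flat$ the corresponding perpendicular coordinate of $y-y_T$ is $O(R^{1/2})$, so $|e_j\cdot\partial_\xi\Psi|\gtrsim R^{1/2+\de}$ uniformly on $3\theta$; iterated integration by parts using $L_j=(i\partial_{e_j}\Psi)^{-1}\partial_{e_j}$ gains $R^{1/2}/R^{1/2+\de}=R^{-\de}$ per step, where the $R^{1/2}$ reflects the cost of differentiating $\widetilde\chi_\theta$ in the $e_j$-direction. In Case (B), $|\omega_\theta\cdot\partial_\xi\Psi|\gtrsim R^{2\de}$ on $3\theta$, and IBP in the radial direction $\omega_\theta$ (whose natural scale inside $\theta$ is $1$) gains $R^{-2\de}$ per step. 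Iterating $N$ times in either case produces $\rap(R)$. The main obstacle is Case (B): $\partial_\xi\phl(x,t;\xi)$ does not genuinely vary in the radial direction across $\theta$ (by $0$-homogeneity), so one cannot invoke an ``uncertainty-type'' spread of the kernel in $\omega_\theta$ — instead the entire radial gap $\ge R^{2\de}$ between $u^\la$ and $y$ must itself drive the non-stationary phase estimate. Once this case is handled, the rest is routine anisotropic non-stationary phase, and combining with the $y$-truncation step yields the lemma.
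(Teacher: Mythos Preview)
Your proof is correct and follows essentially the same route as the paper: write $\cFl f_T$ as a $y$-integral against the oscillatory kernel $K_\theta(x,t;y)=\int e^{i(\phl-y\cdot\xi)}a^\la\widetilde\chi_\theta\,d\xi$ (the paper calls this $\overline{\check G_{x,t}(y)}$ after a Plancherel step), then kill the kernel by non-stationary phase whenever $\partial_\xi\phl(x,t;\xi_\theta)-y$ lies outside a dilate of $\theta^*$. The paper outsources the non-stationary phase to \cite[Lemma 5.4]{guth2019sharp}, whereas you make the anisotropy explicit by splitting into the perpendicular case (A) and the radial case (B); your observation that the $0$-homogeneity of $\partial_\xi\phl$ forces $\omega_\theta\cdot(\partial_\xi\phl(x,t;\xi)-u^\la)=O(1)$ on $3\theta$ is exactly what makes Case (B) go through, and the same Euler-identity trick shows $\omega_\theta^{\otimes k}\cdot\partial_\xi^{k+1}\phl=O(1)$ so the higher IBP iterates are harmless.

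One small imprecision: truncating the $y$-integral to $y\in CT^\flat$ for a \emph{fixed} constant $C$ only produces an $O(C^{-N})$ tail from the Schwartz decay of $\eta_{T^\flat}$, not $\rap(R)$. You should instead restrict to $y\in R^{\de/2}T^\flat$ (as the paper does); the gaps $R^{1/2+\de}-R^{1/2+\de/2}\gtrsim R^{1/2+\de}$ and $R^{2\de}-R^{\de}\gtrsim R^{2\de}$ are still large enough for your IBP gains in (A) and (B), so nothing else in your argument changes.
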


\begin{proof}
Note that
    \[\cFl f_T(x,t)=\int_{\R^n} e^{i\phi^\la(x,t;\xi)}a^\la(x,t;\xi) \wh\eta_{T^\flat}*(\psi_\theta\wh f)\mathrm{d}\xi. \]
Since $a^\la$ is a smoothing symbol and $\wh f_T$ is supported in $2\theta$, we have

    \[\cFl f_T(x,t)= \int_{\R^n} \Big(e^{i\phi^\la(x,t;\xi)} a^\la\wt\psi_\theta\Big) \wh\eta_{T^\flat}*(\psi_\theta\wh f)\mathrm{d}\xi+\rap(R)\|f\|_2,  \]
where $\wt \psi_\theta$ is a bump function on $2\theta$. 
Let $G_{x,t}(\xi)$ be so that $\overline{G_{x,t}(\xi)}=e^{i\phi^\la(x,t;\xi)} a^\la\wt\psi_\theta$.
By Plancherel,
\begin{equation}\label{planch}
    \cFl f_T(x,t)= \int_{\R^n} \overline{\check G_{x,t} (y)} \Big(\eta_{T^\flat}(\psi_\theta^\vee *f)\Big)(y) \mathrm{d}y+\rap(R)\|f\|_2. 
\end{equation} 

Let us compute 
\[\overline{\check G_{x,t}(y)}=\int_{\R^n} e^{i(\langle y,\xi\rangle-\phi^\la(x,t;\xi))} \overline{a^\la\wt\psi_\theta} \mathrm{d}\xi.\]
Via the change of variable $\xi=\xi_\theta+\eta$, we have
\begin{equation}
\nonumber
    \overline{\check G_{x,t}(y)}=\int_{\R^n} e^{i(\langle y,\xi_\theta+\eta\rangle-\phi^\la(x,t;\xi_\theta+\eta))} \overline{a^\la(z;\xi_\theta+\eta)\wt\psi_\theta(\xi_\theta+\eta)} \mathrm{d}\xi. 
\end{equation} 
By the method of non-stationary phase (see  \cite[Lemma 5.4]{guth2019sharp} for a similar argument),  $|\check G_{x,t}(y)|=\rap(R)$ if 
    \[\partial_\xi\phl(x,t;\xi_\theta+\eta)-y\notin \theta^*. \]
Recall \eqref{uncertainty}, which implies that $\partial_\xi\phl(x,t;\xi_\theta+\eta)-\partial_\xi\phl(x,t;\xi_\theta)\in4\theta^*$.
Therefore,  $|\check G_{x,t}(y)|=\rap(R)$ if 
\begin{equation}\label{Gy}
    \partial_\xi\phl(x,t;\xi_\theta)-y\notin 5\theta^*. 
\end{equation} 

\smallskip

Now we are ready to prove the lemma. If $(x_0,t_0)\in B_R^{n+1}\setminus  T$, then 
\[(x_0,t_0)\notin (T\cap \{t=t_0\})=\{\ga^\la(u,t_0;\xi_\theta): u\in R^{\de} T^\flat\}.\]
By applying $\partial_\xi \phi^\la(\cdot,\cdot;\xi_\theta)$ to both sides  (recall \eqref{transformation-id}), we see that the above is a consequence of
\begin{equation}\label{notinT}
    \partial_\xi \phi^\la(x_0,t_0;\xi_\theta)\notin R^{\de}T^\flat. 
\end{equation} 
We use \eqref{planch} to show $\cF^\la f_T(x_0,t_0)=\rap(R)\|f\|_2$:
Since $\eta_{T^\flat}$ decreases rapidly outside $R^{\de/2} T^\flat$, it suffices to show $|\check G_{x_0,t_0}(y)|=\rap(R)$ for $y\in R^{\de/2} T^\flat$.
This boils down to verifying \eqref{Gy}, or equivalently,  $\partial_\xi\phi^\la(x_0,t_0;\xi_\theta)\notin y+5\theta^*\subset 5  R^{\de/2}T^\flat$, since $T^\flat$ is a translated copy of $\theta^*$ and since $y\in T^\flat$. 
However, this is given by \eqref{notinT}.
\end{proof}

\bigskip

\subsection{Comparing wave packets at different scales}

Next, we discuss the wave packet decomposition for $\cFl f$ inside a ball not necessarily centered at the origin. 
Fix $z_0=(x_0,t_0)\in B^{n+1}_\la$ and consider the ball $B_R^{n+1}(z_0)$. 
Define $\wt f$ so that
    \[(\wt f)^\wedge (\xi):=e^{i\phi^\la(z_0;\xi)}\wh f(\xi). \]
As a result,
    \[\cFl f(z)=\wt \cF^\la \wt f(\wt z)\ \textup{~for~}\wt z=z-z_0, \]
where $\wt \cF^\la$ is the Fourier integral operator whose phase $\wt\phi^\la$ and amplitude $\wt a^\la$ are given by
    \[\wt \phi(z;\xi):=\phi(z+\frac{z_0}{\la};\xi)-\phi(\frac{z_0}{\la};\xi)\textup{~and~}\wt a(z;\xi):=a(z+\frac{z_0}{\la};\xi). \]
If $z\in B^{n+1}_R(z_0)$, then $\wt z\in B^{n+1}_R$, and we can therefore apply the wave packet decomposition for $\wt \cF^\la \wt f$ inside $B^{n+1}_R$:
    \[\wt \cF^\la \wt f=\sum_{\wt T} \wt \cF^\la \wt f_{\wt T}+\rap(R)\|f\|_2. \]
Each $\wt T$ is given by the collection of $\wt z\in B^{n+1}_R$ satisfying
    \[\partial_\xi\phi(\frac{\wt z}{\la}+\frac{z_0}{\la};\xi_\theta)=\frac{v}{\la}+\partial_\xi\phi(\frac{z_0}{\la};\xi_\theta), \textup{~for~some~}v\in R^\de T^\flat. \]
Recall \eqref{T-flat} that $\{T^\flat\}$ are translated copies of $\theta^*$ that cover $B_R^n$.

\smallskip

For a given $z_0$, define
\begin{equation}\label{defT}
    T(z_0):=\Big\{\Big(\ga^\la(v+\partial_\xi\phi^\la(z_0;\xi_\theta),t;\xi_\theta),t\Big): v\in R^\de T^\flat, |t-t_0|\le R \Big\}. 
\end{equation}

\begin{remark}\label{replacedirection}
{\rm
If we replace $\xi_\theta$ by any $\xi\in \theta$ in the definition of $T(z_0)$ in \eqref{defT}, then the resulting curved plank is comparable to the original curved plank defined using $\xi_\theta$.
This is a consequence of Lemma \ref{lemuncertainty}.
We will use this fact later when comparing wave packets in different scales.

}
\end{remark}

Thus, the function
    \[\wt \cF^\la \wt f_{\wt T}(z-z_0) \]
is essentially supported in $ T(z_0)$ when restricted to $B^{n+1}_R(z_0)$.
We denote the collection of planks $\{T(z_0): T^\flat\in\T^\flat_\theta\}$ as 
\begin{equation}
\label{T-theta-z0}
    \T_\theta (z_0):=\{T(z_0): T^\flat\in\T^\flat_\theta\}.
\end{equation}

For each $T\in\bigcup_\theta\T_\theta(z_0)$, we also define
\begin{equation}
\label{f-T-z0}
    \wh f_T(\xi):=e^{-i\phi^\la(z_0;\xi)}(\wt f_{\wt T})^\wedge(\xi), 
\end{equation}
where $T^\flat=\wt T^\flat$.
Under this notation,  
    \[\cFl f=\sum_{T\in\bigcup_\theta \T_\theta(z_0)}\cFl f_T+\rap(R)\|f\|_2 \]
in $B^{n+1}_R(z_0)$. 
For each $T\in\T_\theta(z_0)$, we remark that $(\cFl f_T) \Id_{B_R^{n+1}(z_0)}$ is essentially supported in $R^\de T$, $f_T$ is essentially supported in $T^\flat$, and $\wh f_T$ is supported in $2\theta$. 

\smallskip

We have the following lemma regarding the comparison of wave packets at different scales. Its counterpart can be found in \cite[Lemma 9.1]{guth2019sharp}.
\begin{lemma}\label{lemcomparewp}
Let $r\in[1,R)$, and let $\theta\in\Theta_{R^{-1/2}}$, $\wt \theta\in \Theta_{r^{-1/2}}$. 
Suppose $B_{r}^{n+1}(z_0)\subset B_R^{n+1}$, and suppose $T\in\T_\theta$, $\wt T\in\T_{\wt\theta}(z_0)$. 
If either  $\theta \not\subset 4\wt\theta$, or $100T\cap 100\wt T=\emptyset$, then
    \[\Big(f_T\Big)_{\wt T}=\rap(R) \|f\|_2. \]
\end{lemma}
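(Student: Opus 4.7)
The plan is to treat the two hypotheses separately, and in both cases reduce to showing $\|(\wt f_T)_{\wt T}\|_2 = \rap(R)\|f\|_2$; since the relating factor $e^{\pm i\phl(z_0;\cdot)}$ is unimodular, $\|(\wt f_T)_{\wt T}\|_2 = \|(f_T)_{\wt T}\|_2$. Let $\wt\cFl$ denote the shifted FIO with $\wt\cFl\wt g(z') = \cFl g(z_0+z')$, which is of the same symbol-theoretic type as $\cFl$.

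For the frequency-disjoint case $\theta\not\subset 4\wt\theta$, I argue by Plancherel. By definition $(\wt f_T)_{\wt T} = \eta_{\wt T^\flat}(\psi_{\wt\theta}^\vee * \wt f_T)$, so $\|(\wt f_T)_{\wt T}\|_2 \lesssim \|\psi_{\wt\theta}^\vee * \wt f_T\|_2 = \|\psi_{\wt\theta}\,\wh{\wt f_T}\|_2$. Now $\wh{\wt f_T}(\xi) = e^{i\phl(z_0;\xi)}\wh{f_T}(\xi)$ is essentially supported in $2\theta$, while $\psi_{\wt\theta}$ is supported in $\wt\theta$. The hypothesis $\theta\not\subset 4\wt\theta$ combined with $R^{-1/2}\le r^{-1/2}$ (so $\theta$ is narrower than $\wt\theta$) forces $2\theta\cap\wt\theta$ to be essentially empty; the Schwartz tails of $\psi_{\wt\theta}$ then give the desired rapid decay.

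For the space-time-disjoint case $100T\cap 100\wt T=\varnothing$, I apply Lemma \ref{lemsuppT} at both scales $R$ and $r$. At scale $R$, $\cFl f_T$ is essentially supported on $T\cap B^{n+1}_R$; the shifting identity then gives that $\wt\cFl \wt f_T$ is essentially supported on $(T-z_0)\cap B^{n+1}_r$, so the hypothesis makes $\wt\cFl\wt f_T = \rap(R)\|f\|_2$ on $(\wt T-z_0)\cap B^{n+1}_r$. At scale $r$, the lemma applied to $\wt\cFl$ gives that each scale-$r$ wave packet $\wt\cFl(\wt f_T)_{\wt T'}$ is essentially supported on $R^\delta(\wt T'-z_0)$. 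Combining with the $L^2$ energy identity $\|\wt\cFl g\|_{L^2(B^{n+1}_r)}\sim r^{1/2}\|g\|_2$ and the almost-orthogonality $\sum_{\wt T'}\|(\wt f_T)_{\wt T'}\|_2^2 \sim \|\wt f_T\|_2^2$, restriction to $R^\delta(\wt T-z_0)$ isolates the $\wt T'=\wt T$ contribution up to $\rap(R)$ errors, yielding
\[
r^{1/2}\|(\wt f_T)_{\wt T}\|_2 \sim \|\wt\cFl(\wt f_T)_{\wt T}\|_{L^2(R^\delta(\wt T-z_0))} \lesssim \|\wt\cFl\wt f_T\|_{L^2((\wt T-z_0)\cap B^{n+1}_r)} + \rap(R)\|f\|_2 = \rap(R)\|f\|_2.
\]

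The principal technical difficulty is rigorously justifying the $L^2$ almost-orthogonality and the isolation of the $\wt T'=\wt T$ term: distinct scale-$r$ planks overlap through their Schwartz tails, and many $\wt T'$ can pass through a fixed space-time point, so the cumulative contribution from off-diagonal terms must be summed carefully using the rapid decay outside $R^\delta(\wt T'-z_0)$. An alternative self-contained route is to write $(f_T)_{\wt T}(x) = \int H(x,y)\,f_T(y)\,dy$ as a double Fourier integral in the frequency variables and apply non-stationary phase, showing that under $100T\cap 100\wt T=\varnothing$ the critical-point condition $x-y\approx\partial_\xi\phl(z_0;\xi_{\wt\theta})$ fails uniformly on $y\in T^\flat$ and $x$ in the essential support of $(f_T)_{\wt T}$.
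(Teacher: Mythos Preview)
Your treatment of the frequency-disjoint case $\theta\not\subset 4\wt\theta$ is correct and matches the paper's argument exactly.

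For the space-disjoint case, your primary route through Lemma \ref{lemsuppT} and the $L^2$ energy identity has a genuine gap. The localization of $\wt\cFl(\wt f_T)_{\wt T'}$ to $\wt T'-z_0$ via Lemma \ref{lemsuppT} applied at scale $r$ produces an error of size $\rap(r)\|f\|_2$, not $\rap(R)\|f\|_2$: the Fourier support of $(\wt f_T)_{\wt T'}$ has been spread to the $r^{-1/2}$-cap $2\wt\theta'$ by the multiplication with $\eta_{\wt T'^\flat}$, so non-stationary phase only yields decay in $r$. Since the lemma allows any $r\in[1,R)$, this is fatal when $r$ is much smaller than $R$. Separately, even granting $\rap(r)=\rap(R)$, the isolation step you flag as the ``principal technical difficulty'' is not resolved: on $\wt T-z_0$ the sum over the $O(1)$ neighbouring $\wt T'$ is small, but these neighbours overlap there, and smallness of a sum of non-orthogonal terms does not give smallness of each term without further input. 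The $L^2$ lower bound $\|\wt\cFl g\|_{L^2(B_r)}\gtrsim r^{1/2}\|g\|_2$ you invoke is also not established in the paper.

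The alternative route you sketch at the end is in fact the paper's argument, and it avoids both problems. The paper writes
\[
e^{i\phl(z_0;D)}f_T(x)=\int_{\R^n}\overline{\check G_{z_0,x}(y)}\,f_T(y)\,dy+\rap(R)\|f\|_2,
\]
where $\overline{G_{z_0,x}(\xi)}=e^{i(\phl(z_0;\xi)+\langle x,\xi\rangle)}\wt\psi_\theta$ with $\wt\psi_\theta$ a bump on the $R^{-1/2}$-cap $2\theta$. Non-stationary phase then gives $|\check G_{z_0,x}(y)|=\rap(R)$ whenever $\partial_\xi\phl(z_0;\xi_\theta)+x-y\notin 5\theta^*$; crucially the decay is in $R$ because the amplitude lives on the fine cap $\theta$, not on $\wt\theta$. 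One then checks, using the parametrization \eqref{defT} of $\wt T$ and Remark \ref{replacedirection}, that for $x\in r^{\de/2}\wt T^\flat$ and $y\in R^{\de/2}T^\flat$ the condition $2R^{\de/2}T^\flat\cap(\partial_\xi\phl(z_0;\xi_\theta)+r^{\de/2}\wt T^\flat)=\varnothing$ follows from $100T\cap 100\wt T=\varnothing$. This is purely a computation on the bases $T^\flat,\wt T^\flat$ and requires no energy identity or cross-term control.
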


\begin{proof}
For simplicity, we denote $e^{i\phl(z_0;D)}g:= (e^{i\phl(z_0;\cdot)}\wh g)^\vee$.
Recall \eqref{f-T-z0} for the definition of the wave packet decomposition inside $B_r^{n+1}(z_0)$, we have
    \[ e^{i\phl(z_0;D)} \Big(f_{T}\Big)_{\wt T}=\eta_{\wt T^\flat}\Big(\psi_{\wt\theta}^\vee* (e^{i\phl(z_0;D)}f_{T})\Big).\]
Note that $\supp(e^{i\phl(z_0;D)}f_{T})^\wedge=\supp \wh f_{T}$, which is essentially contained in $ 2 \theta$.
Since $\supp\psi_{\wt\theta}\subset \wt\theta$, if $\theta\not\subset 4\wt\theta$, we have $ e^{i\phl(z_0;D)} \Big(f_{T}\Big)_{\wt T}=\rap(R)\|f\|_2$, which implies $\Big(f_{T}\Big)_{\wt T}=\rap(R)\|f\|_2$.

\smallskip

Next, we assume $\theta\subset 4\wt \theta$ and $10T\cap \wt T=\emptyset$. 
Compute
\begin{align*}
    e^{i\phl(z_0;D) }f_T(x)&=\int_{\R^n} e^{i(\phl(z_0;\xi)+\langle x,\xi\rangle)} \wh \eta_{T^\flat}*(\psi_\theta \wh f)\mathrm{d}\xi\\
    &=\int_{\R^n} \Big(e^{i(\phl(z_0;\xi)+\langle x,\xi\rangle)}\wt \psi_\theta\Big) \wh \eta_{T^\flat}*(\psi_\theta \wh f)\mathrm{d}\xi+\rap(R)\|f\|_2,
\end{align*}
where $\wt \psi_\theta$ is a bump function on $2\theta$. 
Introduce $G_{z_0,x}(\xi)$ such that $\overline{G_{z_0,x}(\xi)}=e^{i(\phl(z_0;\xi)+\langle x,\xi\rangle)}\wt \psi_\theta$.
By Plancherel,
\begin{equation}\label{planch2}
    e^{i\phl(z_0;D) }f_T(x)=\int_{\R^n}\overline{\check G_{z_0,x}(y)} \Big( \eta_{T^\flat}(\psi_\theta^\vee*f) \Big)(y)\mathrm{d}y+\rap(R)\|f\|_2.
\end{equation}
Let us compute
    \[\check G_{z_0,x}(y)=\int_{\R^n}e^{i(-\phl(z_0;\xi)+\langle y-x,\xi\rangle )}\wt\psi_\theta\mathrm{d}\xi. \]
By the same argument as in the proof of Lemma \ref{lemsuppT}, $|\check G_{z_0,x}(y)|=\rap(R)$ if
\begin{equation}\label{Hy}
    \partial_\xi\phl(z_0;\xi_\theta)+x-y\notin 5\theta^*.
\end{equation}
To show $\eta_{\wt T^\flat}\Big(\psi_{\wt\theta}^\vee* (e^{i\phl(z_0;D)}f_{T})\Big)=\rap(R)\|f\|_2$, we just need to show $\eqref{planch2}=\rap(R)\|f\|_2$ for $x\in r^{\de/2}\wt T^\flat$.
This boils down to show $|\check G_{z_0,x}(y)|=\rap(R)$ for $x\in r^{\de/2}\wt T^\flat, y\in R^{\de/2}T^\flat$.
By \eqref{Hy}, it suffices to show 
\begin{equation}
\label{lem-wpt-2-scales-eq1}
    \partial_\xi\phl(z_0;\xi_\theta)+x-y\notin \theta^*\ \textup{~for~all~}x \in r^{\de/2}\wt T^\flat, y\in R^{\de/2}T^\flat.
\end{equation}

Since $T^\flat$ is a translated copy of $\theta^*$, \eqref{lem-wpt-2-scales-eq1} follows from  
    \[2R^{\de/2} T^\flat\cap (\partial_\xi\phl(z_0;\xi_\theta)+r^{\de/2}\wt T^\flat)=\emptyset. \]
By applying the transformation $\partial_\xi(\cdot,t;\xi_\theta  )$  (recall \eqref{transformation-id}), the above is a consequence of the fact that
$4T$ and 
    \[\{ \Big(\partial_{\xi}\phl(v+\partial_\xi\phl(z_0;\xi_\theta),t;\xi_\theta),t\Big): v\in r^{\de/2}\wt  T^\flat \}\] are disjoint.
However, by Remark \ref{replacedirection},  the latter one is contained in $100\wt T$.
Thus, if $4T\cap 100\wt T=\emptyset$, then the aforementioned two planks are disjoint, yielding \eqref{lem-wpt-2-scales-eq1}.
\qedhere

\end{proof}

\bigskip

\subsection{Broad norm }\label{sectionbroadnorm}
We introduce the broad norm in the variable coefficient setting. 
A similar setup can be found in \cite[Section 1.5]{guth2019sharp}. 

Let $K=R^{\e^{50}}$.
Recall \eqref{partiiton-of-annulus} that  $\A^{n}(1)$ can be partitioned into caps $\Theta_{K^{-1}}=\{\tau\}$, where each $\tau$ has dimensions $1\times K^{-1}\times\dots\times K^{-1}$. 
In view of the rescaling $\phi^\la$ of the phase function, define the rescaled generalized Gauss map
\begin{equation}
\label{generalized-gauss-map}
    G^\la(z;\xi):=G(z/\la;\xi)\ \textup{~for~}(z;\xi)\in\supp\  a^\la. 
\end{equation}
For each $z\in B_\la^{n+1}$, there is a range of normal directions associated with the cap $\tau$, as given by
    \[G^\la(z;\tau):=\{ G^\la(z;\xi):\xi\in\tau, (z;\xi)\in\supp\  a^\la \}. \]
For any subspace $V\subset \R^{n+1}$, define $\ang (G^\la(z;\tau),V)$ to be the smallest angle between non-zero vectors $v\in V$ and $v'\in G^\la(z;\tau)$.

Fix $A\ge 1$ and $1\le k\le n+1$.
For a $K^2$-ball $B_{K^2}^{n+1}\subset B_\la^{n+1}$ centered at $z_0$, define
    \[\mu_{\cFl f}(B_{K^2}^{n+1}):=\min_{V_1,\dots,V_A}\max_{\tau\notin V_i}\int_{B_{K^2}^{n+1}}|\cFl f_\tau|^p. \]
Here, $V_1,\dots,V_A$ are taken over all $(k-1)$-dimensional subspaces, and $\tau\notin V_i$ refers to those $\tau\in\Theta_{K^{-1}}$ such that $\ang(G^\la(z_0;\tau),V_i)>K^{-2}$ for all $1\le i\le A$.
If $U\subset B_\la^{n+1}$ is a disjoint union of $K^{2}$-balls, then we define the broad norm as
\begin{equation}
\nonumber
    \|\cFl f\|_{\BL^p_{k,A}(U)}^p:=\sum_{B_{K^2}^{n+1}\subset U}\mu_{\cFl f}(B_{K^2}^{n+1}).
\end{equation}

\smallskip

The broad norm enjoys the following properties.
See \cite{guth2019sharp} for their proofs.

\begin{lemma}\label{lemtriangle}
Suppose $1\le p<\infty$, $A_1,A_2\ge 1$ and $A=A_1+A_2$, then
    \[\|\cFl(f_1+f_2)\|_{\BL^p_{k,A}(U)}\lesssim \|\cFl f_1\|_{\BL^p_{k,A_1}(U)}+\|\cFl f_2\|_{\BL^p_{k,A_2}(U)}. \]
\end{lemma}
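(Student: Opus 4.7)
The plan is to prove the inequality ball-by-ball. For each $K^2$-ball $B\subset U$, I will establish the pointwise bound
\[\mu_{\cFl(f_1+f_2)}(B) \lesssim \mu_{\cFl f_1}(B) + \mu_{\cFl f_2}(B),\]
where the three quantities use $A$, $A_1$, and $A_2$ subspaces respectively in their defining minimizations; summing over the disjoint $K^2$-balls in $U$ then yields the lemma.

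Inside a fixed $B = B^{n+1}_{K^2}$ centered at $z_0$, I will pick almost-optimal subspaces: let $V_1^{(1)},\dots,V_{A_1}^{(1)}$ attain the minimum in $\mu_{\cFl f_1}(B)$ and $V_1^{(2)},\dots,V_{A_2}^{(2)}$ attain the minimum in $\mu_{\cFl f_2}(B)$, then take their union as a candidate of size $A_1+A_2 = A$ in the minimization defining $\mu_{\cFl(f_1+f_2)}(B)$. Any $\tau\in\Theta_{K^{-1}}$ admissible against this combined family (in the sense $\ang(G^\la(z_0;\tau), V) > K^{-2}$ for every $V$ in the union) is automatically admissible against the $f_1$-subfamily and against the $f_2$-subfamily separately. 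For such $\tau$, linearity of the Fourier cutoff and of $\cFl$ gives $\cFl(f_1+f_2)_\tau = \cFl(f_1)_\tau + \cFl(f_2)_\tau$, and the elementary inequality $|a+b|^p \lesssim |a|^p + |b|^p$ (valid for $p\ge 1$) then yields
\[\int_B |\cFl(f_1+f_2)_\tau|^p \lesssim \int_B |\cFl(f_1)_\tau|^p + \int_B |\cFl(f_2)_\tau|^p \le \mu_{\cFl f_1}(B) + \mu_{\cFl f_2}(B),\]
the last inequality following from admissibility of $\tau$ for each subfamily, which lets us bound each integral by the respective max. Taking the max over admissible $\tau$ on the left and invoking the min in the definition of $\mu_{\cFl(f_1+f_2)}(B)$ finishes the local bound.

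I do not expect any genuine obstacle here: the entire argument is a combinatorial packaging of the definition together with the $L^p$ triangle inequality. The only subtlety worth flagging is that the admissibility condition $\tau \notin V_i$ is evaluated only at the single centre $z_0$ of the ball, which is what makes taking unions of subspace families and passing to the common admissible caps both clean and free of any geometric complication from the variable Gauss map.
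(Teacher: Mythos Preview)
Your argument is correct and is exactly the standard proof of this fact; the paper itself does not give a proof but refers to \cite{guth2019sharp}, where the same combinatorial argument (choose optimal subspaces for each summand, take their union, and use that admissibility against the union implies admissibility against each subfamily) is carried out.
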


\begin{lemma}\label{lemholder}
Suppose $1\le p,p_1,p_2<\infty$, $0\le \al_1,\al_2\le 1$ satisfy $\al_1+\al_2=1$ and 
    \[\frac1p=\frac{\al_1}{p_1}+\frac{\al_2}{p_2}. \]
Suppose also $A_1,A_2\ge 1$ and $A=A_1+A_2$.
Then,
\begin{equation}
\nonumber
    \|\cFl f\|_{\BL^p_{k,A}(U)}\lesssim \|\cFl f\|_{\BL^{p_1}_{k,A_1}(U)}^{\al_1}\|\cFl f\|_{\BL^{p_2}_{k,A_2}(U)}^{\al_2}.
\end{equation}
\end{lemma}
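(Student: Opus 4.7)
My plan is to reduce the statement to two applications of Hölder's inequality: one at the level of a single $K^2$-ball (interior to the definition of $\mu_{\cFl f}$), and one across the sum over $K^2$-balls that defines the $\BL^p$ norm.

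First I would work on a single $K^2$-ball $B\subset U$. Let $\{V_i^{(1)}\}_{i=1}^{A_1}$ be a choice of $(k-1)$-dimensional subspaces that attains $\mu_{\cFl f,p_1}(B)$, and similarly $\{V_j^{(2)}\}_{j=1}^{A_2}$ attaining $\mu_{\cFl f,p_2}(B)$. Concatenating yields an $A$-tuple of subspaces; any cap $\tau\in\Theta_{K^{-1}}$ avoiding all of them avoids both the $V_i^{(1)}$'s and the $V_j^{(2)}$'s. Since $\alpha_1 p/p_1+\alpha_2 p/p_2 = 1$ by hypothesis, Hölder's inequality gives, for every such $\tau$,
\[
\int_B |\cFl f_\tau|^p \le \Bigl(\int_B |\cFl f_\tau|^{p_1}\Bigr)^{\alpha_1 p/p_1}\Bigl(\int_B |\cFl f_\tau|^{p_2}\Bigr)^{\alpha_2 p/p_2}.
\]
Taking the maximum over $\tau$ avoiding the concatenated $A$-tuple, the right-hand side is bounded by the product of the maxima over $\tau$ avoiding each separate tuple, namely $\mu_{\cFl f,p_1}(B)^{\alpha_1 p/p_1}\mu_{\cFl f,p_2}(B)^{\alpha_2 p/p_2}$. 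Since $\mu_{\cFl f,p}(B)$ is the minimum over all $A$-tuples, it is dominated by this particular choice, so
\[
\mu_{\cFl f,p}(B) \le \mu_{\cFl f,p_1}(B)^{\alpha_1 p/p_1}\mu_{\cFl f,p_2}(B)^{\alpha_2 p/p_2}.
\]

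Next I would sum over the $K^2$-balls $B\subset U$ and apply Hölder's inequality for sums with the same conjugate exponents $p_1/(\alpha_1 p)$ and $p_2/(\alpha_2 p)$:
\[
\sum_B \mu_{\cFl f,p_1}(B)^{\alpha_1 p/p_1}\mu_{\cFl f,p_2}(B)^{\alpha_2 p/p_2}
\le \Bigl(\sum_B \mu_{\cFl f,p_1}(B)\Bigr)^{\alpha_1 p/p_1}\Bigl(\sum_B \mu_{\cFl f,p_2}(B)\Bigr)^{\alpha_2 p/p_2}.
\]
The left-hand side is an upper bound for $\|\cFl f\|_{\BL^p_{k,A}(U)}^p$ by the previous step, and the right-hand side equals $\|\cFl f\|_{\BL^{p_1}_{k,A_1}(U)}^{\alpha_1 p}\|\cFl f\|_{\BL^{p_2}_{k,A_2}(U)}^{\alpha_2 p}$. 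Taking $p$-th roots finishes the proof.

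I do not expect a serious obstacle here; the only subtlety is making sure one is careful about the order of $\min$ and $\max$: the bound on $\mu_{\cFl f,p}(B)$ comes from testing against a specific $A$-tuple built by concatenating the minimizers for $p_1$ and $p_2$, and then the inner $\max$ over the smaller set of admissible $\tau$ is controlled by the two separate $\max$'s over the larger sets of $\tau$ admissible for $p_1$ and $p_2$ respectively. With this observation in place, the rest is just two applications of Hölder, yielding the inequality with an implicit constant of $1$.
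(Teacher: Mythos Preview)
Your argument is correct and is the standard proof of this lemma. The paper does not give its own proof; it simply states the lemma and refers the reader to \cite{guth2019sharp}. Your two-step H\"older argument (first inside each $K^2$-ball after concatenating the minimizing subspace tuples, then across the sum over balls) is exactly the proof one finds there, and it indeed gives implicit constant $1$.
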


\bigskip

\subsection{Some \texorpdfstring{$L^2$}{} results}

The first one is a local $L^2$ estimate for a sum of wave packets whose directions are transverse to the integration domain.

\begin{lemma}
\label{local-L2-lem-1}
Let $1\leq K^{1000}\leq R$.
Let $S$ be a $1\times R^{1/2}\times \cdots\times R^{1/2}$-slab in $\ZR^{n+1}$ parallel to the $n$-dimensional subspace $V$. 
Let $\tau\in\Theta_{K^{-1}}$. 
Suppose there exists $z_0\in S$ such that 
    \[\ang (G^\la(z_0;\tau),V)>K^{-2}. \] 
Then for any set of distinct $R$-planks $\bT$ that $\theta(T)\subset \tau$ for all $T\in\bT$, we have
\begin{equation}
\nonumber
    \| \sum_{T\in\bT}\cFl f_T \|_{L^2(S)}^2\lesssim K^{O(1)} \sum_{T\in\bT} \|f_T\|_2^2.
\end{equation}
\end{lemma}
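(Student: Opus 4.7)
The plan is to establish this $L^2$-orthogonality bound via Plancherel, using the transversality hypothesis at two stages: first to control the overlap of Fourier supports across different directional caps $\theta$, and then to obtain a per-plank $L^2$ bound on $S$ via the wave packet structure.

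First, I would introduce a smooth bump $\eta_S$ adapted to $S$ with $\eta_S \gtrsim \Id_S$ and rapid decay, so that $\widehat{\eta_S}$ is essentially supported in the dual box $S^*$ of dimensions $1 \times R^{-1/2} \times \cdots \times R^{-1/2}$ with long direction $V^\perp$. Grouping the planks by their $R^{-1/2}$-caps $\theta$, write $\sum_{T \in \bT} \cFl f_T = \sum_{\theta \subset \tau} \cFl g_\theta$ where $g_\theta := \sum_{T \in \bT,\, \theta(T) = \theta} f_T$ has $\widehat{g_\theta}$ supported in $\theta$. By Plancherel, $\|\eta_S \sum_\theta \cFl g_\theta\|_{L^2}^2 = \|\sum_\theta F_\theta\|_{L^2_\zeta}^2$ with $F_\theta := \widehat{\eta_S \cFl g_\theta}$. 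Writing
$$F_\theta(\zeta) = \int \widehat{g_\theta}(\xi)\, K(\zeta;\xi)\, d\xi, \qquad K(\zeta;\xi) = \int a^\lambda(z;\xi)\eta_S(z)\, e^{i(\phi^\lambda(z;\xi) - \langle z,\zeta\rangle)}\, dz,$$
standard non-stationary phase applied to $K$ (using the derivative bounds from the type-$\bfone$ hypothesis $(\textup{D2}_\bfone)$, tested against the natural scales of $\eta_S$) shows that $K(\zeta;\xi)$ is $\rap(R)$ unless $\zeta - \nabla_z \phi^\lambda(z_0;\xi) \in C S^*$; hence $F_\theta$ is essentially supported in $\Omega_\theta := \nabla_z \phi^\lambda(z_0;\theta) + C S^*$.

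The geometric crux is to verify $\#\{\theta \subset \tau : \zeta \in \Omega_\theta\} \lesssim K^{O(1)}$. The set $\Omega_\theta$ is a thickening of a ``cone-plate'' on the frequency cone $\Sigma(z_0) := \{\nabla_z \phi^\lambda(z_0;\xi) : \xi \in \A^n(1)\}$: $O(R^{-1/2})$ in the tangent directions, plus an extra smearing of length $O(1)$ along $V^\perp$. Decompose $V^\perp = w_t + w_n$ into tangent and normal components relative to $\Sigma(z_0)$; the hypothesis $\ang(G^\lambda(z_0;\tau), V) > K^{-2}$ translates to $|w_n| > c K^{-2}$. Hence moving along $V^\perp$ by a distance $|s|$ displaces a point normally off $\Sigma(z_0)$ by $|s|\cdot|w_n|$, so $\Omega_\theta$ stays within the $O(R^{-1/2})$-neighborhood of $\Sigma(z_0)$ only for $|s| \lesssim K^2 R^{-1/2}$, producing an effective tangential extent of $O(K^2 R^{-1/2})$ from each center. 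Since the centers $\nabla_z \phi^\lambda(z_0;\xi_\theta)$ are $R^{-1/2}$-separated on $\Sigma(z_0)$ by the non-degeneracy (\textbf{H1}), at most $K^{O(1)}$ of the $\Omega_\theta$ can contain any given $\zeta$, yielding by Cauchy--Schwarz
$$\|\sum_\theta F_\theta\|_{L^2_\zeta}^2 \lesssim K^{O(1)} \sum_\theta \|F_\theta\|_{L^2_\zeta}^2 = K^{O(1)} \sum_\theta \|\eta_S \cFl g_\theta\|_{L^2}^2.$$

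To finish, I would bound each $\|\eta_S \cFl g_\theta\|_{L^2}^2$ using the wave packet structure. The planks $\{T \in \bT : \theta(T) = \theta\}$ are translates with a common direction, so by Lemma \ref{lemsuppT} their wave packets $\cFl f_T$ have essentially disjoint supports, giving $\|\eta_S \cFl g_\theta\|_{L^2}^2 \lesssim \sum_{T : \theta(T) = \theta} \|\cFl f_T\|_{L^2(CS)}^2 + \rap(R)\|f\|_2^2$. For each such plank $T$, the transversality $\ang(G^\lambda(z_0;\tau), V) > K^{-2}$ forces $T$ to cross $S$ along its long direction in a segment of length $\lesssim K^2$, so $|T \cap CS| \lesssim K^2 R^{(n-1)/2}$; combined with the standard wave packet amplitude bound $\|\cFl f_T\|_{L^\infty(T)}^2 \lesssim R^{-(n-1)/2} \|f_T\|_2^2$, this yields $\|\cFl f_T\|_{L^2(CS)}^2 \lesssim K^2 \|f_T\|_2^2$. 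Summing gives the claim with constant $K^{O(1)} \cdot K^2 = K^{O(1)}$. The main technical obstacle is the non-stationary phase analysis quantifying the sharp Fourier support of $F_\theta$ at the dual scale of $S$; this requires tracking derivatives of $\phi^\lambda$ in both $z$ and $\xi$ at the scales determined by $\eta_S$ and fundamentally uses the type-$\bfone$ derivative hypotheses $(\textup{D1}_\bfone)$ and $(\textup{D2}_\bfone)$.
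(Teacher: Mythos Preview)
Your approach is correct in outline and represents a genuinely different route from the paper's. The paper expands the $L^2$-norm directly as a bilinear form in $(\xi_1,\xi_2)$ and proves the kernel estimate
\[
\Big|\int \Id_S^\ast\, e^{i(\phi^\lambda(z;\xi_1)-\phi^\lambda(z;\xi_2))}\,dz\Big|\lesssim R^{n/2}\bigl(1+R^{1/2}K^{-O(1)}|\xi_1-\xi_2|\bigr)^{-10n}
\]
via stationary phase, after first using the triangle inequality to reduce all directions to a single $K^{-10n}$-cap (so that the Taylor error $O(|\xi_1-\xi_2|^2)$ is dominated by the linear term). This yields $\|\sum_\theta \cFl f_\theta\|_{L^2(S)}^2\lesssim K^{O(1)}\sum_\theta\|f_\theta\|_2^2$ directly, and the within-$\theta$ sum is then pure Plancherel: $\|f_\theta\|_2^2=\sum_{T:\theta(T)=\theta}\|f_T\|_2^2$. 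Your argument is the dual picture: you take the $(n{+}1)$-dimensional Fourier transform, localize each $F_\theta$ to a plate $\Omega_\theta$, and count overlaps. The geometric input is identical --- both arguments ultimately use that $\partial^2_{z\xi}\phi^\lambda(z_0;\xi)\cdot(\xi_1-\xi_2)\in(G^\lambda)^\perp$ together with $|G^\lambda\cdot V^\perp|\gtrsim K^{-2}$ forces a $V$-component of size $\gtrsim K^{-2}|\xi_1-\xi_2|$.

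Two points deserve attention. First, your overlap heuristic (``moving along $V^\perp$ by $|s|$ displaces normally by $|s||w_n|$'') is local; it does not by itself rule out the line $\zeta+\ZR V^\perp$ re-entering $N_{R^{-1/2}}(\Sigma(z_0)|_\tau)$ at a far-away $\theta$. This is exactly the issue the paper's preliminary reduction to $K^{-10n}$-caps handles: once $|\xi_1-\xi_2|\ll K^{-2}$ the linearization dominates and the bound $|\xi_1-\xi_2|\lesssim K^{O(1)}R^{-1/2}$ follows cleanly. You should either perform the same reduction (it costs only $K^{O(1)}$) or supply a global argument bounding the number of intersections of the line with $\Sigma(z_0)|_\tau$. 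Second, you invoke the transversality a second time in the per-plank step to get $|T\cap CS|\lesssim K^2 R^{(n-1)/2}$; this is correct but redundant. The paper's route avoids it: once the $\theta$-level bound reads $\sum_\theta\|f_\theta\|_2^2$ (in terms of $f$, not $\cFl f$ on $S$), the sum over $T$ is immediate by $L^2$-orthogonality of the wave packets $\{f_T\}_{T\in\bT_\theta}$. Your Fourier-support method naturally lands on $\sum_\theta\|\eta_S\cFl g_\theta\|_2^2$ instead, which is why you need the extra geometric step.
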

\begin{proof}
We refer the reader to Figure \ref{slab}, where the planks are curved rather than straight.
Since we are allowed to lose a $K^{O(1)}$ factor, by the triangle inequality, we assume the direction caps of all the $T\in\bT'$ lie in a $K^{-10n}$-cap.
Let $\bT=\cup_\theta\bT_\theta$, where the directional cap for the planks in $\bT_\theta$ is $\theta$.
Let $f_\theta=\sum_{T\in\bT_\theta}f_T$,
so that 
\[\sum_{T\in\bT}\cFl f_T=\sum_\theta \cFl f_\theta.\]
We will show 
\begin{equation}
\label{l2-ortho}
    \| \sum_\theta \cFl f_\theta \|_{L^2(S)}^2\lesssim K^{O(1)} \sum_\theta\|f_\theta\|_2^2.
\end{equation}
Then the lemma follows from the $L^2$-orthogonality of $\sum_{T\in\bT_\theta}f_T$.

\smallskip

Let $\Id_S^\ast$ be a bump function that equals 1 on $S$ and is supported on $2S$.
By expanding the $L^2$ norm, the left-hand side of \eqref{l2-ortho} is bounded above by
\begin{equation}
\label{expanding}
    \sum_{\theta_1,\theta_2}\int \Id_S^\ast e^{i(\phl(z;\xi_1)-\phl(z;\xi_2)}\mathrm{d}z\int a^\la(z;\xi_1)\overline{a^\la(z;\xi_2)}\wh f_{\theta_1}(\xi_1)\overline{\wh f_{\theta_2}(\xi_2)}\mathrm{d}\xi_1\mathrm{d}\xi_2.
\end{equation}
Note that \eqref{l2-ortho} follows from the estimate
\begin{equation}
\label{stationary-phase}
    \big|\int \Id_S^\ast e^{i(\phl(z;\xi_1)-\phl(z;\xi_2))}\mathrm{d}z\big|=R^{n/2}(1+R^{1/2}K^{-O(1)}|\xi_1-\xi_2|)^{-10n}.
\end{equation}
In fact, for each $\theta$, partition $\theta$ into $R^{-1/2}$-balls $\om$, and by let $\wh f_{\om}=\wh f_\theta\Id_\om$.
Then $\wh f_{\om}$ is supported in an $R^{1/2}$-ball, and we have
\begin{equation}
\nonumber
    \eqref{expanding}=\sum_{\om_1,\om_2}\int \Id_S^\ast e^{i(\phl(z;\xi_1)-\phl(z;\xi_2)}\mathrm{d}z\int a^\la(z;\xi_1)\overline{a^\la(z;\xi_2)}\wh f_{\om_1}(\xi_1)\overline{\wh f_{\om_2}(\xi_2)}\mathrm{d}\xi_1\mathrm{d}\xi_2,
\end{equation}
which, via \eqref{stationary-phase} and the estimate $\|\wh f_\om\|_1\lesssim R^{-n/4}\|\wh f_\om\|_2$, is bounded above by
\begin{align*}
    &\sum_{\om_1,\om_2}\int|f_{\om_1}(\xi_1)f_{\om_2}(\xi_2)| R^{n/2}(1+R^{1/2}K^{-O(1)}|\xi_1-\xi_2|)^{-10n}\mathrm{d}\xi_1\mathrm{d}\xi_2\\
    \lesssim &\sum_{\om_1,\om_2}(1+R^{1/2}K^{-O(1)}|c(\om_1)-c(\om_2)|)^{-10n}\|\wh f_{\om_1}\|_2\|\wh f_{\om_2}\|_2\\
    \lesssim &\, K^{O(1)}\sum_\om\|\wh f_\om\|_2^2= K^{O(1)}\sum_\theta\|\wh f_\theta\|_2^2.
\end{align*}
Here $c(\om)$ is the center of $\om$.
This proves \eqref{l2-ortho}.

\smallskip

By the method of stationary phase, to prove \eqref{stationary-phase}, it suffices to show that there exists $\vec v \parallel V$ such that for all $z\in 2S$, 
\begin{equation}\label{notindual}
    |\partial_z (\phl(z;\xi_1)-\phl(z;\xi_2))\cdot \vec v|\gtrsim K^{-O(1)}|\xi_1-\xi_2|.
\end{equation}
Via Taylor's theorem, $\partial_z\phl(z;\xi_1)-\partial_z\phl(z;\xi_2)=\partial^2_{z\xi}\phl(z;\xi_2)\cdot(\xi_1-\xi_2)+O(|\xi_1-\xi_2|^2)$.
Since $|\partial^2_{z\xi}\phl(z;\xi_2)\cdot(\xi_1-\xi_2)|\gtrsim |\xi_1-\xi_2|$ and since $|\xi_1-\xi_2|\lesssim K^{-100}$, the following is true:
\begin{enumerate}
    \item $|\partial_z (\phl(z;\xi_1)-\phl(z;\xi_2))|\sim|\xi_1-\xi_2|$.
    \item $\ang(\partial_z (\phl(z;\xi_1)-\phl(z;\xi_2)), \partial^2_{z\xi}\phl(z;\xi_2)\cdot(\xi_1-\xi_2))\lesssim K^{-100}$.
\end{enumerate}
Thus, \eqref{notindual} boils down to showing that for all $z\in 2S$,
\begin{equation}
\label{angle2}
    \ang(\partial^2_{z\xi}\phl(z;\xi_2)\cdot(\xi_1-\xi_2), V^\perp)\gtrsim K^{-50}.
\end{equation}

Note that when $|z-z_0|\lesssim R^{1/2}$, 
    \[|\partial^2_{z\xi}\phl(z;\xi_2)-\partial^2_{z\xi}\phl(z_0;\xi_2)|\lesssim |z-z_0|\cdot \|\partial^3_{zz\xi}\phl\|_\infty\lesssim R^{1/2}\la^{-1}<K^{-100}. \]
Hence, to show \eqref{angle2}, we just need to show
\begin{equation}
\label{angle3} 
    \ang(\partial^2_{z\xi}\phl(z_0;\xi_2)\cdot(\xi_1-\xi_2),V^\perp)>K^{-30}. 
\end{equation}
However, by the assumption in the lemma, we have
    \[\ang(G^\la(z_0;\xi_2),V)>K^{-2},\]
which, since $G^\la(z_0;\xi_2)$ is orthogonal to $\partial^2_{z\xi}\phl(z_0;\xi_2)\cdot(\xi_1-\xi_2)$, implies \eqref{angle3}.
\qedhere

\end{proof}

\medskip

As a direct corollary, we have

\begin{lemma}
\label{local-L2-lem-2}
Let $Q$ be an $R^{1/2}$-ball and $\bT$ is a set of wave packets. 
Then 
\begin{equation}
\nonumber
    \| \sum_{T\in\bT}\cFl f_T \|_{L^2(Q)}^2\lesssim R^{1/2} \sum_{T\in\bT} \| f_T\|_2^2.
\end{equation}
\end{lemma}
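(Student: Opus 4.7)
The plan is to reduce the estimate directly to Lemma \ref{local-L2-lem-1} by slicing the ball $Q$ into thin slabs on which that lemma applies. Concretely, I would cover $Q$ by $\lesssim R^{1/2}$ pairwise disjoint pieces $\{S_j\}$, each obtained as the intersection of $Q$ with a horizontal strip of unit thickness in the $t$-variable. Every such $S_j$ is contained in a $1\times R^{1/2}\times\cdots\times R^{1/2}$-slab parallel to the spatial hyperplane $V_j:=\{t=t_j\}$, so by pairwise disjointness
\[
    \Bigl\|\sum_{T\in\bT}\cFl f_T\Bigr\|_{L^2(Q)}^2 \;\le\; \sum_j \Bigl\|\sum_{T\in\bT}\cFl f_T\Bigr\|_{L^2(S_j)}^2.
\]

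To apply Lemma \ref{local-L2-lem-1} on each $S_j$ I need every wave packet to have its directional cap inside a single element of $\Theta_{K^{-1}}$. The fix is to fix an \emph{absolute} constant $K_0$ (independent of $R$ and $\la$) and partition $\bT=\bigsqcup_\tau \bT_\tau$ according to which $\tau\in\Theta_{K_0^{-1}}$ contains the directional cap of $T$, producing $O(1)$ subcollections. For every $\tau$ and every $z_0\in S_j$, I claim the uniform transversality
\[
    \ang\bigl(G^\lambda(z_0;\tau),V_j\bigr)\;\gtrsim\; 1\;>\; K_0^{-2}.
\]
Granting this, Lemma \ref{local-L2-lem-1} yields
\[
    \Bigl\|\sum_{T\in\bT_\tau}\cFl f_T\Bigr\|_{L^2(S_j)}^2 \;\lesssim\; K_0^{O(1)} \sum_{T\in\bT_\tau}\|f_T\|_2^2,
\]
and Cauchy--Schwarz over the $O(1)$ caps $\tau$ then gives $\|\sum_{T\in\bT}\cFl f_T\|_{L^2(S_j)}^2\lesssim \sum_{T\in\bT}\|f_T\|_2^2$. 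Summing over the $\lesssim R^{1/2}$ slabs $S_j$ produces the claimed bound.

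The only step needing genuine verification is the uniform transversality. For the prototype phase $\phi(z;\xi)=\langle x,\xi\rangle+t|\xi|$ one computes $G(z;\xi)=(-\xi/|\xi|,1)/\sqrt{2}$, which makes a fixed angle $\pi/4$ with every spatial hyperplane; the $t$-component of $G$ is $1/\sqrt{2}$. In the general setting the type-$\mathbf{1}$ hypotheses $(\mathrm{H1}_{\mathbf{1}})$ and $(\mathrm{H2}_{\mathbf{1}})$ force $\partial^2_{\xi z}\phi$ to be an $O(\cpar)$-perturbation of the model, so after taking the wedge product the $t$-component of $G_0^\lambda$ stays bounded below by a constant and $\ang(G^\lambda,V_j)\gtrsim 1$ uniformly in $z\in B^{n+1}_\la$ and $\xi\in\A^n(1)$. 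Choosing $K_0$ so that $K_0^{-2}$ lies well below this lower bound, the hypothesis $K_0^{1000}\le R^{1/2}$ of Lemma \ref{local-L2-lem-1} is automatic for large $R$, while the small-$R$ regime of Lemma \ref{local-L2-lem-2} is trivial since both sides are $O(\sum_T\|f_T\|_2^2)$.
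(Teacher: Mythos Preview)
Your proof is correct and is precisely the argument the paper has in mind: cover $Q$ by $\sim R^{1/2}$ horizontal unit-thickness slabs, apply Lemma~\ref{local-L2-lem-1} on each with an $O(1)$ choice of $K$, and sum.

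One small refinement concerning the transversality step: you justify $\ang(G^\lambda,V_j)\gtrsim 1$ by asserting that the type-$\mathbf 1$ hypotheses force $\partial^2_{\xi z}\phi$ to be an $O(\cpar)$-perturbation of the model. Strictly speaking, the listed conditions $(\textup{H1}_{\mathbf 1})$ and $(\textup{H2}_{\mathbf 1})$ control $\partial^2_{\xi x}\phi$ and $\partial^2_{\xi'\xi'}\partial_t\phi$, not $\partial^2_{\xi t}\phi$ itself, so the bound on the $t$-component of $G^\lambda$ is not immediate from what is written. The cleaner route, which also removes the need to partition into $K_0^{-1}$-caps, is to bypass the Gauss map entirely: for the horizontal subspace $V=\R^n\times\{0\}$, the key inequality \eqref{notindual} in the proof of Lemma~\ref{local-L2-lem-1} asks only for a lower bound on the $x$-gradient of $\phi^\lambda(z;\xi_1)-\phi^\lambda(z;\xi_2)$. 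By the mean value theorem and $(\textup{H1}_{\mathbf 1})$ one has $\partial_x\phi^\lambda(z;\xi_1)-\partial_x\phi^\lambda(z;\xi_2)=M(\xi_1-\xi_2)$ with $|M-I_n|\le \cpar$ uniformly, so $|\partial_x(\phi^\lambda(z;\xi_1)-\phi^\lambda(z;\xi_2))|\sim|\xi_1-\xi_2|$ for \emph{all} $\xi_1,\xi_2\in\A^n(1)$ without any smallness assumption. This yields \eqref{stationary-phase} with $K^{-O(1)}$ replaced by $1$, and hence the slab bound $\|\sum_{T\in\bT}\cFl f_T\|_{L^2(S_j)}^2\lesssim\sum_{T\in\bT}\|f_T\|_2^2$ directly.
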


\bigskip

\section{Wave packet density }
\label{section-wpd}

In this section, we introduce the wave packet density $\cW(f, B^{n+1}_R(z_0))$.
Given a $s$-cap $\tau\in \Theta_{s}$ with $R^{-1/2}\le s\le 1$, recall that  
    \[V_{\tau,R}=(Rs^2)\tau^*\]
is an isotropic dilate of $\tau^*$, a box centered at the origin of dimensions $R s^2\times R s\times \dots\times  R s$.

\begin{definition}[Wave packet density]
\label{wpd-def}
    Suppose $g$ is a function defined in $\R^{n}$ such that $\supp\wh g\subset \A^n(1)$, and suppose $B^{n+1}_R(z_0)=B^{n+1}_R(x_0,t_0)\subset B_\la^{n+1}$. Define
    \begin{equation}\label{eqdensity}
    \begin{split}
        &\cW(g,B_R^{n+1}(z_0)):=\\
        &\sup_{R^{-1/2}\le s\le 1}\sup_{\tau\in\Theta_s}\sup_{
             V\parallel V_{\tau,R}
       }\bigg(\frac{1}{|V|}\int \sum_{\begin{subarray}{c}
            \theta\subset \tau\\
            \theta\in\Theta_{R^{-1/2}}
        \end{subarray}}\sum_{\begin{subarray}{c}
            T\in\T_\theta(z_0)\\
            T^\flat\subset V
        \end{subarray}}|g_T|^2\bigg)^{1/2}.
        \end{split}
    \end{equation}
    Here $V\parallel V_{\tau,R}$ means $V$ is a translated copy of $V_{\tau,R}$, and recall \eqref{T-theta-z0} for $\T_\theta(z_0)$.
\end{definition}

\begin{remark}\label{remarkwpd}
    {\rm

We give some intuitive explanations to the set of planks  $\{T\in\bigcup_{\theta\subset \tau}\T_\theta(z_0):T^\flat\subset V\}$ that appeared in the above summation. 
For each $V\parallel V_{\tau,R}$, mimicking Definition \ref{defgaV}, we define
    \begin{equation}\label{labeldeftau}
        \Ga_V(z_0;\xi_\tau;R):=\{(\ga^\la(v+\partial_{\xi}\phi^\la(z_0;\xi_\tau),t;\xi_\tau): v\in V, |t-t_0|\le R\}. 
    \end{equation} 
When $z_0=z$, it recovers Definition \ref{defgaV}. From \eqref{defT}, one sees that for $T\in \T(z_0)$, $T=\Ga_{R^\de T^\flat}(z_0;\xi_\theta;R)$.
We call $\Ga_V(z_0;\xi_\tau;R)$ a curved plank in a ball centered at $z_0$.
Note that Lemma \ref{geometric} and Lemma \ref{lemsuppT} still hold in this setting when we move the center from the origin to $z_0$. 
By Lemma \ref{lemsuppT}, $\Ga_V(z_0;\xi_\tau;R)$ is comparable to $\Ga_V(z_0;\xi_\theta;R)$ for $\theta\subset \tau$. 
Hence, we may identify $\Ga_V(z_0;\xi_\tau;R)$ and $\Ga_V(z_0;\xi_\theta;R)$,
and the condition $T^\flat \subset V$ implies $T\subset \Ga_V(z_0;\xi_\tau;R)$.
Therefore, the wave packets summed in \eqref{eqdensity} are morally those $T$ that are contained in the fat curved plank $\Ga_V(z_0;\xi_\tau;R)$ with directional cap contained in $\tau$. 
In this regard, $\cW(g,B_R^{n+1}(z_0))$ can be viewed as the density of wave packets among all the fat curved planks in $B_R^{n+1}(z_0)$.  
}
\end{remark}

\begin{lemma}
\label{wpd-less-lem}
Suppose $g$ is a function defined in $\R^n$ with $\supp\wh g\subset \A^n(1)$.
Let $g_{\T_1}=\sum_{T\in\T_1}g_T$ and $g_{\T_2}=\sum_{T\in\T_2}g_T$ be the sums of wave packets at scale $R$ in $B_R^{n+1}(z_0)$. 
If $\T_1\subset \T_2$, then
\begin{equation}
\nonumber
     \cW(g_{\T_1},B_R^{n+1}(z_0))^2\le  \cW(g_{\T_2},B_R^{n+1}(z_0))^2. 
\end{equation}
\end{lemma}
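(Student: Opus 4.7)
The plan is to show that the monotonicity is essentially built into the definition of $\cW$, once one verifies that the wave packet decomposition at scale $R$ in $B_R^{n+1}(z_0)$ behaves linearly on functions that are already sums of wave packets at this scale and center.

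First, I would establish the following key claim: for any $g$ with $\supp\wh g\subset \A^n(1)$ and any $\T\subset\bigcup_\theta \T_\theta(z_0)$, if $g_{\T}=\sum_{T'\in\T}g_{T'}$, then the wave packet decomposition of $g_{\T}$ at scale $R$ in $B_R^{n+1}(z_0)$ satisfies
\[(g_{\T})_T \equiv g_T \quad \text{if } T\in\T, \qquad (g_{\T})_T \equiv 0 \quad \text{if } T\notin\T,\]
each equality holding modulo $\rap(R)\|g\|_2$. This is a linearity computation: by definition $(g_{\T})_T = \sum_{T'\in\T}(g_{T'})_T$, and Lemma \ref{lemcomparewp}, applied at the common scale $r=R$ and center $z_0$, forces $(g_{T'})_T$ to be $\rap(R)\|g\|_2$ whenever $T'\neq T$ (as curved planks), because the Fourier support of $g_{T'}$ lies in $2\theta(T')$ and its essential physical support is $T'$. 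In the remaining case $T'=T$, the formula $g_T=\eta_{T^\flat}(\psi_\theta^\vee\ast g)$ shows that the decomposition reproduces $g_T$ itself.

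Second, fix any admissible triple $(s,\tau,V)$ with $s\in[R^{-1/2},1]$, $\tau\in\Theta_s$, and $V\parallel V_{\tau,R}$ in the supremum defining $\cW(g_{\T_1},B_R^{n+1}(z_0))$. By the claim, for each $T\in\T_\theta(z_0)$ with $\theta\subset\tau$ and $T^\flat\subset V$, one has the pointwise bound $|(g_{\T_1})_T|^2\le |(g_{\T_2})_T|^2$: if $T\in\T_1\subset\T_2$ both sides equal $|g_T|^2$; if $T\in\T_2\setminus\T_1$ the left side is negligible and the right side is $|g_T|^2$; and otherwise both are negligible. Integrating and dividing by $|V|$ gives
\[\frac{1}{|V|}\int\sum_{\substack{\theta\subset\tau\\ \theta\in\Theta_{R^{-1/2}}}}\sum_{\substack{T\in\T_\theta(z_0)\\ T^\flat\subset V}}|(g_{\T_1})_T|^2 \;\le\; \frac{1}{|V|}\int\sum_{\substack{\theta\subset\tau\\ \theta\in\Theta_{R^{-1/2}}}}\sum_{\substack{T\in\T_\theta(z_0)\\ T^\flat\subset V}}|(g_{\T_2})_T|^2 \;\le\; \cW(g_{\T_2},B_R^{n+1}(z_0))^2.\]
Taking the supremum over $(s,\tau,V)$ on the left then yields the lemma.

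The only delicate point is the $\rap(R)\|g\|_2$ tails from Step 1, which strictly speaking obstruct an exact inequality. These are absorbed under the standard convention in this framework that such rapidly decaying contributions are negligible relative to any polynomial-in-$R$ quantity; equivalently, one can simply take $(g_{\T_i})_T := g_T\cdot \mathbf{1}_{T\in\T_i}$ as the working definition once Step 1 shows it agrees with the true decomposition modulo $\rap(R)$. With that convention the inequality is literally an inclusion of index sets and the proof is immediate.
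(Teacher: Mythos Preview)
Your proof is correct and aligns with the paper's approach; the paper simply says ``The proof is straightforward from the definition,'' and you have filled in exactly the details behind that sentence, including the one genuine subtlety (that applying the wave packet decomposition to $g_{\T_i}$ must reproduce $g_T\cdot\mathbf{1}_{T\in\T_i}$ up to $\rap(R)$ tails). Your final remark, that under the natural convention $(g_{\T_i})_T:=g_T\cdot\mathbf{1}_{T\in\T_i}$ the inequality is literally an inclusion of index sets, is precisely the one-line argument the paper has in mind.
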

\begin{proof}
The proof is straightforward from the definition.
\end{proof}

\begin{lemma}
\label{wpd-lem}
Suppose a function $g$ defined in $\ZR^n$ is a sum of wave packets $g=\sum_{T\in\ZT}g_T$.
Suppose $\|g_T\|_2^2\gtrsim R^{\frac{n-1}{2}}$ for all $T\in\ZT$.
Then 
\begin{equation}
\nonumber
    \cW(g,B_R^{n+1})^2\gtrsim \max\{ 1, R^{-\frac{n+1}{2}}(\#\T)\}\gtrsim R^{-\frac{n+1}{4}}(\#\ZT)^{1/2}.
\end{equation}
\end{lemma}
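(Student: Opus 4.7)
The lemma has two components -- the bound $\cW(g,B_R^{n+1})^2\gtrsim 1$ and the bound $\cW(g,B_R^{n+1})^2\gtrsim R^{-\frac{n+1}{2}}\#\ZT$ -- and once both are in hand the final inequality $\max\{1,a\}\ge \sqrt{a}$ for $a\ge 0$ gives $\max\{1,R^{-\frac{n+1}{2}}\#\ZT\}\ge R^{-\frac{n+1}{4}}(\#\ZT)^{1/2}$ by AM--GM. My plan is therefore to exhibit, for each of the two bounds, an admissible triple $(s,\tau,V)$ inside the supremum in Definition \ref{wpd-def} that already witnesses the desired inequality, using the hypothesis $\|g_T\|_2^2\gtrsim R^{(n-1)/2}$ and nothing deeper.

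For the bound $\cW^2\gtrsim 1$ I would fix any single wave packet $T_0\in\ZT$ with direction cap $\theta_0$, and choose $s=R^{-1/2}$, $\tau=\theta_0$, and $V=T_0^\flat$. Since $Rs^2=1$ at this scale, the reference box $V_{\tau,R}=\theta_0^*$ is literally a $1\times R^{1/2}\times\cdots\times R^{1/2}$ slab, and by the construction in \eqref{T-flat} the plank base $T_0^\flat$ is a translate of $\theta_0^*$, so this choice of $V$ is admissible. Only the term $T=T_0$ survives in the double sum inside the supremum, and since $|V|\sim R^{(n-1)/2}$ one has
\[
   \frac{1}{|V|}\int \sum_{\theta\subset\tau}\sum_{T^\flat\subset V}|g_T|^2 \;\ge\; \frac{\|g_{T_0}\|_2^2}{|T_0^\flat|} \;\gtrsim\; 1.
\]

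For the bound $\cW^2\gtrsim R^{-(n+1)/2}\#\ZT$ I would take $s=1$, in which case $V_{\tau,R}$ is an $R$-cube with $|V|\sim R^n$, and apply two rounds of pigeonholing. First, since $\A^n(1)$ is covered by $O(1)$ many $1$-caps, there exists a $\tau\in\Theta_1$ into which $\gtrsim \#\ZT$ of the direction caps $\theta(T)$ are nested. Next, since all plank bases $T^\flat$ lie in $B_R^n$, and $B_R^n$ is covered by $O(1)$ translates of the $R$-cube $V_{\tau,R}$, there is a translate $V$ whose interior contains the bases of $\gtrsim \#\ZT$ planks (this is the geometrically non-trivial step; see below). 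Because the sum defining $\cW$ is over the non-negative quantities $|g_T|^2$, no orthogonality is required, and
\[
   \frac{1}{|V|}\int \sum_{\theta\subset\tau}\sum_{T^\flat\subset V}|g_T|^2 \;=\; \frac{1}{|V|}\sum_{T^\flat\subset V}\|g_T\|_2^2 \;\gtrsim\; \frac{(\#\ZT)\,R^{(n-1)/2}}{R^n} \;=\; R^{-\frac{n+1}{2}}\#\ZT.
\]

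The one place that requires care is the second pigeonholing: a plank base $T^\flat$ could straddle the boundary between two translates of $V_{\tau,R}$ in the cover of $B_R^n$. However, each $T^\flat$ has diameter $O(R^{1/2})\ll R$, so a standard shifting argument (randomly translating the $O(1)$-grid of $R$-cubes within $B_{2R}^n$ and averaging, or equivalently replacing the rigid cover by slightly overlapping translates) guarantees a positive fraction of the selected plank bases are fully contained in a single $V$. This affects only the implicit constant, not the stated exponents, so no obstacle is hidden here and the scheme above completes the proof.
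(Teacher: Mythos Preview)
Your proposal is correct and follows essentially the same approach as the paper: the paper's one-line proof simply says to test with $V=T^\flat$ for the bound $\gtrsim 1$ and with $V=B_R^n$ for the bound $\gtrsim R^{-(n+1)/2}\#\ZT$, which are exactly your two choices $s=R^{-1/2}$ and $s=1$. The pigeonholing and boundary-straddling concerns you raise are legitimate details the paper suppresses, but they are harmless $O(1)$ issues as you note.
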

\begin{proof}
In the first inequality, the lower bound $1$ is obtained by testing with $V=T^\flat$ in \eqref{eqdensity}, and the lower bound $ R^{-\frac{n+1}{2}}(\#\T)$ is obtained by testing with $V=B_R^n$.
\qedhere

\end{proof}

\medskip

The next two lemmas are the main reasons why the wave packet density is appropriate for induction on scales.
The first lemma suggests the wave packet density is an appropriate substitute for the $L^\infty$ norm.

\begin{lemma}\label{Linfty} Suppose $g$ is a function defined in $\R^n$ with $\supp\wh g\subset \A^n(1)$.
Then
    \begin{equation}
    \nonumber
    \cW(g, B^{n+1}_R)\lesssim \|g\|_{\infty}.
    \end{equation}
\end{lemma}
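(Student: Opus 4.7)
The bound reduces, after fixing $s\in[R^{-1/2},1]$, a cap $\tau\in\Theta_s$, and a translate $V\parallel V_{\tau,R}$, to showing
\[
\frac{1}{|V|}\int_{\R^n}\sum_{\theta\subset\tau}\sum_{T\in\T_\theta(0):\,T^\flat\subset V}|g_T|^2\;\lesssim\;\|g\|_\infty^2
\]
uniformly, after which I take the supremum. Since the center is $z_0=0$ and $\phi^\la(0;\xi)\equiv 0$ by \eqref{phi0}, the wave packets take the standard form $g_T=\eta_{T^\flat}\,u_\theta$, where $u_\theta:=\psi_\theta^\vee *g$ has Fourier support in $\theta$. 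For each fixed $\theta$, the partition of unity $\{\eta_{T^\flat}\}_{T\in\T_\theta}$ satisfies $\sum_{T^\flat}|\eta_{T^\flat}|^2\le 1$ pointwise (since $\sum\eta_{T^\flat}=1$ and $|\eta_{T^\flat}|\le 1$), and the restricted sum $\sum_{T^\flat\subset V}|\eta_{T^\flat}|^2$ decays rapidly off $V$. Hence pointwise
\[
\sum_{\theta\subset\tau}\sum_{T^\flat\subset V}|g_T|^2\;\le\;w_V\sum_{\theta\subset\tau}|u_\theta|^2,
\]
for a Schwartz weight $w_V$ concentrated on $V$ with $\int w_V\lesssim|V|$.

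The main step is local $L^2$ orthogonality. I pick a Schwartz bump $\phi_V$ with $w_V\le C\phi_V^2$ and $\hat\phi_V$ essentially supported in a small fraction of the dual box $V^*$, whose dimensions are $(Rs^2)^{-1}\times (Rs)^{-1}\times\cdots\times (Rs)^{-1}$. The crucial geometric observation is that the hypothesis $s\ge R^{-1/2}$ forces $(Rs)^{-1}\le R^{-1/2}$, which is exactly the transverse separation scale between the caps $\theta\in\Theta_{R^{-1/2}}$ inside $\tau$. Consequently the Fourier supports $\theta+\supp(\hat\phi_V)$ of $\widehat{\phi_V u_\theta}=\hat\phi_V*\hat u_\theta$ are essentially disjoint as $\theta\subset\tau$ varies, so Plancherel gives
\[
\sum_{\theta\subset\tau}\|\phi_V u_\theta\|_2^2\;\lesssim\;\Big\|\phi_V\sum_{\theta\subset\tau}u_\theta\Big\|_2^2\;=\;\|\phi_V g_\tau\|_2^2,
\]
where $g_\tau:=\psi_\tau^\vee* g$ with $\psi_\tau:=\sum_{\theta\subset\tau}\psi_\theta$.

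Finally, Young's inequality gives $\|g_\tau\|_\infty\le\|\psi_\tau^\vee\|_1\|g\|_\infty\lesssim\|g\|_\infty$, so
\[
\int w_V |g_\tau|^2\;\le\;\|g_\tau\|_\infty^2\int\phi_V^2\;\lesssim\;\|g\|_\infty^2\,|V|.
\]
Chaining the three displays and dividing by $|V|$ yields the desired estimate, and taking the supremum over $s,\tau,V$ completes the proof. The main technical step is the local $L^2$ orthogonality; its validity hinges precisely on the lower bound $s\ge R^{-1/2}$ hard-coded into the definition of $\cW$, which is exactly what guarantees that the dual box $V^*$ is finer than the $R^{-1/2}$-cap separation, so that after localizing in physical space to $V$ the pieces $u_\theta$ remain essentially orthogonal.
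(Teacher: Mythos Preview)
Your approach parallels the paper's in spirit—$L^2$ orthogonality plus localization to $V$—but the execution has a gap in what you call the main step. From finite overlap of the Fourier supports $\supp\psi_\theta+\supp\hat\phi_V$ one obtains
\[
\Big\|\phi_V\sum_{\theta\subset\tau}u_\theta\Big\|_2^2\;\lesssim\;\sum_{\theta\subset\tau}\|\phi_V u_\theta\|_2^2,
\]
which is the \emph{reverse} of the inequality you need. The sets $\supp\psi_\theta$ for adjacent $\theta$ genuinely overlap (they must, since $\sum_\theta\psi_\theta\equiv1$), so the Fourier supports of the $\phi_V u_\theta$ are never disjoint, only boundedly overlapping; and bounded overlap of supports does not prevent cancellation in $\sum_\theta\phi_V u_\theta$ while the individual pieces remain large. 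There is also a secondary issue in your final step: $\psi_\tau=\sum_{\theta\subset\tau}\psi_\theta$ is a smoothed indicator of $\tau$ with transition width $R^{-1/2}\ll s$, so for a generic partition of unity $\|\psi_\tau^\vee\|_1$ picks up a factor $(\log R)^{n-1}$ (the one-dimensional model produces a Dirichlet kernel), and the claimed bound $\|g_\tau\|_\infty\lesssim\|g\|_\infty$ is off by logarithms.

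Both issues disappear if you adopt the paper's maneuver and localize $g$ in physical space first. Write $g=g\Id_{CV}+g\Id_{(CV)^c}$. For $\theta\subset\tau$ the kernel $\psi_\theta^\vee$ is essentially supported in $\theta^*\subset V_{\tau,R}$, so $\psi_\theta^\vee*(g\Id_{(CV)^c})$ is essentially supported in $((C-1)V)^c$ and, after multiplication by $w_V$, contributes $\rap(R)\|g\|_\infty$. For the near piece, drop the weight and use the \emph{global} Bessel bound (which goes in the right direction because $\psi_\theta\ge0$):
\[
\sum_{\theta\subset\tau}\int w_V\,|\psi_\theta^\vee*(g\Id_{CV})|^2\le\sum_{\theta}\int\psi_\theta^2\,|(g\Id_{CV})^\wedge|^2\le\|g\Id_{CV}\|_2^2\le|CV|\,\|g\|_\infty^2.
\]
This is essentially the paper's proof, which first pigeonholes to an orthogonal subfamily $\T'\subset\T$ and then performs the same physical-space splitting of $g$. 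Your pointwise collapse of the $T$-sum for fixed $\theta$ is a clean alternative to that pigeonholing, but the local Fourier orthogonality you invoke afterwards cannot replace the physical localization of $g$.
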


\begin{proof}
For $\tau,V\parallel V_{\tau,R}$, denote $\T=\{T\in\bigcup_{\theta\subset \tau}\T_\theta(z_0):
            T^\flat\subset V\}$.
We just need to prove
    \[\frac{1}{|V|}\int \sum_{T\in\T}|g_T|^2\lesssim \|g\|_\infty^2. \]
Recall that both $\{\psi_\theta\}$ and $\{\eta_{T^\flat}\}$ are families of non-negative functions, and that $\{\text{supp}(\psi_\theta)\}_{\theta}$ are finitely overlapping.
By pigeonholing, we can choose $\T'\subset \T$ so that the wave packets $\{g_T:T\in\ZT'\}$ are essentially orthogonal, and we have 
\begin{equation}\label{pflinfty}
    \frac{1}{|V|}\int \sum_{T\in\T}|g_T|^2\lesssim \frac{1}{|V|}\int \sum_{T\in\T'}|g_T|^2\lesssim \frac{1}{|V|}\int |\sum_{T\in\T'}g_T|^2. 
\end{equation} 

Decompose
    \[\sum_{T\in\T'}g_T=\sum_{T\in\T'}(g\Id_{100V})_T+\sum_{T\in\T'}(g\Id_{(100V)^c})_T. \]
We claim that $(g\Id_{(100V)^c})_T=\rap(R)\|g\|_{\infty}$.
Note that,
    \[(g\Id_{(100V)^c})_T=\eta_{T^\flat}\big(\psi_\theta^\vee*(g\Id_{(100V)^c})\big). \]
Since $\psi^\vee_\theta$ is essentially supported in $\theta^*\subset V_{\tau,R}$, $\psi_\theta^\vee*(g\Id_{(100V)^c})$ is essentially supported in $(100V)^c+V_{\tau,R}\subset (90V)^c$.
The claim follows from the fact that $\eta_{T^\flat}$ is essentially supported in $T^\flat\subset V$.
Therefore,
\begin{align*}
     \eqref{pflinfty}&\lesssim \frac{1}{|V|}\int_V |\sum_{T\in\T'}(g\Id_{100V})_T|^2+\rap(R)\|g\|_\infty^2\\
     &\lesssim  \frac{1}{|V|}\int  |g\Id_{100V} |^2+\rap(R)\|g\|_\infty^2\\[1ex]
     &\lesssim \| g\|_\infty^2+\rap(R)\|g\|_\infty^2. \qedhere
\end{align*}

\end{proof}

\medskip

The second lemma establishes a connection between wave packet densities at different scales.
\begin{lemma}\label{leminduct}
Suppose $1\le r\le R^{1-10\de}$ and $B^{n+1}_r(z_0)\subset B^{n+1}_R$. Then
    \[\cW(f, B^{n+1}_r(z_0))\lesssim R^{O(\de)} (\frac Rr)^{\frac{n-1}{2}} \cW(f, B^{n+1}_R)+\rap(R)\|f\|_2.\]
   
\end{lemma}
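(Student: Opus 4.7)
The approach is to compare scale-$r$ wave packets in $B_r^{n+1}(z_0)$ to scale-$R$ wave packets in $B_R^{n+1}$ via Lemma \ref{lemcomparewp}, exploiting the non-isotropic Lorentz rescaling intrinsic to the cone. Fix extremizers $s\in[r^{-1/2},1]$, $\wt\tau\in\Theta_s$, and $V_r\parallel V_{\wt\tau,r}$ attaining $\cW(f,B_r^{n+1}(z_0))^2$. For each $\wt T\in\T_{r,\wt\theta}(z_0)$ with $\wt\theta\subset\wt\tau$ and $\wt T^\flat\subset V_r$, Lemma \ref{lemcomparewp} expands
\[ f_{\wt T}=\sum_T (f_T)_{\wt T}+\rap(R)\|f\|_2, \]
where the sum runs over scale-$R$ wave packets $T$ with $\theta(T)\subset 4\wt\theta$ and $100T\cap 100\wt T\neq\emptyset$; for a fixed $\wt T$ there are at most $R^{O(\de)}(R/r)^{(n-1)/2}$ such $T$, since $4\wt\theta$ contains $\sim(R/r)^{(n-1)/2}$ subcaps in $\Theta_{R^{-1/2}}$ and only $O(1)$ scale-$R$ tubes per direction intersect $\wt T$ transversely.

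To bound $\sum_{\wt T}\|f_{\wt T}\|_2^2$ in terms of scale-$R$ data, introduce the Lorentz-rescaled parameter $s_R=R^{O(\de)}s\sqrt{r/R}$ (valid in $[R^{-1/2},1]$ under the hypothesis $r\le R^{1-10\de}$), partition $\wt\tau$ into $\sim(R/r)^{(n-1)/2}$ subcaps $\tau_R^{(i)}\in\Theta_{s_R}$, and choose matching slabs $V_R^{(i)}\parallel V_{\tau_R^{(i)},R}$ covering the bases of the scale-$R$ wave packets with direction in $\tau_R^{(i)}$ that are relevant to $V_r$. A direct computation gives
\[ \frac{|V_R^{(i)}|}{|V_r|}\lesssim R^{O(\de)}\Big(\frac{R}{r}\Big)^{\frac{n-1}{2}}, \]
because the Lorentz scaling preserves the radial (short) dimension $rs^2$ while stretching each of the $n-1$ angular (long) dimensions from $rs$ to $s\sqrt{rR}$. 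Combining the expansion of $f_{\wt T}$ with the density bound $\sum_{T:\theta\subset\tau_R^{(i)},\,T^\flat\subset V_R^{(i)}}\|f_T\|_2^2\le|V_R^{(i)}|\,\cW(f,B_R^{n+1})^2$, together with a carefully arranged pairing of Cauchy--Schwarz and almost orthogonality across the decomposition, one arranges the two $(R/r)^{(n-1)/2}$ factors---one from the subcap count and one from the Lorentz volume ratio---to combine multiplicatively, yielding
\[ \cW(f,B_r^{n+1}(z_0))^2\lesssim R^{O(\de)}\Big(\frac{R}{r}\Big)^{n-1}\cW(f,B_R^{n+1})^2+\rap(R)\|f\|_2^2, \]
and taking square roots gives the lemma.

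The main obstacle is precisely this last combination: a naive iteration of Cauchy--Schwarz, partition, and the Lorentz volume ratio would accumulate a spurious extra $(R/r)^{(n-1)/2}$, producing only the worse bound $(R/r)^{3(n-1)/2}$. The correct argument replaces one of the Cauchy--Schwarz losses by exact $L^2$-orthogonality of the frequency projections $\{P_{\tau_R^{(i)}}f\}_i$ restricted to $V_r$; this orthogonality, valid in the long directions thanks to the Lorentz matching of $s_R^{-1}$ with the elongation of $V_r$, is supplemented by Lemma \ref{lemcomparewp} in the short (radial) direction where it is only approximate, and the residual loss is absorbed into the $R^{O(\de)}$ factor. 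The hypothesis $r\le R^{1-10\de}$ supplies both the validity of $s_R\in[R^{-1/2},1]$ and the margin controlling the rapidly decaying tails of Lemma \ref{lemcomparewp}.
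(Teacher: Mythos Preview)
Your setup matches the paper's: fix the extremizer $(\wt\tau,V_r)$ at scale $r$, invoke Lemma~\ref{lemcomparewp}, partition $\wt\tau$ into $\sim(R/r)^{(n-1)/2}$ subcaps $\tau_R^{(i)}\in\Theta_{s_R}$ with $s_R\sim s(r/R)^{1/2}$, and compute $|V_R^{(i)}|/|V_r|\lesssim R^{O(\de)}(R/r)^{(n-1)/2}$. The gap is in how you combine these.

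Your proposed remedy---``exact $L^2$-orthogonality of the frequency projections $\{P_{\tau_R^{(i)}}f\}_i$ restricted to $V_r$''---does not hold. Adjacent caps $\tau_R^{(i)}$ are $\sim s_R$-separated in frequency, so approximate orthogonality on a spatial box requires side length $\gtrsim s_R^{-1}\sim s^{-1}(R/r)^{1/2}$ in the angular directions. The long side of $V_r$ is $rs$, and the requirement $rs\gtrsim s^{-1}(R/r)^{1/2}$ rearranges to $rs^2\gtrsim(R/r)^{1/2}$; at the endpoint $s=r^{-1/2}$ this reads $1\gtrsim(R/r)^{1/2}$, which fails whenever $r\ll R$. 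The claimed ``Lorentz matching of $s_R^{-1}$ with the elongation of $V_r$'' is therefore not valid across the parameter range, and the spurious $(R/r)^{(n-1)/2}$ loss you yourself flagged is not actually removed.

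The paper avoids Cauchy--Schwarz entirely, and this is the point you are missing. After pigeonholing to an almost-orthogonal subfamily $\wt\bT'\subset\wt\bT$ so that $\int\sum_{\wt T\in\wt\bT}|f_{\wt T}|^2\lesssim\int|\sum_{\wt T\in\wt\bT'}f_{\wt T}|^2$, one uses Lemma~\ref{lemcomparewp} in the form $f_{\wt T}=(\sum_{T\in\bT}f_T)_{\wt T}+\rap(R)\|f\|_2$ for each $\wt T\in\wt\bT'$, where $\bT=\bigcup_i\bT_{\tau_R^{(i)}}$ is the full set of relevant scale-$R$ planks. Since $g\mapsto\sum_{\wt T\in\wt\bT'}g_{\wt T}$ is bounded on $L^2(\R^n)$ and $\{f_T\}_{T\in\bT}$ are themselves almost orthogonal,
\[
\int\Big|\sum_{\wt T\in\wt\bT'}f_{\wt T}\Big|^2\lesssim\int\Big|\sum_{T\in\bT}f_T\Big|^2\lesssim\int\sum_{T\in\bT}|f_T|^2=\sum_i\int\sum_{T\in\bT_{\tau_R^{(i)}}}|f_T|^2,
\]
all integrals taken over $\R^n$, not over $V_r$. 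Dividing by $|V_r|$ and bounding each inner sum by $|V_R^{(i)}|\,\cW(f,B_R^{n+1})^2$ then yields the two factors $(R/r)^{(n-1)/2}$ cleanly, one from the subcap count and one from the volume ratio, with no loss beyond $R^{O(\de)}$.
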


\begin{proof}
By definition, we can assume $\cW( f, B^{n+1}_r(z_0))$ is attained at the pair $(\wt V,\wt \tau)$. 
In other words, there are $\wt s\in [r^{-1/2},1]$, $ \wt \tau\in\Theta_{\wt s}$, $ \wt V\parallel V_{\wt\tau,r}$ such that
\begin{equation}
\label{reverse-ortho-2}
    \cW(  f, B^{n+1}_r(x_0))^2 \lesssim  \frac{1}{| \wt V|}\int \sum_{\wt T\in\wt\bT}
    |f_{\wt T}|^2\lesssim\frac{1}{|\wt V|}\int \bigg|\sum_{\wt T\in\wt\bT'} f_{ \wt T}\bigg|^2,
\end{equation} 
where $\wt\bT'\subset\wt\bT$ is an appropriate subset, and $\wt\bT$ is defined as 
    \[\wt \bT:=\{ \wt T \in \bigcup_{\wt \theta\in\Theta_{r^{-1/2}},\wt\theta\subset\wt \tau}\T_{\wt\theta}: \wt T^\flat\subset \wt V \}. \]

Next, we analyze how the wave packets at scale $r$ interact with the wave packets at scale $R$. 
Recall the wave packet decomposition at scale $R$ in $B^n_R$:
    \[f=\sum_{\theta\in\Theta_{R^{-1/2}}}\sum_{T\in\T_\theta} f_T+\rap(R)\|f\|_2. \]
We define a relation $\prec$ between wave packets of scales $r$ and $R$. 
For two planks $\wt T\in\T_{\wt\theta}(z_0), T\in\T_\theta$, if $\theta\subset 4\wt \theta$ and $100 T\cap 100 \wt T\neq \emptyset$, we write
    \[\wt T\prec T. \]
By Lemma \ref{lemcomparewp}, $(f_T)_{\wt T}=\rap(R)\|f\|_2$ unless $\wt T\prec T$. 

Let $s=\wt s(\frac{r}{R})^{1/2} R^{5\de}$, so $s\in [R^{-1/2+5\de},1]$.
Let $\Theta_s(4\wt\tau)\subset\Theta_s$ be a subset with $\#\Theta_s(4\wt\tau)\sim (\frac Rr)^{\frac{n-1}{2}}$ such that
\[4\wt\tau\subset \bigcup_{\tau\in\Theta_s(4\wt\tau)} \tau.\] 
For each $\tau\in\Theta_s(4\wt\tau)$, let $V_\tau \parallel V_{\tau,R}$ be such that $V_\tau \supset \wt V+\partial_\xi\phl(z_0;\xi_\tau)$, and define
    \[\bT_\tau:=\{ T\in \bigcup_{\theta\in\Theta_{R^{-1/2}},\theta\subset\tau}\T_\theta: T^\flat \subset V_\tau \}. \]
By remark \ref{remarkwpd}, $\wt \bT$ is morally the set of $r$-planks contained in $\Ga_{\wt V}(z_0;\xi_{\wt \tau};r)$ with directional cap contained in $\wt \tau$; $\bT_\tau$ is morally the set of $R$-planks contained in $\Ga_{V_\tau}(\xi_\tau;R)$ with directional cap contained in $\tau$. See Figure \ref{comparewp} for the geometry of $\Ga_{\wt V}(z_0;\xi_{\wt \tau};r)$ and $\{\Ga_{V_\tau}(\xi_\tau;R): \tau\in \Theta_s(4\wt \tau)\}$. 
Let
    \[\bT:=\bigcup_{\tau\in\Theta_s(\wt\tau)}\bT_\tau. \]

\begin{figure}[ht]
\centering
\includegraphics[width=8cm]{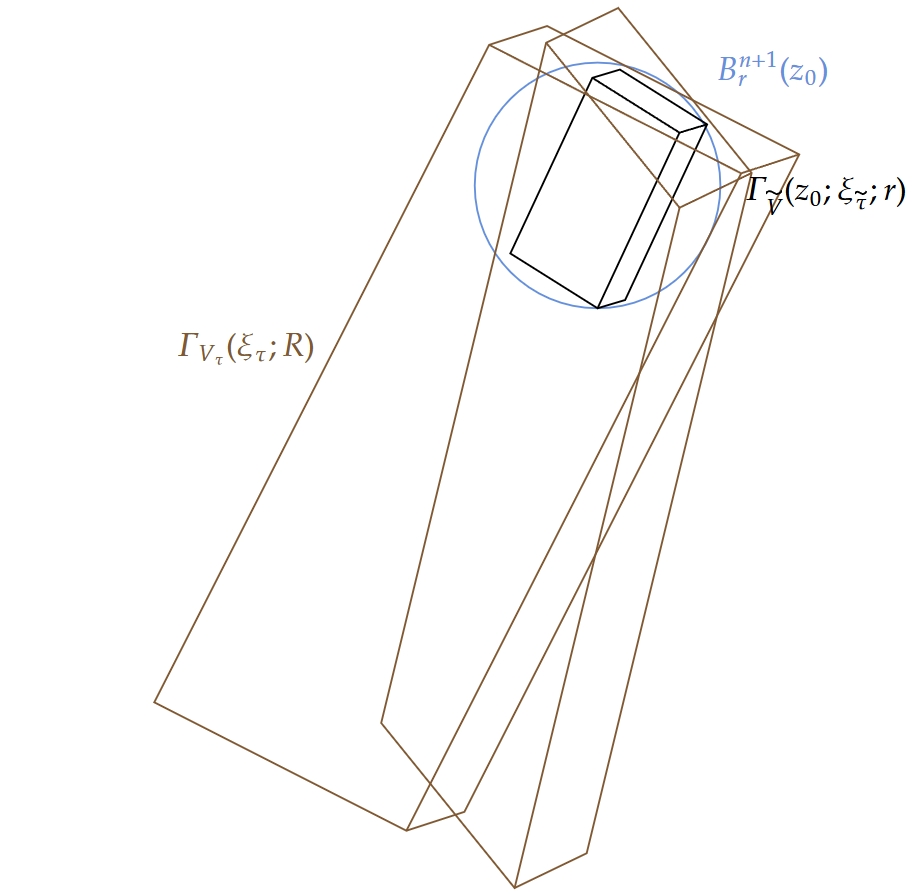}
\caption{}
\label{comparewp}
\end{figure}

We claim that if $\wt T\prec T$ for some $\wt T\in \wt \bT$, then $T\in \bT$. 
Indeed, since $\wt T\prec T$, we have $T\in\T_\theta$ for some $\theta\subset 4\wt \tau$. 
Thus, we can find $\tau\in\Theta_s(4\wt\tau)$ such that $\theta\subset \tau$.
Consider the curved plank $\Ga_{\wt V}(z_0;\xi_{\wt \tau};r)$ as in \eqref{labeldeftau}. By Remark \ref{remarkwpd}, planks in $\wt\bT$ are all contained in $\Ga_{C\wt V}(z_0;\xi_{\wt \tau};r)$ for some large $C>0$.
By Lemma \ref{lemuncertainty}, $\Ga_{C\wt V}(z_0;\xi_{\wt\tau};r)\subset \Ga_{C^2\wt V}(z_0;\xi_{\tau};r)$.
 
Since $T$ intersects the dilate of some $\wt T$ in $\wt\bT$, we know that $T\cap \Ga_{C^3 \wt V}(z_0;\xi_{\tau};r)\neq\emptyset$, which implies $R^\de T^\flat\cap \Big(C^3\wt V+\partial_\xi\phl(z_0;\xi_\tau)\Big)\neq\emptyset$ via \eqref{labeldeftau}.
This would imply
\[T^\flat \subset V_\tau.\]
This shows $T\in \bT_\tau\subset\bT$ and hence the claim.

\smallskip

Consequently, from \eqref{reverse-ortho-2}, we have
\begin{equation}
\nonumber
\begin{split}
    \cW( f,B_R^{n+1}(z_0))^2&\lesssim \frac{1}{|\wt V|}\int \bigg|\sum_{\wt T\in\wt \bT'} \bigg( \sum_{T\in\bT} f_T\bigg)_{ \wt T}\bigg|^2+\rap(R)\|f\|_2^2\\
    &\lesssim \frac{1}{|\wt V|}\int \bigg| \sum_{T\in\bT} f_T\bigg|^2+\rap(R)\|f\|_2^2\\
    &\lesssim \frac{1}{|\wt V|}\int  \sum_{T\in\bT} |f_T|^2+\rap(R)\|f\|_2^2\\
    &=\sum_{\tau\in\Theta_s(4\wt\tau)} \frac{1}{|\wt V|}\int \sum_{T\in\bT_\tau} |f_T|^2+\rap(R)\|f\|_2^2.
\end{split}
\end{equation}

Now we finish the proof of the lemma.
By the definition of $\bT_\tau$, we have
\begin{equation}
\nonumber
\cW( f,B_r^{n+1}(z_0))^2\lesssim \!\!\!\sum_{\tau\in\Theta_s(4\wt\tau)} \!\frac{|V_\tau|}{|\wt V|}\bigg(\frac{1}{|V_\tau|}\int \sum_{\begin{subarray}{c}
            \theta\subset \tau\\
            \theta\in\Theta_{R^{-1/2}}
        \end{subarray}}\sum_{\begin{subarray}{c}
           T\in\T_\theta \\
           T^\flat\subset  V_\tau
\end{subarray}}\!|f_T|^2\bigg)+\rap(R)\|f\|_2^2.
\end{equation}
Note that each $V_\tau$ is of dimensions $R^{10\de}r \wt s^2\times R^{5\de} ( Rr)^{1/2}\wt s\times \dots\times R^{5\de}(Rr)^{1/2}\wt s$.
We have
\[\frac{|V_\tau|}{|\wt V|}\lesssim R^{O(\de)}(\frac Rr)^{\frac{n-1}{2}}.\] 
Also note 
\[\#\Theta_s(4\wt\tau)\lesssim R^{O(\de)} (\frac Rr)^{\frac{n-1}{2}}.\] 
We obtain
\begin{equation*}
    \cW( f,B_r^{n+1}(z_0))^2\lesssim R^{O(\de)} (\frac Rr)^{n-1}\cW (f, B^{n+1}_R)^2+\rap(R)\|f\|_2^2. \qedhere
\end{equation*}

\end{proof}

\begin{remark}
    { \rm
    The $R^{O(\de)}$ factor appears here since the wave packet $T$ has size $R^{2\de}\times R^{1/2+\de}\times \dots\times R^{1/2+\de}$, rather than $1\times R^{1/2}\times \dots\times R^{1/2}$. Though, the lemma still works for the induction step since $\de$ is very small compared to $\e$.
    
    }
\end{remark}

\medskip

\begin{remark}
\label{wpd-cwa-rmk-1}
\rm

The wave packet density introduced in Definition \ref{wpd-def} naturally imposes a non-concentration condition on the set of planks $\ZT$, where $g=\sum_{T\in\ZT}g_T$ is the wave packet decomposition of $g$.
Indeed, if $\|g_T\|_2$ are about the same for all $T\in\ZT$, then the assumption $\cW(g,B_R^{n+1}(z_0))\lesssim1$ is the necessary condition for the following Kakeya estimate to hold: $|\cup_\ZT|\gtrapprox\sum_{T\in\ZT}|T|$.
However, it is likely that this is not a sufficient condition.
Under the current definition of wave packet density, the assumption $\cW(g, B_R^{n+1}(z_0)) \lesssim 1$ is analogous to the classical ``$(n-1)$-ball condition" for a fractal set (see \cite[(1.1)]{KWZ} for an example with $n = 3$ and $\ZT$ replaced by a family of tubes).

\end{remark}

\begin{remark}[Continuation of Remark \ref{wpd-cwa-rmk-1}]
\label{wpd-cwa-rmk-2}
\rm

Nevertheless, wave packet density is a versatile concept.
One may modify its definition to impose a stronger non-concentration condition on the planks $\ZT$.
For instance, the next modification of Definition \ref{wpd-def} will enable us to impose a non-concentration condition on the planks $\ZT$, similar to the \textit{Convex Wolff Axiom} (see \cite[Definition 0.12]{wang2024restriction}) for tubes:

\smallskip

For dyadic numbers $s_1, s_2\dots,s_{n-1}\in[R^{-1/2}, 1]$, and $\tau_\circ$ being a convex subset of $[-1,1]^{n-1}\times \{1\}$ of dimensions $s_1\times \dots\times s_{n-1}$, define $\tau:=\{ \xi\in\A^n(1): \xi/|\xi|\in\tau_\circ \}$. 
We say $\tau$ is an $(s_1,\dots,s_{n-1})$-cap. Then $\tau$ is a convex set and can be roughly viewed as a box of dimensions $1\times s_1\times \dots\times s_{n-1}$. 
For example, a cap $\tau\in\Theta_{s}$ is an $(s,\dots,s)$-cap.
Given an $(s_1,\dots,s_{n-1})$-cap $\tau$, we define its dual box $V_{\tau,R}$ in the physical space $\R^n$ as follows:
Let $s_{\text{max}}:=\max_{1\le i\le n-1} s_i$. 
Define $V_{\tau,R}$ to be the box centered at the origin, of dimensions $R \smax^2\times R s_1\times \dots\times R s_{n-1}$, whose edges are parallel to the corresponding edges of the $1\times s_1\times \dots\times s_{n-1}$-box $\tau$.

Now, we give a generalization of Definition \ref{wpd-def}. 
One can check that the lemmas corresponding to Lemmas \ref{wpd-less-lem}, \ref{Linfty}, and \ref{leminduct} are also true.

\medskip

\noindent {\emph{Generalization of Definition \ref{wpd-def}}: } Suppose $g$ is a function in $\R^n$ with $\supp\wh g\subset \A(1)$, and $B^{n}_R(x_0)$ is a ball. Define
    \begin{equation}
    \nonumber
        \wt\cW(g,B_R^{n+1}(z_0)):=
        \sup_{\tau}\sup_{\begin{subarray}{c}
             V\parallel V_{\tau,R} \\
             V\subset B^{n}_R
        \end{subarray}}\bigg(\frac{1}{|V|}\int \sum_{\begin{subarray}{c}
            \theta\subset \tau\\
            \theta\in\Theta_R
        \end{subarray}}\sum_{\begin{subarray}{c}
            T^\flat\subset V\\
            T\in\T_\theta(z_0)
        \end{subarray}}|g_T|^2\bigg)^{1/2}.
    \end{equation}
Here, $\sup_\tau$ is taken over all caps $(s_1,\dots,s_{n-1})$-caps $\tau$ for all dyadic numbers $s_1, s_2, \ldots,s_{n-1}\in[R^{-1/2}, 1]$.

\end{remark}

\bigskip

Finally, we reduce Proposition \ref{1propLpest} to the following theorem, which is formulated in terms of a mixed norm defined via the wave packet density.
\begin{theorem}
\label{mixed-norm-thm}
Let $\cf$ be a type $\bfone$ Fourier integral operator given by \eqref{FIO}.
Suppose the phase function $\phi$ satisfies \eqref{phi0}, and the amplitude function $a$ obeys $\supp_\xi\ a\subset\A^n(1)^\circ$.
Let $\cf^\la$ be given by \eqref{FIOlambda}. 
Then for all $\e>0$ and when $p=p(n)$, there exists a constant $C_{\e}$ that is independent to $\cf$, such that 
\begin{equation}
\label{mixed-norm-esti-1}
    \|\cFl f\|_{L^p(B_R^{n+1})}\le C_\e R^{(n-1)(\frac12-\frac1p)+\e}\|f\|_{2}^{\frac2p}\cW( f,B_R^{n+1})^{1-\frac2p}
\end{equation}
for all $f$ with $\supp\wh f\subset\A^n(1)$ and all $R\in[1, \la^{1-\e}]$.
\end{theorem}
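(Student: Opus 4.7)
The plan is to prove \eqref{mixed-norm-esti-1} by induction on $R$, running the two arguments sketched in Section~\ref{section-sketch}: the $n\ge 4$ argument from Section~\ref{section-sketch-n-geq4} and the refined $n=3$ argument from Section~\ref{section-sketch-n=3}. The induction hypothesis is that \eqref{mixed-norm-esti-1} holds at all smaller scales $r \le R^{1-\e}$ and, via Lemma~\ref{leminduct}, the wave packet density at scale $r$ inside any ball $B_r^{n+1}(z_0)\subset B_R^{n+1}$ is controlled by $(R/r)^{(n-1)/2}\cW(f,B_R^{n+1})$ up to rapidly decaying tails and a harmless $R^{O(\de)}$ loss. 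The broad--narrow reduction with thresholds $K=R^{\e^{50}}$ reduces to proving the broad version $\|\cFl f\|_{\BL^p_{k,A}(B_R^{n+1})}$ for suitable $k,A$; narrow contributions are absorbed by the inductive hypothesis via decoupling on the $K^{-1}$-cap scale.

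After the broad reduction, I would perform wave packet decomposition $f=\sum_{T\in\T}f_T$ and do several dyadic pigeonholings to normalize: (i) extract a subset $\T$ such that $\|f_T\|_2$ are comparable, (ii) extract a union of unit balls $X\subset B_R^{n+1}$ where $\|\cFl f\|_{L^p(B)}$ are comparable, (iii) extract parameters $\mu$ (balls per plank), $l$ ($R^{1/2}$-balls intersecting each shading), and $\lambda$ (shading volume). This gives the setup at the beginning of Section~\ref{section-sketch-n-geq4}. At this point, one applies the refined decoupling theorem (Theorem~\ref{refdecthm0}) at the decoupling exponent $q_n=2(n+1)/(n-1)$ to get \eqref{after-dec-2}, and a crude $L^2$ estimate \eqref{l2-2} via orthogonality on $R^{1/2}$-balls.

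The key geometric input is the hairbrush inequality: fixing a plank $T_0$ and using broadness to produce transverse planks in $\ZH(T_0)$ intersecting $X\cap \bar T_0$, the disjointness of these planks gives \eqref{hairbrush-esti-1}, which combined with the double-counting identity \eqref{l1-1} yields $\mu\lesssim l^{-1/2}(\#\T)^{1/2}$. Plugging into \eqref{after-dec-2} and using the wave packet density lower bound from Lemma~\ref{wpd-lem} converts $\#\T$ into $\cW(f,B_R^{n+1})^{q_n-2}$. Interpolating this $L^{q_n}$ bound with the $L^2$ bound \eqref{l2-2} and optimizing in $l\in[1,R^{1/2}]$ yields the exponent $p=2+\frac{8}{3n-4}$, which is $p(n)$ for $n\ge 4$. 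For $n=3$, this gives $p=10/3$ already but at weaker strength; to reach the sharp exponent $10/3=p_3^+$, I need the refined $L^2$ estimate from Section~\ref{section-sketch-n=3}: an extra pigeonholing via the covering lemma (Lemma~\ref{regular-lem}) produces parameters $\si,\rho,m$ for $R^{1/2}$-balls $Q$; if each $B\subset X\cap Q$ is $4$-broad, transversality yields \eqref{l2-broad}, summing gives \eqref{l2-3}; combined with the improved incidence $\mu\lesssim (\#\T)^{1/2}\rho^{1/2}(l\si)^{-1/2}$ from double counting on $Q$, one concludes \eqref{localwave-3}. The non-$4$-broad case is handled by decoupling at scale $K^{-1}$ together with induction.

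The main obstacle is the broad \& two-ends reduction mentioned in Section~1.5: after the above pigeonholings, one must simultaneously ensure (a) each $|\sum_{T\in\T}\cFl f_T|\Id_B$ is broad on $B\subset X$ and (b) each shading $Y(T)$ satisfies a two-ends condition. Because $|\sum_T \cFl f_T|\Id_B$ is a sum of oscillating wave packets (not a sum of $|\cFl f_T|$), refining $X$ can destroy broadness and refining $\T$ can destroy the two-ends condition, so naive pigeonholing fails. The resolution is an algorithmic argument (Section~\ref{section-algorithm}) showing that each time both conditions fail simultaneously, the total incidence $\sum_{B\subset X}\#\{T\in\T:T\cap B\neq\emptyset\}$ drops by a factor of $R^\ka$, forcing termination in finitely many steps. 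Establishing this algorithm and tracking the various losses through the iteration (Section~\ref{section-iteration}) is where the bulk of the technical work goes; once the broad \& two-ends configuration is achieved, the hairbrush and refined $L^2$ arguments above close the induction by Lemma~\ref{leminduct} and Lemma~\ref{Linfty}, giving \eqref{mixed-norm-esti-1} at $p=p(n)$.
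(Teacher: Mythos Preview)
Your proposal is correct and follows essentially the same approach as the paper: induction on scales, wave packet normalization, the broad--narrow reduction, refined decoupling at $q_n$ combined with an $L^2$ estimate via the hairbrush bound (and, for $n=3$, the refined $L^2$ estimate via Lemma~\ref{regular-lem} and $4$-broadness), with the broad \& two-ends algorithm of Sections~\ref{section-algorithm}--\ref{section-iteration} as the main technical engine. Two minor points: in the paper the algorithm iteration is run \emph{before} the broad--narrow split (the one-end case $\beta\le 100$ is then handled directly by induction on $R/\Kc$-balls, not via the hairbrush), and your remark that the basic argument already gives $p=10/3$ when $n=3$ is slightly off --- plugging $n=3$ into $2+\tfrac{8}{3n-4}$ yields $18/5$, so the refined $L^2$ estimate is genuinely needed to reach $10/3$.
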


\smallskip

\begin{proof}[Proof that Theorem \ref{mixed-norm-thm} implies Proposition \ref{1propLpest}]

Since $\supp_\xi\ a\subset\A^n(1)^\circ$, it follows that $\cFl f=\cFl (\Id_{\A^n(1)}^\ast \wh f)^\vee$, where $\Id_{\A^n(1)}^\ast$ is a bump that equals to 1 on $\supp_\xi\ a$ and is supported in $\A^n(1)$.
By summing up all balls $B_R^{n+1}$ inside $B_\la^{n+1}$, Theorem \ref{mixed-norm-thm} and Lemma \ref{Linfty} imply that for all $\e>0$ and when $p=p(n)$,
\begin{equation}
\label{mixed-norm-esti-2}
    \|\cFl f\|_{L^p(B_\la^{n+1})}\le C_\e \la^{(n-1)(\frac12-\frac1p)+\e}\|f\|_{2}^{\frac2p}\|f\|_\infty^{1-\frac2p}
\end{equation}
We see that \eqref{mixed-norm-esti-2} implies \eqref{lp-esti-1} when $f$ is a characteristic function and when $p=p(n)$.
Therefore, the range $p>p(n)$ for \eqref{lp-esti-1} follows from  a real interpolation between the restrict-type estimate when $p=p(n)$ and the trivial bound $\|\cf^\la f\|_\infty \lesssim \la^{\frac{n-1}{2}}\|f\|_\infty$.
This proves Proposition \ref{1propLpest}.
\qedhere

\qedhere

\end{proof}

\bigskip

\section{Some geometric results}

In this section, we establish several geometric results that will be used to prove Theorem \ref{mixed-norm-thm} in subsequent sections.

\subsection{Lorentz rescaling}
\label{Lorentz-rescaling-section}

Suppose $\tau\in\Theta_{K^{-1}}$ and $\supp \wh f\subset \tau$. 
We want to find an upper bound for $\|\cFl f\|_{L^p(B_R^{n+1})}$ by induction. 
The idea is to perform a rescaling so that $\cF^\la f$ becomes $\wt \cF^{\la/K^2} g$, where $\wt \cF$ is a new Fourier integral operator, and $g$ is a function with $\supp\wh g\subset \A^{n}(1)$. 
At the same time, the scale of the integration domain drops from $R$ to $R/K^2$. 
Moreover, when $\cf$ is of type $\bfone$, the new operator $\wt \cF$ is of type $(1,1,C)$ for some constant $C$ (see \cite[Section 2.5]{beltran2020variable}).
Therefore, by partitioning the new operator $\wt \cF$ into $O(1)$ parts, we can use an induction hypothesis at the scale $(\la/K^2,R/K^2)$ to bound $\|\cFl f\|_{L^p(B_R^{n+1})}$.

Next, we describe the rescaling and the relevant geometry.
Given a $\tau\in\Theta_{K^{-1}}$, let $V_{\tau,R}\subset B_R^n$ be a dual $RK^{-2}\times RK^{-1}\times\cdots\times RK^{-1}$-box, as defined in Lemma \ref{lemuncertainty}. 
Tile $B_R^n$ by translated copies of $V_{\tau,R}$ and denote them by $\B_\tau^\flat=\{\Box^\flat\}$. 
For each $\Box^\flat\in\B^\flat_\tau$, define a curved plank
\begin{equation}
\label{Box}
    \Box:=\Ga_{\Box^\flat}(\xi_\tau;R)=\{(\ga^\la(v,t;\xi_\tau),t;\xi_\tau):v\in \Box^\flat,|t|\le R \}. 
\end{equation}
Denote $\B_\tau=\{\Box:\Box^\flat\in\B^\flat_\tau\}$.
Observe that $\B_\tau$ form a covering of $B_R^{n+1}$, and each $T\in \bigcup_{\theta\subset \tau}\T_\theta$ belongs to $O(1)$ many $\Box\in\B_\tau$.
We assign each $T\in\bigcup_{\theta\subset \tau}\T_\theta$ to a single $\Box$ to which $T$ belongs, and denote the collection of $T$ assigned to $\Box$ by $\T_\Box$.
This gives a partition 
\begin{equation}
\label{ZT-Box}
    \bigcup_{\theta\subset \tau}\T_\theta=\bigcup_{\Box}\T_\Box.
\end{equation}

Given a $\tau\in\Theta_{K^{-1}}$, write $\xi_\tau=(\om,1)$, and define $\Upsilon_{\om}(y,t):=(\ga(y,t;\om,1),t)$ and $\Upsilon^\la_{\om}(y,t):=\la\Upsilon_\om(y/\la,t/\la)$.
Introduce two non-isotropic dilations $D_K(y',y_n,t):=(Ky',y_n,K^2t)$ and $D'_{K^{-1}}(y',y_n):=(K^{-1}u',K^{-2}u_n)$. 
Then, as shown in  \cite[Proof of Lemma 2.3]{beltran2020variable}, we have
\begin{equation}\label{rescaling}
    \cFl g\circ \Upsilon_{\om}^\la\circ D_K= \wt\cF^{\la/K^2}\wt g, 
\end{equation} 
where
    \[\wt g(\eta):=K^{-(n-1)}g(\eta_n\om+K^{-1}\eta',\eta_n), \]
\begin{equation}
\label{cf-tilde}
    \wt\cF^{\la/K^2}\wt g(y,t):=\int_{\R^n}e^{i\wt\phi^{\la/K^2}(y,t;\eta)}\wt a^\la(y,t;\eta)\wt g(\eta)\mathrm{d}\eta.
\end{equation}
Here the phase $\wt \phi(y,t;\eta)$ is given by
    \[\langle y,\eta\rangle+ \int_0^1 (1-r)\langle \partial^2_{\xi'\xi'}\phi(\Upsilon_\om(D'_{K^{-1}}y,t);\eta_n\om+rK^{-1}\eta',\eta_n)\eta',\eta'\rangle\mathrm{d} r, \]
and the amplitude function $\wt a(y,t;\eta)$ is given by 
\[\wt a(y,t;\eta):= a(\Upsilon_\om(D'_{K^{-1}}y;t);\eta_n\om+K^{-1}\eta',\eta_n).\]
We remark that for each $\Box\in\B_\tau$, $(\Upsilon^\la_\om\circ D_K)^{-1}(\Box)$ is morally a $R/K^2$-ball.

\medskip
Now we state and prove the following lemma.

\begin{lemma}\label{lemrescaling}
Let $1\le K\le R^\e$.
Suppose there is a constant $C>0$ such that for all function $g$ defined in $\R^n$ with $\supp\wh g\subset \A^{n}(1)$ and all type $\bfone$ FIO $\wt\cF$, 
\begin{equation}\label{resc}
    \|\wt\cF^{\la/K^2} g\|_{L^p(B_{R/K^2}^{n+1})}\le C \|g\|_2^{\frac2p}\cW(g,B_{R/K^2}^{n+1})^{1-\frac2p}.
    \end{equation}
Let $\tau\in \Theta_{K^{-1}}$. 
Then for all function $f$ defined in $\R^n$ with $\supp\wh f\subset \tau$ and all type $\bfone$ FIO $\wt\cF$, we have
\begin{equation}
\nonumber
    \|\cFl f\|_{L^p(B_R^{n+1})}\lesssim C K^{\frac{2}{p}} \|f\|_2^{\frac2p}\cW(f,B_R^{n+1})^{1-\frac2p}.
\end{equation}
    
\end{lemma}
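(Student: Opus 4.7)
The plan is to partition $B_R^{n+1}$ into the curved planks of $\B_\tau$ from \eqref{Box} and, on each $\Box$, apply the Lorentz rescaling \eqref{rescaling} to transfer the problem to scale $R/K^2$, where the hypothesis \eqref{resc} is available. Using the assignment \eqref{ZT-Box}, I first write $f=\sum_{\Box}f_\Box$ with $f_\Box=\sum_{T\in\T_\Box}f_T$, and invoke Lemma \ref{lemsuppT} to localize $\cFl f$ on $\Box$ to $\cFl f_\Box$ up to a rapidly decaying tail. Summing over the planks,
\begin{equation*}
    \|\cFl f\|_{L^p(B_R^{n+1})}^p\lesssim \sum_{\Box\in\B_\tau}\|\cFl f_\Box\|_{L^p(\Box)}^p+\rap(R)\|f\|_2^p.
\end{equation*}

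For each $\Box$ centered at some $z_0$, a translated variant of \eqref{rescaling} shows that the map $\Upsilon^\la_\om\circ D_K$ (recentered at $z_0$) sends $B_{R/K^2}^{n+1}$ diffeomorphically onto $\Box$ and conjugates $\cFl$ with a type-$\bfone$ FIO $\wt\cF^{\la/K^2}$ (after the standard $O(1)$ partition of the symbol). The associated Jacobian has size $K^{n+1}$, so a change of variable followed by \eqref{resc} yields
\begin{equation*}
    \|\cFl f_\Box\|^p_{L^p(\Box)}\sim K^{n+1}\|\wt\cF^{\la/K^2}\tilde f_\Box\|^p_{L^p(B^{n+1}_{R/K^2})}\le K^{n+1}C^p\|\tilde f_\Box\|_2^2\,\cW(\tilde f_\Box,B^{n+1}_{R/K^2})^{p-2},
\end{equation*}
where $\tilde f_\Box$ denotes the rescaled datum obtained from $f_\Box$.

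The core computation is to convert both factors on the right back to $f_\Box$. A Plancherel calculation with the Jacobian $K^{-(n-1)}$ arising from the change $\xi'=\eta_n\om+K^{-1}\eta'$ gives $\|\tilde f_\Box\|_2^2=K^{-(n-1)}\|f_\Box\|_2^2$. For the density, the Lorentz rescaling sets up a bijection between the $(R/K^2)^{-1/2}$-caps (resp.\ dual fat planks $V\parallel V_{\wt\tau,R/K^2}$) in the new coordinates and the $R^{-1/2}$-caps $\theta\subset\tau$ (resp.\ fat planks $V'\parallel V_{\tau',R}$ with $\tau'\subset\tau$) in the original picture; the normalized $L^2$-averages in \eqref{eqdensity} are preserved because the factor from rescaling $\|f_T\|_2$ cancels against the one from $|V|$, and the centering shift by $\partial_\xi\phl(z_0;\xi_\tau)$ is absorbed by the moved wave packet families \eqref{T-theta-z0}, \eqref{defT}. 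Since the fat planks for $\cW(f_\Box,B_R^{n+1})$ form a subfamily of those for $\cW(f,B_R^{n+1})$, Lemma \ref{wpd-less-lem} then yields $\cW(\tilde f_\Box,B^{n+1}_{R/K^2})\lesssim \cW(f,B_R^{n+1})$.

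Assembling these, and using the near-orthogonality $\sum_\Box \|f_\Box\|_2^2\lesssim \sum_T\|f_T\|_2^2\lesssim \|f\|_2^2$ (each $T$ is assigned to a unique $\Box$ in \eqref{ZT-Box}, and wave packets within a $\theta$ are essentially orthogonal), we obtain
\begin{equation*}
    \|\cFl f\|_{L^p(B_R^{n+1})}^p\lesssim K^{n+1}\cdot K^{-(n-1)}C^p\,\cW(f,B_R^{n+1})^{p-2}\|f\|_2^2=K^2 C^p\|f\|_2^2\,\cW(f,B_R^{n+1})^{p-2},
\end{equation*}
and taking $p$-th roots gives the stated bound $C K^{2/p}\|f\|_2^{2/p}\cW(f,B_R^{n+1})^{1-2/p}$. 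The main technical obstacle is the wave packet density transformation: it requires matching the $R$- and $R/K^2$-level wave packet families and their fat planks carefully and verifying that the normalized $L^2$-averages are preserved under the Lorentz rescaling, including the centering shift at $z_0$; the remainder of the argument is bookkeeping of Jacobians.
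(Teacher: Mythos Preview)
Your proposal is correct and follows essentially the same route as the paper: decompose $B_R^{n+1}$ into the curved planks $\Box\in\B_\tau$, localize $\cFl f$ to $\cFl f_\Box$ on each $\Box$, apply the Lorentz rescaling $\Upsilon^\la_\om\circ D_K$ to reduce to scale $R/K^2$, invoke the hypothesis \eqref{resc}, and then undo the rescaling on the $L^2$ norm (picking up $K^{-(n-1)}$) and on the wave packet density (which is preserved and then dominated via Lemma \ref{wpd-less-lem}), before summing over $\Box$ using orthogonality. The Jacobian bookkeeping and the final numerology $K^{n+1}\cdot K^{-(n-1)}=K^2$ match the paper exactly; your identification of the density transformation as the main technical point is also how the paper treats it (stated there as $\cW(\sum_{\wt T}g_{\wt T},B^{n+1}_{R/K^2,\Box})\sim\cW(\sum_T f_T,B_R^{n+1})$ without further elaboration).
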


\begin{proof}
Write $\tau=\bigcup_{\theta\subset \tau}\theta$, where $\theta\in\Theta_{R^{-1/2}}$. 
Recall the definitions of $\B_\tau=\{\Box\}$ in \eqref{Box} and $\T_\Box$ in \eqref{ZT-Box}.
Perform the wave packet decomposition for $\cFl f$ inside $B_R^{n+1}$ to have
    \[\cFl f=\sum_{\theta\subset \tau}\sum_{T\in\T_\theta}\cFl f_T=\sum_\Box\sum_{T\in\T_\Box} \cFl f_T. \]
Recall the rescaling $\Upsilon^\la_\om\circ D_K$ in \eqref{rescaling}.
For each $\Box\in\B_\tau$, let $B_{R/K^2,\Box}^{n+1}$ be the $R/K^2$-ball containing $(\Upsilon^\la_\om\circ D_K)^{-1}(\Box)$. 
Note that, under this rescaling, each $T\in \T_\Box$ becomes a $R/K^2$-plank $\wt T$ in $B_{R/K^2,\Box}^{n+1}$.
We denote $\wt \T_\Box=\{\wt T: T\in\T_\Box\}$. 
Thus, under the same rescaling, the function $\cFl f$ becomes
    \[\sum_{\Box}\sum_{\wt T\in \wt \T_\Box} \wt\cF^{\la/K^2}\wt g_{\wt T}, \]
where $\wt \cF$ is an FIO of type $(1,1,C)$ for some constant $C$.

By the essential disjointness of $\Box$, we have
\begin{equation}\label{rescplugin}
    \|\cFl f\|_{L^p(B_R^{n+1})}^p\lesssim \sum_{\Box} \|\sum_{T\in \T_\Box }\cFl f_T\|_{L^p(\Box)}^p. 
\end{equation} 
For each $\Box$, since $\Upsilon^\la_\om$ is a diffeomorphism with determinant $\sim1$,
\[\|\sum_{T\in \T_\Box }\cFl f_T\|_{L^p(\Box)}\sim K^{\frac{n+1}{p}}\|\sum_{\wt T\in \wt\T_\Box }\wt\cF^{\la/K^2} g_{\wt T}\|_{L^p(B^{n+1}_{R/K^2,\Box})}, \]
which, by partitioning $\wt \cF$ into $O(1)$ parts and by \eqref{resc}, is bounded by
    \[\lesssim K^{\frac{n+1}{p}} C \|\sum_{\wt T \in \wt \T_\Box} g_{\wt T}\|_{2}^{\frac2p}\cW(\sum_{\wt T \in \wt \T_\Box} g_{\wt T}, B^{n+1}_{R/K^2,\Box})^{1-\frac2p}. \]
Again, since $\Upsilon^\la_\om$ is a diffeomorphism with determinant $\sim1$, we have
    \[\|\sum_{\wt T \in \wt \T_\Box} g_{\wt T}\|_{2}\sim K^{-\frac{n-1}{2}}\|\sum_{T \in \T_\Box}f_{T}\|_{2}. \]
Finally, notice that
    \[\cW(\sum_{\wt T \in \wt \T_\Box}  g_{\wt T}, B^{n+1}_{R/K^2,\Box})\sim \cW(\sum_{T \in \T_\Box}  f_{T}, B^{n+1}_{R})\lesssim \cW(  f, B^{n+1}_R). \]

Plugging the aforementioned information into \eqref{rescplugin}, we obtain
\begin{align*}
    \|\cFl f\|_{L^p(B_R^{n+1})}^p & \lesssim C^p K^{2} \sum_\Box \| 
    \sum_{T\in\T_\Box}  f_T \|_2^2\, 
    \cW(  f, B_R^{n+1})^{p-2}\\
    &\lesssim C^p K^{2} \|f\|_2^2\,\cW(  f,B_R^{n+1})^{p-2}. \qedhere
\end{align*}

\end{proof}

\subsection{Hairbrush}

We call $\Tau\subset B_R^{n+1}$ an $R$-tube if $\Tau$ has the form
    \[\cT=\Ga_{B}(\xi;R) \]
for some $R^{1/2}$-ball $B=B_{R^{1/2}}^n(v)$ and $\xi\in\A^n(1)$ (recall Definition \ref{defgaV}).
Since $\Ga_B(\xi;R)=\Ga_B(s\xi;R)$ (recall \eqref{1-homo}), we may assume $\xi=(\xi',1)$ by rescaling.
We call $\xi$ the direction of the tube and $v$ the position of the tube. 
Notice that the core curve of the tube is given by
\begin{equation}
\label{core-curve}
    \{(\ga^\la(v,t;\xi),t): |t|\le R\}.
\end{equation}

Similar to Definition \ref{distinct}, we make the following definition.
\begin{definition}
Two $R$-tubes $\Ga_{B_1}(\xi_1;R)$ and $\Ga_{B_2}(\xi_2;R)$ are \textbf{distinct} if 
    \[\Ga_{B_j}(\xi_j;R)\not\subset \Ga_{CB_k}(\xi_k;R)\] 
for $j,k\in\{1,2\}, j\not=k$, and some absolute big constant $C$.
\end{definition}

Also, similar to Lemma \ref{lemuncertainty}, we have
\begin{lemma}
\label{lem-distinct-tube}
Two $R$-tubes $\Ga_{B_1}(\xi_1;R)$ and $\Ga_{B_2}(\xi_2;R)$ are distinct if one of the following is true:
\begin{enumerate}
    \item $\dist(\xi_1',\xi_2')\gtrsim R^{-1/2}$.
    \item $\dist(B_1,B_2)\gtrsim R^{1/2}$.
\end{enumerate}
\end{lemma}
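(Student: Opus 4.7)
The plan is to mirror the structure of Lemma \ref{lemuncertainty}, translating the containment question through the local inverse $u^\lambda(z;\xi) := \partial_\xi \phi^\lambda(z;\xi)$. By the symmetry of the definition of ``distinct'' it suffices, under either hypothesis, to produce a point $(x,t)\in\Ga_{B_1}(\xi_1;R)$ that does not lie in $\Ga_{CB_2}(\xi_2;R)$. Writing $x=\ga^\la(u,t;\xi_1)$ for $u\in B_1$, $|t|\le R$, and using \eqref{transformation-id}, this reduces to exhibiting $u\in B_1$ and $|t|\le R$ with
\[
u^\la(\ga^\la(u,t;\xi_1),t;\xi_2)\notin CB_2.
\]
Since by construction $u^\la(\ga^\la(u,t;\xi_1),t;\xi_1)=u\in B_1$, the relevant quantity is the displacement $\Delta(z;\xi_1,\xi_2):=u^\la(z;\xi_1)-u^\la(z;\xi_2)$, evaluated at $z=(\ga^\la(u,t;\xi_1),t)$.

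For case (2), I would set $t=0$. By the normalization \eqref{phi0} and the quantitative condition $(\textup{H1}_\bfone)$, one has $\partial_\xi\phi^\la(x,0;\xi)=x+O(\cpar)$ uniformly in $\xi\in\A^n(1)$, so $\ga^\la(u,0;\xi)\approx u$ and the $t=0$ slice of $\Ga_B(\xi;R)$ is comparable to $B\times\{0\}$. The hypothesis $\dist(B_1,B_2)\gtrsim R^{1/2}$, with implicit constant chosen sufficiently large relative to $C$, then forces the center of $B_1$ to lie outside $CB_2$, finishing this case.

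For case (1), the plan is to take $|t|\sim R$ and apply Taylor's theorem in $\xi$. By the homogeneity \eqref{1-homo} I may normalize $\xi_j=(\xi_j',1)$, so $\xi_1-\xi_2=(\xi_1'-\xi_2',0)$. Taylor expansion gives
\[
\Delta(z;\xi_1,\xi_2)=\partial^2_{\xi\xi}\phi^\la(z;\xi_2)(\xi_1-\xi_2)+O(|\xi_1'-\xi_2'|^2).
\]
The heart of the argument is to show that the leading term has magnitude $\gtrsim |t|\cdot|\xi_1'-\xi_2'|$. This uses two ingredients: the $\xi$-homogeneity of $\phi$, which implies $\partial^2_{\xi\xi}\phi^\la(z;\xi_2)\equiv 0$ at $t=0$ (because of \eqref{phi0}) and is responsible for producing the $|t|$-factor upon integration in $t$; and the quantitative positive-definiteness $(\textup{H2}_\bfone)$, which ensures that when restricted to the hyperplane $\{\eta_n=0\}$ transverse to the null direction $\xi_2$, the matrix $\partial^2_{\xi\xi}\partial_t\phi^\la(z;\xi_2)$ acts like $\tfrac{1}{\xi_{2,n}}I_{n-1}$. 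Choosing $|t|$ of order $R$ and using $|\xi_1'-\xi_2'|\gtrsim R^{-1/2}$ then produces $|\Delta|\gtrsim R^{1/2}$, which exceeds the radius $CR^{1/2}$ of $CB_2$ once the implicit constant in the hypothesis is taken large enough relative to $C$. The main obstacle is precisely this lower bound on $\partial^2_{\xi\xi}\phi^\la(z;\xi_2)(\xi_1-\xi_2)$: one must carefully combine homogeneity with $(\textup{H2}_\bfone)$ and control the error terms of Taylor expansion, exactly as in the proof of Lemma~4.3 of \cite{gao2023square} that was cited in Lemma~\ref{lemuncertainty}.
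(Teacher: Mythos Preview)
Your approach is exactly what the paper intends: it states the lemma with the remark ``similar to Lemma~\ref{lemuncertainty}'' and gives no further proof, so your plan to reduce containment to a statement about $u^\la(z;\xi_1)-u^\la(z;\xi_2)$ is on target.

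Two small points deserve tightening. First, the claim that $\partial^2_{\xi\xi}\phi^\la(z;\xi_2)\equiv 0$ at $t=0$ is not literally a consequence of homogeneity and \eqref{phi0}: the normalization \eqref{phi0} forces vanishing only at $z=0$, and homogeneity only gives $\partial^2_{\xi\xi}\phi\cdot\xi=0$. What you actually need (and what suffices) is that $\partial^2_{\xi'\xi'}\phi^\la(x,0;\xi)=O(\cpar|x|)$, which follows from \eqref{phi0} together with the smallness condition $(\textup{D1}_\bfone)$ on $\partial^{\beta}_\xi\partial_{x_k}\phi$ for $|\beta'|\ge 2$.

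Second, and more substantively, the sentence ``$|\Delta|\gtrsim R^{1/2}$, which exceeds the radius $CR^{1/2}$ of $CB_2$'' does not by itself force $u-\Delta\notin CB_2$: the ball $CB_2$ could a priori be centered near $u-\Delta$. The missing step is to compare two times. At $t=0$ your own case~(2) analysis gives $\Delta\approx 0$, so containment would force $u\in C'B_2$ and hence $\dist(B_1,B_2)\lesssim R^{1/2}$; then at $|t|\sim R$ you have $|\Delta|\gtrsim R|\xi_1'-\xi_2'|$, so $u-\Delta$ has moved by an amount $\gg CR^{1/2}$ away from $CB_2$. Equivalently, the variation of $t\mapsto u-\Delta(t)$ over $[-R,R]$ exceeds the diameter of $CB_2$. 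Once you insert this two-time comparison, the argument closes.
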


Recall $K=R^{\e^{50}}$ and let $\Kc=R^{\e^{100}}$.

\begin{lemma}\label{lemhairbrush}
Let $\boldsymbol{\Tau}=\{\Tau\}$ be a family of distinct $R$-tubes and let $\cT_0$ be an $R$-tube in $B_R^{n+1}$. 
Let $\om_1=\R^{n}\times [i_1R\Kc^{-1},(i_1+1)R\Kc^{-1}]$ and $\om_2=\R^{n}\times [i_2R\Kc^{-1},(i_2+1)R\Kc^{-1}]$ be two horizontal regions with $i_1,i_2\in\ZZ, |i_1|,|i_2|\le \Kc$ and $|i_1-i_2|\ge 2$. 
Suppose for any $\Tau\in\boldsymbol{\Tau}$, the direction of $\Tau_0$ and the direction of $\Tau$ are $K^{-1}$-separated,  $\Tau_0\cap \Tau\neq\emptyset$, and $\Tau_0\cap \Tau\subset \om_1$. 
Then for $z\in\om_2$,
\begin{equation}\label{hairbrushineq2}
    \sum_{\Tau\in\boldsymbol{\Tau}}\Id_\Tau(z)\lesssim (K\Kc)^{O(1)}.
\end{equation} 

\end{lemma}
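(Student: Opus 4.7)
The plan is to bound, for each fixed $z=(x_z,t_z)\in\om_2$, the number of tubes in $\boldsymbol{\Tau}$ that contain $z$. Writing an arbitrary such $\Tau$ as $\Ga_{B_{R^{1/2}}(v_\Tau)}(\xi_\Tau;R)$, the containment $z\in\Tau$ and the identity \eqref{transformation-id} force $v_\Tau=\partial_\xi\phl(z;\xi_\Tau)+O(R^{1/2})$. By Lemma \ref{lem-distinct-tube}, distinct tubes through $z$ must then have directions lying in distinct $R^{-1/2}$-caps of $\A^n(1)$, so it suffices to bound the number of such caps carrying the direction of some admissible $\Tau$.

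To encode the hypotheses $\Tau_0\cap\Tau\ne\emptyset$ and $\Tau_0\cap\Tau\subset\om_1$, I introduce the displacement map
\[
h(\xi,t_1):=\gal\bigl(\partial_\xi\phl(z;\xi),\,t_1;\,\xi\bigr)-\gal(v_0,t_1;\xi_0)\in\R^n,
\]
which measures the separation between the core of the $(z,\xi)$-tube and the core of $\Tau_0$ at time $t_1$. Those hypotheses, together with the $R^{1/2}$-thickness of the tubes, show that every admissible $\Tau$ satisfies $|h(\xi_\Tau,t_1)|\lesssim R^{1/2}$ for some $t_1\in\om_1$. The counting problem therefore reduces to bounding the number of $R^{-1/2}$-caps covering the projection onto $\xi'$ of the sublevel set $\{|h|\lesssim R^{1/2}\}\cap(\{|\xi-\xi_0|\ge K^{-1}\}\times\om_1)$.

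The core analytic step is a quantitative lower bound on $|\det D_{(\xi',t_1)}h|$. Chain-rule differentiation of $h$, combined with (H1) (which keeps $\partial_u\gal$ close to the identity) and the implicit identity $\gal(\partial_\xi\phl(z;\xi),t_z;\xi)=x_z$, shows that $\partial_{\xi'}h$ has $n-1$ singular values comparable to $|t_z-t_1|\gtrsim R\Kc^{-1}$ along $n-1$ linearly independent directions in $\R^n$. For the remaining direction, $\partial_{t_1}h=\partial_t\gal(v,t_1;\xi)-\partial_t\gal(v_0,t_1;\xi_0)$: using the positive definiteness condition (H2) together with $|\xi-\xi_0|\ge K^{-1}$, one extracts a component of size $\gtrsim K^{-O(1)}$ outside the range of $\partial_{\xi'}h$. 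This yields $|\det D_{(\xi',t_1)}h|\gtrsim(R\Kc^{-1})^{n-1}\cdot K^{-O(1)}$. By the inverse function theorem, near each solution of $h=0$ the preimage $\{|h|\lesssim R^{1/2}\}$ is an ellipsoid with $n-1$ semi-axes of length $\sim\Kc R^{-1/2}$ and one of length $\sim K^{O(1)}R^{1/2}$; since $K\Kc\ll R^{1/2}$, the long semi-axis is essentially parallel to the $t_1$-axis and contributes negligibly to the projection onto $\xi'$-space, which is then covered by $O(\Kc^{n-1})$ caps of side $R^{-1/2}$.

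Finally, the number of distinct solution branches of $h=0$ inside $\{|\xi-\xi_0|\ge K^{-1}\}\times\om_1$ is $O(1)$: in the prototypical case $\phi=\langle x,\xi\rangle+t|\xi|$ a direct computation (reducing to a quadratic in the pivot time) gives at most one branch, and the uniform non-degeneracy of $Dh$ together with compactness of $\A^n(1)\times\om_1$ transfers this to the general type-$\bfone$ setting. Summing over branches yields $\sum_{\Tau\in\boldsymbol{\Tau}}\Id_\Tau(z)\lesssim\Kc^{n-1}\le(K\Kc)^{O(1)}$. The main obstacle is the determinant lower bound in the third paragraph: identifying the genuinely transverse direction supplied by (H2), and verifying that the $K^{-1}$-gain survives the error terms permitted by (D1) and (D2) in the type-$\bfone$ class.
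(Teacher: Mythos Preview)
Your approach is a correct dual reformulation of the paper's argument. Where the paper fixes a time slice $t_1\in\om_2$ and studies the \emph{forward} map $(\xi',t_0)\mapsto x(\cT;t_1)$ from hairbrush parameters to position (showing its Jacobian is nondegenerate and concluding the images are $R^{1/2}(K\Kc)^{-O(1)}$-separated), you fix $z\in\om_2$ and study the \emph{inverse} problem of which $(\xi',t_1)$ produce a tube through $z$ hitting $\cT_0$ at time $t_1\in\om_1$. The Jacobians are inverses of each other, and your determinant bound $|\det D_{(\xi',t_1)}h|\gtrsim (R\Kc^{-1})^{n-1}K^{-O(1)}$ is exactly the content of the paper's \eqref{jacobian}. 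The main operational difference is that the paper first reduces $\phi$ to the explicit normal form $\langle x,\xi\rangle+t|\xi'|^2/\xi_n+\cE$ with tightly controlled error $\cE$, which makes the Jacobian computation concrete and the error terms visibly small; you instead invoke (H1) and (H2) abstractly, which is cleaner to state but ultimately requires the same normal-form computation underneath.

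The one step that needs more care is your branch count. Non-degeneracy of $Dh$ together with compactness only tells you the solution set $\{h=0\}$ is finite, not that its cardinality is uniformly bounded over the type-$\bfone$ class (think of $\sin(N\cdot)$ on $[0,2\pi]$: all zeros are nondegenerate, yet there are $N$ of them). What actually works is a $C^1$-perturbation argument from the model case: after the paper's normal-form reduction, the full map differs from the prototypical quadratic map by a term whose derivatives are entrywise bounded by $(10n)^{-10}$ times the corresponding model entries, and since the model map is globally injective (your quadratic-in-pivot-time computation), so is the perturbed map. The paper's forward formulation hides the same issue---a pointwise Jacobian lower bound alone does not give global separation of images---but the explicit normal form and small-error matrix $E$ in \eqref{jacobian} make the perturbation step immediate to verify there.
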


\begin{figure}[ht]
\centering
\includegraphics[width=10cm]{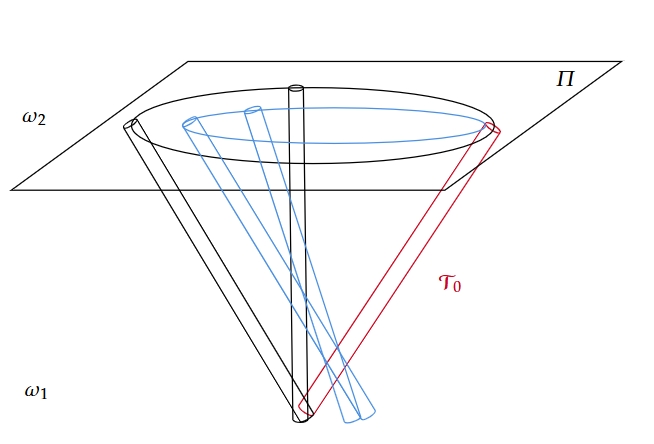}
\caption{Hairbrush for light rays}
\label{hairbrush}
\end{figure}

\smallskip

Before proving the lemma, we would like to discuss the proof idea when $\cf$ is a constant coefficient FIO (note that $\boldsymbol{\Tau}$ depends on the operator $\cf$). 
The variable coefficient case follows similarly.

We refer to Figure \ref{hairbrush} for the key geometric observation. 
When $\cf$ is a constant coefficient FIO, each $R$-tube $\Tau$ is a straight tube pointing to a light ray direction.
Fix $t\in [i_2R\Kc^{-1},(i_2+1)R\Kc^{-1}]$, and let $\Pi=\R^n\times \{t\}$ be a horizontal hyperplane in $\om_2$. 
It suffices to prove \eqref{hairbrushineq2} for $z\in\Pi$. 
Let $\{Q\}$ be a finite-overlapping cover of $\Tau_0\cap \om_1$ by $R^{1/2}$-balls.
Since $\Tau\cap \Tau_0\subset \om_1$, we assign each $\Tau\in\boldsymbol{\Tau}$ to a $Q$ such that $\Tau_0\cap\Tau\cap Q\neq\emptyset$. 
Denote the set of tubes assigned to $Q$ by $\boldsymbol{\Tau}(Q)$ so that
    \[\boldsymbol{\Tau}=\bigsqcup_{Q}\boldsymbol{\Tau}(Q). \]
Let $C(Q):=\bigcup_{\Tau\in\boldsymbol{\Tau}(Q)}\Tau\cap \Pi$, which is contained in the $R^{1/2}$-neighborhood of a $(n-1)$-sphere in $\Pi$. 
Since $\dist(\om_1,\om_2)\ge R\Kc^{-1}$, the spheres $C(Q)$ for different $Q$ have $R^{1/2}\Kc^{-1}$-separated radius, and all of them are tangent at $\Tau_0\cap \Pi$. 
Also, since each $\Tau$ and $\Tau_0$ are $K^{-1}$-separated in direction, $C(Q)\cap N_{100(K\Kc)^{-1}R}(\Tau_0\cap \Pi)=\emptyset$. 
Therefore, the sets $\{C(Q)\}_Q$ are $\lesssim (K\Kc)^{O(1)}$-overlapping, and hence $\{\Tau\cap\Pi\}_{\Tau\in\boldsymbol{\Tau}}$ are $(K\Kc)^{O(1)}$-overlapping. 
We remark that a $O(1)$ loss in exponent is acceptable, as $(K\Kc)^{O(1)}\leq R^{10\e}$.

The variable coefficient case will be clear once we understand the geometry of each corresponding $C(Q)$.
As we will see, each $C(Q)$ is contained in the $R^{1/2}$-neighborhood of a certain surface with a positive definite second fundamental form.

\medskip

\begin{proof}[Proof of Lemma \ref{lemhairbrush}]
After affine transformation, we assume $\Tau_0$ has position $v=0$ and direction $(\xi',\xi_n)=(0,\dots,0,1)$. 
Also, we want to simplify the phase function. 
We claim that, by applying a certain change of variables, we may assume 
\begin{equation}
\nonumber
    \phi(x,t;\xi)=\langle x,\xi\rangle+t \frac{h(\xi')}{\xi_n}+\cE(x,t;\xi'/\xi_n)\xi_n,
\end{equation}
where $h$ is quadratic and $|\cE(x,t;\xi')|=O( |t| |\xi'|^3+|z|^2|\xi'|^2  )$ is the higher order term.
The reader may compare it with  \cite[(1.11)]{bourgain1991lp}. 
We remark that the changes of variables we will use always have Jacobians whose determinants have absolute value comparable to 1.

\smallskip

Now, we prove the claim. 
Recall the notations  $z=(x,t)$, $x=(x',x_n),\ \xi=(\xi',\xi_n)$. 
For $i\le n$, note that $z_i=x_i$. 
Since $\phi$ is $1$-homogeneous in $\xi$, we focus on $\phi(x,t;\xi',1)$. 
Write
    \[\phi(z;\xi',1)=\phi(z;0,1)+g(z;\xi'), \]
with $g(z;0)\equiv 0$.
By the {\bf (H1)} condition of $\phi$, we have
$\partial_x \phi(z;0,1)\neq 0$. 
We may assume $\partial_{z_n}\phi(z;0,1)\neq 0$, so that we can perform the change of variables $\wt z_n=\phi(z;0,1)$ and $\wt z_i=z_i$ ($i\neq n$). 
In the new coordinate (we still use $z$ to denote $\wt z$), we have
    \[\phi(z;\xi',1)=z_n+g(z;\xi'). \]

Apply Taylor's expansion on $g$ to have
    \[\phi(z;\xi',1)=z_n+\sum_{i=1}^{n+1} \sum_{j=1}^{n-1}c_{ij}z_i\xi_j+g_1(z;\xi'), \]
where $\partial^2_{z\xi'}g_1(0,0)=0$. 
There is no $\xi$-term in the above expansion since $\partial^\al_\xi\phi(0;0)=0$ by \eqref{phi0}. 
Since $\phi$ is $1$-homogeneous in $\xi$, we have
    \[\phi(z;\xi',\xi_n)=z_n\xi_n+\sum_{i=1}^{n+1} \sum_{j=1}^{n-1}c_{ij}z_i\xi_j+g_1(z;\xi'/\xi_n)\xi_n. \]
By condition {\bf (H1)} that $\partial^2_{x\xi}\phi(0,0)$ has rank $n$, we see that
\[\partial^2_{x\xi}(z_n\xi_n+\sum_{i=1}^{n+1} \sum_{j=1}^{n-1}c_{ij}z_i\xi_j)\] has rank $n$. 
Hence, a linear change of variables in $z$ gives
    \[\phi(z;\xi',\xi_n)=\sum_{i=1}^nz_i\xi_i+g_2(z;\xi'/\xi_n)\xi_n. \]
Thus, in appropriate coordinates,
    \[\phi(z;\xi',1)=z_n+\sum_{i=1}^{n-1}z_i\xi_i+ g_2(z;\xi'). \]

By Taylor's expansion, we can further write
    \[\phi(z;\xi',1)=z_n+\sum_{i=1}^{n-1}z_i\xi_i+\sum_{i=1}^{n-1}a_i(z)\xi_i+\sum_{i=1}^{n+1}z_ib_i(\xi') +g_3(z;\xi'), \]
where $a_i, b_i$ are quadratic and $ |g_3(z;\xi')|=O(|z|^3|\xi'|+|x||\xi'|^3))$. 
Via the changes of variables $\wt z_i=z_i+a_i(z), \wt \xi_i=\xi_i+b_i(\xi')$ for $i\le n-1$, we have 
    \[\phi(z;\xi',1)=z_n+\sum_{i=1}^{n-1}z_i\xi_i+z_n b_n(\xi')+z_{n+1}b_{n+1}(\xi') +g_4(z;\xi'), \]
where $g_4$ satisfies the same condition as $g_3$.
Again, since $\phi$ is $1$-homogeneous in $\xi$,
    \[\phi(z;\xi',\xi_n)=z_n (\xi_n+b_n(\xi'/\xi_n)\xi_n)+\sum_{i=1}^{n-1}z_i\xi_i+z_{n+1}b_{n+1}(\xi'/\xi_n)\xi_n +g_4(z;\xi'/\xi_n)\xi_n.\]
Perform the $1$-homogeneous change of variables $\wt\xi_n=\xi_n+b_n(\xi'/\xi_n)\xi_n$. 
Noting that $\xi_n\sim 1$ and since $b_n$ is quadratic, when $|\xi'|$ is sufficiently small, $\wt \xi_n\sim 1$.
Thus, we reduce to the form
    \[\phi(z;\xi',\xi_n)=\sum_{i=1}^{n}z_i\xi_i+z_{n+1}\frac{h(\xi')}{\xi_n} +g_5(z;\xi'/\xi_n)\xi_n, \]
where $h$ is quadratic and $g_5(z;\xi')=O(|z|^3|\xi'|+|z||\xi'|^3)$.
Finally, as before, we perform a change of variables $\wt z_i=z_i+O(|z|^3)$ ($i\le n$) to eliminate the $|z|^3|\xi'|$ term in $g_5$; then perform $\wt\xi_i=\xi_i+O(|\xi|^3)$ ($i\le n-1$) to eliminate the $|x||\xi'|^3$ term in $g_5$. 
Eventually, we can reduce $\phi$ to the following form (note that $t=z_{n+1}$)
\begin{equation}
\nonumber
    \phi(x,t;\xi)=\langle x,\xi\rangle+t \frac{h(\xi')}{\xi_n}+\cE(x,t;\xi'/\xi_n)\xi_n,
\end{equation}
where $\cE(x,t;\xi')=O(|z|^2|\xi'|^2+|t||\xi'|^3)$. 
This finishes the proof of the claim.
Moreover, by condition {\bf (H2)}, we know that  $\textup{Hess}(h)$ is positive definite. 
Thus, under appropriate coordinates of the physical variable $z=(x,t)$, by another linear change of variable on $\xi'$, we can further reduce $\phi$ to
\begin{equation}
\label{standard-form}
    \phi(x,t;\xi)=\langle x,\xi\rangle+t \frac{|\xi'|^2}{\xi_n}+\cE(x,t;\xi'/\xi_n)\xi_n.
\end{equation}
By partitioning the amplitude function in the operator $\cf$ a priori, we assume that $|\partial_{t,\xi'}\cE(x,t;\xi'/\xi_n)|\leq (10n)^{-10}$ for $\xi\in\text{supp}_\xi\ a$, the $\xi$-support of the the amplitude function $a(\cdot\,;\cdot)$.
Denote $\wt \cE(x,t;\xi',\xi_n)=\cE(x,t;\xi'/\xi_n)\xi_n$, and $\wt\cE^\la(x,t;\xi',\xi_n)=\la \wt\cE(x/\la,t/\la;\xi',\xi_n)$.

\medskip

We return to the proof of Lemma \ref{lemhairbrush}. 
One can compute
    \[\partial_\xi\phi(x,t;\xi)=(x'+2t\frac{\xi'}{\xi_n}+\partial_{\xi'}\wt\cE,x_n-t\frac{ |\xi'|^2}{\xi_n^2}+\partial_{\xi_n}\wt\cE). \]
Hence,
    \[\partial_\xi\phl(x,t;\xi)=(x'+2t\frac{\xi'}{\xi_n}+\partial_{\xi'}\wt\cE^\la,x_n-t\frac{ |\xi'|^2}{\xi_n^2}+\partial_{\xi_n}\wt\cE^\la). \]
Recall \eqref{transformation-id}.
To find out the core curve of $\Tau_0$, let $(\xi',\xi_n)=(0,1)$ and solve the equation $\partial_\xi\phl(x,t;0,1)=0$. 
Since $h$ is quadratic and since $\cE(x,t;\xi')=O(|z|^2|\xi'|^2+|t||\xi'|^3)$, the solution is $x=0$.
Hence, the core curve of $\Tau_0$ is $\{ (0,t):|t|\le R \}$.

Suppose $\Tau$ is a tube whose core curve intersects $\Tau_0$ at $z_0=(0,t_0)$. 
Then there exists $\xi'$ so that the points $(x,t)$ on the core curve of $\Tau$ satisfy 
    \[\partial_\xi \phl(x,t;\xi',1)=\partial_\xi \phl(0,t_0;\xi',1). \]
Solve this equation and obtain
\begin{equation}\label{geodesic}
    \begin{split}
    &x'=x'(t,t_0,\xi')=-2(t-t_0) \xi'+ \partial_{\xi'}\Big( 
    \wt\cE^\la(x,t;\xi',1)-\wt\cE^\la(x,t_0;\xi',1) \Big),\\
    &x_n=x_n(t,t_0,\xi')=(t-t_0) |\xi'|^2+\partial_{\xi_n}\Big( \wt\cE^\la(x,t;\xi',1)-\wt\cE^\la(x,t_0;\xi',1) \Big). 
    \end{split}
\end{equation} 

Let $\cq$ be a finite-overlapping cover of $\cT\cap \om_1$ by $R^{1/2}$-balls. We may assume the center of each $Q\in\cQ$ is of form $(0,\dots,0,t_Q)$.
Since $\Tau\cap \Tau_0\subset \om_1$, we assign each $\Tau\in\boldsymbol{\Tau}$ to a $Q\in\cq$ such that $\Tau_0\cap\Tau\cap Q\neq\emptyset$. 
Denote the set of tubes assigned to $Q$ by $\boldsymbol{\Tau}(Q)$ so that
    \[\boldsymbol{\Tau}=\bigsqcup_{Q}\boldsymbol{\Tau}(Q). \]
For each tube $\Ga_{B}(\xi;R)\in\boldsymbol{\Tau}(Q)$, we may assume that the core curve of $\Ga_{B}(\xi;R)$ intersects the core curve of $\cT_0$ at the center of $Q$.
Fix an arbitrary $t_1$ such that $\ZR^n\times\{t_1\}\subset\om_2$.
For each $\cT\in\boldsymbol{\cT}$, let $x(\cT;t_1)$ be the intersection of the core curve of $\cT$ with the horizontal hyperplane $\ZR^n\times\{t_1\}$. By \eqref{geodesic}, 
\begin{equation}\label{xT}
    x(\cT;t_1)=(x'(t_1,t_Q,\xi'),x_n(t_1,t_Q,\xi')).
\end{equation} 
Let $B(\cT;t_1)=B^n_{R^{1/2}}(x(\cT;t_1))$.
It suffices to show that the balls $\{B(\cT;t_1)\}_{\cT\in\boldsymbol{\cT
}}$ are $(K\Kc)^{O(1)}$-overlapping.

\smallskip

Note that $|\xi|\gtrsim K^{-1}$, since the directions of the tubes in $\boldsymbol{\Tau}$ and the direction of $\cT_0$, which is $0$, are $K^{-1}$-separated.
Since tubes in $\boldsymbol{\cT}$ are distinct, by Lemma \ref{lem-distinct-tube}, the points $(R\xi',t_Q)$ corresponding to $x(\cT;t_1)$ (as in \eqref{xT}) are $R^{1/2}$-separated for all $\cT\in\boldsymbol{\cT}$.
Now consider $(x',x_n)$ as a function of $(\xi',t_0)$ as in \eqref{geodesic}.
Calculate
\begin{equation}
\label{jacobian}
    \partial_{\xi',t_0}(x',x_n)=\Big(\begin{array}{cc}
    -2(t_1-t_0)I_{n-1} & 2(t_1-t_0)(\xi')^T \\
    2\xi' &  -|\xi'|^2
    \end{array}\Big)+E,
\end{equation}
where the error term $E$ is a matrix such that 
\begin{equation}
\nonumber
    E=(10n)^{-10}\Big(\begin{array}{cc}
    O(|t_1-t_0|)\cdot I_{n-1} & O(|t_1-t_0|\cdot|\xi|)\cdot (\vec{1}_{n-1})^T \\
    O(|\xi'|^2)\vec{1}_{n-1} &  O(|\xi'|^2)
    \end{array}\Big).
\end{equation}
Here $\vec{1}_{n-1}$ is the vector $(1,\ldots,1)\in\ZR^{n-1}$.

Therefore, since $|t_1-t_Q|\gtrsim R\Kc^{-1}$ and since $|\xi'|\gtrsim K^{-1}$, the Jacobian \eqref{jacobian} shows that the points $\{x(\cT;t_1)\}_{\cT\in\boldsymbol{\cT}}$ are $R^{1/2}(\Kc K)^{-O(1)}$-separated, which implies that $\{B(\cT;t_1)\}_{\cT\in\boldsymbol{\cT
}}$ are $(K\Kc)^{O(1)}$-overlapping.
\qedhere

\end{proof}

\medskip

As a corollary, we have the result for planks.
\begin{lemma}\label{hairbrush1}
Suppose $\Tau_0$ is the $R^{1/2}$-neighborhood of a $R$-plank $T_0$ and $\bT=\{T\}$ is a set of distinct $R$-planks contained in $B_R^{n+1}$. 
Let 
    \[\om_1=\R^n\times [i_1R\Kc^{-1},(i_1+1)R\Kc^{-1}],\ \om_2=\R^n\times [i_2R\Kc^{-1},(i_2+1)R\Kc^{-1}]\] be two horizontal regions with $i_1,i_2\in\ZZ, |i_1|,|i_2|\le \Kc$ and $|i_1-i_2|\ge 2$. 
Suppose for all $T\in\bT$, the direction of $\Tau_0$ and the direction of $T$ are $K^{-1}$-separated, and $\Tau_0\cap T\neq\emptyset$ with $\Tau_0\cap T\subset \om_1$.
Then for all $x\in\om_2$,
\begin{equation}
\label{hairbrushineq}
    \sum_{T\in\bT}\Id_T(x)\lesssim (K\Kc)^{O(1)}.
\end{equation} 

\end{lemma}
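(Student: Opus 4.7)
The plan is to reduce to the tube version, Lemma~\ref{lemhairbrush}, by covering each plank by $R$-tubes. For each $T\in\bT$, the $R^{1/2}$-neighborhood $N_{R^{1/2}}(T)$ has spatial cross-section of dimensions $R^{1/2}\times R^{1/2+\de}\times\cdots\times R^{1/2+\de}$, which is comparable (up to an $R^{O(\de)}$ factor) to the cross-section of an $R$-tube. Hence one may cover $N_{R^{1/2}}(T)$ by a bounded number of $R$-tubes $\{\Tau_T^{(j)}\}_j$ sharing the direction of $T$; then $T\subset \bigcup_j \Tau_T^{(j)}$, so $\Id_T(x)\le \sum_j \Id_{\Tau_T^{(j)}}(x)$, and it suffices to bound $\sum_{T,j}\Id_{\Tau_T^{(j)}}(x)$ for $x\in\om_2$.

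To apply Lemma~\ref{lemhairbrush} to $\boldsymbol{\Tau}:=\{\Tau_T^{(j)}\}_{T,j}$, I verify its hypotheses. The direction of each $\Tau_T^{(j)}$ equals that of $T$, hence is $K^{-1}$-separated from the direction of $\Tau_0$; $\Tau_0\cap\Tau_T^{(j)}\ne\emptyset$ is immediate since $T\subset\Tau_T^{(j)}$; and because $\Tau_0$ and $\Tau_T^{(j)}$ are both $R^{1/2}$-wide tubes with $K^{-1}$-separated directions, the intersection $\Tau_0\cap\Tau_T^{(j)}$ has diameter $O(KR^{1/2})$ in the $t$-direction and therefore sits inside a $CKR^{1/2}$-enlargement of $\Tau_0\cap T\subset \om_1$. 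Since $KR^{1/2}\ll R\Kc^{-1}$, this enlargement stays at time-distance $\gtrsim R\Kc^{-1}$ from $\om_2$ whenever $|i_1-i_2|\ge 2$, which is the quantitative input actually used in the proof of Lemma~\ref{lemhairbrush} (through the lower bound $|t_1-t_Q|\gtrsim R\Kc^{-1}$); at worst, one replaces $\om_1$ by a union of three adjacent $R\Kc^{-1}$-slabs and applies the lemma to each piece, paying only a constant.

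The main obstacle is that the tubes in $\boldsymbol{\Tau}$ may fail to be distinct, so some bookkeeping is needed. I would group $\boldsymbol{\Tau}$ into comparability classes, select one representative per class, and show that within any single class at most $O(1)$ distinct planks from $\bT$ can contain a given point $x$. Indeed, the planks come with discrete directions $\xi_\theta$ for $\theta\in\Theta_{R^{-1/2}}$, and a fixed $R$-tube has directional range comprising only $O(1)$ such caps; for planks sharing a common $\xi_\theta$ and lying in a common $R^{1/2}$-wide tube, their geometry is that of translates of a fixed curved slab of short width $R^{2\de}$, and containing $x$ restricts the translate in the short direction to an interval of length $O(R^{2\de})$, while distinctness forces the translates to be $\gtrsim R^{2\de}/C$-separated, leaving only $O(1)$ of them. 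Combining this per-class bound $O(1)$ with the $(K\Kc)^{O(1)}$ bound of Lemma~\ref{lemhairbrush} on the number of distinct tube classes through $x\in\om_2$ yields \eqref{hairbrushineq}.
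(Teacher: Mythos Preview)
Your proposal is correct and follows the same basic strategy as the paper: reduce the plank statement to the tube version (Lemma~\ref{lemhairbrush}) by covering planks with tubes and controlling the resulting overlap. The paper's argument is slightly more streamlined: rather than covering each plank individually and then grouping the resulting (possibly non-distinct) tubes into comparability classes as you do, the paper fixes from the outset a single family of \emph{distinct} $R$-tubes covering $\cup_{\bT}$, so that each plank lies in $O(1)$ of their $R^\de$-dilates and the planks inside any such dilate are finitely overlapping. This avoids your bookkeeping step with comparability classes, though your argument for that step is fine.

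Two small points. First, the number of $R$-tubes needed to cover $N_{R^{1/2}}(T)$ is $R^{O(\de)}$ rather than a literal ``bounded number,'' since the cross-section is $R^{1/2}\times R^{1/2+\de}\times\cdots\times R^{1/2+\de}$; this is harmless because $R^{O(\de)}\le (K\Kc)^{O(1)}$. Second, $\Tau_0$ itself is not an $R$-tube but the $R^{1/2}$-neighborhood of a plank, so to invoke Lemma~\ref{lemhairbrush} exactly one should first partition $\Tau_0$ into $R^{O(\de)}$ genuine $R$-tubes, as the paper does explicitly. Neither point affects the validity of your approach.
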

\begin{proof}
Note that we can cover $\cup_\ZT$ by a family of distinct $R$-tubes such that 
\begin{enumerate}
    \item Each $R$-plank $T$ is contained in $O(1)$ many $R^{\de}$-dilate of $R$-tubes.
    \item For each $R$-tube, the $R$-planks inside the $R^{\de}$-dilate of it are finitely overlapping.
\end{enumerate}
By partitioning $\cT_0$ into $R^{O(\de)}$ many $R$-tubes, we then apply Lemma \ref{lemhairbrush} to conclude the proof. 
\end{proof}

\bigskip

\subsection{Transversality of normal directions, and a trilinear estimate of slabs}

For each $K^2$ ball $B$, let $\ZT(B)$ be the set of planks intersecting $B$.
Pick a point $z\in B$.
Recall \eqref{single-wpt} that for each plank $T\in\ZT(B)$, $(\frac{d}{dt}\gal(z;\xi_\theta),1)$ is its direction. 
We will show that, if the directions of $\ZT(B)$ are broad, then so are the normal directions, which are essentially given by $\{\partial_z\phi(z;\xi_\theta)\}$.

Let us first prove a standard lemma, which provides an upper bound on $\#\{\tau\in\Theta_{K^{-1}}:\tau\text{ lies near a linear subspace}\}$.
It appears in various references, for example, in the display below (7.6) in \cite{gao2023improved}. 
Nevertheless, we include the proof here for completeness.

\begin{lemma}
\label{numberofcaps-lem}
For any $(k-1)$-dimensional subspace $V\subset \R^{n+1}$, we have
\begin{equation}\label{numberofcaps}
    \#\{\tau\in\Theta_{K^{-1}}:\ang(G^\la(z_B,\tau),V)\le K^{-2}\}\lesssim \max\{1,K^{k-3}\}.
\end{equation}
\end{lemma} 

\begin{proof}
Without loss of generality, we assume $z_B=0$.
Via change of variables, we also assume $\phi$ is of reduced form as in the proof of Lemma \ref{lemhairbrush}, equation \eqref{standard-form}:
    \[\phi(x,t;\xi)=\langle x,\xi\rangle +t\frac{|\xi'|^2}{\xi_n}+\wt\cE(x,t;\xi), \]
where $|\wt\cE(x,t;\xi)|=O(|t||\xi'|^3+|z|^2|\xi'|^2)$.
Applying $\frac{d}{dt}$ to $\partial_\xi\phl(\gal(u,t;\xi),t;\xi)=u$, one obtains
    \[ \partial^2_{z\xi}\phl\cdot(\frac{d}{dt}\gal,1)=0. \]
Hence, $(\frac{d}{dt}\gal(z;\xi),1)$ is parallel to $G^\la(z;\xi)$. 
One can compute as in \eqref{geodesic} that
    \[\frac{d}{dt}\gal(z_B;\xi',1)=\frac{d}{dt}\gal(0;\xi',1)=(-2\xi',|\xi'|^2). \]
Since $(-2\xi',|\xi'|^2, 1)$ is parallel to $G^\la(z_B;\xi)$, the left-hand side of \eqref{numberofcaps} is bounded above by
    \[\#\{\tau\in\Theta_{K^{-1}}: (-2\xi_\tau',|\xi_\tau'|^2,1)\in N_{CK^{-2}}(V) \}. \]
Let $\mathcal S^{n-1}:=\{(-2\xi',|\xi'|^2,1):|\xi'|\le 100^{-1}\}$.
Since $\mathcal S^{n-1}$ is away from $0$,
    \[N_{CK^{-2}}(V)\cap \mathcal S^{n-1}\subset N_{C^2K^{-1}}(V\cap \mathcal S^{n-1}). \]
Since $V\cap \mathcal S^{n-1}$ is at most $k-3$ dimensional and since $\{(-2\xi'_\tau,|\xi'_\tau|^2,1)\}_{\tau\in\Theta_{K^{-1}}}$ are $K^{-1}$-separated,
we have
    \[\#\{\tau\in\Theta_{K^{-1}}: (\xi_\tau',|\xi_\tau'|^2,1)\in N_{CK^{-2}}(V) \}\lesssim \max\{1,K^{k-3}\}. \qedhere \]
\end{proof}

\medskip

\begin{lemma}
\label{radial-normal-lem}
There exists a constant $A$ depending on $\phi$ and $n$ such that the following is true:
Let $z\in\ZR^{n+1}$, and let $\cT\subset\Theta_{K^{-1}}$ be a family of $K^{-1}$-caps such that for any two distinct $\tau_i,\tau_j\in\cT$,
\begin{equation}
\label{enormalt}
    |G^\lambda(z,\tau_i)\wedge G^\lambda(z,\tau_j)|
    \geq K^{-2}.
\end{equation}
Here $G^\la$ is the rescaled Gauss map defined in \eqref{generalized-gauss-map}.
Define $v_\xi:=\partial_z\phi(z;\xi)$.

Suppose $\#\cT\geq A$. 
Then there exists 
$\tau_1,\tau_2,\tau_3\in\cT$ such that for all $\xi_j\in \tau_j$,
\[
|v_{\xi_1}\wedge v_{\xi_2}\wedge v_{\xi_3}|\gtrsim K^{-O(1)}.
\]
\end{lemma}

\begin{proof}

Since $\phi$ is homogeneous in $\si$ and obeys the quantitative $(\text{H2}_{\bA})$, without loss of generality, we may assume $z=0$ and write $\partial_z \phi(0, 0; \xi', 1)$ as the graph of a function $h: \xi' \mapsto h(\xi')$ satisfying  
\begin{equation}
\label{hid}
    \Vert \partial_{\xi'\xi'}^2 h - I_{n-1} \Vert_{\text{op}} < \epsilon,
\end{equation}
where $\epsilon > 0$ is a sufficiently small fixed constant, and $\phi$ obeys  
\begin{equation}
\label{qua}
    h(\xi') = \frac{1}{2} |\xi'|^2 + O(|\xi'|^3).
\end{equation}
By homogeneity, $\partial_z \phi(0, 0; \xi) $ can be parameterized as  
\[
(\xi', \xi_n, h(\xi'/\xi_n) \xi_n).
\]
Thus, $G^\la(0,\xi)$ is parallel to
\[
    G_0^\lambda(\xi) := \left( \nabla h(\xi'/\xi_n), - \nabla h\left( \frac{\xi'}{\xi_n} \right)\cdot \frac{\xi'}{\xi_n}+h(\f{\xi'}{\xi_n}), -1 \right).
\]
Denote by 
\[
v(\xi) = (\xi', \xi_n, h(\xi'/\xi_n) \xi_n).
\]

By homogeneity, we may take $\xi=(\xi',1)$ and obtain
\[
    G_0^\lambda(\xi) = (\nabla h(\xi'), -\nabla h(\xi') \cdot \xi'+h(\xi'), -1),
\]
and
\begin{equation}\label{evs}
   v(\xi) = (\xi', 1, h(\xi')). 
\end{equation}
By \eqref{hid}, we know that
    \[|G_0^\lambda(\xi_i)\wedge G_0^\lambda(\xi_j)|\lesssim |\xi_i'-\xi_j'|.\]
Therefore, by the assumption \eqref{enormalt}, for any two distinct $\tau_i,\tau_j\in\cT$ and all $(\xi_i',1)\in\tau_i,\,(\xi_j',1)\in\tau_j$, we have 
\begin{equation}
\nonumber
    |\xi_i'-\xi_j'|\gtrsim K^{-2},
\end{equation}
which further implies that for all distinct  $\tau_i,\tau_j\in\cT$ and all $\xi_i\in\tau_i,\,\xi_j\in\tau_j$, the angle between $(\xi_i',1)$ and $(\xi_j',1)$ is $\gtrsim K^{-2}$. 
Moreover, since $ |h(\xi')|=O(1)$, we have
\begin{equation}
\label{evKs}
    \angle\big(v(\xi_i),v(\xi_j)\big)\gtrsim K^{-2}
\end{equation}
We refer to \cite[Lemma 4.6]{guth2019sharp} for a similar argument. 

\smallskip

Let $v(\tau)=\{v(\xi):\xi\in\tau\}$ and let $\xi_\tau$ be the center of $\tau$.
Since $|v(\xi)|\sim 1$, for any 2-dimensional subspace $V$, we have 
\begin{align*}
        &\#\{\tau\in\Theta_{K^{-1}}:\ang(v(\tau),V)\le K^{-2}\}\\
        \leq C_1&\#\{\tau\in\Theta_{K^{-1}}: ((\xi_{\tau_i})',1, h((\xi_{\tau_i})'))\in N_{C_2K^{-2}}(V) \}
\end{align*}
for some constants $C_1, C_2$.
By assumption \eqref{qua}, we can easily modify the argument in Lemma~\ref{numberofcaps-lem} to obtain from \eqref{evKs} that 
    \[\#\{\tau\in\Theta_{K^{-1}}: ((\xi_{\tau_i})',1, h((\xi_{\tau_i})'))\in N_{C_2K^{-2}}(V) \}\leq C
\]
for some constant $C$ depending on $\phi$ and $n$. 
Thus, if $A$ is chosen to be larger than $10CC_1$, we can find three caps $\tau_1,\tau_2,\tau_3$ such that for all $\xi_j\in \tau_j$, we have $
|v_{\xi_1}\wedge v_{\xi_2}\wedge v_{\xi_3}|\gtrsim K^{-O(1)}$ as desired. 
\qedhere

\end{proof}

\medskip

Recall \eqref{single-wpt}.
Next, we discuss the intersection of three $R$-planks
\[T_i=\{(\ga^\lambda(u,t;\xi_i),t):\ u\in R^\de T_i^{\flat}, |t|\le R\},\hspace{.3cm}1\leq i\leq 3.\]

\begin{lemma}
\label{mka}
Let $\vec v_i=\partial_z\phi^\lambda(z;\xi_i)$. Suppose
%For curved planks 
%$T_i=\{(\ga^\lambda(u,t;\xi_i),t):\ u\in R^\de T_i^{\flat}, |t|\le R\}$, $1\le i\le 3$, satisfying 
\begin{equation}\label{evt}
|\vec v_1\wedge\vec v_2\wedge \vec v_3|\ge K^{-1}.
\end{equation}
Then
%    Let $2\le k\le n+1$. Suppose $T_i,\ i=1,\dots, k$ are $K^{-1}$-transverse. Then
    \begin{equation}\label{ekk}
        |T_1\cap T_2\cap  T_3|\lesssim  K^{O(1)} R^{\frac{n-2}2}.
    \end{equation}
    
\end{lemma}

\begin{proof}%[Proof of Proposition~\ref{mka}]

Without loss of generality, we may assume that for all $i=1,2,3$, $T_i$ is centered at $(u,t)=(0,0)$.
That is, the core curve of each $T_j$ intersects $(0,0)$.

Next, we show that $T\cap B_{R^{1/2}}$ is comparable to a slab of dimensions $R^{2\de}\times  R^{1/2}\times \cdots \times R^{1/2}$.
The proof is a refinement of Lemma~\ref{geometric}.
Express $\gamma^\lambda (u,t;\xi)$ as 
    \[\ga^\lambda(u,t;\xi)-\ga^\lambda(0,t;\xi)+\ga^\lambda(0,t;\xi)-\ga^\lambda(0,0;\xi)+\gamma^\lambda(0,0;\xi).\]
As we have seen in the proof of Lemma~\ref{geometric}, since $u\in T^\flat$, and $R\le \lambda^{1-\epsilon}$, we have $\ga^\lambda(u,t;\xi)-\ga^\lambda(0,t;\xi)=\partial_u\ga^\la(0,t;\xi)\cdot u+O(1)$. 
Notice that
    \[\partial_u\ga^\lambda(0,t;\xi)-\partial_u\ga^\lambda(0,0;\xi)=O(\lambda^{-1}t)=O(R^{-1/2})\]
whenever $|t|\le R^{1/2}$.
Consequently, we obtain
    \[\ga^\lambda(u,t;\xi)-\ga^\lambda(0,t;\xi)=\partial_u\gamma^\lambda(0,0;\xi)\cdot u+O(1).\]
Similarly, as $\|(\partial_t)^2\gamma^\lambda\|_\nf\lesssim \lambda^{-1}$, we have
    \[\ga^\lambda(0,t;\xi)-\ga^\lambda(0,0;\xi)=\partial_t \gamma^\lambda (0,0;\xi)t +O(1).\]
In summary, we know that $T\cap B^{n+1}_{R^{1/2}}$ is comparable to the set
    \[\{\Big(\partial_t\gamma^\lambda(0,0;\xi)t+\partial_u\gamma^{\lambda}(0,0;\xi)\cdot x,t\Big):\ \ x\in T^\flat,\ |t|\le \sqrt R\},\]
which is a slab in $\mathbb R^{n+1}$ of dimensions $R^{2\de}\times R^{1/2}\times\cdots \times R^{1/2}$.

\smallskip 

To describe a normal vector of the slab $T\cap B^{n+1}_{R^{1/2}}$, first observe that in the $\mathbb R^n$-space, the vector $\xi$ is normal to $T^\flat$, which implies that a normal vector $v$ of $T\cap B^{n+1}_{R^{1/2}}$ is of the form $([\partial_u\gamma^\lambda (0,0;\xi)]^{-tran}\cdot\xi,0)+ae_{n+1}$. 
Here $[\partial_u\gamma^\lambda (0,0;\xi)]^{-tran}$ is the transpose of $[\partial_u\gamma^\lambda (0,0;\xi)]^{-1}$, and $e_{n+1}=(0,\dots,0,1)\in\mathbb R^{n+1}$. 
Since $v$ is orthogonal to $\partial_t\gamma^\lambda (0,0;\xi)$, we obtain $a=-\langle [\partial_u\gamma^\lambda (0,0;\xi)]^{-tran}\cdot\xi,\partial_t\gamma^\lambda (0,0;\xi)\rangle$. 
Taking partial derivatives of \eqref{transformation-id} in $u$ and $t$ separately, we get
    \[\partial_u\gamma^\lambda=(\partial_x\partial_\xi \phi^\lambda)^{-1},\hspace{.3cm}\partial_t\gamma^\lambda =-(\partial_x\partial_\xi \phi^\lambda)^{-1}\cdot \partial_t\partial_\xi\phi^\lambda.\]
Therefore, the vector
    \[v=((\partial_x\partial_\xi \phi^\lambda)^{tran}(0,0;\xi)\cdot\xi,0)+\langle \xi, \partial_t\partial_\xi\phi^\lambda (0,0;\xi)\rangle e_{n+1}\]
is normal to the slab $T\cap B^{n+1}_{R^{1/2}}$. 
As $\phi^\lambda$ is $1$-homogeneous in $\xi$, we have 
%Using that \todo{Why this is true?}
$$
\partial_z\phi^\lambda=\lim_{s\to 1^-}\frac{\partial_z\phi^\lambda(z;\xi) -\partial_z\phi^\lambda(z;s\xi) }{1-s}=\partial_\xi\partial_z\phi^\lambda\cdot\xi,
$$
which implies further that
$$
\vec v=\partial_z\phi^\lambda(0,0;\xi).
$$

By above argument, we see that $\vec v_i$ is normal to $T_i\cap B^{n+1}_{\sqrt R}$, therefore, using the condition \eqref{evt} and that $|\vec v_j|\lesssim 1$, we obtain 
\eqref{ekk}.
\qedhere

\end{proof}

\medskip

Finally, we can state the trilinear estimate we need.
\begin{lemma}
\label{trilinear-lem}
Let $Q$ be an $R^{1/2}$-ball, and let $\ZT(Q)$ be a family of $R$-planks intersecting $Q$.
Let $X\subset Q$ be a union of $K^2$-balls.
Let $\mu\geq1$.
Suppose for each $K^2$-ball $B\subset X$, there exists three distinct caps $\tau_1,\tau_2,\tau_3\in\Theta_{K^{-1}}$ such that
\begin{enumerate}
    \item $\#\ZT_{\tau_j}(B)\geq\mu$ for all $j=1,2,3$, where $\ZT_{\tau_j}(B):=\{T\in\ZT(Q):T\cap B\not=\varnothing, \,T\in\ZT_\theta\text{ for some }\theta\subset \tau_j\}$.
    \item For all $\xi_j\in \tau_j$, $|v_{\xi_1}\wedge v_{\xi_2}\wedge v_{\xi_3}|\geq C$.
\end{enumerate}
Then we have
\begin{equation}
\nonumber
    |X|\lesssim (CK)^{O(1)}  R^{\frac{n-2}2}(\#\ZT(Q))^3\mu^{-3}.
\end{equation}
\end{lemma}
\begin{proof}
Let $\ZT_{\tau_j}(Q):=\{T\in\ZT(Q):T\in\ZT_\theta\text{ for some }\theta\subset \tau_j\}$
We say three caps $\tau_1,\tau_2,\tau_3$ are transverse if $|v_{\xi_1}\wedge v_{\xi_2}\wedge v_{\xi_3}|\geq C$ for all $\xi_j\in \tau_j$.
Notice that
\begin{equation}
\nonumber
    \mu^3|X|\lesssim K^{O(1)}\int_X\sum_{T_j\in\ZT_{\tau_j}(B)}|T_1\cap T_2\cap T_3|\lesssim K^{O(1)}\sum_{\substack{\tau_1,\tau_2,\tau_3\text{ are}\\\text{ transverse}}}\sum_{T_j\in\ZT_{\tau_j}(Q)}|T_1\cap T_2\cap T_3|.
\end{equation}
By Lemma \ref{mka}, we have
\begin{equation}
\nonumber
    \mu^3|X|\lesssim (CK)^{O(1)}R^{\frac{n-2}2}\sum_{\tau_1,\tau_2,\tau_3}\prod_{j=1}^3\#\ZT_{\tau_j}(Q)\lesssim  (CK)^{O(1)}  R^{\frac{n-2}2}(\#\ZT(Q))^3,
\end{equation}
which implies the lemma. \qedhere
\end{proof}

\bigskip

\subsection{A covering lemma}

In this subsection, we focus on the geometry inside each $R^{1/2}$-ball.
Although the main result is stated for all dimensions, we will only apply it in the context of 3+1 dimensions.

Suppose $T$ is an $R$-plank and $Q$ is an $R^{1/2}$-ball. 
By Taylor's expansion of the core curve of $T$ inside $Q$, one sees that $T\cap Q$ is contained in a $R^{2\de}\times R^{1/2}\times \dots\times R^{1/2}$-slab (see the proof of Lemma \ref{mka}).
We will prove a covering lemma for $K^2\times R^{1/2}\times \dots\times R^{1/2}$-slabs inside an $R^{1/2}$-ball. 
As before, $K=R^{\e^{50}}$.

\begin{definition}
\label{rho-regular}
Let $R\geq1$ and let $\rho$ be such that $1\le\rho\le R^{O(1)}$. 
Let $\bB$ be a set of disjoint $K^2$-balls inside an $R^{1/2}$-ball $Q\subset\ZR^{n+1}$.
We say $\bB$ is {\bf{$\rho$-regular}} with respect to $K^2\times R^{1/2}\times \dots\times R^{1/2}$-slabs in $Q$ if
\begin{enumerate}
    \item For any $K^2\times R^{1/2}\times \dots\times R^{1/2}$-slab $S$ in $Q$,
    \begin{equation}
    \nonumber
        \#\{ B\in\bB:B\cap S\neq\emptyset  \}\le 2 \rho.
    \end{equation}
    \item There exists a collection $\ZS $ of $K^2\times R^{1/2}\times \dots\times R^{1/2}$-slabs  with $\#\ZS\lesssim \#\bB/\rho$ so that the balls in $\bB$ all intersect with $ \bigcup_{S\in\S}S$, and hence are all contained in $ \bigcup_{S\in\S}5S$.
\end{enumerate}
When $Q$ is clear and $K,R$ are fixed, we simply call $\bB$ $\rho$-regular.
\end{definition}

\smallskip

Definition \ref{rho-regular} is similar to Definition 1.9 in \cite{Li-Wu}.
The next Lemma \ref{regular-lem} is similar to Lemma 2.1 in the same paper.

\begin{lemma}
\label{regular-lem}
Suppose $\bB$ is a disjoint union of $K^2$-balls inside an $R^{1/2}$-ball $Q\subset\ZR^4$.
Then there exists a number $\rho$ and a subset $\bB'\subset \bB$ with $\#\bB'\gtrsim (\log R)^{-1}\#\bB$, such that $\bB'$ is $\rho$-regular.
\end{lemma}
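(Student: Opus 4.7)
The plan is a standard greedy peeling followed by dyadic pigeonholing, in the spirit of \cite{Li-Wu}. Set $\bB_0:=\bB$ and iterate: at step $i$, choose a $K^2\times R^{1/2}\times R^{1/2}\times R^{1/2}$-slab $S_i\subset Q$ maximizing
\begin{equation*}
\rho_i := \#\{B\in\bB_i:B\cap S_i\neq\emptyset\},
\end{equation*}
collect the corresponding balls into $G_i$, and set $\bB_{i+1}:=\bB_i\setminus G_i$. The maximum is achieved because $S\mapsto\#\{B\in\bB_i:B\cap S\neq\emptyset\}$ is upper semicontinuous in $S$ on the compact parameter space of slabs of the prescribed shape contained in $Q$. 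Since the $K^2$-balls are disjoint in the $R^{1/2}$-ball $Q\subset\ZR^4$, one has $\#\bB\leq (R^{1/2}/K^2)^4\leq R^{O(1)}$, so the algorithm terminates; the sequence $(\rho_i)$ takes values in $[1,R^{O(1)}]$ and is non-increasing by construction.

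Next I would dyadic-pigeonhole on the $\rho_i$'s: since they span only $O(\log R)$ dyadic scales, there is a dyadic $\rho$ such that
\begin{equation*}
\bB' := \bigsqcup_{i:\,\rho_i\in[\rho,2\rho)} G_i,\qquad \#\bB'\gtrsim(\log R)^{-1}\#\bB.
\end{equation*}
Set $\ZS:=\{S_i:\rho_i\in[\rho,2\rho)\}$. Condition (2) of Definition~\ref{rho-regular} is then immediate: one has $\#\ZS\sim \#\bB'/\rho$ because $\#G_i\in[\rho,2\rho)$ for each contributing $i$, and every $B\in G_i$ lies in $5S_i$, since the shortest dimension of $S_i$ has width $K^2$, comparable to the diameter of the $K^2$-ball $B$.

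For condition (1), I would let $i^\ast$ denote the smallest index with $\rho_{i^\ast}\in[\rho,2\rho)$, so that $\bB'\subset\bB_{i^\ast}$. The maximality of $\rho_{i^\ast}$ then gives, for every $K^2\times R^{1/2}\times R^{1/2}\times R^{1/2}$-slab $S\subset Q$,
\begin{equation*}
\#\{B\in\bB':B\cap S\neq\emptyset\}\le \#\{B\in\bB_{i^\ast}:B\cap S\neq\emptyset\}\le \rho_{i^\ast}<2\rho,
\end{equation*}
which is precisely condition (1). I do not expect a genuine obstacle here; the only slightly delicate point is guaranteeing the existence of a maximizing $S_i$ over a continuous family of slab positions and orientations, which is handled by the upper semicontinuity remark above (or, equivalently, by discretizing the orientation parameter at scale $K^2/R^{1/2}$ and the translation parameter at scale $K^2$, then taking a strict maximum on the resulting finite set with a harmless factor-$2$ loss absorbed into $\rho$).
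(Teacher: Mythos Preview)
Your proposal is correct and follows essentially the same greedy-peeling-plus-dyadic-pigeonholing argument as the paper. The only cosmetic difference is that the paper defines the pigeonholed subcollection $\bB_\rho$ via an inductive complementation over larger dyadic scales, whereas you take the cleaner route of directly setting $\bB'=\bigsqcup_{i:\rho_i\in[\rho,2\rho)}G_i$ and invoking the monotonicity of $(\rho_i)$ to get $\bB'\subset\bB_{i^\ast}$; your treatment of the existence of a maximizing slab (via upper semicontinuity or discretization) is a detail the paper leaves implicit.
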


\begin{proof}
The proof depends on a greedy algorithm.
The algorithm is designed to construct a partition of $\bB$ into $O(\log R)$ subsets $\{\bB_{\rho}\}_\rho$ where $\rho$ are dyadic numbers in $[1,R^2]$, and $\bB_\rho$ is $\rho$-regular.

Let $\bB_1=\bB$. 
We will construct a sequence $S_1,S_2,\dots$, along with $\rho_1,\rho_2,\dots$ in the following way. 
Let $\bB_j=\bB\setminus\Big( \bigcup_{i\le j-1} \{B\in\bB: B\cap S_i\neq\emptyset\} \Big)$. 
One sees that $\bB_{j}=\bB_{j+1}\sqcup \{B\in \bB_{j}: B\cap S_j\neq\emptyset\}$.
Define 
\begin{equation}
\nonumber
    \rho_{j}=\max_{ S\text{ is a } K^2\times R^{1/2}\times \dots\times R^{1/2}\text{-slab}}\#\{B\in \bB_j: B\cap S\neq\emptyset\}.
\end{equation}
We use $S_j$ to denote the $K^2\times R^{1/2}\times \dots\times R^{1/2}$-slab that attains the maximum. 

We keep doing until $\bB_{m+1}=\emptyset$ at certain step. By the definition, we have the monotonicity $\rho_i\ge \rho_{i+1}$ for any $i$. 
For each dyadic $\rho\in[1,R^{O(1)}]$, we let $I_\rho\subset [1,m]\cap \mathbb N$ be the indices such that $\rho_j\in[\rho,2\rho)$ for any $j\in I_\rho$. 
We obtain a partition of indices
    \[[1,m]\cap \mathbb N=\bigsqcup_{\rho} I_\rho. \]
For each dyadic $\rho$, we inductively define
    \[\bB_\rho=\{B\in \bB: B\cap \bigcup_{i\in I_\rho}S_i\neq\emptyset \}\setminus \Big( \bigcup_{\rho'>\rho}\bB_{\rho'} \Big). \]
Here, $\bigcup_{\rho'>\rho}$ ranges over those dyadic numbers $\rho'$ bigger that $\rho$. 
Thus, 
    \[\bB=\bigsqcup_{\rho}\bB_\rho. \]

By pigeonholing, there exists $\rho$ such that $\#\bB_\rho\gtrsim (\log R)^{-1}\#\bB$. 
We claim that $\bB_\rho$ is $\rho$-regular.
By the definition of $\rho_j$, we see that for any $K^2\times R^{1/2}\times\dots\times R^{1/2}$-slab $S$, $\#\{B\in\bB_\rho: B\cap S\neq\emptyset\}\le \sup_{i\in I_\rho} \rho_i\le 2\rho.$ 
Note that
    \[\bB_\rho= \bigcup_{j\in I_\rho}\bB_j\setminus \bB_{j+1} =\bigsqcup_{j\in I_\rho} \{B\in\bB_j: B\cap S_j\neq \emptyset\}. \]
We have
    \[\sum_{i\in I_\rho}\rho_i= \sum_{j\in I_\rho}\{B\in\bB_j: B\cap S_j\neq \emptyset\} = \#\bB_\rho. \]
Therefore, $\# I_\rho\lesssim \#\bB_\rho/\rho$, and all the balls in $\bB_\rho$ intersect with $\bigcup_{i\in I_\rho}S_i$.
\qedhere

\end{proof}

\begin{remark}

\label{covering-lem-rmk}
\rm

Note that the proof of Lemma \ref{regular-lem} does not rely on any geometric properties of the $K^2\times R^{1/2}\times \dots\times R^{1/2}$-slabs.
That is to say, a similar covering lemma is expected if the family of $K^2\times R^{1/2}\times \dots\times R^{1/2}$-boxes is replaced by another family of geometric objects (for example, a family of tubes, which was used in \cite{Li-Wu}).

\end{remark}

\bigskip

\section{An algorithm}\label{section-algorithm}

In this section, we present (a single step of) an algorithm designed to establish the broad \& two-ends structure on the sum of wave packets $\sum_T\cFl f_T$ mentioned in the introduction. 
Fix $R\ge 1$ and recall our choice of parameters $K=R^{\e^{50}},\Kc=R^{\e^{100}}$. We may assume $R^{1/2},K,\Kc\in\N$. 
Let $\bT$ be a family of $R$-planks and $\bB$ be a family of $K^2$-balls in $B_R^{n+1}$. By slightly modify the parameters (up to a constant multiple), we may assume $K^2\vert R^{1/2}\vert R\Kc^{-1}$.

Let $\Om=\{\om\}$ be a collection of horizontal regions where each $\om$ is of the form \[\R^n\times [iR\Kc^{-1},(i+1)R\Kc^{-1})\] where $i\in\ZZ\cap[-\Kc,\Kc-1]$), so $\Om$ forms a disjoint cover of $B_R^{n+1}$. In the algorithm, we will also work with $\{Q\}$, a set of $R^{1/2}$-balls. We assume each $Q$ is of form \[\prod_{j=1}^{n+1}[i_jR^{1/2},(i_j+1)R^{1/2}],\] where $i_j\in\ZZ\cap [-R^{1/2},R^{1/2}-1]$.
We may assume each $B\in \bB$ is of form \[\prod_{j=1}^{n+1}[i_jK^2,(i_j+1)K^2],\] where $i_j\in \ZZ\cap [-RK^{-2},RK^{-2}-1]$. By our assumption, each $B$ is contained in one $Q$, and each $Q$ is contained in one $\om$. 
Given $T\in\bT$, we have
    \[\{B\in\bB: B\cap T\neq\emptyset\}=\bigsqcup_{\om\in\Om}\{B\in\bB:B\cap T\neq\emptyset, B\subset \om\}. \]
Since the two generic regions in $\Om$ are $R\Kc^{-1}$-separated, the following two statements are conceptually equivalent:
\begin{enumerate}
    \item The $K^2$-balls in $\bB$ intersecting $T$ are from many different regions in $\Om$.
    \item The shading $Y(T)=\cup_{\bB}\cap T$ satisfies a two-ends condition.
\end{enumerate}

\smallskip

To formulate the two-ends property, we introduce the following notion.

\begin{definition}[Shaded incidence triple]
Let $R\ge 1$, let $K=R^{\e^{50}}$, and let $\Om$ be the set of horizontal regions introduced above. 
A \textbf{shaded incidence triple} (or simply triple) $(\bB,\bT;\cG)$ is the following:
\begin{enumerate}
    \item $\bB=\{B\}$ is a set of $K^2$-balls in $B_R^{n+1}$;
    \item $\bT=\{T\}$ is a set of $R$-planks in $B_R^{n+1}$;
    \item $\cG$ is the \textbf{shading map} that $\cG:\bT\to P(\Om)$. Here $P(X)$ is the collection of subsets of $X$, which is also called the power set of $X$. 
\end{enumerate}
\end{definition}

We remark that for a given plank $T$, $\cG(T)$ is a subset of $\Om$.
For a triple, we will be interested in the following incidence:
    \[\cI(\bB,\bT;\cG):=\#\{ (B,T)\in\bB\times \bT: B\cap T\neq\emptyset, B\subset \cup_{\cG(T)} \}. \]
We also define for any $B\in\bB$,
    \[\bT(B;\cG):=\{ T\in\bT: B\cap T\neq\emptyset, B\subset \cup_{\cG(T)} \}, \]
and for any $T\in \bT$,
    \[\bB(T;\cG):=\{B\in\bB: B\cap T\neq\emptyset, B\subset \cup_{\cG(T)}\}. \]
By a double-counting argument, one has
\begin{equation}
\label{double-counting}
    \cI(\bB,\bT;\cG)=\sum_{B\in\bB}\#\bT(B;\cG)=\sum_{T\in\bT}\#\bB(T;\cG).
\end{equation}

To formulate the broad property, we introduce the following notions.
\begin{definition}
\label{T[S]}
For an $R$-plank $T$, we use $\theta(T)\in \Theta_{R^{-1/2}}$ to denote its directional cap.
Let $S$ be a union of $\theta\in\Theta_{R^{-1/2}}$ and let $\bT$ be a set of $R$-planks. We define 
    \[\bT[S]:=\{T\in\bT: \theta(T)\subset S\}. \]
\end{definition}
\noindent Here are two examples:
\begin{enumerate}
    \item $ \bT[\theta]=\{ T\in\bT: \theta(T)=\theta \}$.
    \item For $\tau\in\Theta_{K^{-1}}$,  $\bT[(3\tau)^c]=\{T\in\bT: \theta(T)\not\subset 3\tau\} $.
\end{enumerate}

\begin{definition}[Broad incidence]\label{defbbr}
Given a triple $(\bB,\bT;\cG)$ and an integer $A\ge 1$, define 
    \[\cI_{\bbr,A}(\bB,\bT;\cG):=\inf_{\cT\subset \Theta_{K^{-1}},\#\cT=A}\cI(\bB,\bT[(\cup_{\tau\in\cT}3\tau)^c];\cG). \]
For a set of $R$-planks $\bT$, we define the broad cardinality as
    \[\#_{\bbr,A} \bT:=\inf_{\cT\subset \Theta_{K^{-1}},\#\cT=A}\#\bT[(\cup_{\tau\in\cT}3\tau)^c]. \]
When $A=1$, we write $\cI_{\bbr}(\bB,\bT;\cG)=\cI_{\bbr,A}(\bB,\bT;\cG)$ and $\#_{\bbr} \bT=\#_{\bbr,A} \bT$ in short.
\end{definition}

\medskip

{\bf Algorithm:}

\bigskip

Now we can present the algorithm. 
It allows us to refine a given triple $(\bB,\bT,\cG)$, where $\bT$ corresponds to the set of wave packets and $\bB$ corresponds to the integration domain.
At the end of the algorithm, we will obtain a ``refinement" of $(\bB,\bT,\cG)$ along with several associated parameters.
This will be summarized in Proposition \ref{propalgorithm}. We will use the notation: for two finite sets $E\subset F$, we say $E$ is a \textbf{refinement} of $F$ if $\# E\gtrapprox \#F$.

\medskip

\noindent\textit{Initial stage:} Given an incidence triple $(\bB,\bT;\cG)$ and a $\nu>0$ such that for $B\in\bB$,
\begin{equation}
\label{to-pigeonhole}
    \big\|\sum_{T\in\bT(B;\Si,\cG)}\cFl f_T\big\|_{L^p(B)}\sim \nu.
\end{equation}

\medskip

\noindent
\textit{Step 1: Broad multiplicity in each \texorpdfstring{$K^2$}{}-ball $B\in\bB$.}

Let $A\geq1$ be an integer.
By dyadic pigeonholing on $\{\#_\bbr \bT(B;\cG):B\in\bB\}$,
we can find a refinement $\bB_1$ of $\bB$ and a $\mu\in  [1,R^{O(1)}]\cup\{0\}$ such that 
\begin{equation}
\label{B1}
\begin{split}
    &\#_{\bbr,A} \bT(B;\cG)\in[\mu,2\mu) \textup{~when~}\mu\ge1,\\
    &\#_{\bbr,A} \bT(B;\cG)=0 \textup{~when~}\mu=0,
\end{split}
\end{equation}
for all $B\in\bB_1$. 
Note that
\begin{equation}
\label{item3}
    \cI(\bB_1,\bT;\cG)=\sum_{B\in\bB_1}\#\bT(B;\cG)\ge \#\bB_1 \mu.
\end{equation}

\medskip
\noindent
\textit{Step 2: Two-ends reduction.}

From Step 1, we obtain a triple $(\bB_1,\bT;\cG)$.
For a plank $T\in\bT$ and a dyadic number $1\le \lambda \le R^{O(1)}$, we define 
\begin{equation}\label{Gla}
    \cG_\lambda(T):=\{ \om\in\cG(T):  \#\{B\in\bB_1(T;\cG):B\subset \om\}\in[\lambda,2\lambda) \}.
\end{equation}
Note that $\cG_\lambda$ is a new shading map.
Since
$\cG(T)=\bigsqcup_\lambda \cG_\lambda(T)$, we have for each $B$,
    \[\bT(B;\cG)=\bigsqcup_{\la}\bT(B;\cG_\la), \]
and hence
\begin{equation}
\nonumber
\begin{split}
    \sum_{T\in\bT(B;\cG)}\cFl f_T=\sum_{\lambda}\sum_{T\in\bT(B;\cG_\lambda)}\cFl f_T.
\end{split}
\end{equation}
By pigeonholing, there is a $\lambda(B)$ such that
    \[\nu\sim\big\|\sum_{T\in\bT(B;\cG)}\cFl f_T\big\|_{L^p(B)}\lessapprox \big\|\sum_{T\in\bT(B;\cG_{\la(B)})}\cFl f_T \big\|_{L^p(B)}. \] 
By dyadic pigeonholing on the tuples
    \[\bigg\{ \Big(\lambda(B), \big\|\sum_{T\in\bT(B;\cG_{\la(B)})}\cFl f_T \big\|_{L^p(B)}\Big) \bigg\}_{B\in\bB_1}, \]
there exist $\bB_2$, $\lambda$, and $\nu_1$, such that the following is true:
\begin{enumerate}
    \item $\bB_2$ is a refinement of $\bB_1$.
    \item $\lambda(B)=\lambda$ for all $B\in\bB_2$.
    \item For all $B\in\bB_2$, $\|\sum_{T\in\bT(B;\cG_{\la})}\cFl f_T \|_{L^p(B)}\sim \nu_1\gtrapprox\nu$ .
\end{enumerate}

\smallskip
Next, we will prune the wave packets $\bT$. 
Consider the partition $\bT=\bigsqcup_\be\bT_\be$, where $\be\in[1,2\Kc]$ ranges over dyadic numbers and 
\begin{equation}
\label{beta}
    \bT_\beta=\{ T\in\bT: \beta\le \#\cG_\lambda(T)<2\beta \}.
\end{equation}
For each $B\in\bB_2$, we have 
    \[\bT(B;\cG_\la)=\bigsqcup_\beta \bT_{\beta}(B;\cG_\la). \]
As a result, for each $B\in\bB_2$,
\begin{equation}
\nonumber
    \sum_{T\in\bT(B;\cG_\la)}\cFl f_T=\sum_\be\sum_{T\in\bT_\beta(B;\cG_\la)}\cFl f_T.    
\end{equation}
By pigeonholing, there is a $\be(B)$ such that 
\begin{equation}
\nonumber
   \big\|\sum_{T\in\bT(B;\cG_\la)}\cFl f_T\big\|_{L^p(B)}\lessapprox \big\|\sum_{T\in\bT_{\beta(B)}(B;\cG_\la)}\cFl f_T \big\|_{L^p(B)}.
\end{equation}
Since $\|\sum_{T\in\bT(B;\cG_\la)}\cFl f_T\|_{L^p(B)}\sim \nu_1$ for all $B\in\bB_2$, by a similar dyadic pigeonholing argument as before, there exist $\bB_3$, $\be$, and $\nu_2$, such that:
\begin{enumerate}
    \item $\bB_3$ is a refinement of $\bB_2$.
    \item $\be(B)=\be$ for all $B\in\bB_3$.
    \item For all $B\in\bB_3$, $\|\sum_{T\in\bT_{\beta}(B;\cG_\la)}\cFl f_T \|_{L^p(B)}\sim \nu_2\gtrapprox\nu_1$.
\end{enumerate}

Note that for each $T\in\bT_\beta$, one has 
\begin{equation}
\nonumber
\begin{split}
     \#\bB_1(T;\cG_\la)=&\,\#\{B\in\bB_1: B\cap T\neq\emptyset,  B\subset\cup_{\cG_\la(T)}\}\\[1ex]
     =&\sum_{\om\in\cG_\la(T)}\#\{B\in\bB_1: B\cap T\neq\emptyset, B\subset \om\}\\
     \le& \sum_{\om\in\cG_\la(T)}\#\{B\in\bB_1(T;\cG): B\subset \om\}.
\end{split}
\end{equation}
By \eqref{Gla} and \eqref{beta}, $\#\{B\in\bB_1(T;\cG): B\subset \om\}<2\la$ when $\om\in\cG_\la(T)$; $\#\cG_\la(T)<2\beta$ when $T\in\bT_\beta$. Therefore,  $\#\bB_1(T;\cG_\la)\lesssim \beta\la$ for $T\in\bT_\beta$, which gives
\begin{equation}\label{item4}
\cI(\bB_1,\bT_\beta;\cG_\lambda)=\sum_{T\in\bT_\beta}\#\bB_1(T;\cG_\la)\lesssim \#\bT_\beta\cdot\beta\la.
\end{equation}

\bigskip

\noindent
\textit{Step 3: Uniformization on each $R^{1/2}$-ball.}

Let $\{Q\}$ be a set of $R^{1/2}$-balls that form a partition of $B_R^{n+1}$.
For each $R^{1/2}$-ball $Q$, consider the set $\bB_3|_Q=\{B\in\bB_3: B\subset Q\}$. By Lemma \ref{regular-lem}, there is a refinement $\bB_3'|_Q$ of $\bB_3|_Q$ and a number $\rho(Q)\ge 1$ such that $\bB_3'|_Q$ is $\rho$-regular. 
Let $\bB_3'=\bigcup_{Q}\bB_3'|_Q$, so $\bB_3'$ is a refinement of $\bB_3$.
For each $R^{1/2}$-ball $Q$, we consider the following quantities:

\begin{enumerate}
    \item $\rho(Q)$.
    \item $\#\bB_3'|_Q$.
    \item $\#_{\bbr}\bigcup_{B\in\bB_4|_Q}\bT_\be(B;\cG_\la)$and $\#_{\bbr} \bigcup_{B\in\bB_4|_Q}\bT(B;\cG_\la)$.
\end{enumerate}
By pigeonholing, we can find a set of $R^{1/2}$-balls $\cQ=\{Q\}$ and numbers $\rho,\si,m, \bar m, \iota$, such that for all $Q\in \cQ$ the following is true:
\begin{enumerate}
    \item $\rho(Q)\sim\rho$.
    \item $\#\bB_3'|_Q\sim \si$.
    \item $\#_{\bbr} \bigcup_{B\in\bB_3'|_Q}\bT_\be(B;\cG_\la)\sim m$ and $\#_\bbr \bigcup_{B\in\bB_3'|_Q}\bT(B;\cG_\la)\sim \bar m$, so $\bar m\gtrsim m\gtrsim\mu$.
    \item $\cI_{\bbr, A}(\bB_3'|_Q,\bT;\cG)\sim \iota$.
    \item $\#\bigcup_{Q\in\cQ}\bB_3'|_Q\gtrapprox \#\bB_3'$.
\end{enumerate}
Let $\bB_4$ be a refinement of $\bB_3'$ defined as
\[\bB_4:=\bigcup_{Q\in\cQ}\bB_3'|_Q.\]
We also define an important quantity:
\begin{equation}\label{defl}
     l:=\sup_{T\in\bT_\beta}\#\{Q\in \cQ: Q\cap T\neq\emptyset \}. 
\end{equation}

\smallskip

What follows is some numerology regarding the parameters we have introduced.
Recall \eqref{B1}.
Hence, for any $B\in\bB_1$ and any $\cT\subset\Theta_{K^{-1}}$ with $\#\cT=A$, 
    \[\#\bT[(\cup_{\tau\in\cT}3\tau)^c](B;\cG)\ge \#_{\bbr, A} \bT(B;\cG) \ge \mu. \]
Note that $\bB_4\subset\bB_1$ and $\#\bB_4|_Q\sim \si$ for all $Q\in\cQ$ (here $\bB_4|_Q=\{B\in\bB_4:B\subset Q\}$).
Therefore, for all $Q\in\cQ$,
\begin{equation}\label{item5}
    \begin{split}
    \cI_{\bbr, A}(\bB_4|_Q,\bT;\cG)=\inf_{\cT\subset \Theta_{K^{-1}},\#\cT=A}\cI(\bB_4|_Q,\bT[(\cup_{\tau\in\cT}3\tau)^c];\cG)\\
    =\inf_{\cT\subset \Theta_{K^{-1}},\#\cT=A}\sum_{B\in\bB_4|_Q}\#\bT[(\cup_{\tau\in\cT}3\tau)^c](B;\cG) \gtrsim \si\mu. 
    \end{split}
\end{equation}

\eqref{item5} is our first estimate.
To establish our third estimate, note that by \eqref{double-counting},
\begin{equation}
\nonumber
    \cI_{\bbr,A}(\bB_4|_Q,\bT_\beta;\cG_\la)\leq \cI_{\bbr}(\bB_4|_Q,\bT_\beta;\cG_\la)=\inf_{\tau\in \Theta_{K^{-1}}}\sum_{T\in \bT_\beta[(3\tau)^c]}\#(\bB_4|_Q)(T;\cG_\la).
\end{equation}
Thus, for any $\tau\in\Theta_{K^{-1}}$, we have
\begin{equation}
\nonumber
    \begin{split}
    \cI_{\bbr, A}(\bB_4|_Q,\bT_\beta;\cG_\la)&\le \sum_{T\in \bT_\beta[(3\tau)^c]}\#(\bB_4|_Q)(T;\cG_\la)\\
    &=\sum_{T\in \bigcup_{B\in\bB_4|_Q}\bT_\beta[(3\tau)^c](B;\cG_\la)}\#(\bB_4|_Q)(T;\cG_\la),
    \end{split}
\end{equation}
as $(\bB_4|_Q)(T;\cG_\la)=\emptyset$ when $T\notin \bigcup_{B\in\bB_4|_Q}\bT_\beta[(3\tau)^c](B;\cG_\la)$.
Choose $\tau$ so that
\begin{align}
    \#\bigcup_{B\in\bB_4|_Q}\bT_\beta[(3\tau)^c](B; \cG_\la)=\#_{\bbr}\bigcup_{B\in\bB_4|_Q}\bT_\beta(B; \cG_\la)\lesssim m.
\end{align}
Note that $\#(\bB_4|_Q)(T; \cG_\la)\le \#\{B\in \bB_4|_Q:B\cap T\neq\emptyset \}$.
Since $\bB_4|_Q$ is $\rho$-regular,  
    \[\#(\bB_4|_Q)(T; \cG_\la)\lesssim \rho. \]
Therefore, we obtain our second estimate
\begin{equation}
\label{item6}
    \cI_{\bbr,A}(\bB_4|_Q,\bT_\beta; \cG_\la)\lesssim \rho m.
\end{equation}

We put the third estimate into a lemma.
\begin{lemma}
\label{si-uppper-bound}
Suppose $A\geq \log K$.
Then we have
\begin{equation}
\label{item-5-half}
    \si\lesssim K^{O(1)}R^{\frac{n-2}2}\bar m^3\mu^{-3}.
\end{equation}
\end{lemma}

\begin{proof}
Since $\#_\bbr \bigcup_{B\in\bB_4|_Q}\bT(B;\cG_\la)\sim \bar m$, there exists a cap $\tau_\ast\in\Theta_{K^{-1}}$ such that, by denoting $\Theta_{K^{-1}}^\ast=\{\tau\in\Theta_{K^{-1}}:\tau\cap 3\tau_\ast=\varnothing\}$, we have 
\begin{equation}
\label{bar-m-upper}
    \#\bigcup_{B\in\bB_4|_Q}\bT[\cup_{\Theta_{K^{-1}}^\ast}](B;\cG)\lesssim \bar m.
\end{equation}
Since $\#_{\bbr,A} \bT(B;\cG) \ge \mu$ for all $B\in \bB_1$, there exists $\cT(B)\subset\Theta_{K^{-1}}^\ast$ with $\#\cT(B)\geq \log K-3n$ so that for all $\tau\in\cT(B)$, 
\[
\#\bT[\tau](B;\cG)\gtrsim K^{-(n-1)}\mu.
\]
As $\log K-3n$ is bigger than any constant depending only on $\phi$ and $n$ for large enough $K$, by Lemma \ref{radial-normal-lem}, there are three caps $\tau_1,\tau_2,\tau_3\in\cT(B)$ such that for all $\xi_j\in \tau_j$,
\[
|v_{\xi_1}\wedge v_{\xi_2}\wedge v_{\xi_3}|\gtrsim K^{-O(1)}.
\]
Since $\bB_4\subset\bB_1$ and since $\#\bB_4|_Q\sim \si$ for all $Q\in\cQ$, applying Lemma \ref{trilinear-lem} with $\ZT(Q)=\bigcup_{B\in\bB_4|_Q}\bT[\cup_{\Theta_{K^{-1}}^\ast}](B;\cG)$ and using \eqref{bar-m-upper}, we prove \eqref{item-5-half}.
\qedhere

\end{proof}

\bigskip

\textbf{At this point, the algorithm stops.}

\bigskip

Before concluding the algorithm as a theorem, we present an incidence result based on the parameters introduced in the algorithm.

\begin{lemma}\label{lem-incidency}
    Suppose $\beta>100$. Then
    \begin{equation}\label{item7}
        \#\bB_1\gtrsim (K\Kc)^{-O(1)}lm\la.
    \end{equation}
\end{lemma}
\begin{proof}
By the definition of $l$ in \eqref{defl}, there exists $T_0\in\bT_\beta$ such that
    \[l=\#\{Q\in \cQ: Q\cap T_0\neq\emptyset \}.\]
By pigeonholing, there exists $\om_1\in\Om$ such that
    \[\#\{Q\in\cQ: Q\cap T\neq\emptyset, Q\subset \om_1\}\gtrsim l\Kc^{-1}.\]

Denote $\cQ'=\{Q\in\cQ: Q\cap T\neq\emptyset, Q\subset \om_1\}$.
Let $\Tau_0$ be the $R^{1/2}$-neighborhood of the $R$-plank $T_0$.
Recall that $\#_\bbr\bigcup_{B\in\bB_4|_Q}\bT_\beta(B;\cG_\la)\sim m$ for each $Q\in\cQ$. 
Let $\tau\in\Theta_{K^{-1}}$ be the cap containing the direction of $\Tau_0$, so
    \[\#\bigcup_{B\in\bB_4|_Q}\bT_\beta[(3\tau)^c](B;\cG_\la)\gtrsim m. \]
Now we define 
    \[\bT'= \bigcup_{Q\in\cQ'}\bigcup_{B\in\bB_4|_Q}\bT_\beta[(3\tau)^c](B;\cG_\la), \]
which can be viewed as a hairbrush with stem $\Tau_0$.
Since the directions of planks $T\in\bT'$ are $K^{-1}$-separated with the direction of $\Tau_0$, each $T\in\bT'$ belongs to $\lesssim K^{O(1)}$ different $\{\bigcup_{B\in\bB_4|_Q}\bT_\beta[(3\tau)^c](B;\cG_\la):Q\in\cq'\}$. Therefore,
    \[\#\bT'\gtrsim K^{-O(1)}\sum_{Q\in\cQ'}\#\bigcup_{B\in\bB_4|_Q}\bT_\beta[(3\tau)^c](B;\cG_\la)\gtrsim (K\Kc)^{-1}lm. \]

Next, we consider the set
    \[\bB'=\bigcup_{T\in\bT'}\bB_1(T; \cG_\la). \]
Note that $\bT'\subset \bT_\beta$ and for each $T\in\bT_\beta$, $\#\cG_\la(T)\ge\beta>100$.
Recall \eqref{Gla}. 
By pigeonholing on the horizontal regions in $\Om$, we can find a uniform $\om_2\in\Om$ with $\dist(\om_1,\om_2)>R\Kc^{-1}$, so that
    \[\#\{B\in\bB_1(T;\cG_\la):B\subset \om_2\}\ge \la \]
holds for $\ge (2\Kc)^{-1}\#\bT'$ many $T\in\bT'$. 
By Lemma \ref{hairbrush1}, $\{T\}_{T\in\bT'}$ are $(K\Kc)^{O(1)}$-overlapping in $\om_2$. As a result, we have
    \[\#\bB'\gtrsim (2\Kc)^{-1}\#\bT' (K\Kc)^{-O(1)}\la\gtrsim (K\Kc)^{-O(1)}lm\la. \]
Therefore, as $\bB'\subset \bB_1$, we have
    \[\#\bB_1\gtrsim (K\Kc)^{-O(1)}lm\la. \qedhere \]

\end{proof}

\bigskip

We summarize the algorithm as a Proposition.

\begin{proposition}\label{propalgorithm}
Let $\bT$ be a set of $R$-planks, $\bB$ be a set of $K^2$-balls in $B_R^{n+1}$, and $\cG$ be a shading map on $\bT$.
Suppose $\{f_T\}_{T\in \bT}$ is a set of wave packets and there exist $\nu>0$ such that for all $B\in\bB$,
    \[\big\|\sum_{T\in\bT(\bB;\cG)}\cFl f_T\big\|_{L^p(B)}\sim \nu. \]
Let $A\geq1$ be an integer.
If $A\geq \log K$, then there exists 
\begin{enumerate}
    \item parameters $\mu, \lambda,\beta, \rho, \si, m, \bar m, \iota, l, \nu'$,
    \item a sequence of refinements  $\bB_4\subset \bB_3\subset \bB_2\subset \bB_1\subset \bB$,
    \item a subset of planks $\bT_\beta\subset \bT$, a shading map $\cG_\lambda$, and a set of $R^{1/2}$-balls $\cQ$,
\end{enumerate}
such that  $\cG_\la(T)\subset \cG(T)$ for all $T\in\bT$, and the following is true:
\begin{enumerate}[(a)]
    \item $\|\sum_{T\in\bT_{\beta}(B;\cG_\la)}\cFl f_T \|_{L^p(B)}\sim\nu'\gtrapprox\nu$ for all $B\in\bB_4$.
    \item $\#\bB_4|_Q\sim \si$, and $\bB_4|_Q$ is $\rho$-regular for all $Q\in\cq$. Here, $\bB_4|_Q$ denotes $\{B\in\bB_4:B\subset Q\}$.
    \item $l=\sup_{T\in\bT_\beta}\#\{Q\in \cQ: Q\cap T\neq\emptyset \}$.
    \item $\bar m\gtrsim m\gtrsim\mu$.
\end{enumerate}
Moreover, we have the following estimates of these parameters:
\begin{enumerate}[(i)]

    \item $\cI(\bB_1,\bT;\cG) \ge \#\bB_1 \mu$.

    \item $\cI(\bB_1,\bT_\beta;\cG_\lambda)\lesssim \#\bT_\beta\beta\la$.

    \item $\iota\sim\cI_{\bbr,A}(\bB_4|_Q,\bT;\cG)\gtrsim \si\mu$ for all $Q\in\cQ$.

    \item $\cI_{\bbr,A}(\bB_4|_Q,\bT_\beta;\cG_\la)\lesssim \rho m $ for each $Q\in\cQ$.

    \item $\be\leq \Kc$, and if $\be>100$, then $\#\bB_1\gtrsim (K\Kc)^{-O(1)}lm\la$.

    \item For all $B\in\bB_1$, $\#_{\bbr,A} \bT(B;\cG)\lesssim\mu$.

    \item For all $T\in\bT_\be$, $\#\cg_\la(T)\lesssim\be$.

    \item $\si\lesssim K^{O(1)}R^{\frac{n-2}2}\bar m^3\mu^{-3}$.
\end{enumerate}

\end{proposition}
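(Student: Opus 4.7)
The proposition is essentially a packaging of the three-step pigeonholing algorithm constructed in the preceding paragraphs, together with the incidence estimates derived along the way. The plan is to run the algorithm verbatim and then verify items (a)--(c) and (i)--(vii) by reading off the output at each stage.

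First, I would perform the Step~1 pigeonholing on the broad cardinality $\#_\bbr\bT(B;\cG)$ over $B\in\bB$. This produces a refinement $\bB_1\subset\bB$ on which $\#_\bbr\bT(B;\cG)\sim\mu$ for some dyadic $\mu$. Item (vi) is then immediate from the construction, and item (i) follows from \eqref{double-counting} together with the inequality $\#\bT(B;\cG)\ge\#_\bbr\bT(B;\cG)\sim\mu$ on $\bB_1$. Next, I would run Step~2: for each $T$, split $\cG(T)=\bigsqcup_\la\cG_\la(T)$ according to how many balls of $\bB_1(T;\cG)$ sit in a single horizontal region $\omega$, and split $\bT=\bigsqcup_\be\bT_\be$ by the size $\be\sim\#\cG_\la(T)$. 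Two successive dyadic pigeonholings on $B\in\bB_1$ extract a common $\la$, then a common $\be$, and the $L^p$-norm hypothesis propagates to $\bB_3$ with a $\lessapprox R^\eta$ loss; this proves (a) with $\nu'\gtrapprox\nu$. The bookkeeping $\#\bB_1(T;\cG_\la)\lesssim\be\la$ for $T\in\bT_\be$, combined with $\sum_{T\in\bT_\be}\#\bB_1(T;\cG_\la)=\cI(\bB_1,\bT_\be;\cG_\la)$, gives (ii). Item (vii) is the definition of $\bT_\be$ and item $\be\le\Kc$ in (v) comes from $\#\Om\le 2\Kc$.

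For Step~3, I would apply the covering Lemma~\ref{regular-lem} inside each $R^{1/2}$-ball $Q$ separately to each slice $\bB_3|_Q$, producing a $\rho(Q)$-regular refinement $\bB_3'|_Q$. A further dyadic pigeonholing over $Q$ on the quadruple $(\rho(Q),\#\bB_3'|_Q,\#_\bbr\cup_{B\in\bB_3'|_Q}\bT_\be(B;\cG_\la),\cI_\bbr(\bB_3'|_Q,\bT;\cG))$ extracts a family $\cQ$ and uniform values $\rho,\sigma,m,\iota$, which yields (b) and the $\iota\sim\sigma\mu$ lower bound in (iii) (the inequality $\cI_\bbr(\bB_4|_Q,\bT;\cG)\gtrsim\sigma\mu$ is the double-counting computation in \eqref{item5} using $\bB_4\subset\bB_1$ and (vi)). Estimate (iv) is obtained by choosing the critical $\tau$ realizing the $\#_\bbr$ minimum and combining $\#(\bB_4|_Q)(T;\cG_\la)\lesssim\rho$ (from $\rho$-regularity) with the count $m$ of relevant directions; this is \eqref{item6}. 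Define $l$ as in \eqref{defl} to obtain (c).

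The main non-routine point is the hairbrush bound in (v) when $\be>100$. The idea is to pick $T_0\in\bT_\be$ realizing the supremum in (c), select via pigeonholing a horizontal slab $\omega_1\in\Om$ holding a $\gtrsim l/\Kc$ fraction of the balls of $\cQ$ meeting $T_0$, and form the hairbrush $\bT'$ consisting of planks from $\bT_\be[(3\tau)^c]$ (for $\tau$ containing the direction of the thickened stem $\Tau_0$) that meet some $Q\in\cQ'$. Because each such plank is $K^{-1}$-transverse to $\Tau_0$, the $m$-bound from (iv) upgrades to $\#\bT'\gtrsim(K\Kc)^{-O(1)}lm$ after removing the $K^{O(1)}$ overlap inside the stem. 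Since every $T\in\bT'\subset\bT_\be$ satisfies $\#\cG_\la(T)\ge\be>100$, another pigeonholing finds a common far region $\omega_2$ with $\dist(\omega_1,\omega_2)>R/\Kc$ on which at least $\ge\la$ balls of $\bB_1(T;\cG_\la)$ live. The plank-hairbrush Lemma~\ref{hairbrush1} gives $(K\Kc)^{O(1)}$-overlap of $\{T\}_{T\in\bT'}$ inside $\omega_2$, whence $\#\bB_1\gtrsim(K\Kc)^{-O(1)}lm\la$, which is \eqref{item7}. The potential obstacle is purely notational bookkeeping across the three layers of pigeonholing; the geometric input is a single application of Lemma~\ref{hairbrush1}.
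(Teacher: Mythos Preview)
Your proposal is correct and follows the paper's approach exactly: the proposition is simply a summary of the three-step algorithm (broad-multiplicity pigeonholing, two-ends reduction, and $R^{1/2}$-ball uniformization) developed immediately before it, and the paper's own proof consists only of pointers to \eqref{item3}--\eqref{item7}, \eqref{B1}, \eqref{beta}, and \eqref{defl}. Your walkthrough of Steps~1--3 and the hairbrush argument for item~(v) (pick a stem $T_0$, pigeonhole to a slab $\omega_1$, form the transverse brush, then apply Lemma~\ref{hairbrush1} in a far slab $\omega_2$) matches the paper's Lemma preceding the proposition line by line.
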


\begin{proof}
Item (i) follows from \eqref{item3}. Item (ii) follows from  \eqref{item4}. Item (iii) follows from  \eqref{item5}. Item (iv) follows from  \eqref{item6}. Item (v) follows from  \eqref{item7}.
Item (vi) follows from  \eqref{B1}. Item (vii) follows from \eqref{beta}.
Item (viii) follows from Lemma \ref{si-uppper-bound}.
\end{proof}

\medskip

\begin{remark}
\label{algorithm-remark}
\rm

The incidence estimates in Proposition \ref{propalgorithm} are only useful if we can establish the two bounds
\begin{enumerate}
    \item $\cI(\bB_1,\bT_\beta;\cG_\lambda)\gtrapprox\cI(\bB_1,\bT;\cG)$.
    \item $\cI_{\bbr,A}(\bB_4|_Q,\bT_\beta;\cG_\la)\gtrapprox \cI_{\bbr,A}(\bB_4|_Q,\bT;\cG)$ for all $Q\in\cq$.
\end{enumerate}
However, this is generally not true, mostly because the sum in \eqref{to-pigeonhole} involves oscillatory functions rather than positive functions.
To achieve these bounds, we will repeatedly run the algorithm. As shown in the next section, after a finite number of iterations, we can essentially realize these two bounds.

\end{remark}

\bigskip

\section{Finish the proof: iteration of Proposition \ref{propalgorithm}}
\label{section-iteration}

We will prove Theorem \ref{mixed-norm-thm} in this and the next section. 
For the reader's convenience, we restate it below.

\begin{theorem}
\label{thmmixednorm}
Let $\cf$ be a type $\bfone$ Fourier integral operator given by \eqref{FIO}.
Suppose the phase function $\phi$ satisfies \eqref{phi0}, and the amplitude function $a$ obeys $\supp_\xi\ a\subset\A^n(1)^\circ$.
Let $\cf^\la$ be given by \eqref{FIOlambda}. 
Then for all $\e>0$ and when $p=p(n)$, there exists a constant $C_{\e}$ that is independent to $\cf$, such that 
\begin{equation}
\label{mixnormineq}
    \|\cFl f\|_{L^p(B_R^{n+1})}\le C_\e R^{(n-1)(\frac12-\frac1p)+\e}\|f\|_2^{\frac2p}\cW(f,B_R^{n+1})^{1-\frac2p}.
\end{equation}
for all $f$ with $\supp\wh f\subset\A^n(1)$ and all $R\in[1, \la^{1-\e}]$.

\end{theorem}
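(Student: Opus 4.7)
\medskip

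The plan is to prove \eqref{mixnormineq} by induction on the scale $R$, using the rescaling-invariant mixed norm $\|f\|_2^{2/p}\cW(f,B_R^{n+1})^{1-2/p}$. The base case ($R\lesssim 1$) is trivial, and for the inductive step we combine the induction hypothesis at scale $R/K^2$ with a broad-narrow decomposition. First I would partition $\A^n(1)$ into $K^{-1}$-caps $\tau\in\Theta_{K^{-1}}$ and split $\cF^\la f=\sum_\tau\cF^\la f_\tau$. A standard broad-narrow argument (relative to the broad norm $\|\cdot\|_{\BL^p_{k,A}}$ of Section \ref{sectionbroadnorm}, with $k$ chosen according to whether $n=3$ or $n\ge4$) reduces \eqref{mixnormineq} on a $K^2$-ball to either the broad contribution or a single-cap contribution $\cF^\la f_\tau$; the single-cap piece is handled by Lorentz rescaling, invoking Lemma \ref{lemrescaling} and the induction hypothesis at scale $R/K^2$, which absorbs a factor $K^{2/p}\leq R^{\e^{50}\cdot 2/p}$ into the $R^\e$ loss.

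For the broad contribution we perform the wave packet decomposition $f=\sum_{T\in\ZT}f_T$ and, after several dyadic pigeonholings, reduce to a triple $(\bB,\ZT,\cG)$ on which $\|\sum_{T\in\ZT(B;\cG)}\cF^\la f_T\|_{L^p(B)}$ is roughly constant on $K^2$-balls $B\in\bB$. The key step is to run the algorithm of Proposition \ref{propalgorithm} iteratively: a single pass gives uniform parameters $\mu,\la,\be,\rho,\si,m,l$ governing broad multiplicity per ball, two-ends multiplicity per plank, and regularity inside $R^{1/2}$-balls, but as Remark \ref{algorithm-remark} warns, the outcome is only useful if the refinements preserve the relevant incidences $\cI(\bB_1,\bT_\be;\cG_\la)\gtrapprox\cI(\bB_1,\bT;\cG)$ and $\cI_\bbr(\bB_4|_Q,\bT_\be;\cG_\la)\gtrapprox\cI_\bbr(\bB_4|_Q,\bT;\cG)$. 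The plan is to argue that whenever one of these near-equalities fails, the total incidence $\cI(\bB,\bT;\cG)$ drops by a factor $R^\ka$ for some absolute $\ka=\ka(\e)>0$; since $\cI\le R^{O(1)}$, only finitely many iterations can occur before both conditions hold simultaneously. The main obstacle is exactly this termination: because $|\sum_T\cF^\la f_T|\cdot\Id_B$ is a sum of oscillatory wave packets rather than a positive sum, losing wave packets during pigeonholing does not automatically decrease the oscillatory $L^p$-norm, and one has to track carefully how pruning $\cG$ to $\cG_\la$ or $\ZT$ to $\ZT_\be$ forces an incidence drop.

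Once the broad \& two-ends structure is in place, the remaining argument follows the sketch of Section \ref{section-sketch}. Apply the refined decoupling Theorem \ref{refdecthm0} to $X=\cup\bB_4$ with multiplicity bound $\mu$ to obtain $\|\sum_T\cF^\la f_T\|_{L^{q_n}(X)}^{q_n}\lessapprox \mu^{2/(n-1)}(R^{(n+1)/2}\#\ZT)$. Then upper bound $\mu$ via a hairbrush argument: the two-ends property on $Y(T)$ and the quantitative transversality from the broad assumption make planks inside the hairbrush $\ZH(T_0)$ essentially disjoint in a region $\om_2$ far from where they meet $T_0$, by Lemma \ref{hairbrush1}; combined with the lower bound $\cW(f,B_R^{n+1})^2\gtrsim R^{-(n+1)/4}(\#\ZT)^{1/2}$ from Lemma \ref{wpd-lem}, this gives $\mu\lesssim l^{-1/2}\#\ZT^{1/2}$. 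Interpolating the resulting $L^{q_n}$ bound with the $L^2$ bound \eqref{L2-1} (obtained from $L^2$-orthogonality on the $R^{1/2}$-neighborhood of $X$ via Lemma \ref{local-L2-lem-2}) and using $l\le R^{1/2}$ closes the induction with $p=2+\frac{8}{3n-4}$ when $n\ge4$. For $n=3$, the sharper exponent $p=10/3$ requires the refined $L^2$ estimate of Subsection \ref{section1.4}: a further broad/not-broad dichotomy at level $A=4$ combined with the $\rho$-regular covering from Lemma \ref{regular-lem} and the local $L^2$ estimate Lemma \ref{local-L2-lem-1} upgrades \eqref{L2-1} to $\|\cF^\la f\|_{L^2(X)}^2\lesssim l(\si/\rho)\|f\|_2^2$, and a refined hairbrush using the double-count $\mu\si\lesssim\rho m$ yields $\mu\lesssim\#\ZT^{1/2}\rho^{1/2}(l\si)^{-1/2}$; the not-4-broad case is closed directly by $\ell^2$-decoupling into $K^{-1}$-caps and the induction hypothesis.
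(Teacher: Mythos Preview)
Your plan tracks the paper's strategy closely, but there is one genuine gap and one mis-described mechanism.

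\textbf{The one-end case is missing.} You assume that iterating Proposition \ref{propalgorithm} always terminates in a ``broad \& two-ends'' configuration, but this is false: the iteration only guarantees that the parameters stabilise, and after stabilisation the two-ends parameter $\beta$ may still satisfy $\beta\le 100$. In that situation the hairbrush estimate (item (v) of Proposition \ref{propalgorithm}, which requires $\beta>100$) is unavailable, and your entire broad-case argument collapses. The paper handles this separately (Section 8.1): when $\beta\le100$, each plank lies in $O(1)$ horizontal regions of width $R/\Kc$, so partitioning $B_R^{n+1}$ into $R/\Kc$-balls $\{B_k\}$ and writing $f_k=\sum_{T\in\bT_k}f_T$ gives $\sum_k\|f_k\|_2^2\lesssim\|f\|_2^2$; one then applies the induction hypothesis at scale $R/\Kc$ together with Lemma \ref{leminduct} (which controls $\cW(f_k,B_{R/\Kc}^{n+1})$ by $\Kc^{(n-1)/2}\cW(f,B_R^{n+1})$) to close the induction with a gain of $\Kc^{-p\e}$. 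Note also that the paper's order is algorithm $\to$ one-end/two-ends split $\to$ (in two-ends) broad-narrow inside a single $R/\Kc$-ball, the reverse of your ordering; your ordering is not fatal, but the one-end case still has to be dealt with somewhere inside the broad branch.

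\textbf{The termination mechanism is not ``incidence drops by $R^\kappa$''.} The paper does not argue that failure of the near-equalities in Remark \ref{algorithm-remark} forces an incidence drop; indeed $\bT_\beta\subset\bT$ and $\cG_\la\subset\cG$ are not refinements and $\cI$ can drop drastically at every step. Instead, the paper observes that eleven parameters ($\mu^{(i)},\la^{(i)},\be^{(i)},\rho^{(i)},\si^{(i)},m^{(i)},\iota^{(i)},l^{(i)},\#\cB^{(i)},\#\T^{(i)},\cI(\cB_1^{(i)},\T^{(i)};\cG^{(i)})$) are all monotone decreasing in $[1,R^{O(1)}]$, partitions $[1,R^{10n}]^{11}$ into $O_\e(1)$ boxes of side $\kappa=R^{\e^{500}}$, and pigeonholes to find a step $N$ at which all eleven change by at most a factor $\kappa$ between steps $N-2,\dots,N+2$ (List \ref{list3}). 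It is this stabilisation, not an incidence drop, that yields the comparison $\cI(\cB_1^{(N)},\T^{(N)};\cG^{(N)})\le\kappa\,\cI(\cB_1^{(N+1)},\T^{(N+1)};\cG^{(N+1)})$ and hence Lemma \ref{upperboundmu} and Theorem \ref{after-iteration-thm}.
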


We will perform a two-parameter inductions on $(\la, R)$ with $1\le R\le \la^{1-\e}$. 
Our base case is $R\le 100$, which is not hard to verify.
Suppose Theorem \ref{thmmixednorm} has been proved for $(\la',R')$ satisfying $\la'\le \la, R'\le R/100$.
We now prove for the scale $(\la,R)$.

\smallskip

Consider the wave packet decomposition for $\cFl f$ in $B_R^{n+1}$ at scale $R$:
    \[\cFl f=\sum_{\theta\in\Theta_{R^{-1/2}}}\sum_{T\in\T_\theta}\cFl f_T. \]
Let $w_{B_R^{n+1}}$ be a weight that is $\sim1$ on $B_R^{n+1}$ and decreases rapidly outside $B_R^{n+1}$. 
By a standard dyadic pigeonholing argument (see, for instance, \cite[Section 5]{wang2024restriction}) and homogeneity, we assume that there exists $\T\subset \bigcup_{\theta\in\Theta_{R^{-1/2}}} \T_\theta$ such that 
\begin{equation}
\label{goal-1}
    \|\cFl f\|_{L^p(B_R^{n+1})}\lessapprox \|\sum_{T\in\T}\cFl f_T\|_{L^p(B_R^{n+1})},
\end{equation}
and for all $T\in\T$ and $q_n=\frac{2(n+1)}{n-1}$,
\begin{equation}
\label{normalization}
    \|\cFl f_T\|_{L^{q_n}(w_{B^{n+1}_R})}\sim  R^\frac{n-1}{4}.
\end{equation}
The normalization \eqref{normalization} is set because $|T|^{1/q_n}\approx R^{\frac{n+1}{2}\frac{1}{q_n}}=R^{\frac{n-1}{4}}$.

By dyadic pigeonholing, there is a family of disjoint $K^2$-balls $\cB=\{B\}$ such that $\|\sum_{T\in\T}\cFl f_T\|_{L^p(B)}$ are about the same for all $B\in\cB$ and
\begin{equation}
\nonumber
    \|\sum_{T\in\T}\cFl f_T\|_{L^p(B_R^{n+1})}^p\lessapprox \sum_{B\in\cB}\|\sum_{T\in\T}\cFl f_T\|_{L^p(B)}^p.
\end{equation}
Since $\cFl f_T$ is essentially supported in $T$, for each $B\in\cb$, we have
   \[\|\sum_{T\in\T}\cFl f_T\|_{L^p(B)}\sim \|\sum_{T\in\T, T\cap B\neq\emptyset}\cFl f_T\|_{L^p(B)}.\]
Thus,  $\|\sum_{T\in\T, T\cap B\neq\emptyset}\cFl f_T\|_{L^p(B)}$ are about the same for all $B\in\cB$, and
\begin{equation}
\label{goal-2}
    \|\sum_{T\in\T}\cFl f_T\|_{L^p(B_R^{n+1})}^p\lessapprox\sum_{B\in\cB}\|\sum_{T\in\T, T\cap B\neq\emptyset}\cFl f_T\|_{L^p(B)}^p. 
\end{equation}
For simplicity, we introduce the following notation.
\begin{definition}
Given a triple $(\bB,\bT;\cG)$, we denote
\begin{equation}
\nonumber
    L^p(\bB,\bT;\cG;f):=\sum_{B\in\bB}\|\sum_{T\in \bT(B;\cG)}\cFl f_T\|_{L^p(B)}^p.
\end{equation}
\end{definition}

Let $\cG^{(0)}$ be the trivial shading map: $\cG^{(0)}(T)=\Om$. Therefore, $\T(B;\cG^{(0})=\{T\in\T: T\cap B\neq\emptyset\}$. 
Denote $\T^{(0)}=\T, \cB^{(0)}=\cB$, so 
\begin{equation}
\label{goal-3}
    \text{R.H.S. of }\eqref{goal-2}= L^p(\cB^{(0)},\T^{(0)};\cG^{(0)};f). 
\end{equation}
We also introduce the following notion.

\begin{definition}
For two shading maps $\cG,\cG'$, we write $\cG\subset \cG'$ if the domain of $\cG$ is a subset of the domain of $\cG'$, and $\cG(T)\subset \cG'(T)$ for any $T$ in the domain of $\cG$.
For two triples, we write 
    \[(\bB,\bT;\cG)\subset (\bB',\bT';\cG'),\] 
    if $\bB\subset \bB', \bT\subset \bT',$ and $\cG\subset \cG'$.
\end{definition}

The triple $(\cB^{(0)},\T^{(0)};\cG^{(0)})$ will be the input of the algorithm given in Section \ref{section-algorithm}.
Next, we will repeatedly use Proposition \ref{propalgorithm} to realize the scenario described in Remark \ref{algorithm-remark}.

\bigskip

{\bf Iteration of Proposition \ref{propalgorithm}.}

\bigskip

At the initial stage (step 0), we are given a $\nu^{(0)}>0$  and a triple $(\cB^{(0)},\T^{(0)};\cG^{(0)})$ such that for all $B\in\cB^{(0)}$,
    \[\| \sum_{T\in\T^{(0)}(B;\cG^{(0)})}\cFl f_T \|_{L^p(B)}\sim \nu^{(0)}.\] 
At step $i$, we apply Proposition \ref{propalgorithm} with an integer $A\geq\log K$ to the triple $(\cB^{(i)},\T^{(i)};\cG^{(i)})$ and the number $\nu^{(i)}$, which gives the following restatement of Proposition \ref{propalgorithm} in the new notation:

\begin{List}\label{list1}
\rm

There exists
\begin{enumerate}
    \item parameters $\mu^{(i)}, \lambda^{(i)},\beta^{(i)}, \rho^{(i)}, \si^{(i)}, m^{(i)}, \bar m^{(i)},  \iota^{(i)}, l^{(i)}, \nu^{(i+1)}$,
    
    \item a sequence of refinements $ \cB_4^{(i)}\subset \cB_3^{(i)}\subset \cB_2^{(i)}\subset \cB_1^{(i)}\subset \cB^{(i)}$,
    
    \item  a set $\T_{\beta^{(i)}}^{(i)}\subset \T^{(i)}$, a shading map $\cG_{\lambda^{(i)}}^{(i)}\subset \cG^{(i)}$, and a set of $R^{1/2}$-balls $\cQ^{(i)}$,
\end{enumerate}
such that, by defining 
    \[\cB^{(i+1)}:=\cB_4^{(i)},\,\,\, \T^{(i+1)}:=\T_{\beta^{(i)}}^{(i)},\,\,\, \cG^{(i+1)}:=\cG_{\lambda^{(i)}}^{(i)},\]
which will serve as the input for step $i+1$, the following is true:
\begin{enumerate}[(a)]
    \item $\|\sum_{T\in\T^{(i+1)}(B;\cG^{(i+1)})}\cFl f_T \|_{L^p(B)}\sim\nu^{(i+1)}\gtrapprox\nu^{(i)}$ for all $B\in\cb^{(i+1)}$.
    
    \item $\#\cB^{(i+1)}|_Q\sim \si^{(i)}$, and $\cB^{(i+1)}|_Q$ is $\rho^{(i)}$-regular for all $Q\in\cq^{(i)}$. Here $\cB^{(i+1)}|_Q=\{B\in\cB^{(i+1)}:B\subset Q\}$.
    
    \item $l^{(i)}=\sup_{T\in\T^{(i+1)}}\#\{Q\in \cq^{(i)}: Q\cap T\neq\emptyset \}$.
    
    \item $\bar m^{(i)}\gtrsim m^{(i)}\gtrsim\mu^{(i)}$.
\end{enumerate}
Moreover, we have the following estimates of these parameters:
\begin{enumerate}[(i)]
    \item $\cI(\cB_1^{(i)},\T^{(i)};\cG^{(i)}) \ge \#\cB_1^{(i)}\mu^{(i)}$.

    \item $\cI(\cB_1^{(i)},\T^{(i+1)};\cG^{(i+1)})\lesssim \#\T^{(i+1)}\beta^{(i)}\la^{(i)}$.

    \item $\iota^{(i)}\sim\cI_{\bbr,A}(\cB^{(i+1)}|_Q,\T^{(i)};\cG^{(i)})\gtrsim \si^{(i)}\mu^{(i)}$ for all $Q\in\cQ^{(i)}$. 

    \item $\cI_{\bbr,A}(\cB^{(i+1)}|_Q,\T^{(i+1)};\cG^{(i+1)})\lesssim \rho^{(i)} m^{(i)} $ for all $Q\in\cQ^{(i)}$.

    \item  $\be^{(i)}\leq \Kc$, and if $\beta^{(i)}>100$, then $\#\cB_1^{(i)}\gtrsim (K\Kc)^{-O(1)}l^{(i)}m^{(i)}\la^{(i)}$.

    \item  For all $B\in\bB_1^{(i)}$, $\#_{\bbr,A} \ZT^{(i)}(B;\cG^{(i)})\lesssim\mu^{(i)}$.
    \item For all $T\in\ZT^{(i+1)}$, $\#\cg^{(i+1)}(T)\lesssim\be^{(i)}$.

    \item $\si^{(i)}\lesssim K^{O(1)}R^{\frac{n-2}2}(\bar m^{(i)})^3(\mu^{(i)})^{-3}$.
\end{enumerate}

\end{List}

\medskip

The sets and parameters obtained in the above iteration obey certain monotonicity properties.
We state some of them that will be used later.

\begin{List}\label{list2}

\rm

We have the following monotonicity on parameters:

\begin{enumerate}[(i)]
    \item $\mu^{(i)}\ge\mu^{(i+1)}, \lambda^{(i)}\ge\lambda^{(i+1)},\beta^{(i)}\ge\beta^{(i+1)}, \rho^{(i)}\ge\rho^{(i+1)}, \si^{(i)}\ge\si^{(i+1)}, m^{(i)}\ge m^{(i+1)}, \bar m^{(i)}\ge \bar m^{(i+1)},  \iota^{(i)}\ge \iota^{(i+1)}, l^{(i)}\ge l^{(i+1)}$.
    
    \item $\nu^{(i+1)}\gtrapprox\nu^{(i)}$.
\end{enumerate}

Also, we have the following monotonicity for sets and incidence:

\rm
\begin{enumerate}
    \item $(\cB^{(i)},\T^{(i)};\cG^{(i)})\supset (\cB^{(i+1)},\T^{(i+1)};\cG^{(i+1)})$.

    \item $\cq^{(i)}\supset \cq^{(i+1)}$.

    \item $\cI(\cB^{(i)},\T^{(i)};\cG^{(i)})\ge \cI(\cB^{(i+1)},\T^{(i+1)};\cG^{(i+1)})$.
    
    \item $\cI_{\bbr,A}(\cB^{(i)},\T^{(i)};\cG^{(i)})\ge \cI_{\bbr,A}(\cB^{(i+1)},\T^{(i+1)};\cG^{(i+1)})$.

    \item $\#\cB^{(i)}\lessapprox \#\cB^{(i+1)}\le \#\cB^{(i)}$.

    \item $\bar m^{(i+1)}\lesssim m^{(i)}\lesssim \bar m^{(i)}$.

\end{enumerate}

As a consequence of (ii) and (5), we have
\begin{enumerate}[(a)]
    \item $\|\sum_{T\in \T^{(i)}(B;\cG^{(i)})}\cFl f_T\|_{L^p(B)}\lessapprox \|\sum_{T\in \T^{(i+1)}(B;\cG^{(i+1)})}\cFl f_T\|_{L^p(B)}$ for all $B\in\cB^{(i+1)}$.

    \item $L^p(\cB^{(i)},\T^{(i)};\cG^{(i)};f)\lessapprox L^p(\cB^{(i+1)},\T^{(i+1)};\cG^{(i+1)};f)$.
\end{enumerate}

\end{List}

\bigskip

{\bf At this point, we have completed the description of the iteration.}

\bigskip

Now we are going to a uniformization for the parameters obtained in the iteration.
For this, we introduce a new parameter $\ka=R^{\e^{500}}$.
One may compare it with other parameters: $R^\e\gg K\gg \Kc\gg \kappa\gg1$.

Note that the 12 factors $\mu^{(i)}, \lambda^{(i)}, \beta^{(i)}, \rho^{(i)}, \si^{(i)}, m^{(i)}, \bar m^{(i)},  \iota^{(i)}, l^{(i)}, \#\cb^{(i)}$, $\ZT^{(i)}$, $\cI(\cB_1^{(i)},\T^{(i)};\cG^{(i)})$ are all natural numbers and all $\leq R^{10n}$.
Partition $[1,R^{10n}]^{12}$ into $O_\e(1)$ balls of radius $\ka$.
Since these factors are monotonically decreasing, by pigeonholing, we can find an integer $N\le\e^{-O(1)}\lesssim_\e 1$ so that the following is true:

\begin{List}\label{list3}
\rm
For $i=N-2,N-1,N,N+1,N+2$, we have the reverse control on the parameters:
\begin{enumerate}
    \item $\mu^{(i)}\le\ka\mu^{(i+1)},  \lambda^{(i)}\le\ka\lambda^{(i+1)}, \beta^{(i)}\le\ka \beta^{(i+1)}, \rho^{(i)}\le\ka\rho^{(i+1)}, \si^{(i)}\le\ka\si^{(i+1)}$, $\iota^{(i)}\le\ka \iota^{(i+1)}$, $m^{(i)}\le\ka m^{(i+1)}$, $\bar m^{(i)}\le\ka \bar m^{(i+1)}$, $l^{(i)}\le\ka l^{(i+1)}$, $\#\cb^{(i)}\le \ka \#\cb^{(i+1)}$, $\#\T^{(i)}\le \ka \T^{(i+1)}$, $\cI(\cB_1^{(i)},\T^{(i)};\cG^{(i)})\le \ka \cI(\cB_1^{(i+1)},\T^{(i+1)};\cG^{(i+1)})$.
\end{enumerate}
As a consequence of this and items (i)-(iv) and (viii) in List \ref{list1}, item (6) in List \ref{list2}, when $i=N$, we have
\begin{enumerate}
    \item[(2)] $\#\cB_1^{(i)}\mu^{(i)}\lesssim \ka^2\#\T^{(i)}\beta^{(i)}\la^{(i)}$.
    \item[(3)] For all $Q\in\cQ^{(i)}$, $\si^{(i)}\mu^{(i)}\lesssim \ka\rho^{(i)}m^{(i)}$.
    \item[(4)] $\si^{(i)}\lesssim K^{O(1)}\ka^3R^{\frac{n-2}2}( m^{(i)})^3(\mu^{(i)})^{-3}$.
\end{enumerate}
\end{List}

This, in particular, implies the following two lemmas:
\begin{lemma}
\label{upperboundmu}
Let $N$ be such that List \ref{list3} is true for $i=N$. 
Then when $\be^{(N)}>100$,
\begin{equation}
\nonumber
    \mu^{(N)}\lesssim R^{\e^2}(\#\T^{(N)})^{1/2}(l^{(N)}\si^{(N)})^{-1/2}(\rho^{(N)})^{1/2}.
\end{equation}
\end{lemma}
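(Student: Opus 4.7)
The plan is to combine three ingredients that have already been assembled by the iteration: the hairbrush lower bound on $\#\cB_1^{(N)}$ from item (v) of List \ref{list1}, which is the global constraint coming from arranging a hairbrush around a stem plank; the upper bound $\#\cB_1^{(N)}\mu^{(N)}\lesssim\kappa^2\#\T^{(N)}\beta^{(N)}\lambda^{(N)}$ from item (2) of List \ref{list3}, which packages the broad incidence counted via the full $K^2$-ball collection; and the local broad bound $\sigma^{(N)}\mu^{(N)}\lesssim\kappa\rho^{(N)}m^{(N)}$ from item (3) of List \ref{list3}, which expresses the broad incidence inside a single $R^{1/2}$-ball. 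No geometric input beyond the hairbrush lemma is needed: the proof is a straightforward elimination of the auxiliary parameters $\#\cB_1^{(N)}$, $\lambda^{(N)}$, and $m^{(N)}$.

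Concretely, since $\beta^{(N)}>100$, item (v) of List \ref{list1} gives
\[
\#\cB_1^{(N)}\gtrsim (K\Kc)^{-O(1)}\, l^{(N)} m^{(N)} \lambda^{(N)}.
\]
Multiplying by $\mu^{(N)}$ and comparing with item (2) of List \ref{list3} cancels the factor $\lambda^{(N)}$, and using $\beta^{(N)}\le\Kc$ one finds
\[
l^{(N)} m^{(N)} \mu^{(N)}\lesssim (K\Kc)^{O(1)} \kappa^2\, \#\T^{(N)}.
\]
Next, I would invoke item (3) of List \ref{list3} in the form $m^{(N)}\gtrsim \sigma^{(N)}\mu^{(N)}/(\kappa\rho^{(N)})$ and substitute, obtaining
\[
l^{(N)}\sigma^{(N)}\bigl(\mu^{(N)}\bigr)^{2}\lesssim (K\Kc)^{O(1)}\kappa^{3}\,\rho^{(N)}\,\#\T^{(N)}.
\]
Taking square roots and using $K=R^{\e^{50}}$, $\Kc=R^{\e^{100}}$, $\kappa=R^{\e^{500}}$ (so that $(K\Kc)^{O(1)}\kappa^{3/2}\le R^{\e^{2}}$ with huge margin) yields
\[
\mu^{(N)}\lesssim R^{\e^{2}}\,(\#\T^{(N)})^{1/2}\,(l^{(N)}\sigma^{(N)})^{-1/2}\,(\rho^{(N)})^{1/2},
\]
which is the desired estimate.

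There is essentially no genuine obstacle at this stage; the lemma is a bookkeeping consequence of the algorithm of Section \ref{section-algorithm} combined with the reverse-monotonicity step in List \ref{list3}. All of the real effort has already been expended in ensuring that the two incidence inequalities (item (2) and item (3) of List \ref{list3}) refer to \emph{comparable} objects at step $N$, and that items (i)--(v) of List \ref{list1} hold for the refined triple $(\cB^{(N+1)},\T^{(N+1)};\cG^{(N+1)})$; once that investment is in place, the numerology collapses to the two-line calculation above.
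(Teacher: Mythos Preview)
Your proof is correct and follows essentially the same route as the paper: combine item (v) of List \ref{list1} with item (2) of List \ref{list3} to cancel $\lambda^{(N)}$ and $\#\cB_1^{(N)}$, then use item (3) of List \ref{list3} to eliminate $m^{(N)}$, absorbing $\beta^{(N)}\le\Kc$ and the $(K\Kc\kappa)^{O(1)}$ losses into $R^{\e^2}$. The paper's write-up is terser but the logic is identical.
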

\begin{proof}
By item (v) in List \ref{list1} and item (2) in List \ref{list3},
    \[(K\Kc)^{-O(1)}l^{(N)}m^{(N)}\la^{(N)}\mu^{(N)}\lesssim \ka^2\#\T^{(N)}\beta^{(N)}\la^{(N)}.\]
This concludes the Lemma by item (3) in List \ref{list3} and since $(\ka K\Kc)^{O(1)}\lesssim R^{\e^2}$.
\qedhere

\end{proof}

\begin{lemma}
\label{upperboundmu2}
Let $N$ be such that List \ref{list3} is true for $i=N$. 
Then when $\be^{(N)}>100$,
\begin{equation}
\nonumber
    \mu^{(N)}\lesssim R^{\e^2}(\#\T^{(N)})^{1/2}(l^{(N)})^{-1/2}\min\{1, \bigg(  \frac{R^{\frac{n-2}{2}}}{\sigma^{(N)}}\bigg)^{\frac{1}{6}}\}.
\end{equation}
\end{lemma}
\begin{proof}
By item (v) in List \ref{list1} and item (2) in List \ref{list3},
    \[(K\Kc)^{-O(1)}l^{(N)}m^{(N)}\la^{(N)}\mu^{(N)}\lesssim \ka^2\#\T^{(N)}\beta^{(N)}\la^{(N)}.\]
Since $(\ka K\Kc)^{O(1)}\lesssim R^{\e^2}$, this concludes the Lemma by item (4) in List \ref{list3} and item (d) in List \ref{list1}.
\end{proof}

\medskip

Let us summarize what we have into a theorem.
\begin{theorem}
\label{after-iteration-thm}
Let $A\geq1$ be an integer.
If $A\geq \log K$, then there exist 
\begin{enumerate}
    \item parameters $\mu, \beta, \rho, \si, l$,
    \item a triple $(\bB, \bT, \cG)$,
    \item a collection of $R^{1/2}$-balls $\cq$,
\end{enumerate}
such that the following is true:
\begin{enumerate}[(a)]
    \item $\cup_{\bB}\subset \cup_\cq$.
    \item $\#\bB|_Q\sim \si$, and $\bB|_Q$ is $\rho$-regular for all $Q\in\cq$. Here $\bB|_Q=\{B\in\bB:B\subset Q\}$.
    \item $l=\sup_{T\in\bT}\#\{Q\in \cQ: Q\cap T\neq\emptyset \}$.
\end{enumerate}
Moreover, we have the following estimates:
\begin{enumerate}[(i)]
    \item $\#_{\bbr,A} \bT(B;\cG)\lesssim \mu$ for all $B\in\bB$.

    \item $L^p(\cB^{(0)},\T^{(0)};\cG^{(0)};f)\lessapprox L^p(\bB,\bT;\cG;f)$ (recall \eqref{goal-3}).

    \item $\cg(T)\lesssim\be$ for all $T\in\bT$, and if $\be\geq100$, we have
    $ \mu\lesssim R^{\e^2}(\#\T)^{1/2}(l\si)^{-1/2}\rho^{1/2}$, and
    $ \mu\lesssim R^{\e^2}(\#\T)^{1/2}l^{-1/2}\min\{1, (R^{(n-2)/2}\si^{-1})^{1/6}\}$.
\end{enumerate}

\end{theorem}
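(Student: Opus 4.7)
The plan is to take all of the data produced by the iteration of Proposition \ref{propalgorithm} and pass to the outputs at a suitably chosen step $N$. Specifically, the iteration described in Section \ref{section-iteration} produces monotone sequences of parameters $\mu^{(i)}, \lambda^{(i)}, \beta^{(i)}, \rho^{(i)}, \sigma^{(i)}, m^{(i)}, \iota^{(i)}, l^{(i)}, \#\cB^{(i)}, \#\T^{(i)}, \cI(\cB_1^{(i)},\T^{(i)};\cG^{(i)})$, each of which is bounded above by $R^{O(1)}$. Since there are only $11$ sequences and each is monotone decreasing, a standard pigeonholing argument on a partition of $[1,R^{10n}]^{11}$ into $O_\e(1)$ cells of diameter $\kappa$ yields an index $N\lesssim_\e 1$ at which all $11$ parameters decrease by at most a factor $\kappa$ when moving from step $N$ to step $N+1$. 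This is exactly the stability given in List \ref{list3}.

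I would then define the output of the theorem to be the data at step $N+1$: set $\bB := \cB^{(N+1)}$, $\bT := \T^{(N+1)}$, $\cG := \cG^{(N+1)}$, $\cQ := \cQ^{(N)}$, and take $\mu := \mu^{(N)}$, $\beta := \beta^{(N)}$, $\rho := \rho^{(N)}$, $\sigma := \sigma^{(N)}$, $l := l^{(N)}$. Properties (a), (b), and (c) then follow immediately from items (b) and (c) in List \ref{list1} applied at step $i=N$: in particular, the $\rho$-regularity of $\bB|_Q$ and the identification $l = \sup_{T\in\bT}\#\{Q\in\cQ: Q\cap T\neq\emptyset\}$ are part of the algorithm's output, and (a) is automatic since $\#\bB|_Q\sim\sigma>0$ for every $Q\in\cQ$.

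For the quantitative estimates, (i) follows from item (vi) of List \ref{list1} together with the monotonicity $\T^{(N+1)}\subset\T^{(N)}$ and $\cG^{(N+1)}\subset\cG^{(N)}$, which only decreases the broad count; the inclusion $\bB^{(N+1)}\subset\bB_1^{(N)}$ allows us to apply (vi) at step $N$. Item (ii) follows by iterating the consequence (b) in List \ref{list2} a total of $N+1$ times; since $N\lesssim_\e 1$, each step loses only a $R^{O(\delta)}$ factor and the cumulative loss is subpolynomial in $R$. Item (iii) has two parts: the bound $\#\cG(T)\lesssim\beta$ is item (vii) of List \ref{list1} at step $N$, and the incidence bound $\mu\lesssim R^{\e^2}(\#\T)^{1/2}(l\sigma)^{-1/2}\rho^{1/2}$ is precisely Lemma \ref{upperboundmu}, whose proof combines the hairbrush estimate (item (v) of List \ref{list1}) with the stabilized bounds (2) and (3) of List \ref{list3}.

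The main obstacle is really upstream of this statement: once the iteration of Proposition \ref{propalgorithm} and the pigeonholing of List \ref{list3} have been set up, the theorem is essentially a bookkeeping exercise of reading off the correct indices. The subtle point to get right is the interplay between the broad incidence $\cI_\bbr$ under refinement of $\T$ and $\cG$ in the proof of (i), since $\#_\bbr$ is defined via an infimum over directional caps and one must check that passing to a subset of planks or shrinking the shading map does not increase the broad count; this is true because for any fixed $\tau$, the set $\T^{(N+1)}[(3\tau)^c](B;\cG^{(N+1)})$ is a subset of $\T^{(N)}[(3\tau)^c](B;\cG^{(N)})$, so taking the infimum over $\tau$ preserves the inequality.
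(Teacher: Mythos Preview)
Your proposal is correct and follows essentially the same approach as the paper's own proof: choose $N$ via the pigeonholing in List~\ref{list3}, set $(\bB,\bT,\cG)=(\cB^{(N+1)},\T^{(N+1)},\cG^{(N+1)})$ and $(\mu,\beta,\rho,\sigma,l,\cQ)=(\mu^{(N)},\beta^{(N)},\rho^{(N)},\sigma^{(N)},l^{(N)},\cQ^{(N)})$, and read off the conclusions from Lists~\ref{list1} and~\ref{list2} together with Lemma~\ref{upperboundmu}. Your added remark on why the broad count $\#_\bbr$ is monotone under passing to $(\T^{(N+1)},\cG^{(N+1)})$ is a helpful clarification that the paper leaves implicit.
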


\begin{proof}
Let $N\lesssim_\e 1$ be the natural number obtained in List \eqref{list3}.
Take $(\bB,\bT,\cg)=(\cB^{(N+1)},\T^{(N+1)};\cG^{(N+1)})$ and $(\mu, \beta, \rho, \si, l, \cq)=(\mu^{(N)}, \beta^{(N)}, \rho^{(N)}, \si^{(N)}, l^{(N)},\cq^{(N)})$.
Then item (a) follows by definition, and items (b) and (c) follow from items (b) and (c) in List \ref{list1}.
Items (i), (iii) follow from items (vi), (vii) in List \ref{list1} and Lemmas \ref{upperboundmu}, \ref{upperboundmu2}.
Finally, item (ii) follows from item (b) in List \ref{list2}, and the fact that $N\lesssim_\e1$.
\qedhere

\end{proof}

\bigskip

\section{Reduction to a two-ends broad estimate}

We prove Theorem~\ref{thmmixednorm} in this and the following sections. As noted in the previous section, our proof proceeds by induction, assuming that \eqref{mixnormineq} holds for all $r \le R/2$.  In this section, we reduce the problem to a two-end broad estimate using a two-end reduction argument, combined with a standard broad-narrow analysis.

Let $A=\log K$.
Let the triple $(\bB,\bT,\cg)$, the set of $R^{1/2}$-balls, and the parameters $(\mu, \beta, \rho, \si, l)$ be given by Theorem \ref{after-iteration-thm}.
As a consequence of item (ii) in Theorem \ref{after-iteration-thm} and recall  \eqref{goal-3},  \eqref{goal-2}, and \eqref{goal-1}, we have 
\begin{equation}
\label{goal-final}
    \|\cFl f\|_{L^p(B_R^{n+1})}^p\lessapprox\sum_{B\in\bB}\|\sum_{T\in \bT(B;\cG)}\cFl f_T\|_{L^p(B)}^p.
\end{equation}

We partition each horizontal region $\omega \cap B_R^{n+1}$ ($\omega \in \Omega$) into a collection of $R/\Kc$-balls, resulting in a partition of $B_R^{n+1}$ into $R/\Kc$-balls, which we denote by $\{B_k\}$.
Using this partition, we have
\begin{equation}
\nonumber
    \text{R.H.S. of }\eqref{goal-final}=\sum_k\sum_{B\in\bB, B\subset B_k}\|\sum_{T\in\bT(B;\cG)}\cFl f_T\|_{L^p(B)}^p.
\end{equation}
Denote
\begin{equation}
\label{bT_k}
    \bT_k=\{ T\in\bT: \cG(T)\cap B_k\neq\emptyset \}, 
\end{equation}
and let 
\begin{equation}
\label{f_k}
    f_k:= \sum_{T\in\bT_k}f_T.
\end{equation}
Since $\cFl f_T$ is essentially supported in $T$, when $B\subset B_k$, we have
    \[\|\sum_{T\in\bT(B;\cG)}\cFl f_T\|_{L^p(B)} \lesssim\|\cFl f_k\|_{L^p(B)}. \]
Therefore, we obtain 
\begin{equation}
\label{B_k}
    \text{R.H.S. of }\eqref{goal-final}\lesssim \sum_k\sum_{B\in\bB, B\subset B_k}\|\cFl f_k\|_{L^p(B)}^p\lesssim\sum_k \|\cFl f_k\|_{L^p(B_k)}^p.
\end{equation}

\smallskip

Denote by $\beta=\beta^{(N)}$.
We will discuss the following two cases separately: 
\begin{enumerate}
    \item \textbf{One-end}: $\beta\le 100$.
    \item \textbf{Two-ends}: $\beta > 100$. 
\end{enumerate}
In the rest of the section, we prove Theorem \ref{thmmixednorm}.
The numerology of the two cases, $n = 3$ and $n \geq 4$, differs slightly but follows the same strategy. 
Therefore, we will address them simultaneously.

\bigskip

\subsection{One-end case} In this subsection, we prove Theorem \ref{thmmixednorm} assuming $\beta\le 100$. 
Recall Theorem \ref{after-iteration-thm} item (iii) that $\cg(T)\lesssim\be\le 100$ for all $T\in\bT$. 
Since $\beta\le 100$, each wave packet $f_T$ is included in the summation of at most $O(1)$ different $f_k$.
Thus, by $L^2$-orthogonality,
\begin{equation}
\label{oneL2}
    \sum_k\|f_k\|_2^2\lesssim \|f\|_2^2. 
\end{equation}

Next, we use the induction hypothesis at the scales $(\lambda, R/\Kc)$ and apply \eqref{mixnormineq} to each $B_k$.
We note that \eqref{mixnormineq} is stated for balls centered at the origin, but a change of coordinates allows the result to hold for balls not necessarily centered at the origin.
Suppose $B_k=B^{n+1}_{R/\Kc}(z_k)$.
Apply \eqref{mixnormineq} to obtain
\begin{equation}
\nonumber
    \|\cFl f_k\|_{L^p(B_k)}^p\le C_\e (\frac{R}{\Kc})^{(n-1)(\frac{p}{2}-1)+p\e}\|f_k\|_2^2\,\cW(f_k, B^{n+1}_{R/\Kc}(z_k))^{p-2}.   
\end{equation}
By Lemma \ref{leminduct} in which $r=R\Kc^{-1}$, we have
\begin{equation}
\nonumber
    \cW( f_k,B^{n+1}_{R/\Kc}(z_k))\lesssim R^{O(\de)}\Kc^{\frac{n-1}{2}} \cW(f_k,B^{n+1}_R).
\end{equation}
Since the wave packets summed in $f_k$ are a subset of those summed in $f$, it follows from Lemma \ref{wpd-less-lem} that
    \[\cW(f_k,B^{n+1}_R)\lesssim \cW(f,B^{n+1}_R). \]

Combining the calculations above and recalling \eqref{B_k}, we obtain
\begin{align*}
    \|\cFl f\|_{L^p(B_R^{n+1})}^p&\lessapprox 
    \sum_{k}C_\e (\frac{R}{\Kc})^{(n-1)(\frac{p}{2}-1)+p\e}R^{O(\de)}\Kc^{\frac{n-1}{2}(p-2)} \|f_k\|_2^2\,\cW(f,B^{n+1}_R)^{p-2}\\
    &\lesssim C_\e R^{O(\de)}\Kc^{-p\e}  R^{(n-1)(\frac{p}{2}-1)+p\e}\|f\|_2^2\,\cW(f,B^{n+1}_R)^{p-2}.
\end{align*}
The second inequality follows from \eqref{oneL2}. 
This closes the induction since $\Kc=R^{\e^{100}}$ is large enough and hence proves Theorem \ref{thmmixednorm}.

For the following, we assume the two-ends case, where $\beta > 100$.

\bigskip
\subsection{Two-ends: broad narrow reduction}
By pigeonholing on the $R/\Kc$ balls summed in \eqref{B_k}, there exists a $B_k$ so that
\begin{equation}
\label{two1}
    \|\cFl f\|_{L^p(B_R^{n+1})}^p\lessapprox \Kc^{O(1)}\sum_{B\in\bB, B\subset B_k}\|\cFl f_k\|_{L^p(B)}^p.
\end{equation}

What follows is a somewhat standard broad-narrow method. 
Recall the broad norm $\|\cdot \|_{\BL^p_{k,A}}^p$ defined in Subsection \ref{sectionbroadnorm}.
Let
\begin{equation}
\label{k-n}
    k=k_n:=\left\{
    \begin{array}{ll} 
    \frac{n+5}2, &\textrm{$n\ge 3$ is odd},\\[1ex]
    \frac{n+4}{2},& \textrm{$n\ge 2$ is even}.
\end{array}
\right.
\end{equation}
We remark that the $k$ in the broad norm is irrelevant to the $k$ in $B_k$.

Choose $A=(10n\log K)^2$. 
For any $K^2$-ball $B\subset B_k$, let $V_1,\dots,V_A$ be the set of subspaces of dimension $k_n$ where the broad norm is attained:
    \[\|\cFl f_k\|_{\BL^p_{k_n,A}(B)}^p=\max_{\tau\notin V_i, 1\le i\le A}\int_B |\cFl f_{k,\tau}|^p\]
Here ``$\tau\notin V_i, 1\le i\le A$" means ``$\tau$  ranges over $\tau\in\Theta_{K^{-1}}$ with $\ang(G(\tau),V_i)>K^{-2}$ for all $1\le i\le A$".
Also, $f_{k,\tau}=\sum_{T\in\bT_k[\tau]}f_T$ (recall Definition \ref{T[S]}).
Thus, we have
\begin{equation}
\label{broadnarrow}
    \|\cFl f_k\|_{L^p(B)}^p\lesssim K^{O(1)} \|\cFl f_k\|^p_{\BL^p_{k_n,A}(B)}+\sum_{i=1}^A\int_B | \sum_{\tau\in V_i}\cFl f_{k,\tau} |^p.
\end{equation}
Here ``$\tau\in V_i$" means $\ang(G^\la(z_B,\tau),V_i)\le K^{-2}$, where $z_B$ is the center of $B$.

\smallskip

By using Theorem \ref{l2-decoupling} at $R=K^2, p=p(n)$, Lemma \ref{numberofcaps-lem}, and H\"older's inequality, we can bound the right-hand side of \eqref{broadnarrow} by (the rapidly decreasing term can be discarded, as we have done the normalization in \eqref{normalization})
    \[\sum_{i=1}^A\int_B | \sum_{\tau\in V_i}\cFl f_{k,\tau} |^p\lessapprox K^{(k_n-3)(\frac12-\frac1p)p}\sum_{\tau\in\Theta_{K^{-1}}}\int_{w_B} |\cFl f_{k,\tau}|^p,\]
where $w_B$ is a weight that is $\sim1$ on $B$ and decreases rapidly outside $B$.
Denote $X:=\bigcup_{B\in \bB: B\subset B_k} B$.
Summing up all $\{B\in\bB: B\subset B_k\}$ in \eqref{broadnarrow} recalling \eqref{two1}, we have
\begin{align}\label{broadnarrow2}
    \|\cFl f\|^p_{L^p(B_R^{n+1})}\lessapprox\, & (K\Kc)^{O(1)}\|\cFl f_k\|^p_{\BL^p_{k_n,A}(X)}\\ \nonumber
    &+ \Kc^{O(1)} K^{(k_n-3)(\frac12-\frac1p)p }\sum_{\tau}\int_{B_R^{n+1}}|\cFl f_{k,\tau}|^p.
\end{align}

If the first term on the right-hand side of \eqref{broadnarrow2} dominates, we say we are in the \textbf{broad case}.
If the second term on the right-hand side of \eqref{broadnarrow2} dominates, we say we are in the \textbf{narrow case}.
We will prove Theorem \ref{thmmixednorm} for the narrow case below, and leave the discussion about the broad case in the next section.

\bigskip

\subsection{Two-ends: narrow case}
Recall \eqref{broadnarrow2}.
In this case, we have
    \[\|\cFl f\|_{L^p(B_R^{n+1})}^p\lessapprox \Kc^{O(1)} K^{(k_n-3)(\frac12-\frac1p)p}\sum_\tau \int_{B_R^{n+1}}|\cFl f_{k,\tau}|^p. \]
For each $\tau$, by Lemma \ref{lemrescaling} and by using the induction hypothesis of Theorem \ref{thmmixednorm} at scale $R/K^2$, we have
    \[\|\cFl f_{k,\tau}\|_{L^p(B_R^{n+1})}\lesssim K^{\frac{2}{p}}(\frac{R}{K^2})^{(n-1)(\frac12-\frac1p)+\e}\|f_{k,\tau}\|_2^{\frac{2}{p}}\cW(f_{k,\tau},B_R^{n+1})^{1-\frac2p}. \]
By Lemma \ref{wpd-less-lem}, we get
    \[\cW(f_{k,\tau},B_R^{n+1})\lesssim \cW(f,B_R^{n+1}). \]
Thus, $\|\cFl f\|_{L^p(B_R^{n+1})}^p$ is bounded above by 
\begin{align}
\nonumber
     \Kc^{O(1)} R^{(n-1)(\frac12-\frac1p)p+p\e} &K^{(k_n-3)(\frac12-\frac1p)p+2-2(n-1)(\frac12-\frac1p)p-2p\e}\\ \nonumber
     &\cdot \sum_{\tau}\|f_{k,\tau}\|_2^2\cW(f,B_R^{n+1})^{p-2}.
\end{align}
Note that $p(n)\geq 2+ \f4{2n-k_n+1}$, and when $p\geq 2+ \f4{2n-k_n+1}$ and $k_n\ge 3$, we have $(k_n-3)(\frac12-\frac1p)p+2-2(n-1)(\frac12-\frac1p)p\leq 0$.
Therefore, when $p=p(n)$,
\begin{equation}
\nonumber
    \|\cFl f\|_{L^p(B_R^{n+1})}^p\lessapprox \Kc^{O(1)}  R^{(n-1)(\frac12-\frac1p)p+p\e} K^{-2p\e}\|f\|_2^2\,\cW(f,B_R^{n+1})^{p-2}.
\end{equation}
Since $K=R^{\e^{50}}$ and since $\Kc=R^{\e^{100}}$, we have $K^{-2p\e}\Kc^{O(1)}\lesssim R^{-\e^{100}}$.
Thus, the induction is closed, and we finish the proof of Theorem \ref{thmmixednorm} in the narrow case.

\bigskip

\section{Finish the proof: broad case study}
Recall \eqref{broadnarrow2} and $A=(10n\log K)^2$.
In the broad case, we have
\begin{equation}
\label{twoends1}
    \|\cFl f\|^p_{L^p(B_R^{n+1})}\lessapprox (K\Kc)^{O(1)}\|\cFl f_k\|^p_{\BL^p_{k_n,A}(X)}.
\end{equation}
Recall \eqref{f_k}, \eqref{bT_k}, \eqref{goal-final}, and Theorem \ref{after-iteration-thm}.

\bigskip

\subsection{\texorpdfstring{$L^{q_n}$}{} estimate}
Let $q_n=\frac{2(n+1)}{n-1}$ be the decoupling exponent.
Note that $X=(\cup_{\bB})\cap B_k$. 
Since $A/2\geq \log K$, for each $K^2$-ball $B\subset X$, by item (i) in Theorem \ref{after-iteration-thm}, we have
    \[\#_{\bbr,A} \{T\in\bT_k: T\cap B\neq\emptyset\} \le \#_{\bbr,A} \bT(B;\cG)\lesssim \mu. \]
Therefore, we can find $\tau'\in\Theta_{K^{-1}}$ such that
    \[\#\{ T\in\bT_k: T\cap B\neq\emptyset,\, \theta(T)\not\subset 3\tau' \}\lesssim \mu. \]
By the definition of broad norm and noting $A/2\geq10n$, we have
    \[\|\cFl f_k\|^{q_n}_{\BL^{q_n}_{k_n,A/2}(B)}\lesssim K^{O(1)}\max_{\tau\not\subset 3\tau'}\int_B |\cFl f_{k,\tau}|^{q_n}, \]
where, recall that $f_{k,\tau}=\sum_{T\in\bT_k[\tau]}f_T$.
Hence, there is a $\tau(B)\not\subset 3\tau'$ such that
\begin{equation}
\label{pigeontau}
    \|\cFl f_k\|^{q_n}_{\BL^{q_n}_{k_n,A/2}(B)}\lesssim K^{O(1)}\int_B |\cFl f_{k,\tau(B)}|^{q_n},
\end{equation}
and since $\tau(B)\not\subset 3\tau'$, we have
\begin{equation}\label{uppermu}
    \#\{\bT_k: T\cap B\neq\emptyset,\, \theta(T)\subset \tau(B)\}\lesssim \mu.
\end{equation}

By pigeonholing on the set $\{\tau(B)\}_{B\subset X}$, we can find a uniform $\tau\in\Theta_{K^{-1}}$, a union of sub-collection of $K^2$-balls $X'\subset X$, such that $\tau(B)=\tau$ for $B\subset X'$ and
    \[\|\cFl f_k\|_{\BL^{q_n}_{k_n,A/2}(X)}^{q_n}\lesssim K^{O(1)} \|\cFl f_k\|_{\BL^{q_n}_{k_n,A/2}(X')}^{q_n}.\]
Hence, by \eqref{pigeontau}, we get
\begin{equation}
\nonumber
    \|\cFl f_k\|_{\BL^{q_n}_{k_n,A/2}(X)}^{q_n}\lessapprox K^{O(1)}\sum_{B\subset X'}\int_B|\cFl f_{k,\tau}|^{q_n}=K^{O(1)}\| \cFl f_{k,\tau}\|^{q_n}_{L^{q_n}(X')}.
\end{equation}
Recall \eqref{normalization} and Lemma \ref{local-L2-lem-2}.
Since $\cFl f_T$ is essentially constant on $T$, we have 
    \[R^\frac{n+1}{2}\sim\|\cFl f_T\|_{L^{q_n}(w_{B^{n+1}_R})}^{q_n}\lesssim R^\frac{n+1}{n-1}\|\cFl f_T\|_{L^{2}(w_{B^{n+1}_R})}^{q_n}\lesssim \|f_T\|_2^{q_n}.\]
Also, by Lemma \ref{wpd-lem}, we have
    \[\cW(f,B_R^{n+1})^2\gtrsim \max\{ 1, R^{-\frac{n+1}{2}}(\#\bT)\}\gtrsim R^{-\frac{n+1}{4}}(\#\bT)^{1/2}.\]
Therefore, by \eqref{uppermu} and recalling \eqref{normalization}, we can apply Theorem \ref{refdecthm} to get
\begin{align}
\label{Lqnest}
    &\|\cFl f_k\|_{\BL^{q_n}_{k_n,A/2}(X)}^{q_n}\lessapprox K^{O(1)}\|\cFl f_{k,\tau}\|_{L^{q_n}(X')}^{q_n}\\[1ex]
    \nonumber
    \lessapprox &\, K^{O(1)}\mu^{\frac{2}{n-1}}\sum_{T\in\bT_k[\tau]}\|\cFl f_T\|_{L^{q_n}(w_{B^{n+1}_R})}^{q_n}\lesssim K^{O(1)}\mu^{\frac{2}{n-1}}(\#\bT\cdot R^{\frac{n+1}{2}})\\
    \nonumber
    \lessapprox &\,K^{O(1)} R^2\cdot(R^{-\frac{n-3}{2(n-1)}}\mu^\frac{2}{n-1} (\#\bT)^{-\frac{1}{n-1}})\|f\|_2^2\,\cW(f,B_R^{n+1})^{p_n-2}. 
\end{align}
This is the $L^{q_n}$ estimate we need.

\bigskip

\subsection{Proof for the case \texorpdfstring{$n=3$}{}}

Write $\frac{1}{p(n)}=\al(n)\frac{1}{2}+(1-\al(n))\frac{1}{q_n}$.
Notice that $q_3=4$ and $\al(3)=1/5$.
By Lemma \ref{lemholder}, we have
\begin{equation}
\label{twoends2}
    \|\cFl f_k\|_{\BL^p_{4,A}(X)}\lesssim \|\cFl f_k\|^{1/5}_{\BL^2_{4,A/2}(X)} \|\cFl f_k\|^{4/5}_{\BL^{4}_{4,A/2}(X)}.
\end{equation}
It remains to estimate $\|\cFl f_k\|_{\BL^2_{4,A/2}(X)}$ and $\|\cFl f_k\|_{\BL^{4}_{4,A/2}(X)}$.

We are going to estimate $\|\cFl f_k\|_{\BL^2_{4,A/2}(X)}$ inside each 
$R^{1/2}$-ball $Q\in\cQ$ separately and then sum them up.
Recall that $X=\cup_{\bB}\cap B_k$.
By Theorem \ref{after-iteration-thm}, $X|_Q$ is $\rho$-regular for all $Q\in\cq$.
Thus, for each $Q\in\cQ$, there is a set of $10K^2\times R^{1/2}\times R^{1/2}\times R^{1/2}$-slabs $\S(Q)$ with  $\#\S(Q)\lesssim \si/\rho$ such that $X\cap Q\subset \cup_{\S(Q)}$. 
Hence,
\begin{equation}\label{proceedas}
    \|\cFl f_k\|^2_{\BL^2_{4,A/2}(X\cap Q)}\le \sum_{S\in\S(Q)}\|\cFl f_k\|^2_{\BL^2_{4,A/2}(S)}. 
\end{equation} 
Fix an $S\in\ZS(Q)$ and let $V$ be the 3-dimensional subspace that is parallel to $S$. By the definition of the broad norm, we have
    \[\|\cFl f_k\|^2_{\BL^2_{4,A/2}(S)}\le \sum_{B\subset S}\max_{\tau\notin V}\int_B |\cFl f_{k,\tau}|^2.  \]
By pigeonholing on the caps $\{\tau\}$, there exists a uniform $\tau\notin V$ such that
    \[\|\cFl f_k\|^2_{\BL^2_{4,A/2}(S)}\lesssim K^{O(1)} \int_S |\cFl f_{k,\tau}|^2. \]
Here $f_{k,\tau}=\sum_{T\in\bT_k[\tau]}f_T$.

% and note that     
%     \[\int_S |\cFl f_{k,\tau}|^2\lesssim\int_S\big|\sum_{T\in\bT_k[\tau],T\cap Q\not=\varnothing}f_T\big|^2\]
Apply Lemma \ref{local-L2-lem-1} and note that $\bT_k[\tau]\subset\bT_k\subset \bT$. We get
    \[\int_S |\cFl f_{k,\tau}|^2\lesssim K^{O(1)} \sum_{T\in\bT_k[\tau], T\cap Q\not=\varnothing} \|f_T\|_2^2\lesssim K^{O(1)} \sum_{T\in\bT, T\cap Q\not=\varnothing} \|f_T\|_2^2. \]
Plugging it back to \eqref{proceedas} and summing up all $Q\in\cq$, we get
\begin{equation}
\nonumber
\begin{split}
     \|\cFl f_k\|^2_{\BL^2_{4,A/2}(X)}&\lesssim K^{O(1)}\sum_{Q\in\cQ} \sum_{S\in\S(Q)}\sum_{T\in\bT: T\cap Q\neq\emptyset} \|f_T\|_2^2\\
     &\lesssim K^{O(1)}\#\S(Q) \sum_{T\in\bT}\sum_{Q\in\cQ:T\cap Q\neq\emptyset} \|f_T\|_2^2\lesssim K^{O(1)} l(\si/\rho)\|f\|_2^2.
\end{split}
\end{equation}
In the last inequality, we use item (c) in Theorem \ref{after-iteration-thm} and $L^2$-orthogonality.

\medskip

Now we can finish the proof.
By \eqref{twoends1}, \eqref{twoends2}, \eqref{Lqnest} and \eqref{proceedas}, we get
    \[\|\cFl f\|_{L^p(B_R^{4})}\lessapprox (K\Kc)^{O(1)}R^{2(\frac{1}{2}-\frac{1}{p})}(\mu (\#\bT)^{-1/2})^{1/5}(l\si/\rho)^{1/10}\|f\|_2^\frac{2}{p}W(f,B_R^4)^{1-\frac{2}{p}}. \]
This implies \eqref{mixnormineq} since $ (K\Kc)^{O(1)}\lesssim R^{\e^2}$ and since $\mu (\#\bT)^{-1/2}(l\si/\rho)^{1/2}\lesssim R^{\e^2}$, which follows from item (iii) in Theorem \ref{after-iteration-thm}.

\bigskip

\subsection{Proof for the case \texorpdfstring{$n\geq 5$ for odd $n$}{}}
\label{odd-n-section}

In this case, we need to use the $k$-broad estimate obtained by \cite{Schippa}.
Let 
\[r_{n} = \frac{2(n+k_n+1)}{n+k_n-1}.\]

\begin{theorem}[\cite{Schippa}, Theorem 1.2]
\label{k-brd}
For any $\e>0$, there exists constants $C_\e$ and $A\geq[1,\infty)\cap\ZZ$ such that for all $f$ with $\supp\wh f\subset\A^n(1)$,
    \[\|\cFl f\|_{\BL^{r_n}_{k_n,A}(B_R^{n+1})}\le C_\e R^\e \|f\|_{2}.\]
\end{theorem}

As two direct corollaries, we have
\begin{corollary}
For any $\e>0$, there exists constants $C_\e$ and $A\geq[1,\infty)\cap\ZZ$ such that for all $f$ with $\supp\wh f\subset\A^n(1)$,
\begin{equation}
\label{k-brd1}
    \|\mathcal F^\la f\|_{\BL^{r_n}_{k_n,A}(B_R^{n+1})}\le C_\e R^\e R^{n(\f12-\frac{1}{r_n})} \|f\|_{2}^{\frac{2}{r_n}}\cW(f,B^{n+1}_R)^{1-\frac{2}{r_n}}.
\end{equation}
\end{corollary}

\begin{proof}
Recall $\cW(f,B^{n+1}_R)$ in Definition \ref{wpd-def}.
Thus, we have
\[
     \|f\|_{2}\lesssim \Big( \sum_{T\in\ZT}\|f_T\|_{2}^2\Big)^{1/2} \lesssim R^{n/2} \cW(f,B^{n+1}_R). 
\]
This proves the corollary by Theorem \ref{k-brd}. \qedhere

\end{proof}

\begin{corollary}
\label{cor-2}
Let $\ZT$ be a family of $R$-planks, and let $f=\sum_{T\in\ZT} f_T$ be a sum of wave packets. 
Let $\si, l\geq1$.
Let $X$ be a union of $K^2$-balls, and let $\cq$ be a family of $R^{1/2}$-balls such that the following hold:
\begin{enumerate}
    \item $X\subset\cup_\cq$, and $|X\cap Q|\sim \si$ for all $Q\in\cq$.
    \item $\sup_{T\in\ZT}\#\{Q\in \cQ: Q\cap T\neq\emptyset \}\leq l$.
\end{enumerate}
Then we have
\begin{equation}
\label{L2e}
    \|\cFl f\|_{\BL^2_{k_n,A}(X)}\lessapprox l^{1/2}\si^{\f 1{n+k_n+1}}\|f\|_{L^2}.
\end{equation}

\end{corollary}
\begin{proof}
Consider each $Q\in\cq$.
Let $\ZT(Q)=\{T\in\ZT:T\cap 2Q\not=\varnothing\}$ and $f_Q=\sum_{T\in\ZT_{Q}}f_T$.
By H\"older's inequality and Theorem \ref{k-brd}, we have
\begin{align}
\nonumber
    \|\cFl f\|^2_{\BL^2_{k_n,A}(X\cap Q)}&\lesssim \|\cFl f_Q\|^2_{\BL^2_{k_n,A}(X\cap Q)}\\ \nonumber
    &\lesssim \si^{1-\frac{2}{r_n}}\|\cFl f_Q\|^2_{\BL^{r_n}_{k_n,A/2}(Q)}\lessapprox \si^{1-\frac{2}{r_n}}\|f_Q\|_2^2.
\end{align} 
Since $1-2/r_n=2/(n+k_n+1)$, we can sum up all $Q\in\cq$ using assumption (2) to conclude the corollary.
\qedhere

\end{proof}

\smallskip

Recall \eqref{twoends1}.
Let $\al(n)=1-\frac{(n-3)(n+k_n+1)}{(3n+1)(k_n-1)}$ and $A'=A/3$.
By Lemma \ref{lemholder}, we have for $p=p(n)=p_n^+$,
\begin{equation}
\label{holder-3-term}
    \|\cFl f_k\|_{\BL^p_{k_n,A}(X)}\lesssim \|\cFl f_k\|_{\BL^{q_n}_{k_n,A'}(X)}^\frac{\al(n)(n+1)}{n+2}\|\cFl f_k\|_{\BL^{2}_{k_n,A'}(X)}^\frac{\al(n)}{n+2}\|\cFl f_k\|_{\BL^{r_n}_{k_n,A'}(X)}^{1-\al(n)}.
\end{equation}
Note that the datum $(f_k,X)$ obeys the assumption of Corollary \ref{cor-2}.
Let $p_n=\frac{2(n+2)}{n}$.
Thus, by \eqref{Lqnest}, \eqref{L2e}, and the second incidence estimate in (iii) of Theorem \ref{after-iteration-thm}, we have
\begin{align}
\nonumber
    &\|\cFl f_k\|_{\BL^{q_n}_{k_n,A'}(X)}^\frac{n+1}{n+2}\|\cFl f_k\|_{\BL^{2}_{k_n,A'}(X)}^\frac{1}{n+2}  \\ \nonumber
    \lesssim&\, R^{O(\e^2)}R^{\frac{3n-1}{4(n+2)}}\min\{1, R^{\frac{n-2}{2}}\si^{-1}\}^\frac{1}{6
    (n+2)}\cdot\si^{\f 1{(n+k_n+1)(n+2)}} \|f\|_{L^2}^{\f2{p_n}}\,\cW(f,B_R^{n+1})^{1-\f2{p_n}}\\ \nonumber
    \lesssim &\, R^{O(\e^2)}\min\{R^{\frac{3(3n-1)}{2}}\si^\frac{6}{n+k_n+1}, R^{\frac{10n-5}{2}}\si^{-\frac{n+k_n-5}{n+k_n+1}}\}^\frac{1}{6
    (n+2)} \|f\|_{L^2}^{\f2{p_n}}\,\cW(f,B_R^{n+1})^{1-\f2{p_n}}.
\end{align}
Plugging this back to \eqref{holder-3-term} and employing \eqref{k-brd1}, we get that when $p=p(n)$,
\begin{align}
\nonumber
    \|\cFl f_k\|_{\BL^p_{k_n,A}(X)}\lesssim& R^{O(\e^2)}\|f\|_{L^2}^{\f2{p}}\,\cW(f,B_R^{n+1})^{1-\f2{p}}\cdot \\ \label{numerology1}
    & \min\{R^{\frac{3(3n-1)}{2}}\si^\frac{6}{n+k_n+1}, R^{\frac{10n-5}{2}}\si^{-\frac{n+k_n-5}{n+k_n+1}}\}^\frac{\al(n)}{6
    (n+2)}\cdot R^{\frac{n(1-\al(n))}{n+k_n+1}}.
\end{align}
Recall \eqref{k-n} that $k_n=\frac{n+5}{2}$ when $n$ is odd. 
Thus, when $p=p(n)$ and $n\geq 5$,
\begin{align}
\nonumber
    \eqref{numerology1}&=\min\{R^\frac{9n-3}{2}\si^\frac{12}{3n+7},R^\frac{10n-5}{2}\si^{-\frac{3n-5}{3n+7}}\}^\frac{2}{3n^2+10n+3}\cdot R^\frac{2n^2-6n}{3n^2+10n+3}\\ \nonumber
    &=R^{(n-1)(\frac{1}{2}-\frac{1}{p})}\min\{R^\frac{9n-3}{2}\si^\frac{12}{3n+7},R^\frac{10n-5}{2}\si^{-\frac{3n-5}{3n+7}}\}^\frac{2}{3n^2+10n+3}\cdot R^\frac{6-10n}{3n^2+10n+3}\\ \nonumber
    &=R^{(n-1)(\frac{1}{2}-\frac{1}{p})}\min\{R^{\frac{3-n}{2}}\si^{\frac{12}{3n+7}},R^{\frac{1}{2}}\si^{-\frac{3n-5}{3n+7}}\}^\frac{2}{3n^2+10n+3}\leq R^{(n-1)(\frac{1}{2}-\frac{1}{p})}.
\end{align}
Thus, by \eqref{twoends1}, this proves Theorem \ref{thmmixednorm} when $n\geq5$ and $n$ is odd.

\bigskip

\subsection{Proof for the case \texorpdfstring{$n\geq 4$ for even $n$}{}}

The idea behind this proof is similar to that in Section \ref{odd-n-section}, so we only provide a few remarks regarding the choice of parameters.

Similar to \eqref{holder-3-term}, choosing $\al(n)=1-\frac{(3n-10)(n+1)}{(3n^2+n+6)}$ and $A'=A/3$, we have by Lemma \ref{lemholder} that when $p=p(n)$, 
\begin{equation}
\nonumber
    \|\cFl f_k\|_{\BL^p_{k_n,A}(X)}\lesssim \|\cFl f_k\|_{\BL^{q_n}_{k_n,A'}(X)}^\frac{\al(n)(n+1)}{n+2}\|\cFl f_k\|_{\BL^{2}_{k_n,A'}(X)}^\frac{\al(n)}{n+2}\|\cFl f_k\|_{\BL^{r_n}_{k_n,A'}(X)}^{1-\al(n)}.
\end{equation}
The remainder of the calculation proceeds as before, using the choice $k_n=\frac{n+4}{2}$.

\bigskip

\appendix 
\numberwithin{equation}{section}
\section{Decoupling inequalities}

Suppose $f(x)$ is a function in $\R^{n }$ with $\supp\wh f\subset \A^n(1)$ with the wave packet decomposition inside $B_R$:
    \[\cFl f=\sum_{\theta\in\Theta_{R^{-1/2}}}\sum_{T\in\T_\theta}\cFl f_T. \]
Beltran, Hickman, and Sogge \cite{beltran2020variable} proved the following variable coefficient version of $\ell^pL^p$-decoupling. 
We remark that $\cFl f=T^\la\wh f$, where $T^\la$ is the oscillatory integral operator as in \cite{beltran2020variable}. 
Also, as we will focus exclusively on $(n+1)$ dimensions in the appendix, we simplify the notation by writing $B_r^{n+1}(z)$ as $B_r(z)$.

\begin{theorem}[$\ell^p$-decoupling]
\label{lp-decoupling}
    Let $2\le p\le \frac{2(n+1)}{n-1}$. For any $\e>0$, one has
    \begin{equation}
    \nonumber
        \|\cFl f\|_{L^p(B_R )} \lesssim_{\e,M,\phi,a} R^{\frac{n-1}{2}(\frac12-\frac1p)+\e} (\sum_{\theta\in\Theta_{R^{-1/2}}}\|\cFl f_\theta\|_{L^p(w_{B_R })}^p)^{1/p}+R^{-M}\|f\|_2. 
    \end{equation}
Here $w_{B_R }$ is a weight that is $\sim1$ on $B_R $ and decreases rapidly outside $B_R $. 
\end{theorem}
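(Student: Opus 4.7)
The plan is to follow Beltran--Hickman--Sogge \cite{beltran2020variable} and reduce Theorem \ref{lp-decoupling} to the constant-coefficient $\ell^2$-decoupling for the cone (Bourgain--Demeter) via induction on scales. First I would reduce to the critical exponent $p_c = \frac{2(n+1)}{n-1}$. Since $\#\Theta_{R^{-1/2}} \sim R^{(n-1)/2}$, the power $R^{\frac{n-1}{2}(\frac12-\frac1p)}$ appearing in the statement is exactly what H\"older's inequality yields when upgrading an $\ell^2 L^{p_c}$-decoupling of the form
\begin{equation*}
    \|\cFl f\|_{L^{p_c}(B_R)} \lesssim_\e R^\e \Big(\sum_{\theta \in \Theta_{R^{-1/2}}} \|\cFl f_\theta\|_{L^{p_c}(w_{B_R})}^2\Big)^{1/2} + R^{-M}\|f\|_2
\end{equation*}
into the $\ell^{p_c} L^{p_c}$ form stated in the theorem; for $2 \le p < p_c$ the conclusion then follows by real interpolation against the trivial $\ell^2 L^2$ bound (which is Plancherel together with the finite overlap of $\{\wh{\cFl f_\theta}\}_\theta$). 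Thus the whole task is the $\ell^2 L^{p_c}$ inequality at the critical exponent.

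For that, I would induct on the radius $R$. Fix $K = R^{\e^{50}}$ and cover $B_R$ by $K^2$-balls $B_{K^2}(z_0)$. On each such ball, I would freeze the phase: Taylor expanding in $z$ around $z_0$ and using the derivative bounds $(\mathrm{D1}_\bfone), (\mathrm{D2}_\bfone)$ of Section \ref{subsecquant}, the phase $\phi^\la(z;\xi)$ differs from its quadratic-in-$z$ Taylor polynomial $\Psi_{z_0}(z;\xi)$ by at most $O(\la^{-2} K^6) \le R^{-1}$ on $B_{K^2}(z_0) \times \A^n(1)$, which produces only rapidly decaying contributions after a non-stationary phase argument analogous to Lemma \ref{lemsuppT}. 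Condition $(\mathrm{H2}_\bfone)$ ensures that $\Psi_{z_0}(z;\cdot)$ has $\xi$-Hessian of conic type at $z_0$, so the classical $\ell^2 L^{p_c}$ cone decoupling applies at scale $K^2$ to the frozen operator, yielding
\begin{equation*}
    \|\cFl f\|_{L^{p_c}(B_{K^2}(z_0))}^2 \lesssim K^{\e^{50}\cdot C} \sum_{\tau \in \Theta_{K^{-1}}} \|\cFl f_\tau\|_{L^{p_c}(w_{B_{K^2}(z_0)})}^2,
\end{equation*}
where $f_\tau = \sum_{\theta \subset \tau} f_\theta$. Summing in $z_0$ via Minkowski in $\ell^{p_c/2}$ (allowed since $p_c \ge 2$) promotes the bound to all of $B_R$.

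For the fine scale, apply the Lorentz rescaling of Section \ref{Lorentz-rescaling-section}: via \eqref{rescaling} each piece $\cFl f_\tau$ on $B_R$ is turned into $\wt\cF^{\la/K^2} \wt f_\tau$ on an $R/K^2$-ball, with $\wt \cF$ of type $\bfone$ (after a harmless $O(1)$ partition, as in \cite[Section 2.5]{beltran2020variable}) and $\wt f_\tau$ frequency-supported in $\A^n(1)$. Invoking the induction hypothesis at scale $R/K^2$ decouples $\wt f_\tau$ into caps of size $(R/K^2)^{-1/2}$, which correspond under reverse rescaling precisely to the $R^{-1/2}$-caps $\theta \subset \tau$. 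Combining with the coarse-scale bound and tracking the $K^\e$ losses closes the induction, since the geometric factor $K^{C\e^{50}}$ acquired at each level of the $\log_{K^2} R$-deep recursion totals at most $R^{\e/10}$. The main obstacle will be the approximation step: showing that, uniformly in the $K^2$-cover, one may replace $\phi^\la$ by its frozen quadratic Taylor polynomial and still apply the constant-coefficient decoupling, while simultaneously verifying that the rescaled operators $\wt\cF^{\la/K^2}$ remain in the type-$\bfone$ class under iteration. This is the technical core of \cite{beltran2020variable}, and it relies essentially on the higher-order hypotheses $(\mathrm{D1}_\bfone)$--$(\mathrm{D2}_\bfone)$ and the margin $(\mathrm{M}_\bfone)$.
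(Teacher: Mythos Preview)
The paper does not actually prove Theorem~\ref{lp-decoupling}; it simply attributes the result to Beltran--Hickman--Sogge \cite{beltran2020variable} and moves on to the refined decoupling inequality (Theorem~\ref{refdecthm}). So there is no in-paper proof to compare against.

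Your outline is a faithful sketch of the argument in \cite{beltran2020variable}: reduce to the critical $\ell^2 L^{p_c}$ statement, induct on $R$, apply constant-coefficient conic decoupling at scale $K^2$ on each $K^2$-ball, then Lorentz-rescale each $\tau$-piece and invoke the inductive hypothesis at scale $R/K^2$. One small inaccuracy: the ``freezing'' step is not done by replacing $\phi^\la$ by a \emph{quadratic}-in-$z$ Taylor polynomial (which would still be variable-coefficient), but rather by showing that on each $B_{K^2}(z_0)$ the Fourier transform of $\Id_{B_{K^2}(z_0)}^* \cFl f_\tau$ is essentially supported in a $1\times K^{-1}\times\cdots\times K^{-1}\times K^{-2}$ cap on the fixed cone $\Sigma_{z_0}=\{\partial_z\phi^\la(z_0;\xi):\xi\in\A^n(1)\}$. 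The relevant error is $|\partial_z\phi^\la(z;\xi)-\partial_z\phi^\la(z_0;\xi)|=O(\la^{-1}K^2)\ll K^{-2}$, which is what makes the localization work; this is exactly the computation carried out in the proof of Theorem~\ref{refdecthm} in the appendix (see the lemma there and \eqref{computeFsupp}--\eqref{compute2}). Otherwise your plan is correct and matches the cited reference.
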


A slight modification of the proof in \cite{beltran2020variable} also gives the $\ell^2$-decoupling inequality.

\begin{theorem}[$\ell^2$-decoupling]
\label{l2-decoupling}
    Let $2\le p\le \frac{2(n+1)}{n-1}$. For any $\e>0$, one has
    \begin{equation}
    \nonumber
        \|\cFl f\|_{L^p(B_R )} \lesssim_{\e,M,\phi,a} R^{\e} (\sum_{\theta\in\Theta_{R^{-1/2}}}\|\cFl f_\theta\|_{L^2(w_{B_R })}^p)^{1/p}+R^{-M}\|f\|_2. 
    \end{equation}
\end{theorem}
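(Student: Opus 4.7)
The strategy is to extract the $\ell^2$-valued version of Theorem~\ref{lp-decoupling} directly from the proof given in \cite{beltran2020variable}. That proof runs a Bourgain--Demeter-type bootstrap in the variable-coefficient setting and, at the final step, converts an intermediate $\ell^2$-sum of $L^p$-norms over the $R^{-1/2}$-caps into an $\ell^p$-sum via H\"older's inequality, at a cost of $(\#\Theta_{R^{-1/2}})^{1/2-1/p}\sim R^{\frac{n-1}{2}(\frac12-\frac1p)}$. The ``slight modification'' referred to before the statement consists simply in retaining the $\ell^2$-sum and omitting this last H\"older step, so that the right-hand side of the displayed inequality should be read with an $\ell^2$-sum over $\theta$ of $L^p$-norms.

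More concretely, I would handle the critical exponent $p_c=\tfrac{2(n+1)}{n-1}$ first. Define
\[
D(R):=\sup\frac{\|\cFl f\|_{L^{p_c}(B_R)}}{\bigl(\sum_{\theta\in\Theta_{R^{-1/2}}}\|\cFl f_\theta\|_{L^{p_c}(w_{B_R})}^{2}\bigr)^{1/2}+R^{-M}\|f\|_2},
\]
where the supremum ranges over all type-$\mathbf{1}$ FIOs $\cFl$ and all $f$ with $\supp\wh f\subset\A^n(1)$. The inductive step establishes
\[
D(R)\lesssim K^{O(1)}D(R/K^2)+\textup{(multilinear contribution)}
\]
via a broad--narrow dichotomy at scale $K^{-1}$. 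The broad contribution is controlled by the variable-coefficient multilinear restriction estimate for FIOs proved in \cite{beltran2020variable}; the narrow contribution is handled by regrouping the $R^{-1/2}$-caps into $K^{-1}$-caps $\tau$ and performing the Lorentz rescaling of Section~\ref{Lorentz-rescaling-section}, which converts the scale-$R$ problem on each $\tau$ into a scale-$R/K^2$ problem on a full cap and preserves the type-$\mathbf{1}$ condition up to $O(1)$ constants. Iterating in $R$ yields $D(R)\lesssim_\e R^\e$. The extension to the full range $2\le p\le p_c$ follows by interpolation between the trivial bound at $p=2$ (Plancherel combined with the finite overlap of the $\xi$-supports of $\wh f_\theta$) and the critical bound at $p_c$; interpolation for decoupling inequalities of this vector-valued form is standard.

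The principal obstacle relative to \cite{beltran2020variable} is purely bookkeeping: one must verify that every reduction step, particularly the Lorentz rescaling and the reassembly of $K^{-1}$-caps into $R^{-1/2}$-caps, respects the $\ell^2$-structure on the caps rather than only $\ell^p$. This amounts to checking that after rescaling a $K^{-1}$-cap $\tau$ into a full cap, the induced wave-packet decomposition at scale $R/K^2$ has its $\ell^2$-sum of $L^p$-norms comparable, up to the Jacobian factor of the rescaling, to the $\ell^2$-sum over the $R^{-1/2}$-sub-caps of $\tau$ of the $L^p$-norms of the corresponding pieces of $\cFl f$. Since the Lorentz rescaling is a measure-preserving change of variables on the physical side (up to a $K^{n+1}$-factor) and a linear change of variables on the frequency side, this tracking is routine and introduces no new analytic ingredient beyond those already present in \cite{beltran2020variable}.
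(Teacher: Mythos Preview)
Your proposal is correct and matches exactly what the paper does: it gives no standalone proof of Theorem~\ref{l2-decoupling}, only the remark that a slight modification of \cite{beltran2020variable} (retaining the $\ell^2$-sum over caps rather than passing to $\ell^p$ via H\"older) yields the result, which is precisely your outline. The paper also notes, as an alternative derivation, that Theorem~\ref{l2-decoupling} follows from the refined decoupling Theorem~\ref{refdecthm} by a standard argument in \cite{guth2020falconer}.
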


\medskip

In the rest of the appendix, we prove the following refined decoupling inequality, which also implies Theorem \ref{l2-decoupling}. 
The implication can be found below Theorem 4.2 in \cite{guth2020falconer}. 
The refined decoupling inequality for the cone was proved in \cite[Appendix A]{gan2022restricted}, when $\cf^\la$ is of constant coefficient.
Similarly, the proof for the variable coefficient case is based on the $\ell^2$-decoupling theorem and an induction on scales argument.

\begin{theorem}
\label{refdecthm}     
Fix $\nu=\e^{50}$ and let $K=R^{\nu}$. 
Suppose we have the wave packet decomposition at scale $R$ in $B_R^{n+1}$:
    \[\cFl f=\sum_{T\in \W} \cFl f_T. \]
Assume that $\|\cFl f_T\|_{L^p(w_{B_R })}$ are about the same for all $T\in\W$. Let $Y$ be a disjoint union of $K^2$-balls in $B_R $, each of which intersects $\le M$ many $T\in \W$. Then for $2\le p\le \frac{2(n+1)}{n-1}$ and any $\e>0$,
    \begin{equation}
    \nonumber
        \|\cFl f\|_{L^p(Y)}\le C_{\e,\nu} R^\e (\frac{M}{|\W|})^{\frac12-\frac1p}(\sum_{T\in\W} \|\cFl f_T\|_{L^p(w_{B_R })}^2)^{1/2}+\rap(R) \|f\|_2. 
    \end{equation}
Since $\|\cFl f_T\|_{L^p(w_{B_R })}$ are about the same for all $T\in\W$, we equivalently have
\begin{equation}
\label{refdecineq}
    \|\cFl f\|_{L^p(Y)}\le C_{\e,\nu} R^\e M^{\frac12-\frac1p}(\sum_{T\in\W} \|\cFl f_T\|_{L^p(w_{B_R })}^p)^{1/p}+\rap(R) \|f\|_2. 
\end{equation}
\end{theorem}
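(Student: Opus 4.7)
The plan is to prove Theorem \ref{refdecthm} by induction on the scale $R$, using $\ell^2$-decoupling (Theorem \ref{l2-decoupling}) at intermediate scale $K^2$ as the decomposition engine and the Lorentz rescaling identity \eqref{rescaling} (with the framework of Lemma \ref{lemrescaling}) as the vehicle for induction $R \to R/K^2$. This mirrors the standard Du-Guth-Li refined decoupling scheme; the constant-coefficient cone case was treated in \cite{gan2022restricted}, and the variable-coefficient adjustments are in the spirit of \cite{beltran2020variable}.

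The base case $R \le K^{O(1)}$ follows from a direct pointwise H\"older bound using the multiplicity hypothesis: if each $z \in Y$ lies in a $K^2$-ball meeting at most $M$ planks, then $|\cFl f(z)|^p \le M^{p-1}\sum_{T:T\ni z}|\cFl f_T(z)|^p$, and integrating yields \eqref{refdecineq} with a better exponent than needed. For the inductive step, I apply $\ell^2$-decoupling on each $K^2$-ball $B \subset Y$ and sum:
\begin{equation*}
    \|\cFl f\|_{L^p(Y)}^p \le C_\e K^{p\e/10}\sum_{\tau \in \Theta_{K^{-1}}}\|\cFl f_\tau\|_{L^p(w_Y)}^p + \rap(R)\|f\|_2^p,
\end{equation*}
where $f_\tau := \sum_{T \in \W,\,\theta(T)\subset\tau}f_T$. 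For each $\tau$, I cover $B_R$ by the curved planks $\B_\tau = \{\Box\}$ of \eqref{Box} and use \eqref{ZT-Box} to partition $\{T\in\W:\theta(T)\subset\tau\}$ into groups $\{\W_{\tau,\Box}\}_\Box$. The Lorentz rescaling \eqref{rescaling} intertwines $\cFl f_\tau$ restricted to $\Box$ with $\wt\cF^{\la/K^2}\wt g_\Box$ for a type-$\bfone$ (up to $O(1)$ partition) FIO, mapping $\Box$ diffeomorphically to an $R/K^2$-ball, and mapping each $T \in \W_{\tau,\Box}$ to an $R/K^2$-plank $\wt T$. The image of $Y \cap \Box$ in the rescaled frame is a disjoint union of anisotropic boxes (one per original $B \subset Y$ with $B \cap \Box \ne \emptyset$); inscribing these into disjoint $K^2$-balls in the rescaled frame, I apply the inductive hypothesis at scale $R/K^2$ and use Lemma \ref{lemrescaling} to undo the rescaling.

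The principal difficulty is the multiplicity bookkeeping after rescaling: because the Lorentz rescaling is anisotropic, a naive cover of $\wt Y_\Box$ by $K^2$-balls can incur a multiplicity inflation by a positive power of $K$, which would not be absorbed by the inductive gain $(R/K^2)^\e = R^\e K^{-2\e}$. The resolution is to use a cover built from the images of the original $K^2$-balls themselves — they remain disjoint under the diffeomorphism and each is contained in a single $K^2$-ball of the rescaled frame — and to phrase the inductive statement with enough flexibility to permit such covers, so that each such ball intersects at most $\wt M \lesssim M$ of the rescaled planks. With this arranged, summing over $\Box \in \B_\tau$ using disjointness, then over $\tau \in \Theta_{K^{-1}}$, and using the hypothesis that $\|\cFl f_T\|_{L^p(w_{B_R})}$ are all comparable, the total $L^p$-bound accumulates combined $K$-losses of $K^{O(\nu\e)}$ which, against the inductive gain $K^{-2\e}$, close the induction provided $\nu = \e^{50}$ is chosen so that all $K$-factors become $R^{o(\e)}$.
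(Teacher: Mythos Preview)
Your scheme --- induction on $R$ via $\ell^2$-decoupling at scale $K^2$ followed by Lorentz rescaling to scale $R/K^2$ inside each $\Box$ --- is the paper's skeleton, but the multiplicity bookkeeping has a real gap. The displayed inequality after ``apply $\ell^2$-decoupling'' is not what $\ell^2$-decoupling delivers: on a single $K^2$-ball $B$ one gets $\|\cFl f\|_{L^p(B)}\lesssim K^{\e/10}\big(\sum_\tau\|\cFl f_\tau\|_{L^p(w_B)}^2\big)^{1/2}$, and raising to the $p$th power and summing over $B$ does \emph{not} yield $\sum_\tau\|\cFl f_\tau\|_{L^p(w_Y)}^p$ without a H\"older loss of $K^{(n-1)(p/2-1)}$, which the inductive gain $K^{-2\e}$ cannot absorb. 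What is missing is a \emph{second} multiplicity parameter. The paper introduces, for each $\Box$, an intermediate cover by anisotropic boxes $Q_\Box$ of dimensions $K_1^2\times KK_1^2\times\cdots\times K^2K_1^2$ (designed so that each $Q_\Box$ becomes a $K_1^2$-ball after the rescaling, where $K_1=(R/K^2)^\nu$), and pigeonholes to extract $M'\sim\#\{T\in\W_\Box:T\cap Q_\Box\ne\emptyset\}$ and $M''\sim\#\{\Box:Y_\Box\cap Q\ne\emptyset\}$. The $\ell^2\to\ell^p$ conversion on each $K^2$-ball then costs $(M'')^{\frac12-\frac1p}$, the inductive step inside each $\Box$ costs $(M')^{\frac12-\frac1p}$, and one checks $M'M''\lesssim M$.

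Your proposed fix for the rescaled set does not work as written. Under $D_K^{-1}\circ(\Upsilon_\omega^\la)^{-1}$ a $K^2$-ball maps to a box of dimensions roughly $K\times\cdots\times K\times K^2\times 1$; each such image lies in a single $K_1^2$-ball (since $K_1^2= K^2 R^{-4\nu^2}\approx K^2$), but many of them land in the \emph{same} $K_1^2$-ball, so they cannot be inscribed in disjoint ones. Consequently ``phrasing the inductive statement flexibly'' for these tiny images does not by itself recover the right multiplicity at the $K_1^2$-scale needed for the hypothesis. The paper sidesteps this by taking the inductive set $Y_\Box$ to be a union of the $Q_\Box$'s themselves (not of the images of the original $K^2$-balls); after rescaling this is automatically a disjoint union of $K_1^2$-balls carrying multiplicity $M'$, while the original $K^2$-ball structure is used only to control $M''$ at the outer decoupling step. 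A minor further difference: rather than invoking Theorem~\ref{l2-decoupling}, the paper proves a Fourier-support lemma showing that $\Id_Q^*\eta_{Y_\Box}\sum_{T\in\W_\Box}\cFl f_T$ has Fourier support in a $K^{-1}$-cap of a fixed cone, so that constant-coefficient Bourgain--Demeter applies on each $K^2$-ball and the decoupling is phrased at the level of the $\Box$'s rather than just the $\tau$'s.
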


\begin{proof}
We will prove \eqref{refdecineq} by induction on the scale $R$. 
Suppose \eqref{refdecineq} holds for scales $\la'\le \la/K^2, R'\le R/K^2$. 
Let 
    \[ R_1=R/K^2=R^{1-2\nu}\ \textup{and}\  K_1=R_1^\nu=R^{\nu-2\nu^2}.\] 
For $\tau\in \Theta_{K^2}$, we define $\Box_{\tau}^\flat=(R/K^{-2})  \tau^*$ to be the isotropic dilate of $\tau^\ast$, a box centered at the origin in $\R^n$ of dimensions $RK^{-2}\times RK^{-1}\times \dots\times RK^{-1}$. 
Cover $B_R^n$ by translated copies of $\Box_\tau^\flat$ and denote this cover by $\B^\flat_\tau=\{\Box^\flat\}$.
For each $\Box^\flat$, as in \eqref{defgaVeq}, we define 
    \[\Box=\Ga_{\Box^\flat}(\xi_\tau;R).\] 
Let
$\B_\tau=\{\Box:\Box^\flat\in\B^\flat_\tau\}$, so the fat curved planks in $\B_\tau$ form a finitely overlapping covering of the $n+1$ dimensional ball $B_R$. 
This induces a partition of wave packets $\W=\bigsqcup_{\tau\in\Theta_{K^2}}\bigsqcup_{\Box\in\B_\tau} \W_\Box$. 
Here for each $T\in\W$, we assign $T$ to $\W_\Box$ if $T\subset \Box$. 
In cases where there are multiple valid choices for $\Box$, we simply choose one.

\begin{figure}[ht]
\centering
\includegraphics[width=10cm]{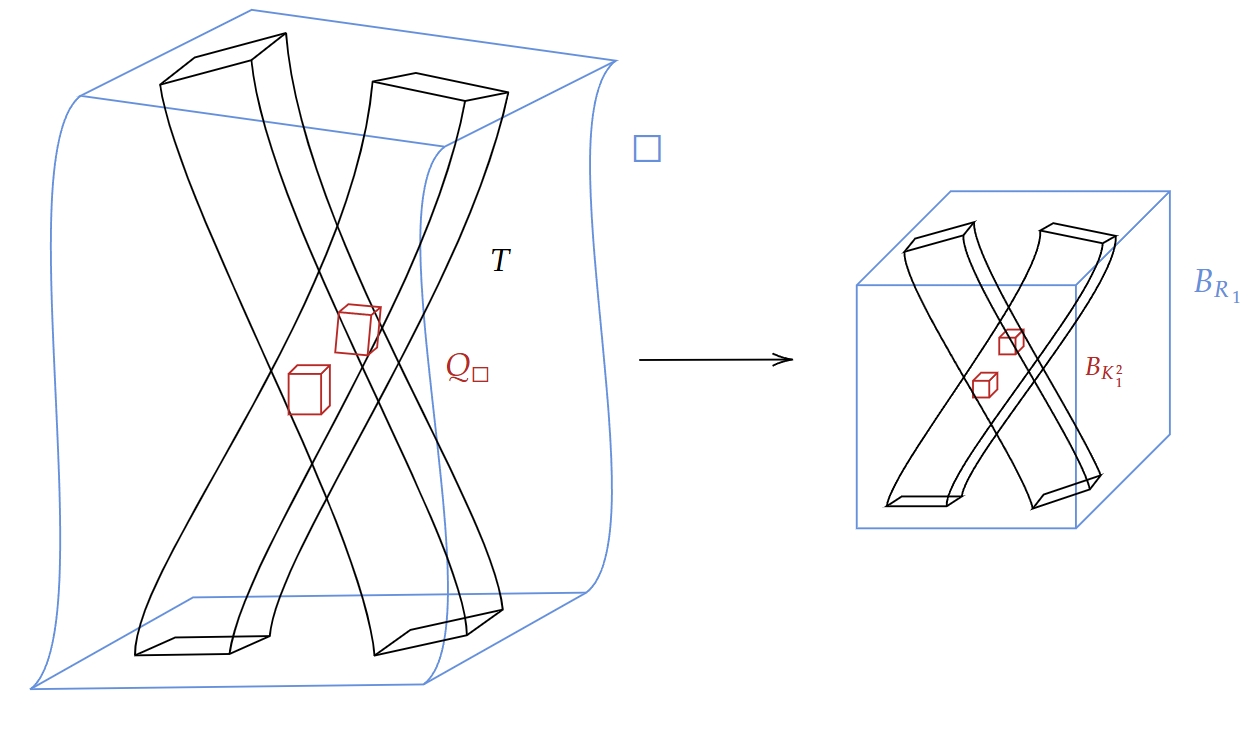}
\caption{Rescaling}
\label{refdec}
\end{figure}

For each $\Box\in \B_\tau$, 
let $\cQ_\Box=\{Q_\Box\}$ be a set of planks that form a finitely overlapping covering of $100\Box$, where each $Q_\Box$ is of the form 
\begin{equation}
\label{Qbox}
    \Ga_W(\bar z;\xi_\tau;K^2K_1^2) 
\end{equation} 
(see \eqref{labeldeftau}) for some point $\bar z\in 100\Box$ and some $K_1^2\times KK_1^2\times \dots\times KK_1^2$-slab $W$ parallel to $K_1^2 \tau^*$, an isotropic dilate of $\tau^\ast$.
The size of $Q_\Box$ is chosen so that when $\Box$ is rescaled to become $B_{R_1}$,  each $Q_\Box$ becomes a $K_1^2$-ball, enabling us to perform induction. 
See Figure \ref{refdec}.
Let $\{\eta_{Q_\Box}\}_{Q_\Box\in\cQ_\Box}$ be a partition of unity adapted to $\cQ_\Box=\{Q_\Box\}$.

\smallskip

Write $\cFl f$ as
    \[\cFl f=\sum_{\W}\cFl f_T=\sum_{\Box}\sum_{T\in\W_\Box}\cFl f_T. \]
For each $\Box$, by dyadic pigeonholing on $\{\#\{T\in\W_\Box: T\cap Q_\Box\not=\varnothing\}:Q_\Box\in\cq_\Box\}$, there exists a number $M'(\Box)$ and a set $Y_\Box$, which is a union of $Q_\Box$, such that
    \begin{enumerate}
        \item $\#\{T\in\W_\Box: T\cap Q_\Box\not=\varnothing\}\sim M'(\Box)$ for all $Q_\Box\subset Y_\Box$.
        \item We have
        \begin{equation}
        \label{pigeonhole-1}
            \|\cFl f\|_{L^p(Y)}\lesssim (\log R)\bigg\| \sum_\Box\sum_{T\in\W_\Box}\eta_{Y_\Box}\cFl f_T \bigg\|_{L^p(Y)}.
        \end{equation}
    \end{enumerate}
Here, $\eta_{Y_\Box}=\sum_{Q_\Box\subset Y_\Box}\eta_{Q_\Box}$.
By dyadic pigeonholing on $\{M'(\Box)\}_\Box$, there exists a set $\B$ and a uniform number $M'$ such that 
    \begin{enumerate}
        \item $M'(\Box)\sim M'$ for all $\Box\in\B$.
        \item We have     
        \begin{equation}
        \label{pigeonhole-2}
            \bigg\| \sum_\Box\sum_{T\in\W_\Box}\eta_{Y_\Box}\cFl f_T \bigg\|_{L^p(Y)}\lesssim (\log R)^2 \bigg\| \sum_{\Box\in \B}\sum_{T\in\W_\Box}\eta_{Y_\Box}\cFl f_T \bigg\|_{L^p(Y)}.
        \end{equation}
    \end{enumerate}
Finally, by dyadic pigeonholing on $\{\#\{Y_\Box:Y_\Box\cap Q\not=\varnothing, \Box\in\B\}: Q\subset Y\}$, there exists a number $M''$ and a subset $Y'\subset Y$ such that 
\begin{enumerate}
    \item $\#\{Y_\Box:Y_\Box\cap Q\not=\varnothing, \Box\in\B\}\sim M''$ for all $Q\subset Y'$.
    \item We have
    \begin{equation}
    \label{pigeonhole-3}
        \bigg\| \sum_{\Box\in\B}\sum_{T\in\W_\Box}\eta_{Y_\Box}\cFl f_T \bigg\|_{L^p(Y)}\lesssim (\log R) \bigg\| \sum_{\Box\in\B}\sum_{T\in\W_\Box}\eta_{Y_\Box}\cFl f_T \bigg\|_{L^p(Y')}.
    \end{equation}
\end{enumerate}

\smallskip

We want to estimate the right-hand side of \eqref{pigeonhole-3} using Bourgain-Demeter's $\ell^2$ decoupling theorem (\cite[Theorem 1.2]{Bourgain-Demeter}), that is, the $\ell^2$ decoupling theorem in the constant coefficient setting.
This requires us to study the Fourier transform of the function in the right-hand side of \eqref{pigeonhole-3}.

For each $K^2$-ball $Q\subset Y'$, let $\Id_Q^*$ be a smooth bump function adapted to $Q$, so
\begin{equation}
\label{bump-fcn}
    \bigg\|\sum_{\Box\in\B}\sum_{T\in\W_\Box}\eta_{Y_\Box}\cFl f_T \bigg\|_{L^p(Q)}\lesssim\bigg\|\Id_Q^*\sum_{\Box\in\B}\sum_{T\in\W_\Box}\eta_{Y_\Box}\cFl f_T \bigg\|_{p}.
\end{equation}
Suppose $Q=B_{K^2}(z_0)$ is centered at $z_0$. 
For each $\tau\in\Theta_{K^{-1}}$, define the cap
    \[ P(z_0;\tau):=N_{K^{-2+\de}}(\{ \partial_z\phl(z_0;\xi):\xi\in 2\tau \}). \]
Here, $\de=\e^{1000}$.
Note that $\{P(z_0;\tau)\}_{\tau\in\Theta_{K^{-1}}}$ is a set of $1\times K^{-1}\times \dots\times K^{-1}\times K^{-2+\de}$-caps that form a finite-overlapping cover of the $K^{-2+\de}$-neighborhood of the conical surface
    \[ \Si_{z_0}:=\{ \partial_z\phl(z_0;\xi):\xi\in\A^n(1) \}. \]
One may view $P(z_0;\tau)$ as the $K^{-2+\de}$-neighborhood of the tangent plane of $\Si_{z_0}$ at $\xi_\tau$. 
Note that the normal direction of the tangent plane is $(\partial_t\gal(u_0,t_0;\xi_\tau,t_0),1)$, where $u_0=\partial_\xi\phl(z_0;\xi_\tau)$. 
Thus, we can think of $P(z_0;\tau)$ as
\begin{equation}
\nonumber
\begin{split}
    \partial_z\phl(z_0;\xi_\tau)+
    \bigg(\Big\{  \partial_\xi\partial_z\phl(z_0;\xi_\tau)\cdot(\xi-\xi_\tau):\xi\in \tau\Big\}\\
    \oplus\Big\{r(\partial_t\gal(u_0,t_0;\xi_\tau,t_0),1): -K^{-2+\de}\le r\le K^{-2+\de}  \Big\}\bigg).
\end{split}
\end{equation}
Here $\oplus$ denotes the orthogonal sum.
This shows that a dual box of $P(z_0;\tau)$ is parameterized by
\begin{equation}
\nonumber
\begin{split}
    &\bigg(\proj_{\R^n}( \{  \partial_\xi\partial_z\phl(z_0;\xi_\tau)\cdot(\xi-\xi_\tau):\xi\in \tau \})\bigg)^*\\
    &+\Big\{r(\partial_t\gal(u_0,t_0;\xi_\tau,t_0),1): -K^{2-\de}\le r\le K^{2-\de}  \Big\}.
\end{split}
\end{equation}
Since $\partial_\xi\partial_x\phl(z_0;\xi_\tau)\cdot \partial_u\gal(u_0,t_0;\xi_\tau)=I_n$ and since $\phi$ obeys the  quantitative condition $(\text{H1}_{\bA})$, we have
\begin{align*}
    &\bigg(\proj_{\R^n}( \{  \partial_\xi\partial_z\phl(z_0;\xi_\tau)\cdot(\xi-\xi_\tau):\xi\in \tau \})\bigg)^* \\
   =&\,\bigg(\{  \partial_\xi\partial_x\phl(z_0;\xi_\tau)\cdot(\xi-\xi_\tau):\xi\in \tau \}\bigg)^*\sim\Big\{  \partial_u\gal(u_0,t_0;\xi_\tau)\cdot u:u\in \tau^* \Big\}.
\end{align*}
Hence, we get
\begin{equation}
\label{tangentplane}
\begin{split}
    P^*(z_0;\tau)\sim &\,\Big\{  \partial_u\gal(u_0,t_0;\xi_\tau)\cdot u:u\in \tau^* \Big\}
    \\
    &+\Big\{r(\partial_t\gal(u_0,t_0;\xi_\tau),1): -K^{2-\de}\le r\le K^{2-\de}  \Big\}.
\end{split}
\end{equation}

\begin{lemma}
Given $\tau\in\Theta_{K^{-1}}$ and $\Box\in\B_\tau$,
    \[ \Id_Q^*\sum_{T\in\W_\Box}\eta_{Y_\Box}\cFl f_T, \]
as a function in $\R^{n+1}$, has Fourier transform essentially supported in $2P(z_0;\tau)$.
\end{lemma}

\begin{proof}
Note that, up to a rapidly decreasing term,
    \[\Id_Q^*\sum_{T\in\W_\Box}\eta_{Y_\Box}\cFl f_T=\Id_Q^*\sum_{Q_\Box\subset Y_\Box,Q_\Box\cap 2Q\neq\emptyset}\eta_{Q_\Box}\sum_{T\in\W_\Box}\cFl f_T.\]
Let $P_0(z_0;\tau)$ be a translated copy of $P(z_0;\tau)$ that is centered at the origin.
Since $(\Id^*_Q\eta_{Q_\Box}\cFl f_T)^\wedge=\wh \eta_{Q_\Box}*(\Id^*_Q\cFl f_T)^\wedge$, it suffices to show that
\begin{enumerate}
    \item $(\Id_Q^*\cFl f_T)^\wedge$ is essentially supported in $P(z_0;\tau)$.
    \item $\wh \eta_{Q_\Box}$ is essentially supported in $P_0(z_0;\tau)$.
\end{enumerate}

We write
\begin{equation}
\label{computeFsupp}
    (\Id_Q^*\cFl f_T)^\wedge(\eta)=\int \Id_Q^*e^{i(\phl(z;\xi)-\langle z,\eta\rangle)}\mathrm{d}z  \int a^\la(z;\xi) \wh f_T \mathrm{d}\xi.  
\end{equation} 
Note that $\wh f_T$ is supported in $2\theta\subset 2\tau$.
Thus, to show item (1), we just need to show that for all $\xi\in2\tau$, 
\begin{equation}
\label{compute2}
    \int \Id_Q^*e^{i(\phl(z;\xi)-\langle z,\eta\rangle)}\mathrm{d}z
\end{equation}
decays rapidly outside $P(z_0;\tau)$.
By stationary phase method, this is true if
    \[ |\partial_z\phl(z;\xi)-\eta|\ge K^{-2+\de}\ \textup{~for~all~}z\in 2Q. \]
Note that $|\partial_z\phl(z;\xi)-\partial_z\phl(z_0;\xi)|\lesssim |z-z_0|\la^{-1}\le K^{-3}$, as $K$ is a small power of $\la$. 
Hence, \eqref{compute2} is negligible if
    \[ |\partial_z\phl(z_0;\xi)-\eta|\ge K^{-2+\de}. \]
Therefore, \eqref{computeFsupp} is essentially supported in
    \[ N_{K^{-2+\de}}(\{ \partial_z\phl(z_0;\xi): \xi\in2\tau \})=P(z_0;\tau). \]
This proves item (1).
\medskip

Regarding item (2), we study $\eta_{Q_\Box}$ for $Q_\Box\cap 2Q\neq\emptyset$. 
By \eqref{Qbox}, for some slab $W$,
    \[ Q_\Box=\{ (\gal(u+\partial_z\phl(\bar z;\xi_\tau),t;\xi_\tau),t):u\in W, |t-\bar t|\le K^2K_1^2 \}, \]
where $\bar z=(\bar x,\bar t)$.
We first show that $Q_\Box$ is morally a box. 
Let $\bar u$ be the center of $W$. 
By Taylor's expansion, we have
\begin{align}
\nonumber
    \gal(u+\partial_z\phl(\bar z;\xi_\tau),t;\xi_\tau)
    =&\,\gal(\bar u+\partial_z\phl(\bar z;\xi_\tau),\bar t;\xi_\tau)\\ \label{linearapproximate}
    +&\partial_u\gal(\bar u+\partial_z\phl(\bar z;\xi_\tau),\bar t;\xi_\tau)\cdot(u-\bar u)\\
    \nonumber
    +&\partial_t\gal(\bar u+\partial_z\phl(\bar z;\xi_\tau),\bar t;\xi_\tau)(t-\bar t)+O(\la^{-1}(KK_1)^2).
\end{align} 
Since $K,K_1\leq R^{\nu}\leq\la^{\nu}$, $O(\la^{-1}(KK_1)^2)=O(1)$.
Hence, $Q_\Box$ can be approximated by the linear term in \eqref{linearapproximate}, yielding that $Q_\Box$ is morally a box. 
As a result, the translation of $Q_\Box$ to the origin can be essentially parameterized by 
\begin{align}
\nonumber
    \Bigg\{\Bigg(\partial_u\gal(\bar u+\partial_z\phl(\bar z;\xi_\tau),\bar t;\xi_\tau)\cdot u
    +&\partial_t\gal(\bar u+\partial_z\phl(\bar z;\xi_\tau),\bar t;\xi_\tau)t,t\Bigg)\\ \nonumber
    &:u\in K_1^2\tau^*,|t|\le K^2K_1^2\Bigg\}. 
\end{align}
Again, since $|\bar u|\le K^2$ and $|\bar z-z_0|\lesssim K^2$, by using Taylor's expansion to get rid of the second order term, the above rectangle can be essentially parameterized by
\begin{equation}\label{samerectangle}
    \Bigg\{\Bigg(\partial_u\gal(u_0,t_0;\xi_\tau)\cdot u
+\partial_t\gal(u_0,t_0;\xi_\tau)t,t\Bigg):u\in K_1^2\tau^*,|t|\le K^2K_1^2\Bigg\},
\end{equation}
where replace $\bar u$ by $0$ and replace $\bar z$ by $z_0$ (recall that $u_0=\partial_\xi\phl(z_0;\xi_\tau)$). 

\smallskip

To show that $\wh \eta_{Q_\Box}$ is essentially supported in $P_0(z_0;\tau)$, we just need to show that $Q_\Box$ contains a translated copy of $P^*_0(z_0;\tau)$. 
In other words, we just need to show \eqref{samerectangle} contains \eqref{tangentplane}, which is true.
\qedhere

\end{proof}

Now we can use Bourgain-Demeter's $\ell^2$ decoupling theorem in $Q$ to have
    \[\text{R.H.S of }\eqref{bump-fcn}\le C_\e K^{\e/100}(M'')^{\frac12-\frac1p}\bigg( \sum_{\Box\in\B} \bigg\| \sum_{T\in\W_\Box}\eta_{Y_\Box}\cFl f_T \bigg\|_{L^p(\om_Q)}^p \bigg)^{1/p}. \]
Recall \eqref{pigeonhole-1}, \eqref{pigeonhole-2}, and \eqref{pigeonhole-3}.
Sum up $Q\subset Y'$ so that
\begin{equation}
\nonumber
    \|\cFl f\|_{L^p(Y)}\lesssim C_\e (\log R)^{100} K^{\e/100}(M'')^{\frac12-\frac1p} \bigg( \sum_{\Box\in\B}\bigg\|\sum_{T\in\W_\Box}\cFl f_T\bigg\|^p_{L^p(Y_{\Box})} \bigg)^{1/p}. 
\end{equation}
Similar to Lemma \ref{lemrescaling}, use \eqref{refdecineq} at scale $R_1=R/K^2$ as an induction hypothesis (see Figure \ref{refdec}) to get
\begin{equation}
\nonumber
    \bigg\|\sum_{T\in\W_\Box}\cFl f_T\bigg\|_{L^p(Y_{\Box})} \lesssim C_{\e,\de} R^\e K^{-2\e}\bigg(\frac{M'}{|\W_\Box|}\bigg)^{\frac12-\frac1p}\bigg(\sum_{T\in\W_\Box}\|\cFl f_T\|_{L^p(\om_{B_R})}^2 \bigg)^{1/2}.
\end{equation}
Therefore, $\|\cFl f\|_{L^p(Y)}$ is bounded above by
    \[C_\e (\log R)^{100} K^{\e/100} C_{\e,\de} R^\e K^{-2\e} \bigg( \frac{M'M''}{|\W_\Box|} \bigg)^{\frac12-\frac1p}\bigg(\sum_{\Box\in\B}\bigg(\sum_{T\in\W_\Box}\|\cFl f_T\|_{L^p(\om_{B_R})}^2\bigg)^{p/2} \bigg)^{1/p}. \]
Since $\|\cFl f_T\|_p$ are comparable for all $T\in\W$, we thus have that $\|\cFl f\|_{L^p(Y)}$ is bounded above by
\begin{equation}
\nonumber
    C_\e (\log R)^{100} K^{\e/100} C_{\e,\de} R^\e K^{-2\e} \bigg( \frac{M'M''}{|\W|} \bigg)^{\frac12-\frac1p} \bigg(\frac{|\B||\W_\Box|}{|\W|}\bigg)^{\frac1p}\bigg(\sum_{T\in\W}\|\cFl f_T\|_{L^p(\om_{B_R})}^2 \bigg)^{1/2}.
\end{equation} 
Recall $K=R^{\e^{50}}$.
Thus, when $R$ is large enough, $C_\e (\log R)^{100}K^{\e/100}K^{-2\e}\le 1/1000$.
Since $|\B||\W_\Box|\le 2 |\W|$ and since $M'M''\le 4 M$, we therefore obtain
\begin{equation*}
    \|\cFl f\|_{L^p(Y)}\le C_{\e,\de} R^\e (\frac{M}{|\W|})^{\frac12-\frac1p}(\sum_{T\in\W} \|\cFl f_T\|_{L^p(\om_{B_R})}^2)^{1/2}. \qedhere
\end{equation*}

\end{proof}

\bigskip

\bibliographystyle{abbrv}
\bibliography{ref}

\begin{thebibliography}{10}

\bibitem{beltran2020variable}
D.~Beltran, J.~Hickman, and C.~D. Sogge.
\newblock Variable coefficient {W}olff-type inequalities and sharp local smoothing estimates for wave equations on manifolds.
\newblock {\em Anal. PDE}, 13(2):403--433, 2020.

\bibitem{beltran2021sharp}
D.~Beltran, J.~Hickman, and C.~D. Sogge.
\newblock Sharp local smoothing estimates for {F}ourier integral operators.
\newblock In {\em Geometric aspects of harmonic analysis}, volume~45 of {\em Springer INdAM Ser.}, pages 29--105. Springer, Cham, [2021] \copyright 2021.

\bibitem{bourgain1991lp}
J.~Bourgain.
\newblock {$L^p$}-estimates for oscillatory integrals in several variables.
\newblock {\em Geom. Funct. Anal.}, 1(4):321--374, 1991.

\bibitem{Bourgain-Demeter}
J.~Bourgain and C.~Demeter.
\newblock The proof of the {$\ell^2$} decoupling conjecture.
\newblock {\em Ann. of Math. (2)}, 182(1):351--389, 2015.

\bibitem{FHL}
S.~Fang, D.~He, and X.~Li.
\newblock Maximal cone multipliers: {S}harp ${L}^p$ estimates in $\mathbb{R}^3$ and beyond.
\newblock {\em preprint}, 2025.

\bibitem{gan2022restricted}
S.~Gan, S.~Guo, L.~Guth, T.~L.~J. Harris, D.~Maldague, and H.~Wang.
\newblock On restricted projections to planes in $\mathbb{R}^3$.
\newblock {\em To appear in American Journal of Mathematics}, 2022.

\bibitem{gan2022note}
S.~Gan, C.~Oh, and S.~Wu.
\newblock A note on local smoothing estimates for fractional {S}chr\"odinger equations.
\newblock {\em J. Funct. Anal.}, 283(5):Paper No. 109558, 36, 2022.

\bibitem{gao2023improved}
C.~Gao, B.~Liu, C.~Miao, and Y.~Xi.
\newblock Improved local smoothing estimate for the wave equation in higher dimensions.
\newblock {\em J. Funct. Anal.}, 284(9):Paper No. 109879, 48, 2023.

\bibitem{gao2023square}
C.~Gao, B.~Liu, C.~Miao, and Y.~Xi.
\newblock Square function estimates and local smoothing for {F}ourier integral operators.
\newblock {\em Proc. Lond. Math. Soc. (3)}, 126(6):1923--1960, 2023.

\bibitem{GWX}
C.~Gao, S.~Wu, and Y.~Xi.
\newblock Sharp microlocal {K}akeya--{N}ikodym estimates for eigenfunctions with applications.
\newblock {\em arXiv preprint arXiv:2509.01116}, 2025.

\bibitem{guth2016restriction}
L.~Guth.
\newblock A restriction estimate using polynomial partitioning.
\newblock {\em J. Amer. Math. Soc.}, 29(2):371--413, 2016.

\bibitem{guth2019sharp}
L.~Guth, J.~Hickman, and M.~Iliopoulou.
\newblock Sharp estimates for oscillatory integral operators via polynomial partitioning.
\newblock {\em Acta Math.}, 223(2):251--376, 2019.

\bibitem{guth2020falconer}
L.~Guth, A.~Iosevich, Y.~Ou, and H.~Wang.
\newblock On {F}alconer's distance set problem in the plane.
\newblock {\em Invent. Math.}, 219(3):779--830, 2020.

\bibitem{guth2020sharp}
L.~Guth, H.~Wang, and R.~Zhang.
\newblock A sharp square function estimate for the cone in {$\Bbb {R}^3$}.
\newblock {\em Ann. of Math. (2)}, 192(2):551--581, 2020.

\bibitem{heo2011radial}
Y.~Heo, F.~Nazarov, and A.~Seeger.
\newblock Radial {F}ourier multipliers in high dimensions.
\newblock {\em Acta Math.}, 206(1):55--92, 2011.

\bibitem{KWZ}
N.~H. Katz, S.~Wu, and J.~Zahl.
\newblock Kakeya sets from lines in {$SL_2$}.
\newblock {\em Ars Inven. Anal.}, pages Paper No. 6, 23, 2023.

\bibitem{Li-Wu}
X.~Li and S.~Wu.
\newblock On almost everywhere convergence of planar {B}ochner-{R}iesz mean.
\newblock {\em arXiv:2407.20887}, 2024.

\bibitem{Minicozzi-Sogge}
W.~P. Minicozzi, II and C.~D. Sogge.
\newblock Negative results for {N}ikodym maximal functions and related oscillatory integrals in curved space.
\newblock {\em Math. Res. Lett.}, 4(2-3):221--237, 1997.

\bibitem{peral1980lp}
J.~C. Peral.
\newblock {$L\sp{p}$}\ estimates for the wave equation.
\newblock {\em J. Functional Analysis}, 36(1):114--145, 1980.

\bibitem{rogers2008local}
K.~M. Rogers.
\newblock A local smoothing estimate for the {S}chr\"odinger equation.
\newblock {\em Adv. Math.}, 219(6):2105--2122, 2008.

\bibitem{Schippa}
R.~Schippa.
\newblock Oscillatory integral operators with homogeneous phase functions.
\newblock {\em J. Anal. Math.}, 154(1):1--67, 2024.

\bibitem{Seeger-Sogge-Stein}
A.~Seeger, C.~D. Sogge, and E.~M. Stein.
\newblock Regularity properties of {F}ourier integral operators.
\newblock {\em Ann. of Math. (2)}, 134(2):231--251, 1991.

\bibitem{Sjo}
P.~Sj\"olin.
\newblock Regularity of solutions to the {S}chr\"odinger equation.
\newblock {\em Duke Math. J.}, 55:699--715, 1987.

\bibitem{sogge1991propagation}
C.~D. Sogge.
\newblock Propagation of singularities and maximal functions in the plane.
\newblock {\em Invent. Math.}, 104(2):349--376, 1991.

\bibitem{Tao-B-R-restriction}
T.~Tao.
\newblock The {B}ochner-{R}iesz conjecture implies the restriction conjecture.
\newblock {\em Duke Math. J.}, 96(2):363--375, 1999.

\bibitem{vega}
L.~Vega.
\newblock {S}chrödinger equations: pointwise convergence to the initial data.
\newblock {\em Proc. Amer. Math. Soc.}, 102:874--878, 1988.

\bibitem{wang2024restriction}
H.~Wang and S.~Wu.
\newblock Restriction estimates using decoupling theorems and two-ends {F}urstenberg inequalities.
\newblock {\em arXiv:2411.08871}, 2024.

\bibitem{wolff2000local}
T.~Wolff.
\newblock Local smoothing type estimates on {$L^p$} for large {$p$}.
\newblock {\em Geom. Funct. Anal.}, 10(5):1237--1288, 2000.

\end{thebibliography}

\end{document}